\newcommand{\Z}{\mathbb{Z}}
\newcommand{\1}{\mathbbm{1}}
\newcommand{\kf}{\mathbb{K}}
\newcommand{\Q}{\mathbb{Q}}
\newcommand{\R}{\mathbb{R}}
\newcommand{\N}{\mathbb{N}}
\newcommand{\He}{\mathcal{H}}
\newcommand{\Hom}{\text{Hom}}
\theoremstyle{definition}
\newtheorem*{thm*}{Theorem}
\newtheorem{thm}{Theorem}[section]
\newtheorem{cor}[thm]{Corollary}
\newtheorem{lem}[thm]{Lemma}
\newtheorem{prop}[thm]{Proposition}
\newtheorem{rem}[thm]{Remark}
\newtheorem{ex}[thm]{Example}
\newtheorem{defn}[thm]{Definition}
\numberwithin{equation}{section}
\title{Categorification of quantum boson algebras}
\author{Sam Qunell}
\begin{document}
\begin{abstract}
    We produce graded monoidal categorifications of the quantum boson algebras in any symmetrizable Kac-Moody type. Our categories are defined in terms of diagrammatic generators and relations and have a faithful 2-representation on Khovanov-Lauda and Rouquier's categorification of the corresponding positive part quantum group. We use our construction to produce interesting bases of the quantum boson algebras and quantum bosonic extensions.
\end{abstract}
\maketitle
\tableofcontents
\section{Introduction}
In this paper, we construct graded monoidal categories $\mathcal{B}(C)$ categorifying the quantum boson algebra $B(C)$ for any symmetrizable generalized Cartan matrix $C$. Our category $\mathcal{B}(C)$ acts faithfully on Khovanov-Lauda and Rouquier's categorification of $U_q^+(\mathfrak{g}(C))$ via the induction and restriction functors for quiver Hecke algebra modules. We produce explicit bases of Hom spaces in $\mathcal{B}(C)$ based on certain planar diagrams. The category $\mathcal{B}(C)$ satisfies a universal property coming from its explicit generators and relations and from a localization.
\subsection{Quantum boson algebras}
The quantum boson algebras play an important yet subtle role in the theory of quantum groups. These algebras are defined for any symmetrizable Kac-Moody Lie algebra and have a faithful action on the corresponding positive part quantum group $U_q^+(\mathfrak{g})$. The quantum boson algebras were originally introduced by Kashiwara in \cite{kashi} in his study of crystal bases on $U_q^+(\mathfrak{g})$. Similar to how the action of the full quantum group gives rise to the Kashiwara operators on any simple finite-dimensional representation, the action of the quantum boson algebra on $U_q^+(\mathfrak{g})$ yields the Kashiwara operators for the crystal basis of $U_q^+(\mathfrak{g})$. For a simple finite-dimensional representation $V$ of $U_q(\mathfrak{g})$, the crystal basis of $U_q^+(\mathfrak{g})$ descends to one of $V$ via the quotient $U_q^+(\mathfrak{g})\twoheadrightarrow V$. So, the action of the quantum boson algebras controls the theory of crystal bases.  

The quantum boson algebra $B(C)$ for $(C_{ij})_{i,j\in I}$ is defined similarly to the quantum group for $C$. The algebra $B(C)$ has generators $\{E_i,F_i\}_{i\in I}$ so that both the $E_i$'s and $F_i$'s satisfy quantum Serre relations and so that the \emph{quantum boson relations} are satisfied:
\[F_iE_j-q_i^{-C_{ij}}E_jF_i=\frac{\delta_{ij}}{1-q_i^2}.\]
The quantum boson algebra $B(C)$ acts on $U_q^+(\mathfrak{g}(C))$ with $E_i$ acting as right multiplication by $E_i$ and with $F_i$ acting as the corresponding adjoint operator under Lusztig's bilinear form on $U_q^+(\mathfrak{g}(C))$. These adjoint operators are also known as Lusztig's twisted derivations \cite{lusbook}.

The quantum boson algebras and quantum boson relations have appeared in several related contexts since their introduction. In \cite{qshuffle}, Leclerc shows that $B(C)$ acts on the quantum shuffle algebra for $C$ in finite type and uses this structure to study the dual canonical bases of $U_q^+(\mathfrak{g}(C))$. For $\mathfrak{g}$ of type $ADE$, Hernandez and Leclerc prove that the deformed Grothendieck ring of a subcategory of finite-dimensional representations for the quantum Loop algebra $U_q(L\mathfrak{g})$ has a presentation in terms of generators and relations that generalize the quantum boson relations \cite{dehallalg}. Kashiwara, Kim, Oh, and Park prove that the algebra with these generators and relations admits a nondegenerate symmetric bilinear form and global basis theory for any symmetrizable Kac-Moody type \cite{newkashiboson}. They refer to this algebra as the \emph{bosonic extension} and show that elements of their basis correspond to $(q,t)$-characters of simple $U_q(L\mathfrak{g})$-representations in ADE type.

A modern approach to the structure theory of quantum groups is \emph{categorification}. For $I$ the set of vertices in the Dynkin diagram of $\mathfrak{g}(C)$ and for each $\alpha\in \N[I]$, Khovanov-Lauda and Rouquier produce graded algebras $H_\alpha(C)$ called \emph{quiver Hecke} or \emph{KLR algebras} \cite{khla}\cite{kholau2}\cite{2km}. Khovanov and Lauda prove in \cite{khla} and \cite{kholau2} that the category $\bigoplus_{\alpha} H_\alpha(C)-\text{grproj}$ of finitely generated graded projective modules over the KLR algebras is a monoidal categorification of Lusztig's integral form of $U_q^+(\mathfrak{g}(C))$.

The action of $B(C)$ on $U_q^+(\mathfrak{g}(C))$ has been found in this categorical context. The action of the $E_i$ is the induction functor $H_{\alpha}(C)-\text{grMod}\rightarrow H_{\alpha+\alpha_i}(C)-\text{grMod}$, and $F_i$ acts via the corresponding restriction. These functors restrict to acting on the categories of all graded projective modules $H_\alpha(C)-\text{grProj}$. These functors were used Lauda and Vazirani in \cite{categorcrystal} and independently by Kang and Kashiwara in \cite{kk} to study categorifications of highest weight simple representations of $U_q(\mathfrak{g})$. It is also proven in \cite{kk} that these functors satisfy categorical analogues of the quantum boson relations. Here, the scalar $1/(1-q_i^2)$ is interpreted as the infinite direct sum $\text{Id}\oplus q_i^2\text{ Id}\oplus q_i^4\text{ Id}\oplus\dots$. In \cite{itsmeeeeeeeee}, we use these categorical quantum boson relations to produce 2-representations of $U_q^+(\tilde{\mathfrak{sl}}_{n+1})$ on $U_q^+(\mathfrak{sl}_{n+1})$. It is therefore natural to hope that these functors come from the action of a monoidal category defined in terms of generators and relations. We note that Kang, Kashiwara, and Kim construct related monoidal categorifications in \cite{kashi-qt-categor}. However, their construction is an abelian category, and hence it does not have a natural representation on $\bigoplus_\alpha H_\alpha(C)-\text{grProj}$. Moreover, their construction is less tractable outside of ADE type. To contrast, our results for $\mathcal{B}(C)$ work uniformly well in any symmetrizable Kac-Moody type. See also \cite{jantz} for another categorical interpretation of the deformed Grothendieck ring of $U_q(L\mathfrak{g})$. We expect that our results in \cite{itsmeeeeeeeee} can be reinterpreted as the existence of a functor from the categorification $\mathcal{U}_q^+(C)$ of $U_q^+(C)$ into the homotopy category of $\mathcal{B}(C)$, although we do not prove this.

\subsection{Main results}
We construct our category $\mathcal{B}(C)$ in several steps. Extra caution is needed in the construction since it is not easy to incorporate infinite biproducts into the usual diagrammatic theory. Firstly, for a matrix of KLR parameters $Q$, we define a monoidal category $\mathcal{A}_2(Q)$ generated by objects $E_i,F_i$ and morphisms $X_i:E_i\rightarrow E_i$, $T_{ij}:E_iE_j\rightarrow E_jE_i$, $\eta_i:\1\rightarrow F_iE_i$, and $\epsilon_i:E_iF_i\rightarrow \1$. The $X_i$ and $T_{ij}$ satisfy the same KLR relations as in the categorification of the full quantum group \cite{qha}\cite{kholau3}. The $\eta_i$ and $\epsilon_i$ are the unit and counit of a one-sided duality $E_i \dashv F_i$. We only require one-sided duality since the induction and restriction functors on KLR-algebra modules are only one-sided adjoints. This should be contrasted with the two-sided duality up to shifts in the full quantum group categorification \cite{kholau3}. We associate planar diagrams to each morphism obtained by composing and tensoring the defining morphisms. We use the homotopy techniques of \cite{kholau3} to produce explicit spanning sets $B_{X,Y}$ of each Hom space $\Hom_{\mathcal{A}_2(Q)}(X,Y)$ in terms of certain minimal diagrams with no double crossings or self-intersections. In case $Q$ is determined by a symmetrizable generalized Cartan matrix $C$, we may consider a graded version $\mathcal{A}_2(C)$ of $\mathcal{A}_2(Q)$ where we introduce formal shifts of $q^nX$ of each object and consider only morphisms of appropriate degree. The same homotopy arguments show that all graded Hom spaces in $\mathcal{A}_2(C)$ are bounded below in degree and finite-dimensional in each degree.

The category $\mathcal{A}_2(C)$ acts on $\bigoplus_\alpha H_\alpha(C)-\text{grMod}$ via the induction and restriction functors. Our first main result is Theorem \ref{thm:a2_2rep_faithful}.
\begin{thm*}
    The 2-representation of $\mathcal{A}_2(C)$ on $\bigoplus_\alpha H_\alpha(C)-\text{grMod}$ is faithful. The graded Hom space $\sum_{i\in \Z} \Hom_{\mathcal{A}_2(C)}(q^iX,Y)$ has a basis given by $B_{X,Y}$.
\end{thm*}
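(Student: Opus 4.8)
The plan is to prove both assertions at once by a squeeze argument. We have already produced $B_{X,Y}$ as a spanning set of each graded $\Hom$ space using the homotopy reductions, so it gives an \emph{upper} bound on the graded dimension of $\sum_{i\in\Z}\Hom_{\mathcal{A}_2(C)}(q^iX,Y)$ in each degree. To promote $B_{X,Y}$ to a basis and simultaneously obtain faithfulness, it suffices to show that the images of the elements of $B_{X,Y}$ under the 2-representation are linearly independent as natural transformations between functors on $\bigoplus_\alpha H_\alpha(C)-\text{grMod}$. Indeed, if these images are independent then $B_{X,Y}$ is independent, hence a basis; and since $B_{X,Y}$ spans, the 2-representation is then injective on the entire $\Hom$ space, which is exactly faithfulness. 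Because the graded Hom spaces are bounded below and finite-dimensional in each degree, it is enough to verify this independence one degree at a time.

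The next step is to make the images concrete. Under the 2-representation, $E_i$ acts as induction and $F_i$ as restriction, while $X_i$ and $T_{ij}$ act by the usual KLR dot and crossing on the induction side, and $\eta_i,\epsilon_i$ act as the unit and counit of the one-sided adjunction $E_i\dashv F_i$. A reduced planar diagram $b\in B_{X,Y}$ thus maps to an explicit composite of these operations. Such a natural transformation between composites of induction and restriction functors is determined by a bimodule map, and hence by the image of the identity element $1_\beta$ of the free modules $H_\beta(C)$. Evaluating $b$ on a well-chosen $H_\beta(C)$ lands in a module whose underlying space carries the explicit KLR basis coming from the Mackey-type filtration of the relevant induction-restriction composite, and I would expand the image of each diagram in that basis.

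Linear independence then reduces to a triangularity statement. The subcategory generated by the $E_i$ together with $X_i$ and $T_{ij}$ is precisely Khovanov--Lauda and Rouquier's categorification of $U_q^+(\mathfrak{g}(C))$, whose basis theorem already supplies the independence of the dotted-crossing diagrams; this handles the cap/cup-free part. The genuinely new ingredient is the one-sided duality given by the cups $\eta_i$ and caps $\epsilon_i$. I would order the diagrams of $B_{X,Y}$ by a combinatorial statistic built from the cap/cup pattern together with the crossing data, and argue that each reduced diagram contributes a distinguished leading basis vector in the image module that no lower diagram can produce. The no-self-intersection and no-double-crossing conditions are exactly what make these leading terms well defined and pairwise distinct, while the grading restricts which diagrams can occur in a fixed degree.

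The main obstacle I anticipate is controlling the cups and caps. Because the duality is only one-sided, one cannot straighten an arbitrary diagram onto the $E$-only side, so the triangularity argument must be carried out directly on composites of induction and restriction rather than reduced to the known $U_q^+$ case. Concretely, the difficulty is to show that $\eta_i$ and $\epsilon_i$ act by genuinely independent operations—$\eta_i$ introducing a new strand and polynomial generator, $\epsilon_i$ extracting a specific coefficient—so that distinct cap/cup patterns are separated by distinct leading terms. Making this leading-term bookkeeping uniform across all symmetrizable Kac-Moody types, and checking that the Mackey filtration of the pertinent induction-restriction composites is compatible with the chosen ordering, is where the bulk of the technical work will lie.
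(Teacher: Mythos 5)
Your overall frame coincides with the paper's: both statements are reduced to showing that the images of a fixed $B_{X,Y}$ under the 2-representation are linearly independent natural transformations, and this is tested by evaluating on free modules $H_\beta(C)$ and expanding in the Khovanov--Lauda basis of the resulting induction--restriction bimodule (Corollary \ref{cor:kholau2_ef_basis}). The crossings-and-dots part is indeed already covered by Khovanov--Lauda, exactly as you say. The gap is in the step you yourself flag as the ``bulk of the technical work'': you assert that diagrams with distinct cap/cup patterns contribute ``distinguished leading basis vectors'' with respect to some ordering adapted to a Mackey-type filtration, but you give no mechanism for why this is true, and under a one-sided adjunction this separation claim is essentially the theorem itself, not a bookkeeping exercise. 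In particular, for a target containing $F_jE_i$ you must distinguish the image of a diagram whose last operation is the crossing $\sigma_{ij}$ from the images of diagrams whose last operations are a cup $\eta_i$ followed by dots, inside the same module $1_{*j}H_{\alpha+\alpha_i}1_{*i}\otimes_\alpha M$; a filtration whose graded pieces merely contain these contributions does not by itself prevent cancellation among them, and your proposal never identifies the structure that does.

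What the paper uses instead, and what is absent from your plan, is the pair of ingredients Lemma \ref{lem:rho_onto} and Proposition \ref{prop:kk}: every morphism from a reduced object into $YF_jE_iZ$ factors through $Y\rho_{ij}Z$, and Kang--Kashiwara's isomorphism splits $F_jE_i$ as a functor into $q_i^{-C_{ij}}E_iF_j\oplus\delta_{ij}\bigoplus_{n\in\N}q_i^n\1$. Consequently the $\sigma$-family and the $\eta$-families of spanning diagrams land in \emph{distinct direct summands} of the target functor, so they are independent of one another for free, and independence within each family is a statement about a new target ($q_i^{-C_{ij}}AE_iF_jB$ or $q_i^{2n}AB$) having strictly fewer ordered pairs of a negative entry preceding a positive entry. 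Inducting on the number of such pairs, and dually precomposing with $\sigma_{ji}$ to handle non-reduced \emph{sources} (a case your proposal does not address at all), reduces everything to the reduced-to-reduced case, where the images of distinct dotted pairings are distinct Khovanov--Lauda basis elements and no triangularity or ordering is needed. So your proposal could likely be completed, but only by proving something equivalent to Proposition \ref{prop:kk} and Lemma \ref{lem:rho_onto} along the way; as written, the decisive separation of cup/cap patterns is assumed rather than established.
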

Showing nondegeneracy of a category defined in terms of generators and relations is in general extremely difficult. Khovanov and Lauda originally could only prove nondegeneracy of their categorification of the idempotented quantum group in type A \cite{kholau3}. Later work in \cite{kk},\cite{web1}, and \cite{web2} extended this result to other types, although this required categorifying many finite-dimensional representations and several lengthy computations. In our case, we can prove nondegeneracy for all symmetrizable types in a uniform way since the natural faithful representation of $B(C)$ is already categorified.

The quantum Serre relations for the $E_i$ are categorified by certain contractible chain complexes in $\mathcal{A}_2(C)$ \cite{kholau2}. By adjunction, a similar chain complex categorifies the quantum Serre relations for the $F_i$. It remains to produce the categorification of the quantum boson relations in these diagrammatic terms. One technical difficulty is in adjoining certain infinite direct sums so that we may express the scalar $1/(1-q_i^2)$. In a general additive category, it may not be true that an infinite coproduct also has the structure of a product. However, due to our explicit bases for each graded Hom space, we prove in Lemma \ref{lem:a2_biprod} that all of the coproducts that we need are biproducts.

We observe that Kang and Kashiwara's isomorphism of functors $\rho_{ij}:q_i^{-C_{ij}}E_iF_j\oplus \delta_{ij}\bigoplus_{n\in \N}q_i^n \1\xrightarrow{\sim}F_jE_i$ is given by a certain direct sum of $\mathcal{A}_2(C)$-morphisms. The inverse isomorphism is not easily expressed in terms of our generating morphisms, so to categorify the quantum boson relations, we must localize at a set of morphisms containing the $\rho_{ij}$. We prove in Lemma \ref{lem:rightfrac} that the appropriate set of morphisms admits a calculus of right fractions. This allows us to  give explicit bases for each Hom space in the localization. The calculus of right fractions has a natural topological interpretation in terms of restricting what types of diagrams can arise from morphisms in $\mathcal{A}_2(C)$.

After localizing, we obtain our desired category $\mathcal{B}(C)$ which has an explicit universal property. Denote by $\mathcal{B}(C)^i$ the idempotent completion. Our second main result is Theorem \ref{thm:gammaiso}, where we identify the Grothendieck group of $\mathcal{B}(C)^i$.
\begin{thm*}
    Let $\textbf{B}$ denote the algebra $\Z[q,q^{-1}][(1/(1-q_i^2))_{i\in I}]$, and denote by $_\textbf{B}B(C)$ the $\textbf{B}$-subalgebra of $B(C)$ generated by the divided powers $E_i^n/[n]_i!$ and $F_i^n/[n]_i!$. Then as $\textbf{B}$-algebras, we have an isomorphism $_\textbf{B}B(C)\xrightarrow{\sim} K_0(\mathcal{B}(C)^i)$.
\end{thm*}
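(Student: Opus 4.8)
The plan is to define a $\textbf{B}$-algebra homomorphism $\gamma\colon {}_\textbf{B}B(C)\to K_0(\mathcal{B}(C)^i)$ on generators, check that it respects all defining relations, and then establish bijectivity by separate surjectivity and injectivity arguments. On generators I would set $\gamma(E_i^{(n)})=[E_i^{(n)}]$ and $\gamma(F_i^{(n)})=[F_i^{(n)}]$, where $E_i^{(n)}$ and $F_i^{(n)}$ denote the divided-power objects, namely the indecomposable summands of $E_i^{\otimes n}$ and $F_i^{\otimes n}$ that split off in the idempotent completion $\mathcal{B}(C)^i$, exactly as in the Khovanov--Lauda categorification of $U_q^+(\mathfrak{g}(C))$. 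The grading shift accounts for the $\Z[q,q^{-1}]$-module structure, while the localization promotes the infinite biproduct $\bigoplus_{n} q_i^{2n}\1$ to an honest class representing $1/(1-q_i^2)$ (using the biproduct lemma), which is what forces the base ring up to $\textbf{B}$.

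Next I would verify that $\gamma$ is a well-defined homomorphism of $\textbf{B}$-algebras. The quantum Serre relations among the $E_i$ descend to $K_0$ from the contractible complex that categorifies them, and by the one-sided duality $E_i\dashv F_i$ the dual complex gives the Serre relations among the $F_i$. The quantum boson relation descends from the isomorphism $\rho_{ij}$: reading off the classes of its source and target in $K_0$, and interpreting the infinite biproduct through the localization, yields $[F_jE_i]=q_i^{-C_{ij}}[E_iF_j]+\delta_{ij}(1-q_i^2)^{-1}[\1]$, which is the decategorified boson relation. Surjectivity is then straightforward: the objects $E_i$ and $F_i$ monoidally generate $\mathcal{B}(C)$, so their classes generate $K_0(\mathcal{B}(C)^i)$ as a $\textbf{B}$-algebra, and these classes lie in the image of $\gamma$.

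The main obstacle is injectivity, which I would obtain by decategorifying the $2$-representation. It induces on Grothendieck groups a $\textbf{B}$-algebra homomorphism $\Phi\colon K_0(\mathcal{B}(C)^i)\to \mathrm{End}_\textbf{B}\bigl(K_0(\bigoplus_\alpha H_\alpha(C)\text{-grProj})\bigr)$, and the target module $K_0(\bigoplus_\alpha H_\alpha(C)\text{-grProj})$ is the extension of scalars to $\textbf{B}$ of Lusztig's integral form of $U_q^+(\mathfrak{g}(C))$. Because the $2$-representation realizes $E_i$ as induction and $F_i$ as restriction, the composite $\Phi\circ\gamma$ sends $E_i$ and $F_i$ to right multiplication by $E_i$ and to Lusztig's twisted derivation, which are precisely the operators by which $B(C)$ acts on $U_q^+(\mathfrak{g}(C))$; hence $\Phi\circ\gamma$ is the restriction to ${}_\textbf{B}B(C)$ of that algebraic action. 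Since the action of $B(C)$ on $U_q^+(\mathfrak{g}(C))$ is faithful, the composite $\Phi\circ\gamma$ is injective, and therefore so is $\gamma$. The delicate point to confirm is that faithfulness over the fraction field persists for the ${}_\textbf{B}B(C)$-action on the integral $\textbf{B}$-form; I expect this to follow from flatness of $\textbf{B}$ over $\Z[q,q^{-1}]$ together with the integrality built into the divided-power objects and the $\rho_{ij}$-decomposition. Combined with surjectivity, this establishes the desired isomorphism of $\textbf{B}$-algebras.
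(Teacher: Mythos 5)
Your definition of $\gamma$ on divided powers and your verification of the quantum Serre and quantum boson relations in $K_0$ match the paper's argument, but your surjectivity step has a genuine gap. The group $K_0(\mathcal{B}(C)^i)$ is the topological split Grothendieck group of the \emph{idempotent completion}: by Proposition \ref{thm:nv} it is a free $\textbf{B}$-module on the classes of indecomposable objects, and these indecomposables are images of idempotents that exist only after completing. Monoidal generation of $\mathcal{B}(C)$ by the $E_i$ and $F_i$ only puts the classes $[E_{\textbf{i}}F_{\textbf{j}}]$ (and their $\textbf{B}$-combinations) in the image of $\gamma$; it does not put the class of an individual indecomposable summand $M$ of $E_{\textbf{i}}F_{\textbf{j}}$ there. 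Krull--Schmidt gives only that an $\N[q,q^{-1}]$-combination of indecomposable classes equals $[E_{\textbf{i}}F_{\textbf{j}}]$, and such identities cannot be formally inverted since the quantum factorials $[n]_i!$ are in general not invertible in $\textbf{B}$. Already for $\mathcal{U}_q^+(C)$ the implication you assert fails: the $\mathbf{A}$-algebra generated by the classes $[E_i]$ alone is a proper subalgebra of $K_0(\He^{fg}(C))\simeq{}_\mathbf{A}U_q^+(\mathfrak{g}(C))$, and even after adjoining divided-power classes, the statement that every indecomposable class lies in their span is precisely Khovanov--Lauda's theorem, not a formality. The paper closes this gap by computing, via Corollary \ref{cor:a2_coreflect} and Lemma \ref{lem:a2_reduced_hom}, that the graded endomorphism algebra of the finite sum $X_{\alpha_{\textbf{i}},\alpha_{\textbf{j}}}$ of all reduced objects of a fixed pair of weights is $H_{\alpha_{\textbf{i}}}(Q)\otimes_{\kf}H_{\alpha_{\textbf{j}}}(Q)$, and then invoking Proposition \ref{prop:klr_tensor_K0} (which rests on Theorem \ref{decat}) to identify the classes of its summands with elements of ${}_\mathbf{A}U_q^+(\mathfrak{g})_{\alpha_{\textbf{i}}}\otimes_{\mathbf{A}}{}_\mathbf{A}U_q^+(\mathfrak{g})_{\alpha_{\textbf{j}}}$, which is spanned by products of divided powers. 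Some input of this kind is unavoidable; it is the real content of surjectivity.

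Your injectivity argument, by contrast, is a legitimately different route from the paper's. The paper shows $\gamma$ intertwines the Hom-pairing $(*,*)_{K_0}$ with the form $(*,*)_2$ coming from the bosonic extension and quotes nondegeneracy of $(*,*)_2$; note that this argument consumes surjectivity, whereas yours does not. Your route trades nondegeneracy of a form for faithfulness of the Kashiwara action (both ultimately quotable from \cite{newkashiboson}), and it can be made to work, but two points need to be made explicit: (i) you cannot decategorify the $2$-representation on all of $\bigoplus_\alpha H_\alpha(C)-\text{grMod}$; you must restrict to a target with a well-behaved topological split $K_0$, namely the $\textbf{B}$-closure of $\He^{fg}(C)$, whose $K_0$ is identified with the $\textbf{B}$-extension of ${}_\mathbf{A}U_q^+(\mathfrak{g}(C))$ again by Theorem \ref{decat}; and (ii) the identification of $[F_i]$ with the twisted derivation follows from the adjunction $E_i\dashv F_i$ together with the identification of the Hom form of Definition \ref{form} with Lusztig's form. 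Finally, your worry about flatness is a red herring: the $\textbf{B}$-form spans $U_q^+(\mathfrak{g}(C))$ over $\Q(q)$, so an element of ${}_\textbf{B}B(C)$ acting by zero on it acts by zero on all of $U_q^+(\mathfrak{g}(C))$, and faithfulness of the $\Q(q)$-action concludes directly.
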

Part of this theorem involves identifying a $\Q$-bilinear Hom-like form on $\Q(q)\otimes_{\textbf{B}}K_0(\mathcal{B}(C)^i)$ with a related bilinear form on $B(C)$ that was introduced in \cite{newkashiboson}. Since our category is defined with morphisms given by linear combinations of diagrams, it is natural to ask whether this bilinear form has a purely graphical description similar to that of the bilinear form on the full quantum group given in \cite{kholau3}. This does not follow directly from our categorification since certain necessary diagrams are not realizable by morphisms in $\mathcal{A}_2(C)$. However, we prove in Theorem \ref{thm:qthom_diagram} that the bilinear form on $B(C)$ can be interpreted as computing a graded dimension of a vector space spanned by a larger class of diagrams between monomials in the $E_i$ and $F_i$. In fact, we prove a more general result for the bosonic extension. This construction allows us to view the quantum boson algebra as a quotient of the free algebra on the same generators by the kernel of the bilinear form.

Theorem \ref{thm:gammaiso} also gives us a basis of indecomposable object classes for $B(C)$ (up to shifts). It is automatically true that this basis has structure constants for multiplication within $\N[q,q^{-1},(1/(1-q_i^2))_{i\in I}]$ and has a similarly convenient action on the indecomposable basis of $U_q^+(C)$. It follows from the proof of Theorem \ref{thm:gammaiso} that this indecomposable basis of $B(C)$ can be described explicitly. There is a vector space isomorphism $U_q^+(C) \otimes_{\Q(q)} U_q^+(C)\simeq B(C)$ with $E_i\otimes 1\rightarrow E_i$ and $1\otimes E_i\rightarrow F_i$.  If $S$ is the indecomposable basis for $U_q^+(C)$, then the image of $S\otimes S$ under this isomorphism is the indecomposable basis of $B(C)$ up to shifts. It is natural to then guess an interesting basis for the bosonic extension, which as a vector space is isomorphic to the restricted infinite tensor $\bigotimes'_{n\in \Z} U_q^+(C)$. We extend the above results to prove in Theorem \ref{thm:bznicebasis} that the restricted infinite tensor $\bigotimes'_{n\in \Z} S$ has several interesting properties resembling that of an indecomposable basis. These last few results suggest the existence of an additive graded monoidal categorification of the bosonic extension. 
\subsection{Organization}
This paper is organized as follows. In Section \ref{sec:back}, we fix notation for the various algebras and categories that we will use and review important structures on them. In Section \ref{sec:qboson}, we construct the categorification of the quantum boson algebra. In Subsections \ref{subsec:intermediate}, \ref{subsec:topology}, and \ref{subsec:2rep}, for any symmetrizable Cartan matrix $C$ we we construct a category $\mathcal{A}_2(C)$ by generators and relations without insisting on any direct sums or quantum boson relations. We construct a 2-representation of $\mathcal{A}_2(C)$ on $\bigoplus_\alpha H_\alpha(C)-\text{grMod}$. In Subsection \ref{subsec:faithful}, we prove that this is a faithful 2-representation and thereby produce an explicit basis for each Hom space in $\mathcal{A}_2(C)$. In Subsection \ref{subsec:localize}, we localize the appropriate extension of $\mathcal{A}_2(C)$ at a certain set of morphisms in order to categorify the quantum boson relations. In Subsection \ref{subsec:K0}, we show that the Grothendieck group of our quantum boson category is isomorphic to the appropriate integral form of $B(C)$. In Subsection \ref{subsec:bases}, we characterize the indecomposable basis of $B(C)$ and its action on $U_q^+(C)$. In Section \ref{sec:graphical}, we produce a graphical description of a certain bilinear form on the bosonic extension for $C$. We also produce a basis of the bosonic extension based on our results in Subsection \ref{subsec:bases}.
\subsubsection*{Acknowledgments}
This research was conducted as part of the author's Ph.D. thesis in mathematics at UCLA. This work was supported by NSF grant DMS-2302147. The project is also supported by Simons Foundation award number 376202. We thank Rapha\"el Rouquier for his guidance and feedback. 
\section{Background}\label{sec:back}
\subsection{General Notation}
We denote by $\N\coloneqq \Z_{\geq 0}$ the set of nonnegative integers. By the phrase ``$\kf$-linear category" for $\kf$ a field, we mean a category enriched in $\kf$-vector spaces. We do not assume existence of a zero object or finite direct sums.

We denote by $S_n$ the symmetric group on $n$ symbols, and for $\omega\in S_n$, we denote by $l(\omega)$ the length of any reduced presentation of this element.

When $\mathcal{C}$ is a small category, we denote by $\text{Ob}(\mathcal{C})$ the set of objects of $\mathcal{C}$. We may write $M\in \mathcal{C}$ to mean that $M\in\text{Ob}(\mathcal{C})$. In a category, for morphisms $f:X\rightarrow Y$ and $g:Y\rightarrow Z$, we denote the composition as $g\circ f:X\rightarrow Z$. In a monoidal category with monoidal product $\otimes$, for objects $A$ and $B$, we abbreviate the monoidal product $A\otimes B$ as $AB$. We often denote the identity morphism of an object $A$ as $A$ when no ambiguity is possible. For $f$ a morphism $A\rightarrow X$ and $g$ a morphism $B\rightarrow Y$, we abbreviate $fg:=f\otimes g:AB\rightarrow XY$. We often refer to the monoidal product in a monoidal category as the \emph{tensor}.

If $V$ is a $\Z$-graded $\kf$-vector space, we denote by $V_i$ the graded component in degree $i$. If moreover $V$ has finite-dimensional graded components, then denote by $\text{grdim}(V)$ as the formal infinite sum $\sum_{i\in \Z} \text{dim}_\kf (V_i)q^i$. If $V_i=0$ for all $i<<0$, then the graded dimension takes values in $\N[[q]][q^{-1}]$. If $T$ is the shift functor on graded vector spaces satisfying $T(V)_i=V_{i-1}$, then $\text{grdim}(T(V))=q*\text{grdim}(V)$. 
\subsection{Quantum algebras}\label{subsec:qas}
We fix notation for the various quantum algebras that we use.

\begin{defn}
    A \emph{symmetrizable generalized Cartan matrix} $(C_{ij})_{i,j\in I}$ is a finite integer-valued square matrix satisfying the following properties.
    \begin{enumerate}
        \item For each $i\in I$, we have $C_{ii}=2$.
        \item For each $i,j\in I$ with $i\neq j$, we have $C_{ij} \leq 0$.
        \item $C_{ij}=0$ iff $C_{ji}=0$.
        \item There exists a diagonal matrix $(D_{ij})_{i,j\in I}=\text{diag}(D_i)$ whose diagonal entries are positive relatively prime integers such that $DC=S$, where $S$ is a symmetric matrix.
    \end{enumerate}
\end{defn}

\begin{defn}
    Let $(C_{ij})_{i,j\in I}$ be a symmetrizable generalized Cartan matrix with associated diagonal matrix $D$. Let $q$ be an indeterminant, and denote by $q_i\coloneqq q^{D_{i}}$. For each $n\in \N$ and $i\in I$, denote by $[n]_i\coloneqq (q_i^n-q_i^{-n})/(q_i-q_i^{-1})$ and $[n]_i!\coloneqq [n]_i * [n-1]_i * \dots [1]_i$. For each $n\in \N$, each $k$ with $0\leq k \leq n$, and each $i\in I$, denote by $\binom{n}{k}_i \coloneqq [n]_i!/([k]_i! \dot [n-k]_i!)$.
\end{defn}

\begin{defn}
    Let $(C_{ij})_{i,j\in I}$ be a symmetrizable generalized Cartan matrix with indexing set $I$ and let $\mathfrak{g}=\mathfrak{g}(C)$ be the associated complex Kac-Moody Lie algebra. We define the \emph{positive part of the quantum enveloping algebra} of $\mathfrak{g}$ to be the associative $\Q(q)$-algebra $U_q^+(\mathfrak{g})$ generated by elements $\{E_i\}_{i\in I}$ satisfying the \emph{quantum Serre relations}.
    \[\sum_{k=0}^{1-C_{ij}}\binom{1-C_{ij}}{k}_i (-1)^kE_i^kE_jE_i^{1-C_{ij}-k} =0 \text{ for $i\neq j$}.\]

    For $a\in \N$ and $i\in I$, we also define the \emph{divided powers} $E_i^{(a)}\coloneqq E_i^a/[a]_i!$.
\end{defn}

Denote by $\N[I]$ the semigroup of all nonnegative formal linear combinations of $I$ elements. For $i\in I$, we denote by 
$\alpha_i$ its corresponding generator in $\N[I]$. We similarly define $\Z[I]$. The algebra $U_q^+(\mathfrak{g})$ has a natural $\N[I]$-grading, with $gr(E_i)=\alpha_i$. For $\alpha\in \N[I]$, denote by $U_q^+(\mathfrak{g})_\alpha$ the corresponding homogeneous subspace. For $\N[I]$-elements $\alpha=\sum_i a_i\alpha_i$ and $\alpha'=\sum_i b_i\alpha_i$ where each $a_i,b_i\in \N$, we say $\alpha \leq \alpha'$ if each $a_i\leq b_i$, and say $\alpha < \alpha'$ if $\alpha\leq \alpha'$ and $\alpha \neq \alpha'$. 

\begin{defn}
    On $U_q^+(\mathfrak{g})$, there is a unique symmetric bilinear form $(*,*)_L$ defined by $(1,1)_L=1$, $(1,E_i)_L=0$, $(E_i,E_j)_L=\frac{\delta_{ij}}{1-q_i^2}$, and extended to the rest of $U_q^+(\mathfrak{g})$ via the Hopf pairing rule $(ab,c)_L=(a\otimes b,\Delta(c))_L$, for $\Delta$ the comultiplication of the standard twisted bialgebra structure. This form is nondegenerate.
\end{defn}
\begin{defn}
    There is a $q$-antilinear automorphism of $U_q^+(\mathfrak{g})$ denoted $\bar{*}$ given by $\bar{q}=q^{-1}$ and $\bar{E_i}=E_i$. The $q$-semilinear form $(\bar{*},*)_L$ is nondegenerate since $(*,*)_L$ is.
\end{defn}
\begin{defn}
    The \emph{quantum boson algebra} associated to a symmetrizable generalized Cartan matrix $(C_{ij})_{i,j\in I}$ is the $\Q(q)$-algebra $B(C)$ with generators $\{E_i,F_i\}_{i\in I}$ satisfying the following relations. Both the $\{E_i\}_{i\in I}$ and the $\{F_i\}_{i\in I}$ satisfy quantum Serre relations. Moreover, the \emph{quantum boson relations} hold:
    \[F_iE_j-q_i^{-C_{ij}}E_jF_i=\frac{\delta_{ij}}{1-q_i^2}.\] We similarly define divided powers $E_i^{(a)}\coloneqq E_i^a/[n]_i!$ and $F_i^{(a)}\coloneqq F_i^a/[n]_i!$.
    We also denote this algebra as $B(\mathfrak{g})$ for $\mathfrak{g}=\mathfrak{g}(C)$.
\end{defn}
Kashiwara showed in \cite{kashi} that $B(\mathfrak{g})$ acts on $U_q^+(\mathfrak{g})$ as follows. The element $E_i$ acts by right multiplication by $E_i$, and $F_i$ acts as the corresponding adjoint operator $E_i^*$ under $(*,*)_L$, albeit with different choices of normalization. It is known that this action is faithful, see for example \cite{newkashiboson}.

We consider a few different lattice subrings.

\begin{defn}
    Denote by $\mathbf{A}$ the ring $\Z[q,q^{-1}]$ and by $\mathbf{B}$ the ring $\Z[q,q^{-1}][(1/(1-q_i^2))_{i\in I}]$. Note that both $\mathbf{A}$ and $\mathbf{B}$ are subrings of $\Q(q)$. For $\mathfrak{g}$ a symmetrizable Kac-Moody Lie algebra, denote by $_\mathbf{A}U_q^+(\mathfrak{g})$ the $\mathbf{A}$-subalgebra of $U_q^+(\mathfrak{g})$ generated by the divided powers $E_i^{(a)}$. Likewise, denote by $_\mathbf{B}B(\mathfrak{g})$ the $\mathbf{B}$-subalgebra of $B(\mathfrak{g})$ generated by the $E_i^{(a)}$ and the $F_i^{(b)}$.
\end{defn}

The quantum boson algebra is the main focus of this article, although some of its properties are best studied via a larger algebra called the bosonic extension. For $C$ of type $ADE$, Hernandez and Leclerc proved that a renormalization of this algebra is isomorphic to a deformed Grothendieck ring for a certain subcategory of finite dimensional representations of the quantum loop algebra $U_q(L\mathfrak{g})$ \cite{dehallalg}.
\begin{defn}
    The \emph{bosonic extension} associated to a symmetrizable generalized Cartan matrix $(C_{ij})_{i,j\in I}$ is the $\Q(q)$-algebra $B_\Z(C)$ with generators $E_{i,n}$ for $i\in I$ and $n\in \Z$ satisfying the following relations.
    \[\sum_{k=0}^{1-C_{ij}}(-1)^k\binom{1-C_{ij}}{k}_iE_{i,n}E_{j,n}E_{i,n}^{1-C_{ij}-k}=0 \text{ for $i\neq j,$ any $n\in \Z$},\]
    \[E_{i,n+k}E_{j,n}=q_i^{(-1)^kC_{ij}}E_{j,n}E_{i,n+k}+\delta_{(i,j),(k,1)}\frac{1}{1-q_i^2} \text{ for $i,j\in I$, $n\in \Z$, $k\in \Z^+$.}\]
\end{defn}
The second set of relations above will also be referred to as the \emph{quantum boson relations}. This algebra is studied in depth in \cite{newkashiboson}. We need a bilinear form and some symmetries of $B_\Z(C)$. First, some setup.

There is a $\Z[I]$-grading on $B_\Z(C)$ defined by $\text{gr}(E_{i,n})=(-1)^n\alpha_i$. For $\alpha\in \Z[I]$, we denote by $B_\Z(C)_\alpha$ the corresponding weight space. The algebra $B_\Z(C)$ comes with two important symmetries. There is a $q$-antilinear antiautomorphism $\bar{*}$ that fixes generators, i.e. $\bar{q}=q^{-1}$ and $\bar{E}_{i,n}=E_{i,n}$. Note that this is an antiinvolution unlike the bar involution on $U_q^+(C)$. There is another $q$-antilinear antiautomorphism $\mathcal{D}$ defined by $\mathcal{D}(q)=q^{-1}$ and $\mathcal{D}(E_{i,n})=E_{i,n+1}$. The composition $\bar{\mathcal{D}}\coloneqq \bar{*}\circ \mathcal{D}=\mathcal{D}\circ \bar{*}$ is the $q$-linear automorphism satisfying $\bar{\mathcal{D}}(E_{i,n})=E_{i,n+1}$.

Denote by $V\coloneqq \bigotimes'_{n\in \Z} U_q^+(\mathfrak{g}(C))$ the restricted infinite tensor product over $\Z$-many copies of $U_q^+(\mathfrak{g}(C))$, i.e. the vector space generated by the infinite simple tensors $\dots v_n \otimes v_{n+1}\otimes v_{n+2}\dots$ with each $v_{n}\in U_q^+(\mathfrak{g}(C))$ and all but finitely many of the $v_n=1$. It is proven in Corollary 4.4 of \cite{newkashiboson} that the linear map $G:V \rightarrow B_\Z (C)$ given by $G(\otimes_{n\in \Z} v_n)=\prod_{n\in \Z} \bar{\mathcal{D}}^n(v_n)=\dots \bar{\mathcal{D}}^n(v_n)\bar{\mathcal{D}}^{n+1}(v_{n+1})\dots$ is an isomorphism of vector spaces. 

Denote by $B_{\Z,n}(C)$ the subalgebra generated by the $E_{i,n}$, and for $\alpha\in \Z[I]$, denote by $B_{\Z,n}(C)_\alpha\coloneqq B_{\Z,n}(C)\cap B_\Z(C)_\alpha$. We similarly define the $B_{\Z,\leq n}(C),B_{\Z,\geq n}(C),B_{\Z,<n}(C), B_{\Z,>n}(C)$ and their weight spaces. The map $G_n:U_q^+(\mathfrak{g}(C))\rightarrow B_{\Z,n}(C)$ given by $E_i\rightarrow E_{i,n}$ is an isomorphism of algebras. The linear isomorphism with $V$ implies that $B_\Z(C)$ decomposes as 
\begin{equation}\label{eqn:bzdecomp}
B_\Z(C)\simeq \bigoplus_{(\alpha_n)_{n\in \Z}}{\bigotimes}'_{n\in \Z} B_{\Z,n}(C)_{\alpha_n},
\end{equation} where we only consider $(\alpha_n)_{n\in \Z}$ with only finitely many of the $\alpha_n$ nonzero. Denote by $P:B_\Z(C)\rightarrow B_\Z(C)$ the linear projection onto the subspace $\bigotimes'_{n\in \Z} B_{\Z,n}(C)_0\simeq \Q(q)$ via the decomposition in Equation \ref{eqn:bzdecomp}.

The following properties of $(*,*)_\Z$ are the appropriate versions of Lemma 5.3 and Theorem 5.4 of \cite{newkashiboson}. Some small changes were required due to our different normalization and since our form is $q$-semilinear and not symmetric. These can be proven with the techniques of \cite{newkashiboson}.

\begin{prop}\label{prop:qthomform}
There is a $\Q$-bilinear form $(*,*)_\Z$ on $B_\Z(C)$ defined by $(X,Y)_\Z \coloneqq P(\mathcal{D}(X)Y)$. It satisfies the following properties for each $X,Y\in B_\Z(C)$. 
\begin{enumerate}
    \item $(*,*)_\Z$ is $q$-semilinear, i.e. $(X,qY)_\Z=(q^{-1}X,Y)_\Z=q(X,Y)_\Z$.
    \item The $\Q(q)$-bilinear form $(\bar{*},*)_\Z$ is symmetric, i.e. $(\bar{X},Y)_\Z=(\bar{Y},X)_\Z$.
    \item If $X\in B_\Z(C)_{\alpha_X}$ and $Y\in B_\Z(C)_{\alpha_Y}$ with $\alpha_X\neq \alpha_Y$ then $(X,Y)_\Z=0$.
    \item $(*,*)_\Z$ is nondegenerate.
    \item $(X,Y)_\Z=(\bar{\mathcal{D}}(X),\bar{\mathcal{D}}(Y))_\Z$
    \item $(E_{i,m}X,Y)_\Z=(X,E_{i,m+1}Y)_\Z$ and $(XE_{i,m},Y)_\Z=(X,YE_{i,m-1})_\Z$.
    \item Fix $m\in\Z,$ homogeneous $X,W\in B_{\Z,>m}(C)$ and $Y,Z\in B_{\Z,\leq m}(C)$ with $X\in B_\Z(C)_{\alpha_X}$ and $Z\in B_\Z(C)_{\alpha_Z}$. Then $(XY,ZW)_\Z=q^{-(\alpha_X,\alpha_Z)}(X,W)_\Z(Y,Z)_\Z$. 
\end{enumerate}
\end{prop}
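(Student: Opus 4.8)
The plan is to adapt the arguments of \cite{newkashiboson} behind their Lemma 5.3 and Theorem 5.4, accounting for the two differences in our setup: our contraction scalar is $1/(1-q_i^2)$, and our form is $q$-semilinear rather than $\Q(q)$-bilinear and symmetric, so conjugations must be inserted carefully. I would first dispatch the formal properties. Property (1) is immediate from the $q$-antilinearity of $\mathcal{D}$ and the $\Q(q)$-linearity of the projection $P$ onto the weight-zero line ${\bigotimes}'_n B_{\Z,n}(C)_0\simeq \Q(q)$. For (3), note that $\mathrm{gr}(E_{i,n})=(-1)^n\alpha_i$ and $\mathcal{D}(E_{i,n})=E_{i,n+1}$ together show that $\mathcal{D}$ reverses $\Z[I]$-weight, so $\mathcal{D}(X)Y$ has weight $\alpha_Y-\alpha_X$, which $P$ kills unless $\alpha_X=\alpha_Y$.

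Next I would record the equivariance facts that power several remaining properties. Since $\bar{\mathcal{D}}$ shifts the index $n$ by one, it preserves the decomposition (\ref{eqn:bzdecomp}) by shifting the weight sequence $(\alpha_n)_n$; as the all-zero sequence is shift-invariant and $\bar{\mathcal{D}}$ is $q$-linear, we get $P\circ\bar{\mathcal{D}}=\bar{\mathcal{D}}\circ P$ and $\bar{\mathcal{D}}(P(W))=P(W)$. Also $\mathcal{D}$ and $\bar{\mathcal{D}}$ commute (since $\bar{*}$ commutes with $\mathcal{D}$ and hence with $\mathcal{D}^2$) and $\bar{\mathcal{D}}$ is an algebra automorphism. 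Property (5) then reads $(\bar{\mathcal{D}}X,\bar{\mathcal{D}}Y)_\Z=P(\bar{\mathcal{D}}(\mathcal{D}(X)Y))=P(\mathcal{D}(X)Y)=(X,Y)_\Z$. The first adjunction in (6) is equally direct: as $\mathcal{D}$ is an antiautomorphism, $\mathcal{D}(E_{i,m}X)=\mathcal{D}(X)E_{i,m+1}$, whence $(E_{i,m}X,Y)_\Z=P(\mathcal{D}(X)E_{i,m+1}Y)=(X,E_{i,m+1}Y)_\Z$.

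I expect the factorization (7) to be the main obstacle. I would prove it by induction, peeling generators $E_{i,n}$ ($n\le m$) off $Z$ and commuting them rightward past $\mathcal{D}(X)\in B_{\Z,>m+1}(C)$. Because these indices differ by at least two, the relevant quantum boson relations contribute only the twist scalars $q_i^{(-1)^kC_{ij}}$ and never the $\delta_{(i,j),(k,1)}$ contraction; accumulating them and using $q_i^{C_{ij}}=q^{(\alpha_i,\alpha_j)}$ yields exactly the prefactor $q^{-(\alpha_X,\alpha_Z)}$ and separates the expression into contributions below and above the cut, which $P$ reads off as $(X,W)_\Z(Y,Z)_\Z$. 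The base case, with both arguments in a single $B_{\Z,n}(C)$, is where the contraction does appear: then $\mathcal{D}(X)\in B_{\Z,n+1}(C)$ and $Y\in B_{\Z,n}(C)$ have index gap exactly one, so $P(\mathcal{D}(X)Y)$ is computed by precisely the straightening rule $F_iE_j-q_i^{-C_{ij}}E_jF_i=\delta_{ij}/(1-q_i^2)$ defining Lusztig's form. This identifies $(\bar{X},Y)_\Z$ on $B_{\Z,n}(C)$ with $(G_n^{-1}X,G_n^{-1}Y)_L$, the single bar recording the one conjugation from $\mathcal{D}$. The step I would write out in full is the bookkeeping of twist exponents and index ranges in this induction.

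With (7) and the base-case identification in hand, the rest reduces to properties of $(*,*)_L$. For symmetry (2), on a single $B_{\Z,n}(C)$ the claim becomes the symmetry of $(*,*)_L$, and the general case follows by expanding both sides with (7), checking that $\bar{*}$ (an antiautomorphism fixing each index $n$) meets the twist symmetrically; the second adjunction in (6) then follows from the first and (2) by the analogous manipulation of $P$. For nondegeneracy (4), I would invoke the vector-space isomorphism $G\colon {\bigotimes}'_n U_q^+(\mathfrak{g}(C))\xrightarrow{\sim} B_\Z(C)$ of Corollary 4.4 of \cite{newkashiboson}: iterating (7) across every cut shows that, transported along $G$, the form $(*,*)_\Z$ is a tensor product of copies of the nondegenerate $(*,*)_L$ scaled by the invertible twists $q^{-(\alpha_X,\alpha_Z)}$, hence is nondegenerate.
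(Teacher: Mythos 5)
Your overall strategy coincides with the paper's, which gives no written proof of this proposition and instead defers to the techniques behind Lemma 5.3 and Theorem 5.4 of \cite{newkashiboson}; your treatment of (1), (3), (5), (6), and the commutation argument for (7) is a faithful adaptation of exactly those techniques. In particular the twist bookkeeping is right: $q_i^{(-1)^{n'-n}C_{ij}}=q^{(\mathrm{gr}(E_{i,n'}),\mathrm{gr}(E_{j,n}))}$, so commuting $\mathcal{D}(X)$ past $Z$ produces precisely $q^{-(\alpha_X,\alpha_Z)}$, and the remaining factorization $P(AB)=P(A)P(B)$ for $A\in B_{\Z,\leq m+1}(C)$, $B\in B_{\Z,\geq m+1}(C)$ follows from Equation (\ref{eqn:bzdecomp}) together with the fact that weights inside the overlapping block $B_{\Z,m+1}(C)$ lie in a pointed cone. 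However, one identity you rely on is false as stated. Since $\bar{*}$ on $B_\Z(C)$ is an \emph{anti}automorphism while the bar involution on $U_q^+$ is an automorphism, the single-block identification cannot read $(\bar{X},Y)_\Z=(G_n^{-1}X,G_n^{-1}Y)_L$; the correct statement (the paper's subsequent proposition) is $(X,Y)_\Z=(\overline{G_n^{-1}X},G_n^{-1}Y)_L$, equivalently $(\bar{X},Y)_\Z=(\text{rev}(G_n^{-1}X),G_n^{-1}Y)_L$. Concretely, for $i\neq j$ with $C_{ij}\neq 0$ one has $(\overline{G_n(E_iE_j)},G_n(E_iE_j))_\Z=(E_jE_i,E_iE_j)_L=q^{-(\alpha_i,\alpha_j)}(E_i,E_i)_L(E_j,E_j)_L$, which differs from $(E_iE_j,E_iE_j)_L$. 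Consequently the single-block case of (2) is not bare symmetry of $(*,*)_L$: it is symmetry \emph{together with} invariance of $(*,*)_L$ under the antiautomorphism $\text{rev}$ fixing the $E_i$ (Lusztig's $\sigma$). That property is standard, so the gap is repairable, but it must be invoked.

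Your argument for (4) also fails as phrased. Transported along $G$ (increasing-index ordered products) in \emph{both} arguments, $(*,*)_\Z$ is not a tensor product of block forms, because (7) applies only when the first argument is written with the high-index part on the left. An explicit counterexample: $(E_{i,0}E_{i,1},1)_\Z=P(E_{i,2}E_{i,1})=1/(1-q_i^2)\neq 0$, even though $E_{i,0}E_{i,1}$ and $1$ lie in different components of the $G$-decomposition, so the Gram matrix with respect to $(G,G)$ is not block diagonal. The repair is to decompose the first argument by \emph{decreasing}-ordered products (equivalently, pre-compose the first slot with $\bar{*}$ or $\mathcal{D}$, which exchange the two orderings): with respect to the (decreasing, increasing) pair of decompositions, iterating (7) and applying (3) blockwise does exhibit the pairing as block diagonal, with diagonal blocks equal to invertible powers of $q$ times tensor products of nondegenerate Lusztig-form blocks, and (4) follows. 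With these two corrections your outline carries out what the paper leaves to the reader.
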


The following is similar to Proposition 5.6 (iii) of \cite{newkashiboson}. It is a consequence of the graphical description of $(*,*)_\Z$ given in Theorem \ref{thm:qthom_diagram}, but we also prove it directly for convenience.
\begin{prop}
    \item For each $n\in \Z$ and any $A,B\in U_q^+(\mathfrak{g}(C))$, we have $(\bar{A},B)_L=(G_n(A),G_n(B))_\Z$.
\end{prop}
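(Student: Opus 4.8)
\emph{Proof proposal.} The plan is to transport the form $(*,*)_\Z$ to $U_q^+(\mathfrak{g}(C))$ along the algebra isomorphism $G_n$ and to match the resulting pairing with $(\bar *, *)_L$ by an adjunction recursion. Define $\langle A, B\rangle \coloneqq (G_n(A), G_n(B))_\Z$. Since $\bar{\mathcal{D}}\circ G_n = G_{n+1}$ (both are $q$-linear algebra maps agreeing on the generators $E_i$) and $(X,Y)_\Z = (\bar{\mathcal{D}}(X),\bar{\mathcal{D}}(Y))_\Z$ by Proposition \ref{prop:qthomform}(5), the pairing $\langle A, B\rangle$ is independent of $n$, so it suffices to argue at a single fixed $n$. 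Because $(\bar *, *)_L$ and $(*,*)_\Z$ are both $q$-semilinear in the first variable and $\Q(q)$-linear in the second (Proposition \ref{prop:qthomform}(1)), and $G_n$ is $\Q(q)$-linear, it is enough to prove $\langle A, B\rangle = (\bar A, B)_L$ when $A = E_{i_1}\cdots E_{i_k}$ and $B = E_{j_1}\cdots E_{j_l}$ are monomials. I would then induct on the length $k$ of $A$, showing that both pairings satisfy the same ``peeling'' recursion and agree in the base case $A = 1$.

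For the base case $A = 1$ we have $\langle 1, B\rangle = (1, G_n(B))_\Z = P(\mathcal{D}(1)G_n(B)) = P(G_n(B))$, which is $0$ when $B$ has positive weight (as $G_n(B)$ then lies in a nonzero weight summand of \eqref{eqn:bzdecomp}) and is $1$ when $B = 1$; this matches $(\bar 1, B)_L = (1, B)_L$. For the inductive step, write $A = E_i A'$, so that $\bar A = E_i \bar{A'}$ since the bar involution fixes the $E_i$. On the bosonic side, Proposition \ref{prop:qthomform}(6) gives
\[\langle E_i A', B\rangle = (E_{i,n}\,G_n(A'), G_n(B))_\Z = (G_n(A'), E_{i,n+1}\,G_n(B))_\Z.\]
Writing $G_n(B) = E_{j_1,n}\cdots E_{j_l,n}$ and straightening $E_{i,n+1}$ to the right via the $k=1$ quantum boson relation $E_{i,n+1}E_{j,n} = q_i^{-C_{ij}}E_{j,n}E_{i,n+1} + \delta_{ij}\tfrac{1}{1-q_i^2}$ produces, for each letter with $j_r = i$, a correction term $\tfrac{1}{1-q_i^2}\big(\prod_{s<r}q_i^{-C_{ij_s}}\big)G_n(E_{j_1}\cdots\widehat{E_{j_r}}\cdots E_{j_l})$, together with a single fully straightened term proportional to $(G_n(A'), G_n(B)\,E_{i,n+1})_\Z$. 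This last term vanishes: applying Proposition \ref{prop:qthomform}(7) with $m = n$, and noting $G_n(A'), G_n(B) \in B_{\Z,\leq n}(C)$ while $E_{i,n+1}\in B_{\Z,>n}(C)$, it factors as a multiple of $(1, E_{i,n+1})_\Z = 0$. Hence $\langle E_i A', B\rangle = \tfrac{1}{1-q_i^2}\langle A', D_i(B)\rangle$, where $D_i(B) \coloneqq \sum_{r:\,j_r=i}\big(\prod_{s<r}q_i^{-C_{ij_s}}\big)E_{j_1}\cdots\widehat{E_{j_r}}\cdots E_{j_l}$.

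It remains to identify $D_i$ with the skew-derivation governing $(*,*)_L$. The operator $D_i$ is exactly the twisted derivation adjoint to left multiplication by $E_i$, extracted from the weight-$\alpha_i$ left component of the comultiplication of the standard twisted bialgebra structure, so the Hopf-pairing rule yields $(E_i x, y)_L = \tfrac{1}{1-q_i^2}(x, D_i(y))_L$, the factor $\tfrac{1}{1-q_i^2} = (E_i,E_i)_L$ arising from pairing $E_i$ against that left factor. Combined with $\bar A = E_i\bar{A'}$ and $\Q(q)$-linearity in the second slot, this gives $(\bar A, B)_L = \tfrac{1}{1-q_i^2}(\bar{A'}, D_i(B))_L$, the same recursion satisfied by $\langle \,,\rangle$; applying the inductive hypothesis to the shorter monomial $A'$ and to each monomial of $D_i(B)$ closes the induction. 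The hard part will be the bookkeeping of the $q$-power twists: one must verify that the factors $q_i^{-C_{ij_s}}$ produced by straightening through the quantum boson relation coincide with the twist in the standard skew-derivation $D_i$, which is precisely the normalization for which $F_i$ acts as Kashiwara's operator $E_i^*$, and one must confirm that the homogeneity hypotheses required to invoke Proposition \ref{prop:qthomform}(7) and to kill the fully straightened term hold at every stage.
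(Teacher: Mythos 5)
Your proposal is correct and takes essentially the same approach as the paper: both arguments reduce to monomials and show that the two pairings obey the same quantum-boson ``peeling'' recursion, which together with the value at $A=1$ determines them uniquely. The paper packages your identity $\langle E_iA',B\rangle=\tfrac{1}{1-q_i^2}\langle A',D_i(B)\rangle$ as the statement that the operator $F_i'(X)=G_n^{-1}(P_n(E_{i,n+1}G_n(X)))$ satisfies the quantum boson relations and hence coincides with the adjoint of $E_i$-multiplication under $(\bar{*},*)_L$ (your $\tfrac{1}{1-q_i^2}D_i$), using the projection $P_n$ and its iteration property where you invoke Proposition \ref{prop:qthomform}(6)--(7), and disposing of the $q$-power bookkeeping you flag at the end by the same uniqueness-of-the-recursion observation rather than an explicit comultiplication computation.
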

\begin{proof}
     Fix $n\in \Z$. Let $P_n$ be the projection $B_\Z(C)\twoheadrightarrow B_{\Z,n}(C)$ via the decomposition in Equation \ref{eqn:bzdecomp}. Let $F_i'$ be the $Q(q)$-linear operator on $U_q^+(C)\simeq B_{\Z,n}(C)$ defined by $F_i'(X)=G_n^{-1}(P_n(E_{i,n+1}G_n(X)))$. Then $F_i'(1)=0$ and $F_i'(E_{j,n}X)=q_i^{-C_{ij}}E_{j,n}F_i'(X)+\delta_{ij}/(1-q_i^2)$. So, we can compute inductively that $F_i'$ is the adjoint to $E_j\times$ under $(\bar{*},*)$ since this adjoint also satisfies the quantum boson relations. It is also easy to see that for any $Y,Y'\in B_{\Z,n+1}(C)$ and $X\in B_{\Z,n}(C)$ that $P_n(YP_n(Y'X))=P_n(YY'X)$. 
     
     We now show that the two bilinear forms are equal. Both forms are $\N[I]$-homogeneous and $q$-semilinear, so it is enough to verify equality on monomials of the same $\N[I]$-weight. We compute that for any monomial $E_{i_1}\dots E_{i_k}\in U_q^+(C)$ and $X\in U_q^+(C)$ that $(\overline{E_{i_1}\dots E_{i_k}},X)_L=(1,F'_{i_k}\dots F'_{i_1}(X))_L= F'_{i_k}\dots F'_{i_1}(X)=G_n^{-1}(P_n(E_{i_k,n+1}\dots E_{i_1,n+1}G_n(X)))$. Since this must be a scalar, we have that \[G_n^{-1}(P_n(E_{i_k,n+1}\dots E_{i_1,n+1}G_n(X)))=G_n^{-1}(P(E_{i_k,n+1}\dots E_{i_1,n+1}G_n(X)))=(G_n(E_{i_1}\dots E_{i_k}),G_n(X))_\Z,\] as desired.
\end{proof}
There is an embedding of algebras $B(C)\hookrightarrow B_\Z(C)$ given by $E_i\rightarrow E_{i,0}$ and $F_i\rightarrow E_{i,1}$. The  form $(*,*)_\Z$ on $B_\Z(C)$ induces a $q$-semilinear form on $B(C)$, which we denote by $(*,*)_2$. It is nondegenerate due to conditions 5 and 8 of Proposition \ref{prop:qthomform}.

\subsection{KLR algebras}\label{subsec:klr}
We review Khovanov-Lauda-Rouquier (KLR) algebras. These algebras were originally defined by Khovanov, Lauda, and Rouquier in \cite{2km} and \cite{khla}, and all results of this subsection are from there. For a more introductory survey, see \cite{brundan}.

\begin{defn}\label{defn:seq}
    For a set $I$, denote by Seq the set of formal finite sequences in the set $I$. For $\textbf{i},\textbf{j}\in $ Seq, we may write by either $\textbf{i}\textbf{j}$ or $\textbf{i}\cdot \textbf{j}$ the concatenation of the two sequences. We will often identify a length one sequence with its unique entry. For $\textbf{i}\in $ Seq, denote by $\textbf{i}_n$ the $n$'th entry of $\textbf{i}$. For $1\leq n \leq |\textbf{i}|$, we define the truncations $\textbf{i}_{\geq n}$ and $\textbf{i}_{\leq n}$ in the natural way.
     For $\textbf{i}\in$ Seq, we denote by $|\textbf{i}|$ the length of the sequence and by $\bar{\textbf{i}}$ the reversed sequence, i.e. the sequence obtained by reading $\textbf{i}$ from right to left. 
\end{defn}

\begin{defn}
    Define $\N[I]$ as before. There is a homomorphism of monoids Ab: Seq $\rightarrow \N[I]$ with $i\rightarrow \alpha_i$ and concatenation of sequences sent to addition of elements. For $\textbf{i}\in $ Seq, we denote by $\alpha_{\textbf{i}}$ the image of this element in $\N[I]$. For $\alpha\in \N[I]$, denote by $\text{Seq}_\alpha$ the preimage $\text{Ab}^{-1}(\alpha)$. For $\alpha\in \N[I]$, denote by $|\alpha|\coloneqq |\sum_{i\in I}c_i * \alpha_i|=\sum_{i\in I}c_i$, where $c_i\in \N$. The symmetric group $S_{|\alpha|}$ has a transitive left action on $\text{Seq}_\alpha$ with the transposition $s_k$ acting on $\textbf{i}$ by swapping $\textbf{i}_k$ with $\textbf{i}_{k+1}$.
\end{defn}

\begin{defn}\label{klr}
    
Let $I$ be a finite indexing set and let $\mathbb{K}$ be a field. Let $(Q_{ij})_{i,j\in I}$ be a $\kf [u,v]$-valued matrix such that $Q_{ii}=0$ for all $i\in I$. 

We define the \emph{KLR algebra} $H_{\alpha}(Q)$ to be the associative $\mathbb{K}$-algebra generated by elements $\{1_\textbf{i}\}_ {\textbf{i}\in \text{Seq}_\alpha}\cup \{x_i\}_{1\leq i\leq |\alpha|}\cup \{\tau_i\}_{1\leq i\leq |\alpha|-1}$ with relations
\begin{enumerate}
    \item $\text{The elements } 1_\textbf{i} \text{ are orthogonal idempotents whose sum is } 1$,

    \item $1_\textbf{i}x_i=x_i1_\textbf{i},$
    
    \item $1_\textbf{i}\tau_i=\tau_i1_{s_i(\textbf{i})}$

    \item $x_ix_j=x_jx_i$, 

    \item $(\tau_ix_j-x_{s_{i}(j)}\tau_i)1_\textbf{i}= \begin{cases}
    1_\textbf{i} \text{ if } j=i+1 \text{ and } \textbf{i}_i=\textbf{i}_{i+1}\\
    -1_\textbf{i} \text{ if } j=i \text{ and } \textbf{i}_i=\textbf{i}_{i+1}\\
    0 \text{ otherwise}
    \end{cases}$,

    \item $\tau_i\tau_j=\tau_j\tau_i$ if $|i-j|>1$,
    \item $\tau_i^21_\textbf{i}=Q_{v_i,v_{i+1}}(x_i,x_{i+1})1_\textbf{i},$

    \item $(\tau_{i+1}\tau_i\tau_{i+1}-\tau_i\tau_{i+1}\tau_i)1_\textbf{i}=\delta_{\textbf{i}_i,\textbf{i}_{i+2}}\frac{Q_{\textbf{i}_{i},\textbf{i}_{i+1}}(x_{i+2},x_{i+1})-Q_{\textbf{i}_i,\textbf{i}_{i+1}}(x_{i},x_{i+1})}{x_{i+2}-x_i}1_\textbf{i}$.

\end{enumerate}

It can be shown that $Q_{\textbf{i}_i,\textbf{i}_{i+1}}(x_i,x_{i+1})-Q_{\textbf{i}_i,\textbf{i}_{i+1}}(x_{i+2},x_{i+1})$ is always divisible by $x_{i+2}-x_i$, and so the expression on the right-hand side of relation (8) is always a well-defined element of $H_\alpha(Q)$. The relations above will henceforth be called the \emph{KLR relations}. 

If we have $\alpha\in \N[I]$ and $\textbf{j}\in \text{Seq}_\beta$ for some $\beta \in \N[I]$ with $|\beta|<|\alpha|$, then we denote 
\[1_{*\textbf{j}}:=\sum_{\substack{\textbf{i}\in  \text{Seq}_{\alpha}\\ 
    \textbf{i}=\textbf{j}'\textbf{j},\\
    \textbf{j}'\in \text{Seq}
    _{\alpha-\beta}}} 1_\textbf{i}\in H_\alpha(Q)
                    .\]

For any $\alpha\in \N[I]$, $1\leq i\leq |\alpha|$, and $1\leq j\leq |\alpha|-1$, we write that $x_{-i}:=x_{|\alpha|-i+1}$ and $\tau_{-j}:=\tau_{|\alpha|-j}$ as elements of $H_\alpha$. Note that in this notation we have $(\tau_{-1}x_{-1}-x_{-2}\tau_{-1})1_{*11}=1_{*11}$, and that under a module embedding $H_\alpha\hookrightarrow 1_{*i}H_{\alpha+\alpha_i}$, we have $x_{-i}\rightarrow x_{-i-1}1_{*i}$ and $\tau_{-i}\rightarrow \tau_{-i-1}1_{*i}$.

These algebras are also called \emph{quiver Hecke algebras}. When $Q$ is clear, we simply write $H_\alpha$ in place of $H_\alpha(Q)$.
\end{defn}

We associate KLR algebras to a symmetrizable generalized Cartan matrix $(C_{ij})_{i,j\in I}$ as follows. The matrix $Q$ is also indexed by $I$. We fix a choice of $\kf$-scalars $\{t_{ij}\}$ and $\{s_{ij;pq}\}$, where the first set is indexed over pairs $i\neq j\in I$ and where the second is also indexed over $p,q \in \N$ with $p<-C_{ij}$ and $q<-C_{ji}$. These scalars are required to satisfy the following properties.

\begin{enumerate}
    \item All $t_{ij}$ are invertible.
    \item $s_{ij;pq}=s_{ji;qp}$.
    \item $t_{ij}=t_{ji}$ when $C_{ij}=0$.
    \item $s_{ij;pq}=0$ unless $d_ip+d_jq=-d_iC_{ij}$.
\end{enumerate}
With such a choice of scalars, we define $Q_{ij}$ by $Q_{ii}=0$, $Q_{ij}=t_{ij}$ when $C_{ij}=0$, and
\[Q_{ij}=t_{ij}u^{-C_{ij}}+\sum_{p,q\vert d_ip+d_jq=-d_iC_{ij}} s_{ij;pq} u^pv^q+t_{ji}v^{-C_{ji}}.\]

For $\alpha\in \N[I]$, we then define $H_\alpha(C)\coloneqq H_\alpha(Q)$ associated to this choice of scalars. It is easy to check that this $Q$ satisfies $Q_{ij}(u,v)=Q_{ji}(v,u)$. Note that $Q_{ij}(u,v)$ is homogeneous for the grading $\text{deg}(u)=2d_i$ and $\text{deg}(v)=2d_j$. In fact, we may define a grading on $H_\alpha(Q)$ by $\text{deg}(1_\textbf{i})=0$, $\text{deg}(x_i1_\textbf{i})=2d_{\textbf{i}_i}$, and $\text{deg}(\tau_i1_\textbf{i})=-d_{\textbf{i}_i}C_{\textbf{i}_i\textbf{i}_{i+1}}$.

\subsection{Categories}
We describe the many types of categories that we use. We also define some of the specific categories studied in this paper and review a key theorem from the literature.

All categories used in this paper are $\kf$-linear. 

\begin{defn}
    A \emph{strictly graded category}, or \emph{strictly} $\Z$-\emph{graded category}, is a category equipped with an autoequivalence $T$ such that $T^k(X)\ncong X$ for all objects $X$ and all $k\in \Z$. We may also refer to $T$ as the \emph{shift}. Functors between graded categories are required to commute with the shifts.
\end{defn}
When our category is additive, this autoequivalence endows the Grothendieck group with the structure of an $\mathbf{A}=\Z[q,q^{-1}]$-module, where $q[M]=[T(M)]$ and $q$ is an indeterminant. If, moreover, our category has a compatible monoidal structure, then the Grothendieck group has the structure of an $\mathbf{A}$-algebra.

    There is another equivalent definition of an additive $\Z$-graded category. We may also define an additive strictly $\Z$-graded category as an additive category that is enriched in the category of $\Z$-graded abelian groups. In what follows, we freely interchange between morphisms $T(M)\rightarrow N$ and morphisms $M\rightarrow N$ of degree 1. 

Let $\mathcal{C}$ be a strictly $\Z$-graded category with autoequivalence $T$. For each pair of objects $X$ and $Y$ of $\mathcal{C}$ we denote by $\text{Hom}_\mathcal{C}^\bullet(X,Y)$ the $\Z$-graded $\kf$ vector space with $\text{Hom}_\mathcal{C}^\bullet (X,Y)_i=\text{Hom}_\mathcal{C}(T^i(X),Y)$. In case $X=Y$, this vector space has the structure of a graded $\kf$-algebra.

We now define one of the fundamental categories studied in this paper.
\begin{defn}\label{2cat}
    Let $(Q_{ij})_{i,j\in I}$ be a finite $\kf [u,v]$-valued matrix such that $Q_{ii}=0$ for all $i\in I$. We define $\mathcal{A}_1(Q)$ to be the strict monoidal $\kf$-linear category generated by objects $E_i$ and morphisms $X_i:E_i\rightarrow E_i$ and $T_{ij}:E_iE_j\rightarrow E_jE_i$, where $i\in I$. These morphisms satisfy the following relations. 
    \begin{enumerate}        
        \item $T_{ij}\circ X_iE_j-E_iX_j\circ T_{ij}=\delta_{ij}\text{id}_{E_iE_j},$
        \item 
        $T_{ij}\circ E_iX_j-X_jE_i\circ T_{ij}=-\delta_{ij}\text{id}_{E_iE_j}$,
        \item $T_{ij}\circ T_{ji}=Q_{ij}(E_jX_i,X_jE_i)$,
        \item $ T_{jk}E_i\circ E_jT_{ik}\circ T_{ij}E_k- E_kT_{ij}\circ T_{ik}E_j \circ E_iT_{jk}=\delta_{i,k}\frac{Q_{ij}(X_iE_j,E_iX_j)E_i-E_iQ_{ij}(E_jX_i,X_jE_i)}{X_iE_jE_i-E_iE_jX_i}$.
    \end{enumerate}
    The relations above will also be called the \emph{KLR relations}.
\end{defn}
One can show that after formally adding finite direct sums that there is an isomorphism of rings $H_\alpha(Q)\xrightarrow{\sim} 
 \text{End}_{\mathcal{A}_1(Q)}(\bigoplus_{\textbf{i}\in \text{Seq}_\alpha} E_{\textbf{i}_{|\alpha|}}\dots E_{\textbf{i}_1})$. Under this map, 
 \[1_\textbf{i}\rightarrow \text{id}_{E_{\textbf{i}_{|\alpha|}}\dots E_{\textbf{i}_1}},\]
 \[x_i1_\textbf{i}\rightarrow E_{\textbf{i}_{|\alpha|}}\dots E_{\textbf{i}_{i+1}}X_{\textbf{i}_i}E_{\textbf{i}_{i-1}}\dots E_{\textbf{i}_1},\]
 \[\tau_{i}1_\textbf{i}\rightarrow E_{\textbf{i}_{|\alpha|}}\dots E_{\textbf{i}_{i+2}}T_{\textbf{i}_{i+1},\textbf{i}_i}E_{\textbf{i}_{i-1}}\dots E_{\textbf{i}_1}.\]
 Note that the entries of $\textbf{i}$ are written left-to-right in $H_n(Q)$, but are written right-to-left on the right-hand side.

When $Q$ is determined by a symmetrizable Cartan matrix, we can also put a strict $\Z$-grading on $\mathcal{A}_1(Q)$. Since our morphisms are based on the KLR algebra, they come with a natural grading. We use that $\text{deg}(\text{id}_{E_i})=0$, $\text{deg}(X_i)=\text{deg}(x_i)=2$, and $\text{deg}(T_{ij})=\text{deg}(\tau_11_{ij})=-C_{ij}$. We formally add shifted versions of all objects. 
 \begin{defn}
Define $\mathcal{A}_1(C)$ to be the monoidal category whose objects are formal shifts $q^n X$ of those in $\mathcal{A}_1(Q)$ and whose morphisms are defined by $\Hom_{\mathcal{A}_2(C)}(q^n X,q^m Y)=\Hom_{\mathcal{A}_2(Q)}^\bullet(X,Y)_{n-m}$.
 \end{defn}
 
 We would also like this category to be additive and idempotent closed. There are several idempotents in $H_\alpha(Q)$, but the associated morphisms in $\mathcal{A}_1(C)$ do not always have an image object.

 \begin{defn}
     For an additive category $\mathcal{C}$, we denote by $\mathcal{C}^i$ the \emph{idempotent (Karoubi) completion} of $\mathcal{C}$. The objects of this category are pairs $(M,e)$ where $M$ is an object of $\mathcal{C}$ and $e$ is an idempotent endomorphism of $M$. The morphisms from $(M,e)$ to $(M',e')$ are morphisms $f:M\rightarrow M'$ such that $e'fe=f$. the pair $(M,e)$ should then be viewed as the ``image of $e$" as an object of $\mathcal{C}^i$.
 \end{defn}
 Denote by $\mathcal{U}_q^+(C)$ the idempotent completion of the closure of $\mathcal{A}_1(C)$ under finite direct sums.

 It is easier to study this category after showing it is equivalent to another category defined in terms of KLR algebras. First, some notation.

 \begin{defn}
     For $R=\bigoplus_{i\in \Z} R_i$ a graded ring with graded pieces $R_i$, denote by $R-\text{grmod}$ the category whose objects are graded finitely-generated (left) $R$-modules and morphisms are degree zero graded maps. This is a graded category with $T(\{M_i\})_j=M_{j-1}$. Similarly, we have a graded full subcategory $R-\text{grproj}$, which is the category of finitely generated (left) projective graded $R$-modules.  We may consider also graded modules that are not necessarily finitely generated. The corresponding categories are denoted $R-\text{grMod}$ and $R-\text{grProj}$ with otherwise the same notation. 
 \end{defn}
 In the cases we consider, $R_i=0$ for $i<<0$ and $\text{dim}_{\kf}(R_i)<\infty$, so if our module $\{M_i\}$ is finitely generated, we have $M_i=0$ for $i<<0$. 
 \begin{defn}
      For $\alpha\in \N[I]$, denote $\He^{fg}_\alpha:= H_\alpha-\text{grproj}$. Then define $\He^{fg}(C)\coloneqq\bigoplus_{\alpha\in \N[I]} \He^{fg}_\alpha$. This is clearly a strictly graded additive category, and it can be given the structure of a monoidal category as follows. For any $\alpha, \beta\in \N[I]$, there is a canonical injective (non-unital) morphism of graded rings $H_{\alpha}\otimes_{\kf} H_{\beta}\hookrightarrow H_{\alpha+\beta}$ given by concatenation, i.e. $1_{\textbf{i}}\otimes_{\kf} 1_{\textbf{i}'}\rightarrow 1_{\textbf{i}\textbf{i}'}$. The corresponding induction functor can be converted into our desired bifunctor. We define the bifunctor $\otimes:H_{\alpha}-\text{grmod}\times H_{\beta}-\text{grmod}\rightarrow H_{\alpha+\beta}-\text{grmod}$ via $(M,N)\rightarrow H_{\alpha+\beta}\otimes_{\alpha,\beta}(M\otimes_{\kf} N)$, where the outermost tensor is over $H_{\alpha}\otimes_{\kf} H_{\beta}$-modules. One can show that products of projective modules remain projective, and therefore, these products can be restricted to $\He^{fg}(C)$. The unit object of the corresponding monoidal product on $\He^{fg}(C)$ is $H_{0}(C)= \kf\in \mathcal{H}_0^{fg}$. This product endows $K_0(\He^{fg}(C))$ with the structure of an $\mathbf{A}$-algebra. 
 \end{defn}
 For any $\textbf{i}\in \text{Seq}_\alpha$, we have that $H_{\alpha}1_\textbf{i}$ is an object of $\He^{fg}_\alpha$. However, these are in general not indecomposable.
 
 There is an equivalence additive monoidal graded categories $\mathcal{U}^+_q(C)\rightarrow \He^{fg}(C)$ sending $E_i$ to $H_{\alpha_i}(C)\in \He^{fg}_{\alpha_i}$. Since $\mathcal{U}^+_q(C)$ is defined in terms of generators and relations, there is a straightforward process to defining monoidal functors out of it. Since $\He^{fg}(C)$ has very explicit objects, it is easier to define functors acting on it.

 The following result is the key reason why KLR algebras have become so prominent in categorical representation theory. 
 \begin{thm}[\cite{kholau2}]\label{decat}
     There is an isomorphism of $\N[I]$-graded $\mathbf{A}$-algebras $_\mathbf{A}{U_{q}^+(\mathfrak{g}(C))}\simeq  K_0(\He^{fg}(C))$.
 \end{thm}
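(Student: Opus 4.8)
The plan is to build an explicit $\N[I]$-graded $\mathbf{A}$-algebra homomorphism $\gamma\colon {}_\mathbf{A}U_q^+(\mathfrak{g}(C))\to K_0(\He^{fg}(C))$, prove surjectivity from the generation of $K_0$ by induction products of the $H_{\alpha_i}$, and then deduce injectivity by identifying a graded Hom pairing on $K_0$ with Lusztig's nondegenerate form $(*,*)_L$. I would define $\gamma$ on generators by $\gamma(E_i)\coloneqq [H_{\alpha_i}(C)]$; since $H_{\alpha_i}(C)\cong\kf[x]$ with $\deg x=2d_i$ is a free rank-one (hence projective) graded module, this class is well defined and $\operatorname{grdim}\operatorname{End}(H_{\alpha_i})=1/(1-q_i^2)$, matching $(E_i,E_i)_L$. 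To see that $\gamma$ respects the defining relations of $U_q^+$, I must verify the quantum Serre relations in $K_0$: as recalled in the introduction, the Serre relation is categorified by a contractible complex assembled from the morphisms of $\mathcal{A}_1(C)$, and splitting this complex by homological parity produces an isomorphism of graded projective modules whose class is exactly the Serre identity. Thus $\gamma$ is a well-defined homomorphism.

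For the integral structure I extend $\gamma$ to the divided powers. Here $H_{a\alpha_i}(C)$ is the nilHecke algebra $NH_a$, which is Morita equivalent to the symmetric polynomials $\kf[x_1,\dots,x_a]^{S_a}$ and has a unique indecomposable graded projective $P_{i,a}$ up to shift, with $[H_{a\alpha_i}(C)]=[a]_i!\,[P_{i,a}]$. Setting $\gamma(E_i^{(a)})\coloneqq[P_{i,a}]$ is then forced by $E_i^a=[a]_i!\,E_i^{(a)}$ and lands in $K_0$. Surjectivity follows because, under the monoidal (induction) product, $[H_\alpha 1_\textbf{i}]$ equals the product $\gamma(E_{\textbf{i}_{|\alpha|}}\cdots E_{\textbf{i}_1})$, and the modules $H_\alpha 1_\textbf{i}$ generate $\He^{fg}_\alpha$ under sums and summands; together with the divided-power classes $[P_{i,a}]$ these span $K_0(\He^{fg}(C))$ over $\mathbf{A}$.

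Injectivity is the crux, and I would obtain it from the bilinear form. Equip $K_0(\He^{fg}(C))$ with the graded Euler pairing $\langle[P],[P']\rangle\coloneqq\operatorname{grdim}\Hom^\bullet_{\He^{fg}(C)}(P,P')$; by projectivity $\langle[H_\alpha 1_\textbf{i}],[H_\alpha 1_\textbf{j}]\rangle=\operatorname{grdim}(1_\textbf{i}H_\alpha 1_\textbf{j})$. The key claim is that this pairing is intertwined by $\gamma$ with Lusztig's form, $\langle\gamma(x),\gamma(y)\rangle=(x,y)_L$ up to the standard normalization. Granting this, if $\gamma(x)=0$ then for every $z\in K_0$ we may write $z=\gamma(y)$ by surjectivity, whence $(x,y)_L=\langle\gamma(x),\gamma(y)\rangle=0$; nondegeneracy of $(*,*)_L$ then forces $x=0$. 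Since $K_0(\He^{fg}(C))$ and ${}_\mathbf{A}U_q^+(\mathfrak{g}(C))$ are $\mathbf{A}$-free, injectivity after inverting $q$ gives injectivity over $\mathbf{A}$, and $\gamma$ is the desired $\N[I]$-graded $\mathbf{A}$-algebra isomorphism.

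The main obstacle is precisely the pairing identification, which reduces to computing $\operatorname{grdim}(1_\textbf{i}H_\alpha 1_\textbf{j})$ and matching it with the expansion of $(E_\textbf{i},E_\textbf{j})_L$ produced by the Hopf/shuffle rule. This demands a basis theorem for $H_\alpha$: a PBW-type monomial basis $\{x^{\mathbf a}\tau_\omega 1_\textbf{i}\}$ indexed by $\omega\in S_{|\alpha|}$, a reduced word, and a polynomial multidegree $\mathbf a$. Linear independence of this set is the delicate point and is established through the faithful polynomial representation of $H_\alpha$ on $\bigoplus_{\textbf{i}}\kf[x_1,\dots,x_{|\alpha|}]$, where the $\tau_i$ act by divided-difference operators twisted by the $Q_{ij}$. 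Proving faithfulness of this representation, and thereby pinning down the graded dimensions exactly, is the technically hardest step; once it is in place, the form comparison and hence the whole theorem are formal.
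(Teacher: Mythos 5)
The paper does not actually prove this statement; it is quoted from Khovanov--Lauda \cite{kholau2}, so your proposal has to be measured against their argument. Your overall architecture --- define $\gamma$ on generators by $\gamma(E_i)=[H_{\alpha_i}]$, verify the quantum Serre relations in $K_0$ via the categorified (contractible complex / direct sum) Serre relation, handle divided powers through the nilHecke algebra $H_{a\alpha_i}\cong \operatorname{Mat}_{a!}\bigl(\kf[x_1,\dots,x_a]^{S_a}\bigr)$ so that $[H_{a\alpha_i}]=[a]_i!\,[P_{i,a}]$ up to shift, and prove injectivity by matching the graded Hom pairing $\operatorname{grdim}(1_{\textbf{i}}H_\alpha 1_{\textbf{j}})$ with Lusztig's form via the PBW-type basis theorem coming from the faithful polynomial representation --- is exactly theirs, and your identification of that basis theorem as the technical core of the form comparison is accurate. (A minor point: your injectivity argument invokes surjectivity, but it does not need it; knowing $(x,y)_L=\langle\gamma(x),\gamma(y)\rangle$ for all $y$ in the integral form, together with nondegeneracy of $(*,*)_L$, already forces $x=0$. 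This matters, because otherwise your injectivity would be circular given the problem below.)

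The genuine gap is surjectivity. You assert that because every object of $\He^{fg}_\alpha$ is a summand of finite sums of shifts of the $H_\alpha 1_{\textbf{i}}$, the classes $[H_\alpha 1_{\textbf{i}}]$ together with the divided-power classes span $K_0(\He^{fg}(C))$ over $\mathbf{A}$. That inference is false in general: the split Grothendieck group of a graded Krull--Schmidt category is free on the classes of indecomposables, and the class of a direct summand of $M$ need not lie in the $\mathbf{A}$-span of $[M]$ and its companions. If this step were formal, the theorem would be nearly trivial; it is precisely the hard half. In Khovanov--Lauda's proof, surjectivity requires the representation theory of \emph{simple} modules: one introduces the Grothendieck group $G_0$ of finite-dimensional graded modules, observes that indecomposable projectives and simples give dual bases under the pairing $\operatorname{grdim}\operatorname{Hom}^\bullet(P,S)$, proves that the character map from $G_0$ into the dual of $U_q^+$ (the quantum shuffle algebra) is injective, and then runs an induction on weight with a crystal-operator triangularity: for a simple $S$ with $\varepsilon_i(S)=k>0$, the projective induced from $E_i^{(k)}$ tensored with the projective cover of $\tilde e_i^{\,k}S$ decomposes as $P_S$ plus summands indexed by simples with strictly larger $\varepsilon_i$, and this unitriangular system is invertible over $\mathbf{A}$, so all $[P_S]$ lie in the image of $\gamma$. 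Without this (or an equivalent rank-and-duality argument), your proof only establishes that $\gamma$ is an injection onto a full-rank subalgebra, not an isomorphism onto $K_0(\He^{fg}(C))$.
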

Under this isomorphism, $E_i\rightarrow [H_{\alpha_i}]$. We henceforth use the symbol $E_i$ to refer to both the element $E_i\in U_q^+(\mathfrak{g}(C))$ and the object $E_i\in \mathcal{U}_q^+(C)$ when no confusion is possible. We also need a related result which was established in Section 3.8.3 of \cite{kholau3}.
\begin{prop}{\cite{kholau3}}\label{prop:klr_tensor_K0}
For any $\alpha,\beta \in \N[I]$, we have that \[K_0(H_\alpha\otimes_{\kf}H_\beta)\simeq K_0(H_\alpha)\otimes_{\mathbf{A}}K_0(H_\beta)\simeq  {_\mathbf{A}U_q^+(\mathfrak{g})_\alpha}\otimes_{\mathbf{A}} {_\mathbf{A}U_q^+(\mathfrak{g})_\beta}\]
\end{prop}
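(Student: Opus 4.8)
The second isomorphism is purely formal: by Theorem \ref{decat} both $K_0(H_\alpha)\cong {_\mathbf{A}U_q^+(\mathfrak{g})_\alpha}$ and $K_0(H_\beta)\cong {_\mathbf{A}U_q^+(\mathfrak{g})_\beta}$ are free $\mathbf{A}$-modules, so tensoring the two isomorphisms over $\mathbf{A}$ yields $K_0(H_\alpha)\otimes_\mathbf{A}K_0(H_\beta)\cong {_\mathbf{A}U_q^+(\mathfrak{g})_\alpha}\otimes_\mathbf{A}{_\mathbf{A}U_q^+(\mathfrak{g})_\beta}$ with no further work. All of the content is therefore in the first isomorphism, which is a K\"unneth-type statement for Grothendieck groups of projectives. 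The plan is to realize it through the external tensor product $(M,N)\mapsto M\otimes_\kf N$, which sends a pair of graded projective modules to a graded projective $H_\alpha\otimes_\kf H_\beta$-module (a summand of $(H_\alpha\otimes_\kf H_\beta)^{nm}$ whenever $M\mid H_\alpha^n$ and $N\mid H_\beta^m$). This induces a map $\Phi\colon K_0(H_\alpha)\otimes_\mathbf{A}K_0(H_\beta)\to K_0(H_\alpha\otimes_\kf H_\beta)$, $[M]\otimes[N]\mapsto[M\otimes_\kf N]$, which is $\mathbf{A}$-bilinear because the shift satisfies $T(M)\otimes_\kf N\cong T(M\otimes_\kf N)\cong M\otimes_\kf T(N)$. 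I would prove that $\Phi$ is an isomorphism.

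The algebra $R\coloneqq H_\alpha\otimes_\kf H_\beta$ is again $\Z$-graded, bounded below, and finite-dimensional in each degree, so $R\text{-grproj}$ is graded Krull--Schmidt and $K_0(R)$ is a free $\mathbf{A}$-module on the classes of the indecomposable graded projectives taken up to shift. The same holds for $H_\alpha$ and $H_\beta$; fix complete irredundant lists $\{P_i\}$, $\{Q_j\}$ of their indecomposable projectives up to shift. The crux is to show that $\{P_i\otimes_\kf Q_j\}$ is a complete irredundant list of the indecomposable projectives of $R$ up to shift. Granting this, $\Phi$ carries the $\mathbf{A}$-basis $\{[P_i]\otimes[Q_j]\}$ of the source bijectively onto the $\mathbf{A}$-basis $\{[P_i\otimes_\kf Q_j]\}$ of $K_0(R)$, and is therefore an isomorphism.

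To identify the indecomposables I would use the standard computation $\text{End}^\bullet_R(P_i\otimes_\kf Q_j)\cong \text{End}^\bullet_{H_\alpha}(P_i)\otimes_\kf\text{End}^\bullet_{H_\beta}(Q_j)$, valid since the factors are finitely generated projective. Each $\text{End}^\bullet_{H_\alpha}(P_i)$ is a graded-local ring, and --- this is the essential structural input --- its degree-zero part is exactly $\kf$ (equivalently, the graded simple KLR modules are absolutely irreducible). Hence the degree-zero part of the tensor is $\kf\otimes_\kf\kf=\kf$, the ring is graded-local, and $P_i\otimes_\kf Q_j$ is indecomposable. Surjectivity of $\Phi$ then follows because $R$ is isomorphic to a direct sum of shifts of the various $P_i\otimes_\kf Q_j$, so every indecomposable projective of $R$ occurs as a summand of some $P_i\otimes_\kf Q_j$ and is thus isomorphic up to shift to one of them. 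Irredundancy follows by passing to graded heads: the head of $P_i\otimes_\kf Q_j$ is the simple $R$-module $S_i\otimes_\kf T_j$ (with $S_i$, $T_j$ the heads of $P_i$, $Q_j$), and absolute irreducibility guarantees that these external tensors of simples are simple and pairwise non-isomorphic up to shift, so distinct pairs $(i,j)$ give non-isomorphic projectives.

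The main obstacle is precisely the input that the graded endomorphism ring of each indecomposable projective KLR module has one-dimensional degree-zero part, since this is what makes external tensor products of indecomposables remain indecomposable and of simples remain simple; everything else is the formal machinery of graded Krull--Schmidt categories together with the freeness supplied by Theorem \ref{decat}. This absolute-irreducibility property is part of Khovanov--Lauda's structure theory for KLR algebras, and the proposition itself is recorded in Section 3.8.3 of \cite{kholau3}, which we invoke.
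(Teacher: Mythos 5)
The paper itself contains no proof of this proposition: it is imported wholesale from Section 3.8.3 of \cite{kholau3}, so the only meaningful comparison is with the standard argument in that literature. Your architecture is exactly that standard argument: the second isomorphism is formal from Theorem \ref{decat}, the map $\Phi$ induced by the external product $[M]\otimes[N]\mapsto [M\otimes_\kf N]$ is well defined and $\mathbf{A}$-bilinear, $H_\alpha\otimes_\kf H_\beta$ is again Laurentian so graded Krull--Schmidt applies, and everything reduces to showing that the $P_i\otimes_\kf Q_j$ form a complete irredundant list of indecomposable graded projectives. You have also correctly identified absolute irreducibility of the simple KLR modules as the essential structural input. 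All of that is right.

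The gap is in the step that proves indecomposability of $P_i\otimes_\kf Q_j$, and it is twofold. First, the parenthetical ``equivalently'' is false: absolute irreducibility says $\text{End}(S_i)_0=\kf$ for the simple \emph{head}, which only makes $\text{End}^\bullet(P_i)_0$ a \emph{local} ring with residue field $\kf$; it does not exclude degree-zero nilpotent endomorphisms of the projective cover (for $A=\kf[x]/(x^2)$ concentrated in degree zero, the simple is absolutely irreducible while $\text{End}^\bullet(A)_0=A\neq\kf$). Whether $\text{End}^\bullet(P_i)_0=\kf$ actually holds for KLR projectives is a finer statement needing separate proof. Second, and more seriously, the computation ``the degree-zero part of the tensor is $\kf\otimes_\kf\kf$'' silently uses $(E\otimes_\kf F)_0=E_0\otimes_\kf F_0$, which requires $E=\text{End}^\bullet(P_i)$ and $F=\text{End}^\bullet(Q_j)$ to be non-negatively graded. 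KLR algebras are not non-negatively graded (a same-label crossing has degree $-2d_i$), and graded Hom spaces between projectives are only bounded below, so a priori the cross terms $E_m\otimes F_{-m}$ with $m\neq 0$ contribute to degree zero; graded-locality does not pass to tensor products formally once negative degrees are allowed, so indecomposability does not follow as stated. The repair is the route you already sketch for irredundancy, made primary: absolute irreducibility, finite-dimensionality of KLR simples, and Jacobson density show $S_i\otimes_\kf T_j$ is simple; the containment $\mathrm{rad}(H_\alpha)\otimes H_\beta+H_\alpha\otimes \mathrm{rad}(H_\beta)\subseteq \mathrm{rad}(H_\alpha\otimes_\kf H_\beta)$ is obtained by reducing modulo the positive-degree part of the center (KLR algebras are finitely generated over their centers) to a finite-dimensional quotient; graded Nakayama then makes $P_i\otimes_\kf Q_j\twoheadrightarrow S_i\otimes_\kf T_j$ a projective cover, and a projective with simple head is indecomposable. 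With that substitution your proof is complete and the endomorphism-ring step can be discarded.
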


Some of the other classical structures on $U_q^+(\mathfrak{g})$ can be given categorical meaning.

\begin{defn}\label{form}
    There is a $\Z[[q]][q^{-1}]$-valued, $\Z$-bilinear form $(*,*)$ on $K_0(\He^{fg}_\alpha)$ defined by \[([P],[Q])=\text{grdim}(\text{Hom}_{H_\alpha}^\bullet(P,Q)).\] It can be proven that each $H_\alpha$ has finite dimensional graded components and is bounded below in degree. The same therefore follows for each $\text{Hom}_{\mathcal{H}_\alpha^{fg}}^\bullet (X,Y)$, so this form is well-defined. It is $q$-semilinear in the sense that $([P],[T(Q)])=([T^{-1}(P)],[Q])=q*([P],[Q])$. We can extend this form to all of $K_0(\He^{fg})$ by enforcing that if $P\in \He^{fg}_\alpha$ and $Q\in \He^{fg}_\beta$ for $\alpha\neq \beta$ then $([P],[Q])=0$.
\end{defn}

This is  the form $(*,*)'$ studied in \cite{khla}. On $U_q^+(\mathfrak{g})$, the symmetric form $(\overline{*},*)$ can be identified with $(*,*)_L$.

\begin{defn}\label{2rep}
    For an additive strictly $\Z$-graded monoidal category $\mathcal{C}$, we define a \emph{2-representation} of $\mathcal{C}$ on an additive strictly $\Z$-graded category $\mathcal{D}$ to be a strict monoidal additive graded functor $\rho:\mathcal{C}\rightarrow End_{\oplus,\Z}(\mathcal{D})$. Given a 2-representation of $\mathcal{C}$, we obtain a representation of $\mathbf{A}$-algebras of $K_0(\mathcal{C})$ on $K_0(\mathcal{D})$.
\end{defn}
The following example is the basis for our later constructions.
\begin{ex}\label{ex:leftmult}
    The right multiplication representation of $U_q^+(\mathfrak{g})$ is naturally categorified. For $M$ a right $H_\alpha$-module and $N$ a left $H_\alpha$-module, denote by $M \otimes_\alpha N$ the corresponding $H_\alpha$-bilinear tensor product. Recall the monoidal structure on $\He^{fg}(C)$. For any $N\in \He^{fg}_\alpha$, the monoidal product $N\otimes H_{\alpha_i}$ is therefore identified with $H_{\alpha+\alpha_i}1_{*i}\otimes_{\alpha} N$. Here, we are viewing $H_{\alpha+\alpha_1}1_{*i}$ as a unital $(H_{\alpha+\alpha_i},H_\alpha)$-bimodule. We therefore define $E_{i,\alpha}:\He^{fg}_\alpha\rightarrow \He^{fg}_{\alpha+\alpha_i}$ to be the functor $H_{\alpha+\alpha_i}1_{*i}\otimes_\alpha$, and define $E_i:\He^{fg}(C)\rightarrow \He^{fg}(C)$ to be the sum over all $E_{i,\alpha}$. We can also make the identification $E_iE_j=H_{\alpha+\alpha_i+\alpha_j}1_{*ji}\otimes_{\alpha}$. We have natural transformations 
    \[X_i:E_i\rightarrow E_i,\]  \[a\otimes_\alpha b\rightarrow ax_{|\alpha|+1}\otimes_\alpha b\] and 
    \[T_{ij}:E_iE_j\rightarrow E_jE_i,\]
    \[a\otimes_\alpha b\rightarrow a\tau_{|\alpha|+1}\otimes_\alpha b.\]
    
\end{ex}

\subsection{Localization of categories}
We review the calculus of fractions for localization of categories and the necessary adjustments for the extra categorical structure that we need. The calculus of fractions for categories was introduced in \cite{calc_of_frac}. We ignore set-theoretic issues; all categories used in this paper can be taken to be locally small.

\begin{defn}
    Let $\mathcal{C}$ be a category and let $S$ be a collection of the morphisms in $\mathcal{C}$. We say that a category $\mathcal{C}[S^{-1}]$ equipped with a functor $\mathcal{L}:\mathcal{C}\rightarrow \mathcal{C}[S^{-1}]$ is the \emph{localization of} $\mathcal{C}$ \emph{with respect to} $S$ if the following are satisfied.
    \begin{enumerate}
        \item (invertibility of $S$) For each $s\in S$, we have that $Q(s)$ is an isomorphism in the category $\mathcal{C}[S^{-1}]$.
        \item (universal property) Fix a functor $F:\mathcal{C}\rightarrow \mathcal{D}$ such that each $F(s)$ is an isomorphism in $\mathcal{D}$ for $s\in S$. Then there exists a unique functor $\bar{F}:\mathcal{C}[S^{-1}]\rightarrow D$ such that $F=\bar{F}\circ Q$.
    \end{enumerate}
\end{defn}
Such a localization may be constructed by taking a category with the same objects and with morphisms defined by formal sequences of $\mathcal{C}$ morphisms and inverses of $S$ elements. So, when we refer to $\emph{the}$ localization, we implicitly mean a category with the above properties that has the same objects as $\mathcal{C}$ and such that $\mathcal{L}$ preserves objects.

If the collection $S$ satisfies some extra conditions, then the morphisms in the localization become much easier to study.

\begin{defn}\label{defn:calc_of_frac}
    Let $\mathcal{C}$ be a category and denote by $S$ a collection of morphisms in $\mathcal{C}$. We say that $S$ \emph{admits a calculus of right fractions} if the following 4 conditions are satisfied.
    \begin{enumerate}
        \item $S$ contains all identity morphisms.
        \item  $S$ is closed under composition.
        \item Every cospan $X\xrightarrow{f} Y\xleftarrow{s} Z$ with $s\in S$ can be completed to a commutative diagram
        \begin{center}
         \begin{tikzcd}
            W \arrow[r,"g"] \arrow[d,"t"]& Z \arrow[d,"s"] \\
            X \arrow[r,"f"]& Y
        \end{tikzcd}
        \end{center}
   
        with $t\in S$.
        \item For each triplet $(f,g,s)$ such that $f$ and $g$ are morphisms $X\rightarrow Y$ and $s$ is an $S$-morphism from $Y$ to $Z$ such that $s\circ f=s\circ g$, there exists an $S$-morphism $t:W\rightarrow X$ such that $f\circ t=g\circ t$.
    \end{enumerate}
\end{defn}

The following is well known.
\begin{prop}
    Fix $\mathcal{C}$ a category and $S$ a collection of the morphisms in $\mathcal{C}$. Suppose that $S$ admits a calculus of right fractions. Then the localization $\mathcal{C}[S^{-1}]$ admits the following description.
    \begin{itemize}
        \item Objects of $\mathcal{C}[S^{-1}]$ are the same as those in $\mathcal{C}$.
        \item $\Hom_{\mathcal{C}[S^{-1}]}(A,B)$ is the collection of equivalence classes of spans from $A$ to $B$ of the form $A\xleftarrow{s} X \xrightarrow{f} B$ with $f\in \Hom_{\mathcal{C}}(X,B)$ and $s\in S$. We say that two spans $A\xleftarrow{s} X \xrightarrow{f} B$ and $A\xleftarrow{t} X' \xrightarrow{g} B$ are equivalent if there exists a span $X\xleftarrow{a} Y \xrightarrow{a'} X'$ such that $f\circ a = g\circ a'$, $s\circ a=s'\circ a'$, and $s\circ a\in S$.
        \item Given two spans $A\xleftarrow{s} X \xrightarrow{f} B$ and $B\xleftarrow{t} Y \xrightarrow{g} C$, we obtain the composition in $\Hom_{\mathcal{C}[S^{-1}]}(A,C)$ as follows. First, we complete the cospan $X\xrightarrow{f} B \xleftarrow{t} Y$ to a commutative square and obtain the span $X\xleftarrow{a} Z \xrightarrow{b} Y$. Then the composition is the equivalence class of the span $A\xleftarrow{s\circ a} Z \xrightarrow{g\circ b}C$.
    \end{itemize}
\end{prop}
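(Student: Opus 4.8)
The plan is to build an explicit candidate category $\mathcal{D}$ whose objects and morphisms are exactly as described in the statement, equip it with the functor $\mathcal{L}\colon\mathcal{C}\to\mathcal{D}$ sending a morphism $f\colon A\to B$ to the class of the span $A\xleftarrow{\mathrm{id}}A\xrightarrow{f}B$, and then verify that the pair $(\mathcal{D},\mathcal{L})$ satisfies the two defining conditions of a localization. Since a localization is unique up to canonical isomorphism over $\mathcal{C}$, this identifies $\mathcal{D}$ with $\mathcal{C}[S^{-1}]$ and yields the stated description of objects, morphisms, and composition.

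The first and most technical task is to check that $\mathcal{D}$ is a well-defined category. I would begin with the equivalence relation on spans: reflexivity and symmetry are immediate, but transitivity is not. Given witnessing spans for two successive equivalences, both mapping into the common apex of the middle span, I would complete the resulting cospan using condition (3) of Definition~\ref{defn:calc_of_frac} and then invoke the cancellation condition (4) to verify that the composite legs witness the equivalence of the outer spans. I would then turn to composition, which is defined via a completion square whose existence is guaranteed by condition (3). Here two independence checks are required: that the class of the composite does not depend on the chosen completion square, and that it does not depend on the chosen representatives of the two input classes. Both are diagram chases driven by conditions (3) and (4). Associativity of composition is a further such chase, and the identity morphism at $A$ is the class of $A\xleftarrow{\mathrm{id}}A\xrightarrow{\mathrm{id}}A$.

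With $\mathcal{D}$ established as a category, I would confirm that $\mathcal{L}$ is a functor (preservation of composition follows because identity legs make the completion square trivial) and that it inverts $S$: for $s\in S$ the class of $B\xleftarrow{s}A\xrightarrow{\mathrm{id}}A$ is a two-sided inverse to $\mathcal{L}(s)$, with conditions (1) and (2) ensuring the relevant spans have the correct form. Finally, for the universal property, given any $F\colon\mathcal{C}\to\mathcal{D}'$ with each $F(s)$ invertible, I would define $\bar F$ on a span $A\xleftarrow{s}X\xrightarrow{f}B$ by $\bar F=F(f)\circ F(s)^{-1}$. Well-definedness on equivalence classes reads off directly from the span conditions, functoriality follows from the composition formula, and the identity $F=\bar F\circ\mathcal{L}$ is immediate. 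Uniqueness holds because every span factors in $\mathcal{D}$ as $\mathcal{L}(f)\circ\mathcal{L}(s)^{-1}$, so the value of any functor factoring $F$ through $\mathcal{L}$ is forced.

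The main obstacle throughout is the repeated, careful application of conditions (3) and (4): transitivity of the span equivalence and the two well-definedness checks for composition are precisely where these axioms do the real work, and they are the only genuinely nontrivial diagram chases in the argument. Everything else is formal bookkeeping once those lemmas are in place.
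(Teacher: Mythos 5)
Your proposal is correct: it is the standard Gabriel--Zisman construction of the category of right fractions (spans with an $S$-leg, equivalence and composition defined via conditions (3) and (4) of Definition~\ref{defn:calc_of_frac}, followed by verification of the universal property and of uniqueness via the factorization $fs^{-1}=\mathcal{L}(f)\circ\mathcal{L}(s)^{-1}$), and it correctly isolates the only genuinely nontrivial points, namely transitivity of the span equivalence and the two well-definedness checks for composition. The paper gives no proof of this proposition at all, stating it as well known and deferring to the literature on the calculus of fractions \cite{calc_of_frac}, so your argument is precisely the standard proof that citation points to.
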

We denote by $fs^{-1}$ the morphism $\Hom_{\mathcal{C}[S^{-1}]}(A,B)$ corresponding to the span $A\xleftarrow{s} X \xrightarrow{f} B$.

One alternative way to show that this category of right fractions exists is to use a coreflective localization of $\mathcal{C}$.
\begin{prop}\label{prop:coreflect_to_localiz}
    Suppose that there is a functor $R:\mathcal{C}\rightarrow \mathcal{D}$ with a fully faithful left adjoint $L:\mathcal{D}\rightarrow \mathcal{C}$. Let $S$ be the set of $\mathcal{C}$-morphisms that are sent to isomorphisms by $R$. Then $S$ admits a calculus of right fractions and the induced functor $\mathcal{C}[S^{-1}]\rightarrow D$ is an equivalence.
\end{prop}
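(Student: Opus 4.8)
The plan is to exploit the adjunction data directly. Write $\eta\colon \mathrm{Id}_{\mathcal D}\Rightarrow RL$ and $\varepsilon\colon LR\Rightarrow \mathrm{Id}_{\mathcal C}$ for the unit and counit of $L\dashv R$. The starting observation, which I would record first, is the standard fact that a fully faithful left adjoint has invertible unit, so $\eta$ is a natural isomorphism. From the triangle identity $R(\varepsilon_X)\circ \eta_{RX}=\mathrm{id}_{RX}$ and invertibility of $\eta_{RX}$ it follows that $R(\varepsilon_X)$ is an isomorphism, i.e. $\varepsilon_X\in S$ for every object $X$. This single fact, together with naturality of $\varepsilon$, drives the entire argument.

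Next I would verify the four axioms of a calculus of right fractions. Conditions (1) and (2) are immediate since $R$ preserves identities and composites and isomorphisms are closed under composition. For condition (3), given a cospan $X\xrightarrow{f}Y\xleftarrow{s}Z$ with $s\in S$, I take the apex $W:=LRX$, the leg $t:=\varepsilon_X\in S$, and $g:=\varepsilon_Z\circ L\!\left(R(s)^{-1}\circ R(f)\right)\colon LRX\to Z$. Naturality of $\varepsilon$ along $f$ and along $s$ gives $s\circ g=\varepsilon_Y\circ LR(f)=f\circ \varepsilon_X=f\circ t$, as required. For condition (4), if $f,g\colon X\to Y$ satisfy $s\circ f=s\circ g$ with $s\in S$, then applying $R$ and cancelling the isomorphism $R(s)$ yields $R(f)=R(g)$; taking $t:=\varepsilon_X\in S$ and using naturality of $\varepsilon$ gives $f\circ \varepsilon_X=\varepsilon_Y\circ LR(f)=\varepsilon_Y\circ LR(g)=g\circ\varepsilon_X$. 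Thus $S$ admits a calculus of right fractions, and the universal property of localization produces the induced functor $\bar R\colon \mathcal C[S^{-1}]\to\mathcal D$, which agrees with $R$ on objects.

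It remains to show $\bar R$ is an equivalence, which I would do by checking it is essentially surjective, full, and faithful using the right-fraction description of hom-sets. Essential surjectivity is clear: for $D\in\mathcal D$ the object $LD$ satisfies $\bar R(LD)=RLD\cong D$ via $\eta_D^{-1}$. For fullness, given $\phi\colon RA\to RB$ in $\mathcal D$ I would exhibit the span $A\xleftarrow{\varepsilon_A}LRA\xrightarrow{\varepsilon_B\circ L\phi}B$; computing $\bar R$ of this roof as $R(\varepsilon_B)\circ RL(\phi)\circ R(\varepsilon_A)^{-1}$ and substituting $R(\varepsilon_A)=\eta_{RA}^{-1}$, $R(\varepsilon_B)=\eta_{RB}^{-1}$, and the naturality identity $RL(\phi)\circ\eta_{RA}=\eta_{RB}\circ\phi$ collapses this to $\phi$. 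For faithfulness, suppose two roofs $A\xleftarrow{s}P\xrightarrow{f}B$ and $A\xleftarrow{t}P'\xrightarrow{g}B$ have equal image, i.e. $R(f)\circ R(s)^{-1}=R(g)\circ R(t)^{-1}$. I would take the common refinement with apex $LRA$ and legs $a:=\varepsilon_P\circ L(R(s)^{-1})$ and $a':=\varepsilon_{P'}\circ L(R(t)^{-1})$; naturality of $\varepsilon$ gives $s\circ a=\varepsilon_A=t\circ a'\in S$, while $f\circ a=\varepsilon_B\circ L(R(f)\circ R(s)^{-1})=\varepsilon_B\circ L(R(g)\circ R(t)^{-1})=g\circ a'$, so the two roofs are equivalent. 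Hence $\bar R$ is fully faithful and essentially surjective, i.e. an equivalence.

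I expect the main obstacle to be faithfulness: unlike essential surjectivity and fullness, which are one-line consequences of $\eta$ being invertible, it requires producing an explicit common refinement of two arbitrary roofs and checking all three clauses of the span-equivalence relation, where the precise use of $\varepsilon_X\in S$ and the naturality of $\varepsilon$ must be tracked carefully. Everything else is formal adjunction bookkeeping. An alternative I might use in place of the fully-faithful verification is to show directly that $\mathcal L\circ L\colon\mathcal D\to\mathcal C[S^{-1}]$ is a quasi-inverse to $\bar R$, with the two natural isomorphisms supplied by $\eta^{-1}$ and by the localized counit $\mathcal L(\varepsilon)$; this trades the roof computation for a verification that $\mathcal L(\varepsilon)$ is natural with respect to the inverted morphisms.
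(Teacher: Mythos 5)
Your proof is correct. Note, however, that the paper does not prove this proposition at all: it is stated in the background section (Subsection on localization of categories) as a known fact from the localization literature, alongside the Gabriel--Zisman calculus-of-fractions machinery, and is then simply applied later (Corollary \ref{cor:a2_coreflect} and its sequel). So there is no proof in the paper to compare against; what you have written is the standard argument supplying that omitted verification. All the key steps check out: full faithfulness of $L$ gives invertibility of $\eta$, the triangle identity $R(\varepsilon_X)\circ\eta_{RX}=\mathrm{id}_{RX}$ then puts every counit component $\varepsilon_X$ in $S$, and your use of naturality of $\varepsilon$ correctly verifies the Ore condition (3), the cancellation condition (4), and all three clauses of the paper's span-equivalence relation in the faithfulness step (in particular $s\circ a=t\circ a'=\varepsilon_A\in S$, which is exactly what the definition requires). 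One remark: your faithfulness argument implicitly uses that $\bar R$ sends the roof $fs^{-1}$ to $R(f)\circ R(s)^{-1}$, which follows from $\bar R\circ\mathcal L=R$ and is worth stating explicitly, but this is cosmetic rather than a gap.
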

We refer to the essential image of $R$ as a \emph{coreflective subcategory} of $\mathcal{C}$.

Enriched versions of localization also exist. Specifically, we use the following. See \cite{stacks-project} Lemma 12.8.1 for explicit calculations in the case of enrichment over abelian groups. 

\begin{prop}\label{prop:kf_enrich}
    Suppose that $\mathcal{C}$ is a category enriched in $\kf$-vector spaces. Let $S$ be a subset of the set of all morphisms in $\mathcal{C}$ that admits a calculus of right fractions. Then there is a unique structure of a $\kf$-enriched category on $\mathcal{C}[S^{-1}]$ such that the canonical functor $\mathcal{L}$ is a $\kf$-enriched functor. It is defined as follows. Given any two morphisms from $X$ to $Y$, we may apply the right calculus of fractions conditions and assume they have a ``common denominator", i.e. that they can be written $fs^{-1}$ and $gs^{-1}$ for some morphisms $f,g:Z\rightarrow Y$ and some $S$-morphism $s:Z\rightarrow X$. Then their sum is $(f+g)s^{-1}$, and for $r\in \kf$, we define the scalar product $r(fs^{-1})\coloneqq (rf)s^{-1}$. The localization satisfies a universal property with respect to $\kf$-linear categories and $\kf$-linear functors.
\end{prop}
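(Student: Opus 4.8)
The plan is to follow the classical argument for enrichment over abelian groups (as in \cite{stacks-project}, Lemma 12.8.1), checking at each stage that scalar multiplication behaves compatibly. Throughout I use the four conditions of Definition \ref{defn:calc_of_frac} and the explicit span description of $\mathcal{C}[S^{-1}]$ from the preceding proposition. First I would establish that any finite family of parallel morphisms $X \to Y$ in $\mathcal{C}[S^{-1}]$ admits a common denominator. Given $fs^{-1}$ and $gt^{-1}$ with $s : Z \to X$ and $t : Z' \to X$ in $S$, I complete the cospan $Z \xrightarrow{s} X \xleftarrow{t} Z'$ by condition (3) to a commutative square with legs $a : W \to Z$ in $S$ and $b : W \to Z'$, so that $u := sa = tb$ lies in $S$ by condition (2). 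Using the span equivalence from the preceding proposition (via the map $a$, respectively $b$) one checks $fs^{-1} = (fa)u^{-1}$ and $gt^{-1} = (gb)u^{-1}$, so both fractions share the denominator $u$. Renaming, I may thus write any two parallel morphisms as $fs^{-1}$ and $gs^{-1}$ with common $s : Z \to X$ and numerators $f,g : Z \to Y$, and the proposed formulas $(f+g)s^{-1}$ and $(rf)s^{-1}$ then make literal sense, being built from the $\kf$-vector space operations on $\Hom_\mathcal{C}(Z, Y)$.

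The technical heart, and the step I expect to be the main obstacle, is well-definedness: the sum must not depend on the chosen common denominator. For this I would first prove the key lemma that for $p, q : W \to Y$ and $u : W \to X$ in $S$, one has $p u^{-1} = q u^{-1}$ in $\mathcal{C}[S^{-1}]$ if and only if there is a morphism $v : W' \to W$ with $pv = qv$ and $uv \in S$. The forward direction uses the span equivalence together with condition (4) to produce $v$, and crucially it is the composite $uv$ --- rather than $v$ itself --- that stays in $S$; the reverse direction is immediate from the equivalence criterion by taking the equivalence span to have both legs equal to $v$. Given two common denominators $s$ and $s'$ for the same pair of morphisms, I pass to a common refinement $u = sa = s'b$ as above, rewrite both summands over $u$, and apply the lemma to each of the equalities $(fa)u^{-1} = (f'b)u^{-1}$ and $(ga)u^{-1} = (g'b)u^{-1}$. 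The delicate point is then to merge the two resulting refinements $v_1, v_2$ into a single $v$: I complete the cospan of $uv_1, uv_2 \in S$ via condition (3) and apply condition (4) once more, producing $v$ with $v = v_1 d_1 e = v_2 d_2 e$, so that both summands become equal after precomposing with $v$ while $uv$ remains in $S$. The lemma then yields $(fa+ga)u^{-1} = (f'b+g'b)u^{-1}$, hence $(f+g)s^{-1} = (f'+g')s'^{-1}$. The same bookkeeping applied to a single numerator shows the scalar action is well-defined.

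With well-definedness secured, the remaining verifications are routine. The vector space axioms on each $\Hom_{\mathcal{C}[S^{-1}]}(X,Y)$ follow because, over a fixed common denominator $s : Z \to X$, addition and scaling are exactly the operations on the $\kf$-space $\Hom_\mathcal{C}(Z, Y)$; the zero element is $0\cdot s^{-1}$ and additive inverses are $(-f)s^{-1}$. Bilinearity of composition is checked by first bringing the relevant fractions to common denominators, forming the composition square from condition (3) as in the composition formula of the preceding proposition, and observing that the composite numerators add and scale; the scalar compatibility is identical. That $\mathcal{L}$ is $\kf$-enriched is immediate, since $\mathcal{L}(f) = f\,\mathrm{id}^{-1}$ and fractions with denominator $\mathrm{id}$ add and scale through their numerators, so $\mathcal{L}(f+g) = \mathcal{L}(f)+\mathcal{L}(g)$ and $\mathcal{L}(rf)=r\,\mathcal{L}(f)$.

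Uniqueness is forced: for any $\kf$-structure making $\mathcal{L}$ linear, invertibility of $\mathcal{L}(s)$ gives $fs^{-1} = \mathcal{L}(f)\circ \mathcal{L}(s)^{-1}$, whence $fs^{-1}+gs^{-1} = (\mathcal{L}(f)+\mathcal{L}(g))\circ\mathcal{L}(s)^{-1} = \mathcal{L}(f+g)\circ\mathcal{L}(s)^{-1} = (f+g)s^{-1}$, and similarly for scalars, so the formula is determined by $\kf$-linearity of $\mathcal{L}$ and bilinearity of composition. Finally, for the universal property, given a $\kf$-linear functor $F : \mathcal{C} \to \mathcal{D}$ into a $\kf$-linear category inverting $S$, the functor $\bar F$ supplied by the ordinary (non-enriched) universal property satisfies $\bar F(fs^{-1}) = F(f)\circ F(s)^{-1}$; reducing to common denominators and invoking $\kf$-linearity of $F$ together with bilinearity of composition in $\mathcal{D}$ shows $\bar F$ is $\kf$-linear, and it is the unique such functor since it already is the unique functor factoring $F$ through $\mathcal{L}$.
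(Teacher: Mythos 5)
Your proof is correct and follows exactly the standard calculus-of-fractions argument that the paper itself relies on: the paper states this proposition without proof, deferring to \cite{stacks-project} Lemma 12.8.1 for the abelian-group case, and your write-up is precisely that argument adapted to $\kf$-linear enrichment. In particular, your key well-definedness lemma (that $pu^{-1}=qu^{-1}$ iff $pv=qv$ for some $v$ with $uv\in S$, noting it is $uv$ rather than $v$ that lies in $S$) and the merging of the two refinements via conditions (3) and (4) constitute the correct technical core, so nothing is missing relative to the paper.
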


With a minor additional assumption, the localization of a monoidal category is also  monoidal. See \cite{monoidal_local} for related theory.

\begin{prop}\label{prop:monoidal_localiz}
    Let $\mathcal{C}$ be a category and let $S$ be a collection of morphisms in $\mathcal{C}$ that admits a calculus of right fractions. Suppose also that $\mathcal{C}$ is monoidal. If $S$ is closed under the monoidal product, then $\mathcal{C}[S^{-1}]$ admits a monoidal enrichment such that the functor $\mathcal{L}$ is monoidal. This localization satisfies a universal property with respect to monoidal categories and lax monoidal functors. For $\kf$-linear categories, this is compatible with the $\kf$-enrichment of Proposition \ref{prop:kf_enrich}.
\end{prop}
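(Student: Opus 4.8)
The plan is to transport the monoidal structure of $\mathcal{C}$ across the localization functor $\mathcal{L}$, which is the identity on objects. On objects I would define the product in $\mathcal{C}[S^{-1}]$ to be the $\mathcal{C}$-product. On morphisms, given right fractions $fs^{-1}:A\to B$ and $gt^{-1}:A'\to B'$, I would set
\[
(fs^{-1})\otimes(gt^{-1})\coloneqq (f\otimes g)(s\otimes t)^{-1}.
\]
This is a legitimate right fraction exactly because $S$ is closed under $\otimes$, so $s\otimes t\in S$. For the associativity and unit constraints, together with the pentagon and triangle identities, I would take the images under $\mathcal{L}$ of the corresponding data in $\mathcal{C}$; these are isomorphisms in $\mathcal{C}[S^{-1}]$ since $\mathcal{L}$ is a functor, and all coherence diagrams stay commutative because $\mathcal{L}$ preserves the equations that witness them.

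Two points need genuine checking: that $\otimes$ is well defined on equivalence classes of spans, and that it is a bifunctor. For well-definedness in the first variable, if $fs^{-1}=f's'^{-1}$ is witnessed by a span $X\xleftarrow{a}Y\xrightarrow{a'}X'$ with $sa=s'a'\in S$ and $fa=f'a'$, then tensoring this span with the identity on the domain of $gt^{-1}$ produces $a\otimes\mathrm{id}$ and $a'\otimes\mathrm{id}$, which witness $(f\otimes g)(s\otimes t)^{-1}=(f'\otimes g)(s\otimes t)^{-1}$, using again that $S$ is stable under $\otimes$ to see $(sa)\otimes t\in S$. The symmetric argument handles the second variable, and preservation of identities is clear from $\mathrm{id}\otimes\mathrm{id}=\mathrm{id}$.

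The hard part will be bifunctoriality, i.e.\ compatibility of $\otimes$ with the composition of fractions, since composition is only defined after a noncanonical completion of a cospan to a commutative square. The key is that the monoidal product of two completions is itself a completion of the tensored cospan: if $Z\xrightarrow{a}X_1,\ Z\xrightarrow{b}X_2$ with $a\in S$ completes $X_1\xrightarrow{f_1}B\xleftarrow{s_2}X_2$ and $W\xrightarrow{c}Y_1,\ W\xrightarrow{d}Y_2$ completes the corresponding cospan for the second factor, then $a\otimes c\in S$ and $(a\otimes c,\,b\otimes d)$ completes $(X_1\otimes Y_1)\xrightarrow{f_1\otimes g_1}(B\otimes B')\xleftarrow{s_2\otimes t_2}(X_2\otimes Y_2)$. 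Because the calculus of right fractions renders the composite independent of the chosen completion, evaluating both the tensor of two composites and the composite of two tensors along this particular completion yields the single fraction $((f_2\otimes g_2)(b\otimes d))\,((s_1\otimes t_1)(a\otimes c))^{-1}$, so the two agree. With bifunctoriality established, $\mathcal{L}$ is strictly monoidal, since $\mathcal{L}(f)\otimes\mathcal{L}(g)=(f\otimes g)\,\mathrm{id}^{-1}=\mathcal{L}(f\otimes g)$.

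It remains to record the universal property and the $\kf$-compatibility. Given a monoidal category $\mathcal{D}$ and a lax monoidal functor $F:\mathcal{C}\to\mathcal{D}$ inverting $S$, the plain universal property yields a unique $\bar F:\mathcal{C}[S^{-1}]\to\mathcal{D}$ with $\bar F\circ\mathcal{L}=F$; since $\bar F$ agrees with $F$ on objects and every morphism is a fraction of $\mathcal{L}$-images, I would equip $\bar F$ with the laxity constraints of $F$ and verify their naturality on a fraction $fs^{-1}$ using naturality of the constraints for $f$ and $s$ together with invertibility of $F(s)$; coherence is then inherited verbatim from $F$. Finally, by Proposition~\ref{prop:kf_enrich} any two parallel morphisms share a denominator, so the $\kf$-bilinearity of $\otimes$ in the localization reduces, after clearing the single denominator $s\otimes t$, to the $\kf$-bilinearity of $\otimes$ in $\mathcal{C}$.
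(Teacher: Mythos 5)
Your proof is correct, but there is nothing in the paper to compare it against: the paper states Proposition \ref{prop:monoidal_localiz} as background and simply refers the reader to the literature (``See \cite{monoidal_local} for related theory.''), so your self-contained verification is strictly more than what the paper supplies. Your construction is the standard one and all the essential points are handled: the tensor of spans is again a span because $S\otimes S\subseteq S$; well-definedness one variable at a time; and, most importantly, bifunctoriality via the observation that the tensor of two completions of cospans completes the tensored cospan, combined with independence of composition from the chosen completion. Three small blemishes, none fatal: (i) in the well-definedness paragraph the conclusion should read $(f\otimes g)(s\otimes t)^{-1}=(f'\otimes g)(s'\otimes t)^{-1}$ (you wrote $s\otimes t$ in both denominators), and the identity you tensor with is the identity on the \emph{apex} of the fraction $gt^{-1}$ (the common domain of $g$ and $t$), not on its domain $A'$; (ii) ``all coherence diagrams stay commutative because $\mathcal{L}$ preserves the equations'' covers the pentagon and triangle identities but not naturality of the associator and unitors with respect to the new morphisms $fs^{-1}$ --- that requires exactly the invertibility rearrangement you later use for the laxity constraints of $\bar F$ (naturality for $f$ and for $s$ in $\mathcal{C}$, then conjugate by $\mathcal{L}(s)^{-1}$), so you should say so explicitly; (iii) for the uniqueness clause of the universal property one should note that since $\mathcal{L}$ is strict monoidal and bijective on objects, the laxity constraints of $\bar F$ are forced to equal those of $F$, which is why the factorization is unique as a lax monoidal functor and not merely as a functor. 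With those points patched, your argument is a complete proof of the proposition, including the $\kf$-linear compatibility via common denominators.
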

Likewise, if our category is strictly graded, the localization inherits a strict grading so long as $S$ is closed under shifts.

\section{The quantum boson category}\label{sec:qboson}
\subsection{An intermediate category \texorpdfstring{$\mathcal{A}_2(Q)$}{A2(Q)}}\label{subsec:intermediate}
We define a monoidal category which we use in the construction of our quantum boson category. In this category, we do not insist on any direct sums or quantum boson relations.  
\begin{defn}
    Let $I$ be a finite indexing set and let $(Q_{ij})_{i,j\in I}$ be a $\kf[u,v]$-valued matrix satisfying $Q_{ii}=0$ and $Q_{ij}(u,v)=Q_{ji}(v,u)$. We define $\mathcal{A}_2(Q)$ to be the monoidal $\kf$-linear category defined by the following generators. The generating objects are the symbols $E_i$ and $F_i$ for each $i\in I$ along with a monoidal unit $\1$. We express the generating morphisms diagrammatically. 

    \begin{itemize}

    \item For each $i\in I$, we have the identity morphisms $\text{Id}_{E_i}:E_i\rightarrow E_i$, $\text{Id}_{F_i}:F_i\rightarrow F_i$, and $\text{Id}_\1:\1\rightarrow \1$.
    We draw $\text{Id}_\1$ as an empty diagram.
    \begin{center}
            \begin{tikzpicture}
        
                \draw (-.7,.75)
                node {$\text{Id}_{E_i}:$};
                
                \draw (-.2,.2) node {\tiny $i$};
              
                \draw[->, thick] (0,0) -- (0,1.5);

                \draw (1.3,.75)
                node {$\text{Id}_{F_i}:$};
                
                \draw (1.8,.3) node {\tiny $i$};
              
                \draw[<-, thick] (2,0) -- (2,1.5);
            \end{tikzpicture}
        \end{center}
    \item For each $i\in I$, we define $X_i:E_i\rightarrow E_i$. 
        \begin{center}
            \begin{tikzpicture}
    
                \draw (-.2,.2) node {\tiny $i$};
                \draw (0, .75) node{$\bullet$};
                \draw[->, thick] (0,0) -- (0,1.5);
            \end{tikzpicture}
        \end{center}

    \item For each $i,j\in I$, we define $T_{ij}:E_iE_j\rightarrow E_jE_i.$
        \begin{center}
            \begin{tikzpicture}
                \draw (1.2, .2) node{\tiny $j$};
                \draw (-.2, .2) node{\tiny $i$};
        
                \draw[->, thick] (0,0) .. controls (0,.5) and (1,1) .. (1,1.5);
                \draw[->, thick] (1,0) .. controls (1,.5) and (0,1) .. (0,1.5);
                
            \end{tikzpicture}
        \end{center}
        
    \item For each $i\in I$, we define $\eta_i: \1\rightarrow F_iE_i$ and $\epsilon_i: E_iF_i\rightarrow \1$.
        \begin{center}
                \begin{tikzpicture}
                    \draw (-.7, .3)
                    node{$\eta_i:$};
                    \draw (-0.2, .3) node[above]{\tiny $i$};
                    \draw[->, thick] (0,.58).. controls ++(0,-.75) and ++(0, -.75) .. (1,.58);

                    \draw (2.3, .3)
                    node{$\epsilon_i:$};
                    \draw (2.8, 0) node[above]{\tiny $i$};
                    \draw[->, thick] (3,.0).. controls ++(0,.75) and ++(0, .75) .. (4,0);
                \end{tikzpicture}
        \end{center}

    \end{itemize}

    Objects in this category are words in the $E_i$ and $F_i$. All morphisms in the category are obtained by composing, tensoring, and taking linear combinations of those described above.  
    
    Suppose $g$ and $f$ are morphisms with associated diagrams. Composition of morphisms $g\circ f$ is expressed by placing the diagram for $g$ on top of the diagram for $f$. The monoidal product of morphisms $g\otimes f$ is expressed by placing the diagram for $g$ to the left of the diagram for $f$.
    \begin{center}
            \begin{tikzpicture}
                \draw (1.2, .2) node{\tiny $i$};
                \draw (-.2, .2) node{\tiny $j$};
                \draw[->, thick] (0,0) -- (0,1.25);
                \draw[->, thick] (1,0) -- (1,1.25);
                \draw (0,.625) node{$\bullet$};

                \draw (2,.625) node{\Large $\circ$};
                
                \draw (4.2, .2) node{\tiny $j$};
                \draw (2.8, .2) node{\tiny $i$};
    
                \draw[->, thick] (3,0) .. controls (3,.5) and (4,.75) .. (4,1.25);
                \draw[->, thick] (4,0) .. controls (4,.5) and (3,.75) .. (3,1.25);

                \draw (5,.625) node{$=$};

                \draw (7.2, .2) node{\tiny $j$};
                \draw (5.8, .2) node{\tiny $i$};
    
                \draw[->, thick] (6,0) .. controls (6,.5) and (7,.75) .. (7,1.25);
                \draw[->, thick] (7,0) .. controls (7,.5) and (6,.75) .. (6,1.25);
                \draw (6.14,.9) node{$\bullet$};
            \end{tikzpicture}
        \end{center}

  \begin{center}
            \begin{tikzpicture}
                \draw (1.2, .2) node{\tiny $i$};
                \draw (-.2, .2) node{\tiny $j$};
                \draw[->, thick] (0,0) -- (0,1.25);
                \draw[->, thick] (1,0) -- (1,1.25);
                \draw (0,.625) node{$\bullet$};

                \draw (2,.625) node{$\otimes$};
                
                \draw (4.2, .2) node{\tiny $j$};
                \draw (2.8, .2) node{\tiny $i$};
    
                \draw[->, thick] (3,0) .. controls (3,.5) and (4,.75) .. (4,1.25);
                \draw[->, thick] (4,0) .. controls (4,.5) and (3,.75) .. (3,1.25);

                \draw (5,.625) node{$=$};

                \draw[->, thick] (6,0)--(6,1.25);
                \draw[->, thick] (7,0)--(7,1.25);
                \draw (6,.625) node{$\bullet$};
                \draw (7.2, .2) node{\tiny $i$};
                \draw (5.8, .2) node{\tiny $j$};

                \draw (9.2, .2) node{\tiny $j$};
                \draw (7.8, .2) node{\tiny $i$};
    
                \draw[->, thick] (8,0) .. controls (8,.5) and (9,.75) .. (9,1.25);
                \draw[->, thick] (9,0) .. controls (9,.5) and (8,.75) .. (8,1.25);
                
            \end{tikzpicture}
        \end{center}
    
    We often refer to the monoidal product as the tensor or as horizontal composition. We are not yet making any claims about topological properties of the diagrams associated to morphisms. For the moment, the diagrams are simply a visual aid.

    Before describing the relations, we introduce some notation.
    
    For each $i\in I$ and $m\in \N$, we define $X_i^m:E_i\rightarrow E_i$ as the $m$-fold composition of $X_i$ with itself. For the corresponding diagram, we mark the dot with an $m$. 
        \begin{center}
            \begin{tikzpicture}
    
                \draw (-.2,.2) node {\tiny $i$};
                \draw (0, .75) node{$\bullet$};
                \draw (.2, .75) node{\tiny $m$};
                \draw[->, thick] (0,0) -- (0,1.5);
            \end{tikzpicture}
        \end{center}
    For a more general polynomial in several variables $f\in \kf[x_1,\dots x_n]$ and for each word $E_{i_1}\dots E_{i_n}$, there is an element $f\in \text{End}_{\mathcal{A}_2(Q)}(E_{i_1}\dots E_{i_n})$, where $x_j^l$ corresponds to the morphism $E_{i_1}\dots X_{i_j}^l\dots E_{i_n}$. We denote this diagrammatically as 
    \begin{center}
            \begin{tikzpicture}
    
                \draw (-.2,.2) node {\tiny $i_1$};
                \draw (1,.2) node {\tiny $i_n$};
                \draw (0,.75) node {$\bullet$};
                \draw (1.2,.75) node {$\bullet$};
                \draw (.6,.5) node {$\dots$};
                \draw[->, thick] (0,0) -- (0,1.5);
                \draw[->, thick] (1.2,0) -- (1.2,1.5);
                \draw[-] (1.2,.75) -- (-.7,.75);
                \draw (-1.71,.75) node [inner sep=2pt, draw] {$f(x_1,\dots x_n)$};
            \end{tikzpicture}
        \end{center}

     A named morphism $D$ may be depicted in a diagram as its name inside a box and without any incident braids
    \begin{tikzpicture}
    \draw (4.17,.75) node [draw] {$D$};
    \end{tikzpicture}
    if it is more complex than a generating morphism or a polynomial.
    
    The generating morphisms are required to satisfy the following defining relations.
    \begin{itemize}
    \item (Monoidal condition) For any objects $X,Y$, we have $\text{Id}_X \text{Id}_Y=\text{Id}_{XY}$. This justifies the depiction of the monoidal product of morphisms as a horizontal composition of diagrams. 
    \item For each $i\in I$, $F_i$ is right dual to $E_i$, and the corresponding unit and counit are $\eta_i$ and $\epsilon_i$ respectively. 
            \begin{center}
                \begin{tikzpicture}
                    \draw [->, thick] (0,1.2) to (0,.2) .. controls ++(0,-.5) and ++(0,-.5) .. (1,.2) to (1,.8) .. controls ++(0,.5) and ++(0,.5)..(2,.8) to (2,-.2);
                    \draw (-0.2,.8) node[above]{\tiny $i$};
                    \draw (3, .375) node[above]{$=$};
                    \draw [->, thick] (4,1.2) to (4,-.2);  \draw (3.8,.8) node[above]{\tiny $i$};

                \draw [->, thick] (6,-.2) to (6,.8) .. controls ++(0,.5) and ++(0,.5) .. (7,.8) to (7,.2) .. controls ++(0,-.5) and ++(0,-.5)..(8,.2) to (8,1.2);
                \draw (5.8,.1) node[above]{\tiny $i$};
                \draw (9, .375) node[above]{$=$};
                \draw [<-, thick] (10,1.2) to (10,-.2);
                \draw (9.8,.1) node[above]{\tiny $i$};
                \end{tikzpicture}
            \end{center}

    \item The KLR relations
    \[T_{ij}\circ X_iE_j-E_iX_j\circ T_{ij}=X_jE_i\circ T_{ij}-T_{ij}\circ E_iX_j=\delta_{ij}E_iE_j\]

        \begin{center}
            \begin{tikzpicture}
                \draw (1.2, .2) node{\tiny $j$};
                \draw (-.2, .2) node{\tiny $i$};
                \draw (0, .1)
                node{$\bullet$};
                \draw[->, thick] (0,0) .. controls (0,.5) and (1,.75) .. (1,1.25);
                \draw[->, thick] (1,0) .. controls (1,.5) and (0,.75) .. (0,1.25);

                \draw (1.5,.75)
                node{$-$};

                \draw (3.2, .2) node{\tiny $j$};
                \draw (1.8, .2) node{\tiny $i$};
                \draw (2.95, 1.0)
                node{$\bullet$};
                \draw[->, thick] (2,0) .. controls (2,.5) and (3,.75) .. (3,1.25);
                \draw[->, thick] (3,0) .. controls (3,.5) and (2,.75) .. (2,1.25);

                \draw (3.5,.75)
                node{$=$};

                \draw (5.2, .2) node{\tiny $j$};
                \draw (3.8, .2) node{\tiny $i$};
                \draw (4.05, 1.0)
                node{$\bullet$};
                \draw[->, thick] (4,0) .. controls (4,.5) and (5,.75) .. (5,1.25);
                \draw[->, thick] (5,0) .. controls (5,.5) and (4,.75) .. (4,1.25);

                \draw (5.5,.75)
                node{$-$};

                \draw (7.2, .2) node{\tiny $j$};
                \draw (5.8, .2) node{\tiny $i$};
                \draw (7, .1)
                node{$\bullet$};
                \draw[->, thick] (6,0) .. controls (6,.5) and (7,.75) .. (7,1.25);
                \draw[->, thick] (7,0) .. controls (7,.5) and (6,.75) .. (6,1.25);

                \draw (7.75,.75)
                node{$=\delta_{ij}$};

                \draw[->, thick] (8.25,0) to (8.25,1.25);
                \draw[->, thick] (9.25,0) to (9.25,1.25);
            \end{tikzpicture}
        \end{center}
        \[T_{ij}\circ T_{ji}=Q_{ij}(E_jX_i,X_jE_i)\]
        \begin{center}
        \begin{tikzpicture}
                \draw (.95, .2) node{\tiny $i$};
                \draw (-.2, .2) node{\tiny $j$};
                \draw[->, thick] (0,0) .. controls (0,.25) and (.75,.5) .. (.75,.75);
                \draw[->, thick] (.75,0) .. controls (.75,.25) and (0,.5) .. (0,.75);
                \draw[->, thick] (0,.75) .. controls (0,1) and (.75,1.25) .. (.75,1.5);
                \draw[->, thick] (.75,.75) .. controls (.75,1) and (0,1.25) .. (0,1.5);

                \draw (1.5,.75)
                node{$=$};

                \draw[->, thick] (2,0) to (2,1.5);
                \draw[->, thick] (2.75,0) to (2.75,1.5);
                \draw (2.95, .2) node{\tiny $i$};
                \draw (1.8, .2) node{\tiny $j$};
                \draw (2.75, .75) node{$\bullet$};
                \draw (2,.75) node{$\bullet$};
                \draw[-] (2,.75) -- (3.25,.75);
                \draw (4.17,.75) node [inner sep=2pt, draw] {$Q_{ij}(x_2,x_1)$};
                
            \end{tikzpicture}
        \end{center}
        
        \[ T_{jk}E_i\circ E_jT_{ik}\circ T_{ij}E_k- E_kT_{ij}\circ T_{ik}E_j \circ E_iT_{jk}=\delta_{i,k}\frac{Q_{ij}(X_iE_j,E_iX_j)E_i-E_iQ_{ij}(E_jX_i,X_jE_i)}{X_iE_jE_i-E_iE_jX_i}\]

        \begin{center}
        \begin{tikzpicture}
            \draw (.85, .2) node{\tiny $j$};
            \draw (-.2, .2) node{\tiny $i$};
            \draw (1.65, .2) node{\tiny $k$};
            \draw[->, thick] (0,0) .. controls (0,.45) and (1.5,1.8) .. (1.5,2.25);
            \draw[->, thick] (1.5,0) .. controls (1.5,.45) and (0,1.8) .. (0,2.25);
            \draw[->, thick] (.75,0) .. controls (.75,.25) and (0,.75) .. (0,1.125) .. controls (0,1.5) and (.75,2.0) .. (.75,2.25);

            \draw (2,1.125) node{$-$};
            
            \draw (3.2, .2) node{\tiny $j$};
            \draw (2.3, .2) node{\tiny $i$};
            \draw (4.15, .2) node{\tiny $k$};
            \draw[->, thick] (2.5,0) .. controls (2.5,.45) and (4,1.8) .. (4,2.25);
            \draw[->, thick] (4,0) .. controls (4,.45) and (2.5,1.8) .. (2.5,2.25);
            \draw[->, thick] (3.25,0) .. controls (3.25,.25) and (4,.75) .. (4,1.125) .. controls (4,1.5) and (3.25,2.0) .. (3.25,2.25);

            \draw (4.5, 1.125) node {$=$};

            \draw[->, thick] (5,0)--(5,2.25);
            \draw[->, thick] (5.75,0)--(5.75,2.25);
            \draw[->, thick] (6.5,0)--(6.5,2.25);
            \draw (4.85, .2) node{\tiny $i$};
            \draw (5.55, .2) node{\tiny $j$};
            \draw (6.65, .2) node{\tiny $k$};
            \draw (5,1.125) node{$\bullet$};
            \draw (5.75,1.125) node{$\bullet$};
            \draw (6.5,1.125) node{$\bullet$};
            \draw [-] (5,1.125)--(7,1.125);
            \draw (8.81,1.125) node[inner sep=2pt, draw]{$\delta_{i,k}\frac{Q_{ij}(x_1,x_2)-Q_{ij}(x_3,x_2)}{x_1-x_3}$};
        \end{tikzpicture}
        \end{center}
    \end{itemize}

\end{defn}

It is important to note that the $E_i$ and $F_i$ are only dual on one side unlike the $E_i$ and $F_i$ in the categorification of the full idempotented quantum groups of \cite{kholau3} and \cite{qha}.
\begin{rem}
   An idempotented version of $\mathcal{A}_2(Q)$ is used implicitly in the categorification of the full idempotented quantum groups. 
\end{rem}

We introduce more notation for certain common morphisms. 
\begin{defn}
    Define Seq as in Definition \ref{defn:seq} for the set $I$. Given $\textbf{i}\in$ Seq, we associate objects $E_\textbf{i}$ of $\mathcal{A}_2(Q)$ via $i\rightarrow E_i$ for each $i\in I$ and concatenation of sequences corresponds to monoidal product of objects. We similarly define the $F_{\textbf{i}}$.
\end{defn} 

\begin{defn}
    \begin{itemize}
        \item For each $i\in I$ and $m\in \N$, we define with a slight abuse of notation $X_i^m:F_i\rightarrow F_i$ as the morphism. 
        \begin{center}
            \begin{tikzpicture}
    
                \draw (-.2,1.3) node {\tiny $i$};
                \draw (0, .75) node{$\bullet$};
                \draw (.2,.75) node{\tiny $m$};
                \draw[<-, thick] (0,0) -- (0,1.5);

                \draw (1,.5) node[above]{$\coloneqq$};
                \draw[-, thick] (2,0).. controls ++(0,-.5) and ++(0, -.5) .. (3,0);
                \draw (3, .75) node{$\bullet$};
                \draw (3.2,.75) node{\tiny $m$};
                \draw[->, thick] (3,0) -- (3,1.5);
                \draw[-, thick] (3,1.5).. controls ++(0,.5) and ++(0, .5) .. (4,1.5);

                \draw[->, thick] (2,1.5) to (2,0);
                \draw[->, thick] (4,1.5) to (4,0);

                \draw(1.8,1.3) node{\tiny $i$};
            \end{tikzpicture}
        \end{center}

        \item Fix nonempty $\textbf{i}\in $ Seq and denote $n\coloneqq |\textbf{i}|$. Fix $\sigma\in S_n$. Then for any fixed reduced presentation $\sigma=s_{i_k}\dots s_{i_1}$, we define the morphism $T_\sigma:E_{\textbf{i}}\rightarrow E_{\sigma(\textbf{i})}$ as the morphism $E_{\textbf{i}_{\leq i_k}}T_{i_k}E_{\textbf{i}_{\geq i_{k}+1}}\circ \dots E_{\textbf{i}_{\leq i_1}}T_{i_1}E_{\textbf{i}_{\geq i_{1}+1}}$. Although these morphisms depend on $\textbf{i}$ and on a reduced presentation for $\sigma$, we will only use this notation once these choices are fixed. 
        
        \item For each $i,j\in I$, we define $\sigma_{ij}:E_iF_j\rightarrow F_jE_i$ as the morphism $\sigma_{ij}\coloneqq F_jE_i\epsilon_j \circ F_jT_{ji}F_j \circ \eta_j E_i F_j$.
        \begin{center}
            \begin{tikzpicture}
                \draw (1.25, .2) node{\tiny $j$};
                \draw (-.2, .2) node{\tiny $i$};
        
                \draw[->, thick] (0,0) .. controls (0,.5) and (1,1) .. (1,1.5);
                \draw[<-, thick] (1,0) .. controls (1,.5) and (0,1) .. (0,1.5);

                \draw (2,.5) node{$\coloneqq$};

                \draw[-, thick] (3,0).. controls ++(0,-.5) and ++(0,-.5) .. (4,0);
        
                \draw[->, thick] (4,0) .. controls (4,.5) and (5,1) .. (5,1.5);
                \draw[->, thick] (5,0) .. controls (5,.5) and (4,1) .. (4,1.5);

                \draw[-, thick] (5,1.5).. controls ++(0,.5) and ++(0,.5) .. (6,1.5);

                \draw[->, thick] (3,1.5) to (3,0);
                \draw[->, thick] (6,1.5) to (6,0);

                \draw (2.8,1.3) node{\tiny $j$};
                \draw (6.2,.3) node{\tiny $j$};
                \draw (5.2,.2) node{\tiny $i$};
            \end{tikzpicture}
        \end{center}

        \item For each $i,j\in I$, we define with a slight abuse of notation $T_{ij}:F_iF_j\rightarrow F_jF_i$ as the morphism $T_{ij}\coloneqq F_jE_i\epsilon_j \circ F_j\sigma_{ji}F_j \circ \eta_j E_i F_j$.
        \begin{center}
            \begin{tikzpicture}
                \draw (1.25, .2) node{\tiny $j$};
                \draw (-.2, .2) node{\tiny $i$};
        
                \draw[<-, thick] (0,0) .. controls (0,.5) and (1,1) .. (1,1.5);
                \draw[<-, thick] (1,0) .. controls (1,.5) and (0,1) .. (0,1.5);

                \draw (2,.5) node{$\coloneqq$};

                \draw[-, thick] (3,0).. controls ++(0,-.5) and ++(0,-.5) .. (4,0);
        
                \draw[->, thick] (4,0) .. controls (4,.5) and (5,1) .. (5,1.5);
                \draw[<-, thick] (5,0) .. controls (5,.5) and (4,1) .. (4,1.5);

                \draw[-, thick] (5,1.5).. controls ++(0,.5) and ++(0,.5) .. (6,1.5);

                \draw[->, thick] (3,1.5) to (3,0);
                \draw[->, thick] (6,1.5) to (6,0);

                \draw (2.8,1.3) node{\tiny $j$};
                \draw (6.2,.3) node{\tiny $j$};
                \draw (5.2,.2) node{\tiny $i$};
            \end{tikzpicture}
        \end{center}

        \item Fix nonempty $\textbf{i}\in $ Seq and denote $n\coloneqq |\textbf{i}|$. Fix $\sigma\in S_n$. Then for any fixed reduced presentation $\sigma=s_{i_k}\dots s_{i_1}$, we define with a slight abuse of notation the morphism $T_\sigma:F_{\textbf{i}}\rightarrow F_{\sigma(\textbf{i})}$ as the morphism $F_{\textbf{i}_{\leq i_k}}T_{i_k}F_{\textbf{i}_{\geq i_{k}+1}}\circ \dots F_{\textbf{i}_{\leq i_1}}T_{i_1}F_{\textbf{i}_{\geq i_{1}+1}}$. Although these morphisms depend on $\textbf{i}$ and on a reduced presentation for $\sigma$, we will only use this notation once these choices are fixed. 
    \end{itemize}
\end{defn}

A few relations involving these additional morphisms can be deduced quickly from the definitions. We will not need these, but they are convenient for calculations and for visualization. We depict only the simplest of these extra relations and leave the rest to the reader. Note that each of these relations shows that certain diagrams represent the same morphism if they are equal up to ambient isotopy and perhaps certain error terms with fewer crossings.

\begin{lem}\label{lem:easy_qbos_rels}
    The following relations of morphisms hold in $\mathcal{A}_2(Q)$ for any $i,j\in I$ and $m,n\in \N$.
        \begin{equation*}            \begin{tikzpicture}
                    \draw (-0.2, .7) node[above]{\tiny $i$};
                    \draw[->, thick] (0,1) to (0,.75).. controls ++(0,-.75) and ++(0, -.75) .. (1,.75) to (1,1);
                    \draw (.96,.5) node{$\bullet$};

                    \draw (2,.75) node{$=$};

                    \draw (2.8, .7) node[above]{\tiny $i$};
                    \draw[->, thick] (3,1) to (3,.75).. controls ++(0,-.75) and ++(0, -.75) .. (4,.75) to (4,1);
                    \draw (3.04,.5) node{$\bullet$};

                    \draw (5.3, .4) node[above]{\tiny $i$};
                    \draw[->, thick] (5.5,.5) to (5.5,.75).. controls ++(0,.75) and ++(0, .75) .. (6.5,.75) to (6.5,.5);
                    \draw (6.47,.95) node{$\bullet$};

                    \draw (7.5,.75) node{$=$};

                    \draw (8.3, .4) node[above]{\tiny $i$};
                    \draw[->, thick] (8.5,.5) to (8.5,.75).. controls ++(0,.75) and ++(0, .75) .. (9.5,.75) to (9.5,.5);
                    \draw (8.54,.95) node{$\bullet$};

                \end{tikzpicture}
        \end{equation*}

    \begin{equation*}            \begin{tikzpicture}
                    \draw (-0.2, .4) node[above]{\tiny $i$};
                    \draw[->, thick] (0,.5) to (0,.75).. controls ++(0,.75) and ++(0, .75) .. (1,.75) to (1,.5);
                    \draw[->, thick] (.5,.5) .. controls ++(0,.5) and ++(0,-.5) .. (1,1.5);
                    \draw (.4, .4) node[above]{\tiny $j$};
                    
                    \draw (2,.75) node{$=$};

                    \draw (2.8, .4) node[above]{\tiny $i$};
                    \draw[->, thick] (3,.5) to (3,.75).. controls ++(0,.75) and ++(0, .75) .. (4,.75) to (4,.5);
                    \draw[->, thick] (3.5,.5) .. controls ++(0,.5) and ++(0,-.5) .. (3,1.5);
                    \draw (3.4, .4) node[above]{\tiny $j$};

                    \draw (5.3, .4) node[above]{\tiny $i$};
                    \draw[->, thick] (5.5,.5) to (5.5,.75).. controls ++(0,.75) and ++(0, .75) .. (6.5,.75) to (6.5,.5);
                    \draw[<-, thick] (6,.5) .. controls ++(0,.5) and ++(0,-.5) .. (6.5,1.5);
                    \draw (6.6, 1.15) node[above]{\tiny $j$};
                    
                    \draw (7.5,.75) node{$=$};

                    \draw (8.3, .4) node[above]{\tiny $i$};
                    \draw[->, thick] (8.5,.5) to (8.5,.75).. controls ++(0,.75) and ++(0, .75) .. (9.5,.75) to (9.5,.5);
                    \draw[<-, thick] (9,.5) .. controls ++(0,.5) and ++(0,-.5) .. (8.5,1.5);
                    \draw (8.4, 1.15) node[above]{\tiny $j$};
                    
                \end{tikzpicture}
        \end{equation*}

        \begin{equation*}                    \begin{tikzpicture}
                    \draw (-0.2, .7) node[above]{\tiny $i$};
                    \draw (.4, .7) node[above]{\tiny $j$};
                    \draw[->, thick] (0,1) to (0,.75).. controls ++(0,-.75) and ++(0, -.75) .. (1,.75) to (1,1);
                    \draw[<-, thick] (0,0).. controls ++(0,.5) and ++(0,-.5) .. (.5,1);

                    \draw (2,.75) node{$=$};

                    \draw (2.8, .7) node[above]{\tiny $i$};
                    \draw[->, thick] (3,1) to (3,.75).. controls ++(0,-.75) and ++(0, -.75) .. (4,.75) to (4,1);
                    \draw[<-, thick] (4,0).. controls ++(0,.5) and ++(0,-.5) .. (3.5,1);
                    \draw (3.4, .7) node[above]{\tiny $j$};

                    \draw (5.3, .7) node[above]{\tiny $i$};
                    \draw (5.3, -.1) node[above]{\tiny $j$};
                    \draw[->, thick] (5.5,1) to (5.5,.75).. controls ++(0,-.75) and ++(0, -.75) .. (6.5,.75) to (6.5,1);
                    \draw[->, thick] (5.5,0).. controls ++(0,.5) and ++(0,-.5) .. (6,1);

                    \draw (7.5,.75) node{$=$};

                    \draw (8.3, .7) node[above]{\tiny $i$};
                    \draw[->, thick] (8.5,1) to (8.5,.75).. controls ++(0,-.75) and ++(0, -.75) .. (9.5,.75) to (9.5,1);
                    \draw[->, thick] (9.5,0).. controls ++(0,.5) and ++(0,-.5) .. (9,1);
                    \draw (9.7, -.1) node[above]{\tiny $j$};
        \end{tikzpicture}
    \end{equation*}

    \begin{equation*}
            \begin{tikzpicture}
                \draw (1.25, .2) node{\tiny $j$};
                \draw (-.2, .2) node{\tiny $i$};
        
                \draw[->, thick] (0,0) .. controls (0,.5) and (1,1) .. (1,1.5);
                \draw[<-, thick] (1,0) .. controls (1,.5) and (0,1) .. (0,1.5);

                \draw (.85, 1.1) node{$\bullet$};
                \draw (1.5,.75) node{$-$};

                \draw (3.25, .2) node{\tiny $j$};
                \draw (1.8, .2) node{\tiny $i$};
        
                \draw[->, thick] (2,0) .. controls (2,.5) and (3,1) .. (3,1.5);
                \draw[<-, thick] (3,0) .. controls (3,.5) and (2,1) .. (2,1.5);

                \draw (2.2, .4) node{$\bullet$};

                \draw (3.5,.75) node{$=$};

                \draw (5.25, .2) node{\tiny $j$};
                \draw (3.8, .2) node{\tiny $i$};
        
                \draw[->, thick] (4,0) .. controls (4,.5) and (5,1) .. (5,1.5);
                \draw[<-, thick] (5,0) .. controls (5,.5) and (4,1) .. (4,1.5);

                \draw (5.5,.75) node{$-$};

                \draw (7.25, .2) node{\tiny $j$};
                \draw (5.8, .2) node{\tiny $i$};
        
                \draw[->, thick] (6,0) .. controls (6,.5) and (7,1) .. (7,1.5);
                \draw[<-, thick] (7,0) .. controls (7,.5) and (6,1) .. (6,1.5);
                \draw (4.15,1.1) node{$\bullet$};
                \draw (6.82,.4) node{$\bullet$};

                \draw (7.5, .75) node{$=\delta_{i,j}$};

                \draw (8.1, 1.4) node{\tiny $i$};
                \draw (8.1, .1) node{\tiny $i$};
                
                \draw[->, thick] (8.2,0) .. controls (8.2,.5) and (9.2,.5) .. (9.2,0);
                \draw[->, thick] (8.2,1.5) .. controls (8.2,1) and (9.2,1) .. (9.2,1.5);
            \end{tikzpicture}
    \end{equation*}

\end{lem}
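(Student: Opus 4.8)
The plan is to derive every identity in the statement directly from the defining data of $\mathcal{A}_2(Q)$ — the right-duality (snake/zigzag) axioms for the adjunction $E_i \dashv F_i$ with unit $\eta_i$ and counit $\epsilon_i$, together with the KLR relations for the dots $X_i$ and the crossings $T_{ij}$ — by unfolding the definitions of the auxiliary morphisms. The key observation is that each morphism living on an $F$-strand (the dotted $X_i^m \colon F_i \to F_i$, the mixed crossing $\sigma_{ij}$, and the induced $T_{ij}$ on $F$-strands) is \emph{defined} as a zigzag built from an $\eta$, an $\epsilon$, and a morphism on the corresponding $E$-strand. Consequently, after expanding definitions, each claimed identity becomes a composite of generating morphisms that can be simplified purely with the snake relations and the genuine $E$-strand relations.

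First I would dispatch the two dot-sliding identities in the first display. Expanding $X_i^m \colon F_i \to F_i$ as the zigzag that carries its dot on the intermediate $E_i$-strand, the left-hand side of each identity becomes $\eta_i$ (resp. $\epsilon_i$) precomposed with a dotted cup (resp. postcomposed with a dotted cap); a single application of the appropriate snake relation straightens this and deposits the dot on the opposite leg, which is exactly the right-hand side. In effect these identities record only that a dot may be pushed across a cup or cap, a fact forced the instant one defines a dot on an $F$-strand through the duality.

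The strand-sliding identities in the second and third displays are of the same character. Here I would expand the relevant cup/cap-with-a-through-strand using the definitions of $\sigma_{ij}$ (and of the $F$-strand crossings) in terms of $\eta$, $\epsilon$, and $T$, and then invoke the snake relations together with the naturality of the genuine crossings $T_{ij}$ to slide the through-strand from one side of the cap or cup to the other. No new relation enters beyond the ambient-isotopy moves already licensed by the duality axioms, so each of these reduces to a short bookkeeping of composites.

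I expect the main obstacle to be the final display, the mixed dot-slide commutator for $\sigma_{ij} \colon E_iF_j \to F_jE_i$. The plan is to substitute $\sigma_{ij} = F_jE_i\,\epsilon_j \circ F_j\,T_{ji}\,F_j \circ \eta_j\,E_i\,F_j$ and compute the difference of a dot placed at the top versus the bottom of the $E_i$-strand, and separately of the $F_j$-strand. Sliding the dot through the genuine crossing $T_{ji}$ inside this expression is governed by the KLR dot-slide relation $T_{ji} \circ X_j E_i - E_j X_i \circ T_{ji} = \delta_{ij}\,E_j E_i$; after the surrounding $\eta_j$ and $\epsilon_j$ are applied and one snake relation is used, the error term $\delta_{ij}\,E_jE_i$ collapses to precisely the $\delta_{ij}$ cup-cap morphism on the right-hand side. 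The two equalities in the display then correspond to running this computation along the $E_i$-strand and along the $F_j$-strand. The delicate point, and where I would take the most care, is tracking which leg of the crossing carries the surviving dot after each zigzag straightening, so that both commutators are seen to land on the same $\delta_{ij}$ term with the correct sign and dot placement.
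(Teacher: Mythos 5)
Your proposal is correct and is exactly the argument the paper intends: the paper's proof consists of the single remark that the relations follow ``directly from the definitions via the usual adjunction techniques'' (citing Brundan for details), and your plan — expand the $F$-strand morphisms $X_i^m$, $\sigma_{ij}$, and the $F$-crossings as zigzags through $\eta_i$, $\epsilon_i$, and the $E$-strand generators, then simplify with the interchange law, the snake relations, and (for the last display) the KLR dot-slide relation whose $\delta_{ij}$ error term collapses to $\eta_i\circ\epsilon_i$ — is precisely that computation carried out. No gaps; the only point worth noting is that the two equalities in the final display use the two different KLR dot-slide relations (one for each leg), which your bookkeeping remark already anticipates.
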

\begin{proof}
    These relations can be proven directly from the definitions via the usual adjunction techniques. See \cite{brundan2km} for a more detailed explanation.
\end{proof}

The objects of $\mathcal{A}_2(Q)$ can be viewed as words in the alphabet $I$ where each letter has a formal sign attached. We introduce some notation to make this precise.

\begin{defn}
    Denote by $\pm I$ the set of all elements in $I$ with formal signs attached. Denote by SSeq the set of \emph{signed sequences} of $I$, i.e. finite sequences in $\pm I$. We allow the empty sequence. Seq is naturally a subset of SSeq, where we pick a positive sign for each element of $I$. We extend all of the notation of Definition \ref{defn:seq} to this context. For $\textbf{i}\in$ SSeq, we say that a \emph{positive entry} of $\textbf{i}$ is an entry $\textbf{i}_n$ that has a positive sign. We likewise define negative entries. For $\textbf{i}\in$ SSeq, denote by $-\textbf{i}$ the sequence where all entries have their signs flipped.
    
    Every object of $\mathcal{A}_2(Q)$ determines a signed sequence as follows. We define a function $sseq:\text{Ob}(\mathcal{A}_2(Q))\rightarrow \text{SSeq}$ via $sseq(E_i)= i$, $sseq(F_i)= -i$, and a monoidal product of symbols maps to a concatenation of sequences. The function $sseq$ is a bijection from $\text{Ob}(\mathcal{A}_2(Q))$ to SSeq. So, for $\textbf{i}\in $ SSeq, denote by $E_{\textbf{i}}$ the corresponding object of $\mathcal{A}_2(Q)$. We often identify $\text{Ob}(\mathcal{A}_2(Q))$ with SSeq. We also often identify an entry of a sequence $\textbf{i}$ with its corresponding tensor component in $E_{\textbf{i}}$. 

\end{defn}

Any morphism in $\mathcal{A}_2(Q)$ that can be described as a vertical and horizontal composite of the generating morphisms has some diagram associated to it. For such a morphism $D:E_\textbf{i}\rightarrow E_\textbf{j}$ with $\textbf{i},\textbf{j}\in $ SSeq, mark $|\textbf{i}|$ points on $\R\times \{0\}$ in order according to the sequence $\textbf{i}$ and mark $|\textbf{j}|$ points on $\R\times \{1\}$ in order according to the sequence $\textbf{j}$. The diagram associated to $D$ is the decorated image of an immersed 1-dimensional manifold with boundary inside $\R\times [0,1]$. We write the source of this immersion as a set of braids $[0,1]^ {\coprod(|\textbf{ij}|)/2}\sqcup [0,1]^{\coprod n}$ into $\R\times [0,1]$, where the endpoints of each of the first $(|\textbf{ij}|)/2$ intervals are mapped onto the marked points on $\R\times \{0,1\}$. We also allow for now decorated immersions from an extra component $[0,1]^{\coprod n}$ where for each of the $n$ intervals, the images of the endpoints coincide. These correspond to closed loops in our diagram, although we eventually show that no closed loops are possible i.e. we may take $n=0$. The image of each copy of $[0,1]$ will be referred to as a $\emph{braid}$. Each braid is labeled by a vertex $i$ and is oriented, and the endpoints of each braid are compatible with the label and orientation. We refer to the image of any subinterval $[a,b]$ of a braid as a \emph{strand}. Any number of dots may appear on any open interval of any braid away from local maxima, minima, and crossings.

\begin{center}
    \begin{tikzpicture}
        \draw[-,thick] (-.5,0)--(2.5,0);
        \draw[-,thick] (-.5,3)--(2.5,3);
        \draw[->,thick] (0,0) .. controls (0,1.25) and (2,1.25) .. (2,0);
        \draw[->,thick] (2,3) .. controls (2,1.75) and (0,1.75) .. (0,3);
        \draw[->, thick] (1,0) .. controls (1,.5) and (.3,1) .. (.3,1.5) .. controls (.3,2) and (1,2.5)..(1,3);

        \draw (.05,.3) node{$\bullet$};
        \draw (1.95,.3) node{$\bullet$};
        \draw (.93, 2.7) node{$\bullet$};

        \draw (-.2,.2) node{\tiny$i$};
        \draw (.8,.2) node{\tiny$j$};
        \draw (2.2,.2) node{\tiny$i$};
        \draw (-.2,2.8) node{\tiny$k$};
        \draw (.7,2.8) node{\tiny$j$};
        \draw (2.2,2.8) node{\tiny$k$};

        \draw[decorate,decoration={brace,amplitude=5pt,raise=-1ex}](0,3.35) -- (2, 3.35);
        \draw (1, 3.75) node{$\textbf{j}=(k, j,-k)$};

        \draw[decorate,decoration={brace,amplitude=5pt,mirror, raise=-1ex}](0,-.35) -- (2, -.35);
        \draw (1, -.75) node{$\textbf{i}=(i, j, -i)$};
    \end{tikzpicture}
\end{center}

We do not yet claim anything about the topological properties of these diagrams, i.e. about the presence of self-intersections or loops, although we will prove that these cannot be realized by morphisms in our category. We will also show that the precise spacing of entries on the boundary or the precise bending of braids to accommodate the $\eta_i$ and $\epsilon_i$ is not relevant, so we avoid detailing this.

\begin{defn}
     For any $\textbf{i}$, $\textbf{j}\in $ SSeq, we say that an $\mathcal{A}_2$-diagram from $E_{\textbf{i}}$ to $E_{\textbf{j}}$ is any diagram that can be obtained as the diagram associated to the morphism 
     \[A_{i_k}f_{i_k}B_{i_k}\circ \dots \circ A_{i_1}f_{i_1}B_{i_1},\]
     where each $f_{i_j}$ is among our generating morphisms, the $A_{i_j}$ and $B_{i_j}$ can be any objects, the domain of $A_{i_1}f_{i_1}B_{i_1}$ is $E_{\textbf{i}}$, and the codomain of $A_{i_k}f_{i_k}B_{i_k}$ is $E_{\textbf{j}}$. We say that the \emph{height} of the diagram associated to such a decomposition is $k$. Note that morphisms may have multiple such decompositions of potentially different heights. There is a natural function from the set of $\mathcal{A}_2$-diagrams from $X$ to $Y$ to the Hom space $\Hom_{\mathcal{A}_2(Q)}(X,Y)$, and we often identify an $\mathcal{A}_2$-diagram from $X$ to $Y$ with its image under this map. Since no two caps or cups can be at the same height in an $\mathcal{A}_2$-diagram, each braid in the diagram not connecting two entries of the target has a unique global maximum within $\R\times [0,1]$, and likewise each braid not connecting two entries of the source has a unique global minimum.
\end{defn}
For morphisms $f:A\rightarrow B$ and $g:X\rightarrow Y$, we can always factor $fg=fY\circ Ag$. So, the morphisms associated to all of the $\mathcal{A}_2$-diagrams from $X$ to $Y$ span $\Hom_{\mathcal{A}_2(Q)}(X,Y)$.

The category $\mathcal{A}_2(Q)$ has an interesting symmetry that will descend to the quantum boson category. Let $\mathcal{A}_2(Q)^{co}$ denote the monoidal category that is isomorphic to $\mathcal{A}_2(Q)$ as categories and that has reversed monoidal product. 

\begin{defn}
    
There is an isomorphism $\phi$ of monoidal categories \begin{align*}
\phi:\mathcal{A}_2(Q)&\rightarrow  \mathcal{A}_2(Q)^{co}\\
\phi(E_i)&=F_i,\\
\phi(F_i)&=E_i,\\
\phi(X_i:E_i\rightarrow E_i)&=X_i:F_i\rightarrow F_i,\\
\phi(T_{ij}:E_iE_j\rightarrow E_jE_i)&=T_{ji}:F_jF_i\rightarrow F_iF_j,\\
\phi(\eta_i)&=\eta_i,\\
\phi(\epsilon_i)&=\epsilon_i.
\end{align*}
In general, $\phi(E_\textbf{i})=E_{-\overline{\textbf{i}}}$. Informally, the functor $\phi$ flips an $\mathcal{A}_2$-diagram over the $y$-axis and inverts the orientation of each braid. We see that $\phi$ is an isomorphism because its square is the identity.
\end{defn}

\begin{center}
\begin{tikzpicture}
        \draw (-.85,.5) node{$\phi$};
        \draw (-.6,.5) node{\Huge$($};
        
        \draw[->,thick] (0,0) .. controls (0,.9) and (2,.9) .. (2,0);
        \draw[->,thick] (1,0) .. controls (1,.5) and (0,.5)..(0,1);
        \draw (-.2,.1) node{\tiny$i$};
        \draw (.8,.1) node{\tiny$j$};
        \draw (2.2,.1) node{\tiny$i$};
        \draw (.1,.3) node{$\bullet$};

        \draw (2.75,.5) node{\Huge$)$ \normalsize$=$};

        \draw[->,thick] (3.5,0) .. controls (3.5,.9) and (5.5,.9) .. (5.5,0);
        \draw[<-,thick] (4.5,0) .. controls (4.5,.5) and (5.5,.5)..(5.5,1);
        \draw (3.3,.1) node{\tiny$i$};
        \draw (4.3,.1) node{\tiny$j$};
        \draw (5.7,.1) node{\tiny$i$};
        \draw (5.4,.3) node{$\bullet$};
\end{tikzpicture}
\end{center}
There are other symmetries from $\mathcal{A}_2(Q)$ to its opposite (contravariant) category, but they do not descend to the quantum boson category, so we will not detail them.

We describe a 2-representation of $\mathcal{A}_2(Q)$ that will be used in the next subsections.

\begin{prop}\label{prop:a2_2rep_exists}
    There is a 2-representation of $\mathcal{A}_2(Q)$ on $\bigoplus_{\alpha} H_\alpha(Q)-\text{Mod}$ extending the right-multiplication 2-representation of Example \ref{ex:leftmult}.
\end{prop}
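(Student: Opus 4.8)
The plan is to use the fact that $\mathcal{A}_2(Q)$ is presented by generators and relations: to produce the desired 2-representation it suffices to give a strict $\kf$-linear monoidal functor $\rho\colon \mathcal{A}_2(Q)\to \mathrm{End}(\mathcal{D})$, where $\mathcal{D}=\bigoplus_\alpha H_\alpha(Q)\text{-Mod}$ and the monoidal product on $\mathrm{End}(\mathcal{D})$ is composition of endofunctors (so the monoidal unit $\1$ maps to $\mathrm{Id}_\mathcal{D}$). For this I assign an endofunctor to each generating object and a natural transformation to each generating morphism so that every defining relation holds, and then invoke the universal property. On $E_i$, $X_i$, and $T_{ij}$ I take exactly the assignment of Example \ref{ex:leftmult}: $\rho(E_i)$ is the induction functor $H_{\alpha+\alpha_i}1_{*i}\otimes_\alpha(-)$, and $\rho(X_i)$, $\rho(T_{ij})$ are right multiplication by $x_{|\alpha|+1}$ and $\tau_{|\alpha|+1}$. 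It remains to define $\rho(F_i)$, $\rho(\eta_i)$, $\rho(\epsilon_i)$ and to check the relations involving them together with the KLR relations.

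I would define $\rho(F_i)$ to be the restriction functor, assembled over $\alpha$ from $H_{\alpha+\alpha_i}\text{-Mod}\to H_\alpha\text{-Mod}$, $N\mapsto 1_{*i}N$, viewed as an $H_\alpha$-module via the corner embedding $H_\alpha\hookrightarrow 1_{*i}H_{\alpha+\alpha_i}1_{*i}$. Since $H_{\alpha+\alpha_i}1_{*i}$ is a direct summand of $H_{\alpha+\alpha_i}$ as a left module, the canonical map $f\mapsto f(1_{*i})$ gives an isomorphism $\mathrm{Hom}_{H_{\alpha+\alpha_i}}(H_{\alpha+\alpha_i}1_{*i},N)\cong 1_{*i}N$ of $H_\alpha$-modules, so $\rho(F_i)$ is the right adjoint of the induction functor $\rho(E_i)$, i.e. $E_i\dashv F_i$. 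I then take $\rho(\eta_i)$ and $\rho(\epsilon_i)$ to be the unit and counit of this adjunction: explicitly $\eta_i\colon M\to F_iE_iM$ is $m\mapsto 1_{*i}\otimes_\alpha m$, and $\epsilon_i\colon E_iF_iN\to N$ is $h\otimes_\alpha n\mapsto hn$. One checks these are well defined over $\otimes_\alpha$ and assemble, over all $\alpha$, into natural transformations of endofunctors of $\mathcal{D}$.

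With these assignments the relations split into two kinds. The KLR relations of the generators involve only the $E$-strands, hence only $\rho(X_i)$ and $\rho(T_{ij})$; these hold because right multiplication by $x_{|\alpha|+1}$ and $\tau_{|\alpha|+1}$ satisfies exactly the KLR relations of Definition \ref{klr} inside $H_\alpha(Q)$, precisely as in the right-multiplication representation of $\mathcal{A}_1(Q)$ recorded in Example \ref{ex:leftmult}. The one-sided duality relations asserting that $F_i$ is right dual to $E_i$ with unit $\eta_i$ and counit $\epsilon_i$ are exactly the two triangle identities $\rho(\epsilon_i E_i)\circ\rho(E_i\eta_i)=\mathrm{Id}_{E_i}$ and $\rho(F_i\epsilon_i)\circ\rho(\eta_i F_i)=\mathrm{Id}_{F_i}$, which hold automatically since $\rho(\eta_i)$ and $\rho(\epsilon_i)$ are the unit and counit of a genuine adjunction. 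The monoidal condition $\mathrm{Id}_X\mathrm{Id}_Y=\mathrm{Id}_{XY}$ is automatic for a strict monoidal functor.

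I expect the only nonformal step to be the adjunction bookkeeping of the previous paragraph: verifying that restriction really is the right adjoint of induction with the stated unit and counit, and that these natural transformations assemble coherently over all $\alpha$. This is standard induction–restriction theory for KLR algebras (the projectivity of $H_{\alpha+\alpha_i}1_{*i}$ makes the $\mathrm{Hom}$–tensor identification and the triangle identities a direct computation: $\epsilon_i$ is multiplication, $\eta_i$ inserts $1_{*i}$), so it is routine rather than a genuine obstacle; everything else is either inherited from the established representation of $\mathcal{A}_1(Q)$ or is a formal property of adjunctions. Having verified all defining relations in $\mathrm{End}(\mathcal{D})$, the universal property of $\mathcal{A}_2(Q)$ yields the strict $\kf$-linear monoidal functor $\rho$, which by construction extends the right-multiplication 2-representation of Example \ref{ex:leftmult}.
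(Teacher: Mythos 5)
Your proposal is correct and follows essentially the same route as the paper: extend the right-multiplication 2-representation of Example \ref{ex:leftmult} by letting $F_i$ act as the restriction functor (which, as you verify, is the right adjoint of induction), and take $\rho(\eta_i)$, $\rho(\epsilon_i)$ to be the unit and counit of that adjunction, so the one-sided duality relations become the triangle identities and the KLR relations are inherited from the $\mathcal{A}_1(Q)$ action. The paper phrases restriction as tensoring with the bimodule $1_{*i}H_{\alpha+\alpha_i}$ and gives the unit and counit as explicit bimodule maps, but this is the same construction as yours.
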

\begin{proof}
    We need only produce a right adjoint to the induction functor $E_i=H_{\alpha+\alpha_i}1_{*i}\otimes_\alpha -$. We see that the restriction $F_i=1_{*i}H_{\alpha+\alpha_i}\otimes_{\alpha+\alpha_i} -$ is the (unique) right adjoint. The counit $\epsilon_i$ is given by the bimodule homomorphism $H_{\alpha}1_{*i}\otimes_{\alpha-\alpha_i}1_{*i}H_\alpha \rightarrow H_{\alpha}$, $a\otimes b\rightarrow ab$, and the unit $\eta_i: H_{\alpha}\rightarrow 1_{*i}H_{\alpha}1_{*i}$ is the non-unital left inclusion of bimodules with $1_{\textbf{i}}a\rightarrow 1_{\textbf{i}i}a$.
\end{proof}
\subsection{Topological properties of morphisms}\label{subsec:topology}
We show that certain minimal $\mathcal{A}_2$-diagrams are determined by more basic combinatorial data. We introduce some more notation first.

\begin{defn}
    Given $\textbf{i},\textbf{j}\in$ SSeq, we say that an  $(\textbf{i},\textbf{j})$-\emph{pairing} is a perfect matching of the ordered sets of entries in $\textbf{i}$ and $\textbf{j}$ such that each entry is mapped to an entry with the same vertex label and compatible orientation. Note that if $|\textbf{ij}|$ is not even then there are no such pairings. 
\end{defn}

Any $\mathcal{A}_2$-diagram from $E_\textbf{i}$ to $E_\textbf{j}$ determines an $(\textbf{i},\textbf{j})$-pairing by matching the two endpoints of each braid. Note that not all pairings can arise in this way. For example, there is a nontrivial $(\emptyset,i\cdot -i)$-pairing, but we show below that $\Hom_{\mathcal{A}_2(Q)}(\1 , E_iF_i)=0$. Essentially, the possible orientations of each braid are restricted due to the fact that only certain oriented local maxima and minima are possible. We therefore denote by $p'(\textbf{i},\textbf{j})$ the subset of all $(\textbf{i},\textbf{j})$-pairings that can be realized by an $\mathcal{A}_2$-diagram between the corresponding objects. If $m\in p'(\textbf{i},\textbf{j})$ is the pairing associated to some $\mathcal{A}_2$-diagram $D$ from $E_\textbf{i}$ to $E_{\textbf{j}}$, then for each pair $(a,b)$ of $m$, we say that $D$ \emph{pairs} or \emph{connects} $a$ with $b$.

In what follows, we will use dashed lines in our diagrams to show that specific pairings are induced without describing the exact morphism. We may occasionally suppress vertex labels or irrelevant braids in our diagrams for visual clarity. 

The following lemma shows that certain pairings cannot arise from $\mathcal{A}_2$-diagrams. Note that all of the following forbidden pairings are possible in Khovanov, Lauda, and Rouquier's categorification of the idempotented quantum group since the $E_i$ and $F_i$ are two-sided adjoints here \cite{kholau3}\cite{qha}. 

\begin{lem}\label{lem:a2_forbidden_pairings}
    For $X$ and $Y\in \mathcal{A}_2(Q)$, and $D$ an $\mathcal{A}_2$-diagram from $X$ to $Y$, the corresponding pairing in $p'(sseq(X),sseq(Y))$ has none of the following forbidden pairs.
    \begin{enumerate}
        \item ``No leftwards caps" A negative entry $a$ of $sseq(X)$ paired with a positive entry $b$ of $sseq(X)$ such that $a$ is an earlier entry than $b$
        \begin{center}
            \begin{tikzpicture}
                \draw[->,thick,dashed] (1,0)..controls (1,.5) and (0,.5) .. (0,0);
                \draw[-,thick] (-.4,0)--(1.4,0);
            \end{tikzpicture}
        \end{center}
        \item ``No leftwards cups" A positive entry $a$ of $sseq(Y)$ paired with a negative entry $b$ of $sseq(Y)$ such that $a$ is an earlier entry than $b$

        \begin{center}
            \begin{tikzpicture}
                \draw[->,thick,dashed] (1,0)..controls (1,-.5) and (0,-.5) .. (0,0);
                \draw[-,thick] (-.4,0)--(1.4,0);
            \end{tikzpicture}
        \end{center}
        
        \item ``No leftwards crossings". A pair of entries $(a,b)$ of $sseq(X)$ and a pair of entries $(c,d)$ of $Y$ such that $a$ is paired with $d$, $b$ is paired with $c$, $a$ is negative, $b$ is positive, $a$ is an earlier entry in $sseq(X)$ than $b$, and $c$ is an earlier entry in $sseq(Y)$ than $d$.

        \begin{center}
            \begin{tikzpicture}        
                \draw[<-, thick,dashed] (0,0) .. controls (0,.5) and (1,1) .. (1,1.5);
                \draw[->, thick,dashed] (1,0) .. controls (1,.5) and (0,1) .. (0,1.5);
                \draw[-,thick] (-.4,0)--(1.4,0);
                \draw[-,thick] (-.4,1.5)--(1.4,1.5);

            \end{tikzpicture}
        \end{center}
    \end{enumerate}
\end{lem}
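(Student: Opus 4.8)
The plan is to prove the three prohibitions simultaneously by an induction that peels generating morphisms off the top of an $\mathcal{A}_2$-diagram, after first recording the local orientation data imposed by the generators. Write $\textbf{i}=sseq(X)$ and $\textbf{j}=sseq(Y)$. The first step is the purely local observation that, in any $\mathcal{A}_2$-diagram, the only caps are the rightward caps coming from some $\epsilon_i$ (an $E_i$-leg on the left oriented up into the cap, an $F_i$-leg on the right oriented down), the only cups are the rightward cups coming from some $\eta_i$ ($F_i$ on the left, $E_i$ on the right), and every transverse crossing is a crossing of two upward strands coming from some $T_{ij}$, while the dots $X_i$ create no extrema or crossings. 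This is immediate from the list of generating morphisms and is preserved under tensoring and composition, since those operations only juxtapose or stack diagrams and so introduce no new local maxima, minima, or double points.

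Next I would attach to every partial diagram $D'$ from $E_{\textbf{i}}$ to an intermediate slice $E_{\textbf{k}}$ the perfect matching it induces on the $|\textbf{i}|+|\textbf{k}|$ boundary points, whose pairs are of three kinds: bottom--bottom (cap strands), top--top (cup strands), and bottom--top (through strands). The orientation facts force the two endpoints of a through strand to carry the same sign, so the matching together with the signs is purely combinatorial data. I would then carry along the strengthened invariant $(\star)$: every bottom--bottom pair is rightward (positive entry on the left, negative on the right); every top--top pair is rightward (negative on the left, positive on the right); and no two through strands cross in the forbidden pattern of clause (3). The lemma itself is the case $E_{\textbf{k}}=Y$, where the three clauses of $(\star)$ are precisely the negations of the forbidden pairs (1), (2), and (3).

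I would induct on the height of $D'$ by writing $D=g\circ D'$ for the topmost generator $g$. The cases $g=X_i$ and $g=\eta_i$ are trivial: a dot changes no pairing, and $\eta_i$ merely inserts a fresh rightward cup, preserving all three clauses. If $g=T_{ij}$ it transposes two adjacent upward top strands; since both carry positive top endpoints, the transposition of the associated through strands can never realize the negative/positive pattern demanded by (3), so no forbidden crossing is created. The substantial case is $g=\epsilon_i$, which caps adjacent top entries $\textbf{k}_p=+i$ and $\textbf{k}_{p+1}=-i$ and reconnects whatever these two strands met below. Here one splits into subcases according to whether each strand is a through strand or half of a cup, checking in each that the newly formed pair still satisfies the relevant clause of $(\star)$. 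The key subcase is when both are through strands reaching bottom points $u$ (positive) and $w$ (negative): capping makes $\{u,w\}$ a bottom--bottom pair, and clause (3) of $(\star)$ for $D'$ is exactly what forbids $w$ from lying to the left of $u$, so the new cap is rightward and (1) cannot arise. The symmetry $\phi$, which swaps $E_i\leftrightarrow F_i$, reverses the tensor, and interchanges (1) with (2), can be invoked to halve the bookkeeping.

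The main obstacle I expect is verifying that $(\star)$ is genuinely self-preserving in the $\epsilon_i$ step, namely that merging a through strand with a cup (or a cap with a cup) cannot manufacture a forbidden crossing among the through strands elsewhere in the slice. This is exactly the point at which the strengthened invariant, rather than the three prohibitions taken in isolation, is indispensable, and the careful exchange argument needed to close the $\epsilon_i$ case is where the real content of the lemma lies; the analogous statements for loops are irrelevant here, since a closed component contributes no boundary points to the pairing.
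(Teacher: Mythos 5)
Your proposal is correct, and it is essentially the paper's argument run in contrapositive form: both are inductions on the height of the diagram in which the three prohibitions must be handled jointly, with the cap $\epsilon_i$ as the crux. The difference is organizational. The paper argues by descent: a forbidden configuration in $D$ produces a forbidden configuration in a strictly shorter diagram, where a forbidden cap is treated by cutting $D$ at the $\epsilon_i$ realizing the global maximum of the offending braid (yielding a forbidden crossing in the piece below), a forbidden cup dually (yielding a forbidden crossing in the piece above), and a forbidden crossing by peeling the top height-one layer (yielding a forbidden crossing or a forbidden cup below). You instead peel only the top generator and propagate the conjunction of all three conditions upward as an invariant on partial diagrams with fixed source. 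What your version buys is uniformity: you never need to locate global extrema of braids, and your clause (2) is nearly automatic, since cups are only ever created rightward by $\eta_i$ and merged rightward by caps. What it costs is the fuller subcase analysis at $\epsilon_i$.

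On that subcase analysis, which you flag as the main obstacle: it does close, by exactly the argument you give for the through--through subcase. For instance, if the capped $+i$ at position $p$ is the right end of a cup whose (necessarily negative) left end sits at position $m<p$, while the $-i$ at position $p+1$ is a through strand down to a negative bottom entry $w$, then the merged strand runs from $w$ up to $m$; a forbidden crossing of it with a third through strand $b\to c$ (with $b$ positive and to the right of $w$) would require $c<m<p+1$, hence would already have been a forbidden crossing of $w\to p+1$ with $b\to c$ in $D'$, contradicting the invariant. The mirror subcase is identical, and the cup--cup subcase produces a rightward cup, the same-cup possibility being excluded by your clause (2) (consistent with Lemma \ref{lem:a2_no_loop}, which the paper proves only afterwards). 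Two details you should make explicit: in the $T_{ij}$ step, a transposed strand may be a cup half (the cup stays rightward because the other cup end cannot occupy position $p$ or $p+1$), and you must also check crossings between a transposed strand and a \emph{third} strand, which is immediate since the third strand's top endpoint lies outside $\{p,p+1\}$, so none of the relevant order comparisons change.
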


\begin{proof}
    We argue by induction on the height of our diagram. The base case of height 1 is clear by inspecting the defining morphisms. For the inductive step, fix an $\mathcal{A}_2$-diagram $D$ from $X$ to $Y$ of height at least $2$ and suppose its pairing has any of the above forbidden pairs. We show that we can obtain an $\mathcal{A}_2$-diagram of shorter height, possibly with different source and target, with one of the above forbidden pairs.

    Suppose $D$ pairs a negative entry $a$ of $sseq(X)$ with a positive entry $b$ of $sseq(X)$ such that $a$ is an earlier entry than $b$. Then in $D$, the braid connecting $a$ with $b$ must achieve a unique global maximum. This global maximum is necessarily within the rightwards oriented $W\epsilon_iW'$ for some $W$ and $W'$. Factorize $D$ as $D''\circ W\epsilon_iW'\circ D'$. The only way for $D'$ to connect the endpoints of this $\epsilon_i$ to $a$ and $b$ that is compatible with orientations and this global maximum is to connect the right endpoint to $a$ and the left endpoint to $b$. It is easy to see that this leads to a forbidden pair of the third type within $D'$, which is shorter than $D$. The argument for when $D$ has a forbidden pair of the second type is similar. 

    \begin{center}
        \begin{tikzpicture}
            \draw[-,thick] (-.4,0) -- (1.4,0);
            \draw[dotted] (-.4, 1.5) -- (1.4,1.5);
            \draw[dotted] (-.4, 2.5) -- (1.4,2.5);
            \draw[-,thick] (-.4,3.5) -- (1.4,3.5);

            \draw (.5,3) node[draw]{$D''$};
            \draw[->,thick] (.15,1.5) ..controls (.15,2) and (.85,2) .. (.85,1.5);
            \draw (-.3, 1.95) node[draw]{$W$};
            \draw (1.4, 1.95) node[draw]{$W'$};

            \draw[<-, thick, dashed] (0,0) -- (.85,1.5);
            \draw[->, thick, dashed] (1,0) -- (.15,1.5);
            \draw (0,-.2) node{$a$};
            \draw (1,-.2) node{$b$};

            \draw (-.9,.75) node{$D'$};
            \draw[decorate,decoration={brace,amplitude=5pt, raise=-1ex}](-.6,0) -- (-.6, 1.5);

        \end{tikzpicture}
    \end{center}

    Now suppose that the pairing associated to $D$ has entries $a,b$ of $X$ and entries $c,d$ of $Y$ as in the third type of forbidden pair.  Then the braid connecting $a$ to $d$ must cross that connecting $b$ to $c$. We factor $D$ as $D''\circ D'$, where $D''$ is height 1, and write that the target of $D'$, which is the source of $D''$, is $Z$. The diagram $D'$ must pair both $a$ and $b$ to some entries $a'$ and $b'$ in $Z$, or else $D$ cannot pair them with $c$ and $d$. If $b'$ is an earlier entry than $a'$, then $D'$ is a shorter diagram with the same type of forbidden pair. 
    
\begin{center}
        \begin{tikzpicture}
            \draw[-,thick] (-.4,0) -- (1.4,0);
            \draw[dotted] (-.4, 1.5) -- (1.4,1.5);
            \draw[-,thick] (-.4,2.5) -- (1.4,2.5);

            \draw (-.9,1.95) node{$D''$};
            \draw[decorate,decoration={brace,amplitude=5pt, raise=-1ex}](-.6,1.5) -- (-.6, 2.5);

            \draw[<-, thick, dashed] (0,0) -- (1,1.5);
            \draw[->, thick, dashed] (1,0) -- (0,1.5);
            \draw (0,-.2) node{$a$};
            \draw (1,-.2) node{$b$};
            \draw (-.2,1.7) node{$b'$};
            \draw (1.3,1.7) node{$a'$};

            \draw (-.9,.75) node{$D'$};
            \draw[decorate,decoration={brace,amplitude=5pt, raise=-1ex}](-.6,0) -- (-.6, 1.5);

            \draw[->,thick,dashed](0,1.5) -- (0,2.5);
            \draw[<-,thick,dashed](1,1.5)--(1,2.5);

            \draw (0,2.7) node{$c$};
            \draw (1,2.7) node{$d$};

            \draw(1.7,0) node{$X$};
            \draw(1.7,1.5) node{$Z$};
            \draw(1.7,2.5) node{$Y$};
        \end{tikzpicture}
    \end{center}

    If instead $a'$ is earlier than $b'$, then by inspecting the height 1 diagrams, we see that $D''$ can only be the cap $\epsilon_i$ connected to either $a'$ or $b'$. Without loss of generality, assume that $D''$ is the cap connecting $b'$ to its right neighbor $e$. Then $D''$ must pair $c$ to an entry $c'$ that is earlier than $a'$. Since $D''$ is height 1, we see $D'$ must pair $c'$ with $e$. But this creates a forbidden leftward cup in the shorter diagram $D'$. 
     \begin{center}
        \begin{tikzpicture}
            \draw[-,thick] (-1.1,0) -- (1.7,0);
            \draw[dotted] (-1.1, 1.5) -- (1.7,1.5);
            \draw[-,thick] (-1.1,2.5) -- (1.7,2.5);

            \draw (-1.5,1.95) node{$D''$};
            \draw[decorate,decoration={brace,amplitude=5pt, raise=-1ex}](-1.2,1.5) -- (-1.2, 2.5);

            \draw[<-, thick, dashed] (0,0) -- (.2,1.5);
            \draw[->, thick, dashed] (1,0) -- (.8,1.5);
            \draw (0,-.2) node{$a$};
            \draw (1,-.2) node{$b$};
            \draw (-.1,1.7) node{$a'$};
            \draw (.7,1.7) node{$b'$};
            \draw(1.5,1.7) node{$e$};

            \draw (-1.5,.75) node{$D'$};
            \draw[decorate,decoration={brace,amplitude=5pt, raise=-1ex}](-1.2,0) -- (-1.2, 1.5);

            \draw[<-,thick] (.2,1.5)..controls (.2,1.75) and (1,2.3)..(1,2.5);
            \draw[->,thick] (.8,1.5) ..controls (.8,1.9) and (1.3,1.9).. (1.3,1.5);
            \draw[->,thick](-.8,1.5)..controls (-.8,1.75) and (0,2.25) .. (0,2.5);

            \draw (0,2.7) node{$c$};
            \draw (1,2.7) node{$d$};
            \draw (-.9,1.7) node{$c'$};

            \draw(2,0) node{$X$};
            \draw(2,1.5) node{$Z$};
            \draw(2,2.5) node{$Y$};

            \draw [->,thick,dashed] (1.3,1.5) ..controls (1.3,.8) and (-.8,.8)..(-.8,1.5);
        \end{tikzpicture}
    \end{center}
    So, by induction, none of the three forbidden pairs can occur in an $\mathcal{A}_2$-diagram.
\end{proof}

We now begin to show that in a certain sense the $\mathcal{A}_2$-diagrams are determined by the induced pairing and by the number of dots on each braid. Certain topological defects which do not affect the induced pairing are not actually possible in an $\mathcal{A}_2$-diagram. Note again that the following are all possible in Khovanov, Lauda, and Rouquier's categorification of the idempotented quantum group.

\begin{lem}\label{lem:a2_self_int}
For any $X,Y\in \mathcal{A}_2(Q)$, there is no $\mathcal{A}_2$-diagram from $X$ to $Y$ in which some strand crosses itself.
\end{lem}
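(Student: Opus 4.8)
The plan is to induct on the height of the diagram, in the same spirit as the proof of Lemma \ref{lem:a2_forbidden_pairings}. Before the induction I would record the two orientation facts that drive everything. First, since the only generating crossing $T_{ij}$ joins the two upward strands of $E_iE_j$, every crossing appearing in any $\mathcal{A}_2$-diagram occurs between two locally upward-oriented strands; consequently, traversing any braid along its orientation it alternates between maximal upward and downward arcs, the switches occurring exactly at the local maxima (images of $\epsilon_i$) and local minima (images of $\eta_i$). Second, a braid whose two endpoints lie on the same horizontal boundary must have one positive (up-arrow) endpoint and one negative (down-arrow) endpoint, since the orientation of the underlying interval exits at one end and enters at the other. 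The base case, height one, is clear: a dot, a cap, and a cup have no crossing, and a single $T_{ij}$ crosses two distinct braids.

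For the inductive step I would factor $D = D'' \circ D'$ with $D''$ of height one, so $D''$ is one generator placed among identity strands. If the self-crossing already lies in $D'$, then $D'$ is a shorter self-intersecting diagram and we finish by induction; so I may assume it does not, i.e. the self-crossing is newly created by $D''$. A dot $X_i$ and a cup $\eta_i$ (whose source is empty) neither introduce a crossing nor merge two braids, so these cases are immediate. If $D''$ is a crossing $T_{ij}$, its only new crossing is between its two inputs, which are the two \emph{positive} entries $E_i, E_j$ of the target of $D'$; for this to be a self-crossing those two entries would have to be the two top-boundary endpoints of a single braid of $D'$, and this is impossible by the orientation fact above, since they are both positive. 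Hence no new self-crossing arises in the dot, cup, or crossing cases.

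The remaining and genuinely delicate case is $D'' = \epsilon_i$, the cap, which merges the two braids of $D'$ ending at the capped positions $k$ (an $E_i$, positive) and $k+1$ (an $F_i$, negative) into one braid of $D$. The cap creates no crossing, so any new self-crossing must be a crossing of $D'$ between the two merged pieces; if those pieces form a single braid of $D'$ the crossing was already a self-crossing of $D'$, contradicting our assumption, so they are distinct braids $b_1$ (terminating at $k$) and $b_2$ (originating at $k+1$) that cross in $D'$. Here I would use the orientation bookkeeping: because the crossing of $b_1$ and $b_2$ is between upward arcs while $b_2$ leaves $k+1$ downward, $b_2$ must first pass a local minimum, and tracing $b_1$ and $b_2$ to their remaining endpoints $e_1, e_2$ on the source and target boundaries I would show that the pair $\{b_1, b_2\}$ realizes one of the leftward cap, cup, or crossing configurations forbidden by Lemma \ref{lem:a2_forbidden_pairings}, a contradiction.

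The main obstacle is precisely this last step. Unlike the three easy cases, ruling out the cap requires a careful case analysis over the locations of $e_1$ and $e_2$ (source versus target boundary) and over the crossing parity of $b_1$ and $b_2$, verifying in each instance that a merged self-crossing forces a forbidden pairing; this is exactly where the content of Lemma \ref{lem:a2_forbidden_pairings} is indispensable, whereas the dot, cup, and crossing cases are essentially formal.
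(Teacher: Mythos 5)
Your induction set-up, the orientation bookkeeping, and the dot/cup/crossing cases are all fine: in particular the observation that a braid with both endpoints on one horizontal boundary must have one positive and one negative endpoint correctly kills the $T_{ij}$ case. The gap is in the cap case, and it is not a matter of missing case analysis: the reduction to Lemma \ref{lem:a2_forbidden_pairings} cannot work as stated. That lemma constrains only the \emph{pairing} induced by a diagram, i.e.\ which endpoints are matched, whereas a self-crossing created by merging $b_1$ and $b_2$ is a statement about how many times two braids cross \emph{geometrically}. Two braids whose endpoint pairing is perfectly legal can a priori cross an even number of times (double crossings between distinct braids certainly exist in $\mathcal{A}_2$-diagrams, e.g.\ $T_{ji}\circ T_{ij}$). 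Concretely, take $b_1$ running from a bottom entry $E_i$ at position $e_1$ up to the top entry $k$, and $b_2$ running from the top entry $k+1$ down to a bottom entry $F_i$ at position $e_2$ with $e_1<e_2$. This pairing violates none of the three forbidden configurations (it is realized by two disjoint through-strands), yet if $b_1$ and $b_2$ crossed twice, capping $k,k+1$ would produce exactly the self-crossing you must exclude. So in this configuration your claimed contradiction ``merged self-crossing $\Rightarrow$ forbidden pairing of $D'$'' is simply false; ruling out such crossings is the actual content of the lemma, and it requires an argument that tracks the braids between their crossings, not just their endpoints.

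For comparison, the paper does not peel off the top generator. It factors the diagram at the offending crossing itself, $D=D''\circ E_{\mathbf{i}}T_{ij}E_{\mathbf{j}}\circ D'$, with the two strands of that crossing belonging to one braid $f$, and then follows $f$ between its two passes through the crossing. Applying the no-leftward-cap and no-leftward-cup constraints alternately to the pieces of $f$ lying in $D''$ and in $D'$, it shows that the successive points where $f$ meets the crossing level must move strictly left through the entries of $\mathbf{i}$, which is impossible since $\mathbf{i}$ is finite. Something of this nature (an argument local to the crossing, iterated through the sub-diagrams above and below it) is what your cap case needs; the pairing data of $D'$ alone is provably insufficient.
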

\begin{proof}
    Suppose towards contradiction that we have an $\mathcal{A}_2$-diagram $D$ that can be factorized as $D=D''\circ E_\textbf{i}T_{ij}E_\textbf{j}\circ D'$ in which the two strands in the indicated crossing are part of the same braid $f:[0,1]\rightarrow \R\times [0,1]$ in $D$. In particular, we must have $i=j$. Let $a,b,c,d$ be the smallest real numbers for which $f(t)$ is the upper left, resp. upper right, lower left, lower right endpoints of the crossing at times $t=a$, resp $t=b,c,d$. Due to the orientations and without loss of generality, we may assume that $a<d<b<c$. We also denote by $a,b,c,d$ the corresponding entries of the copies of $\textbf{i}ij\textbf{j}$ at the top and bottom of this crossing. Due to the ordering of $a,b,c,d$, we see that $D''$ must pair $b$ with some entry $b'$ of $\textbf{i}$ or $\textbf{j}$. It cannot be an entry of $\textbf{i}$ without creating a forbidden leftward cap as in Lemma \ref{lem:a2_forbidden_pairings}. Similarly, $D'$ must pair $d$ with some entry $d'$ of $\textbf{i}$. Since $b'\neq d'$, we must have that $D''$ pairs $d'$ with another entry of $\textbf{i}ij\textbf{j}$. Repeating the same argument, it is an entry $d''$ of $\textbf{i}$, and in particular $d''\neq b'$.
    \begin{center}
        \begin{tikzpicture}
            \draw[->,thick](0,0)..controls (0,.5) and (1,.5) .. (1,1);
            \draw[->,thick](1,0)..controls (1,.5) and (0,.5) .. (0,1);
            \draw[-,dotted] (-2.5,0) -- (2.5,0);
            \draw[-,dotted] (-2.5,1) -- (2.5,1);
            \draw[-,thick] (-2.5,2) --(2.5,2);
            \draw[-,thick] (-2.5,-1)--(2.5,-1);

            \draw (-.15,.15) node{$c$};
            \draw (1.15,.15) node{$d$};
            \draw (-.2, .75) node{$a$};
            \draw (1.2, .75) node{$b$};

            \draw[-,thick,dashed] (1,1) ..controls(1,1.5) and (2,1.5)..(2,1);
            \draw[->,thick] (2,1)--(2,0);
            \draw[<-,thick,dashed] (1,0)..controls(1,-.75) and (-1,-.75)..(-1,0);

            \draw (1.75,.15) node{$b'$};
            \draw (-.75,.15) node{$d'$};

            \draw[-,thick] (-1,0)--(-1,1);

            \draw[-,thick,dashed](-1,1)..controls(-1,1.5) and (-2,1.5)..(-2,1);
            \draw[<-,thick] (-2,1)--(-2,0);

            \draw (-1.75,.15) node{$d''$};
            \draw(-2,-.5) node{$\dots$};
            \draw(2,-.5) node{$\dots$};

            \draw (-3,-.5) node{$D'$};
            \draw[decorate,decoration={brace,amplitude=5pt, raise=-1ex}](-2.75,-1) -- (-2.75, 0);

            \draw (-3.1,1.5) node{$D''$};
            \draw[decorate,decoration={brace,amplitude=5pt, raise=-1ex}](-2.75,1) -- (-2.75, 2);

            \draw[decorate,decoration={brace,amplitude=5pt, raise=-1ex}](-2.5,1.6) -- (-.3, 1.6);
            \draw (-1.4,1.85) node{\small$\textbf{i}$};

            \draw[decorate,decoration={brace,amplitude=5pt, raise=-1ex}](1.2,1.6) -- (2.4, 1.6);
            \draw (1.8,1.8) node{\small$\textbf{j}$};

        \end{tikzpicture}
    \end{center}
    
    We note that $\textbf{i}$ has finitely many entries, and so by repeating this argument, we see that it is impossible that $b$ and $d$ are part of the same strand.
    
\end{proof}

\begin{lem}\label{lem:a2_no_loop}
     There are no $\mathcal{A}_2$-diagrams with closed loops. Moreover, $\text{dim}(\Hom_{\mathcal{A}_2(Q)}(\1,\1)) \leq 1$.
\end{lem}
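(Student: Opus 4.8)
The plan is to deduce the dimension bound from the nonexistence of loops, so I would prove the loop statement first via an orientation (turning-number) argument. The preliminary observation is that the only two generating morphisms that create an extremum of a braid are the cap $\epsilon_i : E_iF_i \to \1$ and the cup $\eta_i : \1 \to F_iE_i$ (dots $X_i$ and crossings $T_{ij}$ create no extrema, and identity strands are monotone). Tracing orientations through $\epsilon_i$, the $E_i$-foot enters the cap going upward and the $F_i$-foot leaves going downward, so the braid is traversed \emph{left-to-right} at the top of the cap; dually, through $\eta_i$ the $F_i$-foot enters going downward and the $E_i$-foot leaves going upward, so the braid is traversed \emph{left-to-right} at the bottom of the cup. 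The key point is that the only available cap and the only available cup both induce the same horizontal traversal direction at their extremal points.

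For the nonexistence of loops, I would suppose toward a contradiction that some $\mathcal{A}_2$-diagram contains a closed loop $L$. By Lemma \ref{lem:a2_self_int} no strand crosses itself, so $L$ is an embedded circle in $\R \times [0,1]$; it may cross other braids, but this does not affect its being a simple closed curve as a subset of the strip. Since $L$ connects no entries of either boundary, the remark following the definition of $\mathcal{A}_2$-diagram gives it a unique global maximum and a unique global minimum, and by the previous paragraph these must be a cap $\epsilon_i$ and a cup $\eta_i$ respectively. Hence $L$ is traversed left-to-right at both its topmost and its bottommost point. But an embedded oriented circle is either clockwise or counterclockwise, and in either case its topmost and bottommost points are traversed in \emph{opposite} horizontal directions; this is the contradiction, so no $\mathcal{A}_2$-diagram contains a closed loop.

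For the Hom-space bound, any element of $\Hom_{\mathcal{A}_2(Q)}(\1,\1)$ is a linear combination of $\mathcal{A}_2$-diagrams from $\1$ to $\1$, and such a diagram has no marked points on either boundary. Thus every connected component of its underlying $1$-manifold is a closed loop; having just ruled loops out, the only such diagram is the empty diagram, which represents $\text{Id}_\1$. Therefore $\text{Id}_\1$ spans $\Hom_{\mathcal{A}_2(Q)}(\1,\1)$ and $\dim \Hom_{\mathcal{A}_2(Q)}(\1,\1) \leq 1$.

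The main obstacle is making the orientation argument airtight rather than merely suggestive: one must genuinely verify that $\epsilon_i$ and $\eta_i$ are the only sources of braid extrema, and then invoke the ``opposite horizontal directions at top and bottom'' property of a simple closed curve, which is precisely where embeddedness — and hence Lemma \ref{lem:a2_self_int} — is indispensable, since for a curve with self-crossings the clockwise/counterclockwise dichotomy (turning number $\pm 1$) would fail and the two rightward traversals would no longer be contradictory.
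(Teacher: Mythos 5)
Your proof is correct, but the mechanism of the contradiction differs from the paper's. Both arguments start from the same skeleton: a closed loop touches neither boundary, so by the remark after the definition of $\mathcal{A}_2$-diagrams it has a unique global maximum, necessarily a cap $\epsilon_i$, and a unique global minimum, necessarily a cup $\eta_i$. From there the paper stays combinatorial: the two arcs of the loop running between the cup level and the cap level must pair the cup's $F_i$ (left, negative) with the cap's $F_i$ (right, negative) and the cup's $E_i$ (right, positive) with the cap's $E_i$ (left, positive), which is exactly a forbidden ``leftwards crossing'' (type 3 of Lemma \ref{lem:a2_forbidden_pairings}); no embeddedness of the loop is needed, so Lemma \ref{lem:a2_self_int} is never invoked. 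You instead invoke Lemma \ref{lem:a2_self_int} to get an embedded circle and then appeal to planar topology (the Jordan curve/turning-number fact that a simple closed curve is traversed in opposite horizontal directions at its topmost and bottommost points), against which your observation that both $\epsilon_i$ and $\eta_i$ force left-to-right traversal gives the contradiction. Your citation of Lemma \ref{lem:a2_self_int} is legitimate, since its statement covers strands of closed-loop components and it is proved before this lemma. The trade-off: the paper's route is shorter and more robust because it reuses the already-proved pairing lemma and would survive even if the loop had self-crossings; your route carries an extra dependency and a classical topology input, but it isolates the geometric reason the category forbids loops --- every generator is ``rightward-oriented'' --- of which the forbidden-pairing lemma is the combinatorial shadow. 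Your handling of the second claim (all components of a diagram $\1\to\1$ are loops, so only the empty diagram remains) coincides with the paper's.
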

\begin{proof}
    Any closed loop in an $\mathcal{A}_2$-diagram obtains a unique global maximum via some $W\epsilon_iW':WE_iF_iW'\rightarrow WW'$ and unique global minimum via some $V\eta_iV':VF_iE_iV'\rightarrow VV'$. The positive entry of this global maximum must connect to the positive entry of the global minimum, and likewise for the negative entries. This is impossible due to Lemma \ref{lem:a2_forbidden_pairings}. 
    
    \begin{center}
        \begin{tikzpicture}
            \draw[-,thick] (-.6,-1)--(1.6,-1);
            \draw[-,dotted] (-.6,-.5)--(1.6,-.5);
            \draw[-,dotted] (-.6,0)--(1.6,0);
            \draw[-,dotted] (-.6,1)--(1.6,1);
            \draw[-,dotted] (-.6,1.5)--(1.6,1.5);
            \draw[-,thick] (-.6,2)--(1.6,2);

            \draw[->,thick] (0,1)..controls (0,1.4) and (1,1.4)..(1,1);
            \draw[->,thick] (0,0)..controls (0,-.4) and (1,-.4)..(1,0);
            \draw[<-,thick,dashed] (0,0) ..controls (0,.5) and (1,.5)..(1,1);
            \draw[->,thick,dashed] (1,0) ..controls (1,.5) and (0,.5)..(0,1);

            \draw (-.35,1.25) node[draw]{\tiny$W$};
            \draw (1.45,1.25) node[draw]{\tiny$W'$};
            \draw (-.35,-.25) node[draw]{\tiny$V$};
            \draw (1.45,-.25) node[draw]{\tiny$V'$};
        \end{tikzpicture}
    \end{center}
    The second claim follows from the observation that any braid in an $\mathcal{A}_2$-diagram from the monoidal unit to itself is necessarily a closed loop.
    
\end{proof}
It is not obvious that the Hom spaces in the category $\mathcal{A}_2(Q)$ are nonzero and that $\text{dim}(\Hom_{\mathcal{A}_2(Q)}(\1,\1))=1$. In the graded case, we prove this later in Theorem \ref{thm:a2_2rep_faithful}. 

It is possible for there to be distinct $\mathcal{A}_2$-diagrams that yield the same pairing. We show that such diagrams are equal up to a linear combination of diagrams with fewer crossings. Morally, this is due to the KLR relations in our category. The formal proof requires tools from categorical logic and topology.

Results like the following lemma are very common for categories that are defined diagrammatically. See \cite{isotopy} for the general result in the necessary formalism. This result also shows that the precise spacing of tensor components in input and output objects is not important.

\begin{prop}\label{prop:a2_isotopy}
    Fix any two objects $X,Y\in \mathcal{A}_2(Q)$ and any two $\mathcal{A}_2$-diagrams from $X$ to $Y$. If these diagrams are equal up to an ambient boundary-preserving planar isotopy, then the associated homomorphisms in $\Hom_{\mathcal{A}_2(Q)}(X,Y)$ are equal.
\end{prop}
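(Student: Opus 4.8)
The plan is to prove Proposition \ref{prop:a2_isotopy} by appealing to the general coherence framework for monoidal categories presented in terms of generators and relations, rather than by checking isotopy moves one at a time. The key observation is that $\mathcal{A}_2(Q)$ is a strict monoidal category generated by the listed morphisms subject to relations, and the morphisms $\eta_i, \epsilon_i$ exhibit $E_i$ as right-dual to $F_i$. The central point is that the duality (zig-zag) relations are \emph{exactly} the relations that generate planar isotopy equivalence in the free monoidal category on these generators. So the statement should follow from a general theorem identifying the morphisms of a diagrammatically-presented monoidal category with planar-isotopy classes of diagrams modulo the local relations; this is the content of the reference \cite{isotopy}. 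Concretely, I would invoke the result of \cite{isotopy} in the formalism of monoidal 2-categories or string diagrams, which asserts that two morphism expressions in a strict monoidal category built from adjunction units and counits represent the same morphism whenever their string diagrams are related by a boundary-fixing planar isotopy.

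First I would set up the correspondence precisely: each $\mathcal{A}_2$-diagram is the image under the diagram-assignment function of some composite $A_{i_k} f_{i_k} B_{i_k} \circ \cdots \circ A_{i_1} f_{i_1} B_{i_1}$ of generating morphisms, and an ambient boundary-preserving planar isotopy between two such diagrams can be decomposed into a finite sequence of elementary isotopy moves. The elementary moves fall into two classes: \emph{interchange} moves, which slide two generators past each other at different heights when their supports are horizontally disjoint, and \emph{zig-zag} (snake-straightening) moves, which pull a strand through an adjacent cup-cap pair. Second, I would observe that the interchange moves are automatically respected because $\mathcal{A}_2(Q)$ is a genuine (strict) monoidal category: the bifunctoriality of $\otimes$ gives precisely the identity $f Y \circ A g = fg = fB \circ X g$ for horizontally separated generators $f, g$, so any two heights-reshufflings that are horizontally compatible yield equal morphisms. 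Third, the zig-zag moves are respected because they are literally the defining duality relations of $\eta_i$ and $\epsilon_i$ displayed in the definition of $\mathcal{A}_2(Q)$, namely $F_i \epsilon_i \circ \eta_i F_i = \mathrm{Id}_{F_i}$ and the analogous relation pulling an $E_i$-strand straight.

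The main obstacle, and the reason a bare hand-decomposition is delicate, is that an arbitrary boundary-preserving planar isotopy need not decompose into interchange and zig-zag moves in a way that keeps every intermediate picture realizable as an honest composite of generators; a generic isotopy can move crossings and cups/caps through regions where intermediate frames have no valid left-to-right slicing. Handling this rigorously is exactly what requires the machinery of \cite{isotopy}, which proves a coherence theorem organizing all such isotopies into the two elementary classes after a suitable generic-position/stratification argument on the underlying immersed 1-manifold in $\R \times [0,1]$. I would therefore structure the proof as: (1) reduce to the case where the two diagrams differ by a single elementary move via the generic-position argument of \cite{isotopy}; (2) verify each elementary move preserves the associated morphism using only bifunctoriality of $\otimes$ and the duality relations. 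The bulk of the genuine mathematical content is imported from \cite{isotopy}; what remains for us is only to confirm that our generating relations supply the needed interchange and zig-zag identities, which they do by construction.
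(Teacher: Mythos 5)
Your proposal takes essentially the same route as the paper, which offers no written proof at all and simply cites \cite{isotopy} for the general coherence result in the appropriate formalism; your elaboration (reduction to interchange and zig-zag moves, verified by bifunctoriality of $\otimes$ and the defining duality relations) is exactly the standard content behind that citation. The only nitpicks are terminological: the paper's convention is that $F_i$ is right dual to $E_i$ (i.e.\ $E_i \dashv F_i$), and your interchange identity should read $fY \circ Ag = f\otimes g = Bg \circ fX$ for $f\colon A\to B$, $g\colon X\to Y$.
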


We may therefore think that the Hom spaces in $\mathcal{A}_2(Q)$ are spanned by ambient isotopy classes of $\mathcal{A}_2$-diagrams. Due to the KLR relations, we can refine this result.

We now show that we can essentially study $\mathcal{A}_2$-diagrams up to ``boundary-preserving dot-sliding homotopy". Similar arguments are made in \cite{kholau3}, and we adopt conventions from here. This method allows us to reduce our spanning sets even further. 

\begin{lem}\label{lem:a2_htpy}
    Let $D$ and $D'$ be two minimal $\mathcal{A}_2$-diagrams from $X$ to $Y$. Suppose that $D$ and $D'$ determine the same $(sseq(X),sseq(Y))$-pairing and have the same number of dots on each braid. Then the corresponding elements of $\Hom_{\mathcal{A}_2(Q)}(X,Y)$ for $D$ and $D'$ are equal up to a linear combination of minimal $\mathcal{A}_2$-diagrams with fewer crossings.
\end{lem}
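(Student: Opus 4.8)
The plan is to follow the ``dot-sliding homotopy'' method of \cite{kholau3}, reducing first to the dotless case and then to a purely topological statement about reduced curve arrangements.

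First I would normalize the dots. By the KLR dot-crossing relations $T_{ij}\circ X_iE_j-E_iX_j\circ T_{ij}=\delta_{ij}E_iE_j$ (and its mirror $X_jE_i\circ T_{ij}-T_{ij}\circ E_iX_j=\delta_{ij}E_iE_j$), sliding a dot past a crossing is an exact equality when the two strands carry distinct labels, and otherwise changes the morphism by the uncrossed diagram, which has one fewer crossing. By Lemma \ref{lem:easy_qbos_rels}, a dot slides through any cap or cup without error, and along a strand a dot moves freely by Proposition \ref{prop:a2_isotopy}. Fixing for each braid a reference point just above its lower endpoint and pushing every dot to that point, I can therefore assume that in both $D$ and $D'$ all dots sit at these reference positions. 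Since $D$ and $D'$ have the same number of dots on each braid, the dot data now agree exactly, and it remains to compare the two resulting \emph{dotless} minimal diagrams, all modulo diagrams with strictly fewer crossings.

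For the dotless comparison, the key input is topological: two minimal dotless $\mathcal{A}_2$-diagrams inducing the same $(sseq(X),sseq(Y))$-pairing are related by a finite sequence of boundary-preserving planar isotopies and triple-point (Reidemeister III) moves. I would establish this following \cite{kholau3} and the formalism of \cite{isotopy}. Choosing a generic homotopy rel boundary between the two underlying systems of immersed arcs, general position guarantees that its finitely many critical events are triple points, tangencies producing a bigon, and self-tangencies. Self-tangencies would create a self-crossing, which Lemma \ref{lem:a2_self_int} forbids for $\mathcal{A}_2$-diagrams, so these do not occur (and loops are excluded by Lemma \ref{lem:a2_no_loop}); whenever the homotopy passes a tangency producing a bigon, the two strands of that bigon cross twice and can be resolved using $T_{ij}\circ T_{ji}=Q_{ij}(E_jX_i,X_jE_i)$, a polynomial in dots on uncrossed strands, hence a sum of diagrams with strictly fewer crossings than $D$. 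Modulo such lower-order terms, the homotopy may thus be taken to consist only of isotopies and triple points. This is a reduced-word connectivity statement (a Matsumoto/Tits-type theorem for planar matchings), and it is the main obstacle: one must verify both that a generic homotopy has only these singularities and that, among minimal representatives, no Reidemeister I or II move is genuinely needed except as a source of lower-order error.

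Finally I would translate each move into the corresponding relation. Planar isotopies give equalities of morphisms by Proposition \ref{prop:a2_isotopy}. Each triple-point move is an instance of the braid KLR relation, so it changes the morphism by $\delta_{i,k}\frac{Q_{ij}(X_iE_j,E_iX_j)E_i-E_iQ_{ij}(E_jX_i,X_jE_i)}{X_iE_jE_i-E_iE_jX_i}$, again a diagram with one fewer crossing. Composing the finitely many moves, $D$ and $D'$ represent the same element of $\Hom_{\mathcal{A}_2(Q)}(X,Y)$ up to a linear combination of $\mathcal{A}_2$-diagrams with strictly fewer crossings. Reapplying the lemma by induction on the number of crossings lets me assume these error terms are themselves minimal, completing the proof.
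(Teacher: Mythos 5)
Your overall strategy (normalize dots first, then compare the dotless minimal diagrams topologically, with all error terms having fewer crossings, and close by induction on the number of crossings) is the same as the paper's, and your dot-normalization step is essentially identical to the paper's first step. The gap is in the topological step. You run a generic homotopy between the two curve systems and claim that the Reidemeister~II (bigon) events it passes through can be absorbed as ``lower-order error.'' This is not correct, for two reasons. First, passing a bigon event is not an equality of morphisms up to fewer-crossing terms: by the quadratic KLR relation, a bigon between strands labelled $i$ and $j$ resolves into $Q_{ij}$-dotted diagrams having the \emph{same} number of crossings as $D$ (not strictly fewer, since the bigon added two crossings to the intermediate diagram), and when $i=j$ the bigon is the \emph{zero} morphism because $Q_{ii}=0$; in no case is the bigon-free diagram recovered as a leading term. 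So the chain of comparisons $[D_0]\sim [D_1]\sim\dots\sim[D_n]$ along the homotopy breaks at every R2 event. Second, and more basically, the intermediate stages of an arbitrary generic homotopy need not be $\mathcal{A}_2$-diagrams at all --- they may contain self-crossings or other configurations not realizable by compositions of generators --- and the relations of $\mathcal{A}_2(Q)$, as well as Lemmas \ref{lem:a2_forbidden_pairings}, \ref{lem:a2_self_int}, and \ref{lem:a2_no_loop}, apply only to morphisms of the category, not to arbitrary immersed curve systems occurring along a homotopy. In particular you cannot invoke Lemma \ref{lem:a2_self_int} to forbid self-tangency events of the homotopy; that lemma constrains $\mathcal{A}_2$-diagrams, not the homotopy.

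What is needed is precisely the statement you label ``the main obstacle'' and never prove: two \emph{minimal} diagrams inducing the same pairing are connected by planar isotopies and triple-point moves alone, with no R2 events. The paper obtains this without ever leaving the class of $\mathcal{A}_2$-diagrams, via a combinatorial observation that sidesteps generic homotopies: in a minimal diagram the pairing already determines \emph{which} pairs of braids cross, so $D$ and $D'$ contain the same set of crossings and differ only in the vertical order in which those crossings occur. One then converts $D$ into $D'$ by commuting distant crossings (an isotopy, hence an equality by Proposition \ref{prop:a2_isotopy}) and by Matsumoto-type reorderings of triple crossings (the cubic KLR relation, with errors of strictly fewer crossings). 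Your treatment of the R3 moves and of the dots is fine, but without this crossing-set observation --- or an honest proof of the no-R2 connectivity statement (a Matsumoto/Hass--Scott-type argument) --- the proof as written does not go through.
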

We sketch a proof. Details are given in Section 8 of \cite{lausl2}. 
\begin{proof}
  First, apply the dot-sliding relations to $D$ so that all dots are at the bottom of their respective strands, i.e. the corresponding morphism occurs before any crossing, cap, or cup. Any error term introduced this way has fewer crossings. The data of the $(sseq(X),sseq(Y))$-pairing associated to $D$ determines which braids in $D$ must cross which others. So, the braids attached to specified endpoints in $D$ cross iff the corresponding braids cross in $D'$, although not necessarily in the same order. Proposition \ref{prop:a2_isotopy} and the KLR relations allow us to change the order of three-way crossings in $D$, possibly up to error terms with fewer crossings. Lemmas \ref{lem:a2_self_int} and \ref{lem:a2_no_loop} show that there are no other topological pathologies to our diagrams. So, applying the dot-slide relations again takes us to the diagram $D'$. An induction on the number of crossings then gives the argument.
\end{proof}

\begin{cor}\label{cor:a2_no_double}
    For any $X,Y\in \mathcal{A}_2(Q)$, we have that $\Hom_{\mathcal{A}_2(Q)}(X,Y)$ is spanned by \emph{minimal} $\mathcal{A}_2$-diagrams, i.e. those diagrams containing no pair of strands crossing each other more than once.
\end{cor}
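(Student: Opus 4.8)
The plan is to prove the stronger statement, by induction on the number of crossings, that every $\mathcal{A}_2$-diagram from $X$ to $Y$ represents a linear combination of \emph{minimal} $\mathcal{A}_2$-diagrams in $\Hom_{\mathcal{A}_2(Q)}(X,Y)$. Since the morphisms associated to all $\mathcal{A}_2$-diagrams span $\Hom_{\mathcal{A}_2(Q)}(X,Y)$, this suffices. The base case is a diagram with no crossings, which is automatically minimal. For the inductive step, fix a diagram $D$ that is not minimal, so that some pair of strands crosses more than once; by Lemma \ref{lem:a2_self_int} these two strands lie on two distinct braids $\beta_1$ and $\beta_2$.

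First I would choose a distinguished bigon. Among all pairs of crossings between two common braids, select one, say between $\beta_1$ and $\beta_2$ at points $p$ and $q$, bounding an innermost disk $B$ whose interior meets the fewest crossings. The heart of the argument is to empty this bigon using the KLR relations modulo diagrams with strictly fewer total crossings. Any braid $\gamma$ meeting the interior of $B$ must enter and exit through $\partial B$, since by Lemma \ref{lem:a2_no_loop} it cannot close up and it cannot terminate inside $B$. If $\gamma$ were to enter and exit through the same boundary arc it would form a bigon strictly inside $B$, contradicting minimality; hence $\gamma$ crosses one arc of $\beta_1$ and one arc of $\beta_2$, producing a triple-crossing configuration adjacent to $p$ or $q$. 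Applying the braid relation (4) together with the isotopy invariance of Proposition \ref{prop:a2_isotopy} slides $\gamma$ across this crossing and out of $B$, reducing the number of interior crossings by one; the $\delta_{i,k}$ correction term in relation (4) contributes only diagrams with fewer total crossings, which are absorbed by the outer induction. Lemmas \ref{lem:a2_self_int} and \ref{lem:a2_no_loop} guarantee that no self-intersections or closed loops obstruct this procedure, so iterating empties $B$.

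Once $B$ is empty, the crossings $p$ and $q$ are consecutive, so $D$ factors locally through $T_{ij}\circ T_{ji}$. Applying relation (3) replaces this double crossing by $Q_{ij}(E_jX_i,X_jE_i)$, a linear combination of dotted diagrams with two fewer crossings; the subsequent dot-sliding past untouched strands via relations (1) and (2) only introduces further error terms, each again with fewer crossings. Thus $D$ equals a linear combination of $\mathcal{A}_2$-diagrams each with strictly fewer crossings than $D$, and the inductive hypothesis completes the argument.

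The main obstacle is the combinatorial bookkeeping in emptying the innermost bigon: one must verify that the ``enter and exit through distinct arcs'' case always yields a genuine triple point to which relation (4) applies, and that the overall reduction terminates. This is precisely the dot-sliding homotopy analysis underlying Lemma \ref{lem:a2_htpy}, carried out in detail in Section 8 of \cite{lausl2}; the forbidden-pairing lemma (Lemma \ref{lem:a2_forbidden_pairings}) and the no-loop lemma are exactly what exclude the configurations that would otherwise break the induction.
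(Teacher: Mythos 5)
Your proposal is correct and follows essentially the same route as the paper: induct on the number of crossings, use isotopy (Proposition \ref{prop:a2_isotopy}) together with the triple-point and dot-sliding moves (the content of Lemma \ref{lem:a2_htpy}) to bring the two crossings of the offending pair of strands next to each other modulo diagrams with fewer crossings, and then resolve the resulting double crossing via the KLR relation $T_{ij}\circ T_{ji}=Q_{ij}(E_jX_i,X_jE_i)$. The only difference is presentational: where the paper invokes Lemma \ref{lem:a2_htpy} as a black box, you unwind it into an explicit innermost-bigon argument, which you yourself correctly identify as the analysis underlying that lemma.
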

\begin{proof}
    For any diagram in which two strands cross more than once, apply Proposition \ref{prop:a2_isotopy} and Lemma \ref{lem:a2_htpy} to obtain a diagram where the two strands crossing twice in a row, possibly along with a linear combination of diagrams with fewer crossings. The KLR relations allow us to equate the diagram with the double crossing to a linear combination of diagrams with fewer crossings. So, an induction on the number of crossings gives the claim.
\end{proof}

\begin{cor}\label{cor:a2_dot_htpy}
    For any $X,Y\in \mathcal{A}_2(Q)$, fix a minimal, dotless $\mathcal{A}_2$-diagram realizing each element of $p'(sseq(X),sseq(Y))$. For each chosen diagram $D$, fix an open interval on each braid away from any crossings, minima, or maxima. Let $S_D$ be the set of all $\mathcal{A}_2$-diagrams obtained by putting an arbitrary number of dots on each chosen interval of $D$. Then $B_{X,Y}\coloneqq \bigcup_D S_D$ is a spanning set for $\Hom_{\mathcal{A}_2(Q)}(X,Y)$.
\end{cor}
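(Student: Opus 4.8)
The plan is to combine Corollary~\ref{cor:a2_no_double} with Lemma~\ref{lem:a2_htpy} and run an induction on the number of crossings. By Corollary~\ref{cor:a2_no_double}, the space $\Hom_{\mathcal{A}_2(Q)}(X,Y)$ is already spanned by minimal $\mathcal{A}_2$-diagrams, so it suffices to show that every minimal $\mathcal{A}_2$-diagram from $X$ to $Y$ lies in the span of $B_{X,Y}$. Since a minimal diagram has no pair of strands crossing more than once and involves only finitely many strands, its number of crossings is bounded by a constant depending only on $X$ and $Y$; this gives a well-founded induction on the crossing number.

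For the inductive step, I would fix a minimal diagram $D'$ from $X$ to $Y$ and let $m\in p'(sseq(X),sseq(Y))$ be the pairing it induces, with $D$ the chosen minimal dotless representative of $m$. Because $D$ and $D'$ induce the same pairing, their braids are in canonical bijection, matching each pair of $m$ to the corresponding braid. For each braid $b$, let $d_b$ be the number of dots $D'$ carries on $b$, and let $D_{\vec{d}}\in S_D$ be obtained by placing $d_b$ dots on the chosen open interval of the braid $b$ of $D$. Then $D'$ and $D_{\vec{d}}$ have the same pairing and the same number of dots on each braid, so Lemma~\ref{lem:a2_htpy} yields
\[
D' = D_{\vec{d}} + \bigl(\text{a linear combination of minimal diagrams with fewer crossings}\bigr).
\]
Here $D_{\vec{d}}\in S_D\subseteq B_{X,Y}$, and by the inductive hypothesis every minimal diagram with strictly fewer crossings lies in the span of $B_{X,Y}$, so $D'$ does too. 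The base case, in which $D'$ is crossingless, is the same computation: Lemma~\ref{lem:a2_htpy} produces no error terms, whence $D'=D_{\vec{d}}\in B_{X,Y}$ outright. (Equivalently, one may observe directly that the dot-sliding relations of Lemma~\ref{lem:easy_qbos_rels} move all dots onto the chosen intervals without introducing any crossings.)

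\textbf{Main point of care.} I do not expect a serious obstacle here, as the substantive work is already contained in Corollary~\ref{cor:a2_no_double} and Lemma~\ref{lem:a2_htpy}. The one subtlety is that the induction must be run globally over all minimal diagrams from $X$ to $Y$ ordered by crossing number, rather than fibrewise over a single fixed pairing: the error terms produced by Lemma~\ref{lem:a2_htpy} need not induce the same pairing as $D'$ and are controlled only by having strictly fewer crossings. Phrasing the inductive hypothesis as ``every minimal $\mathcal{A}_2$-diagram from $X$ to $Y$ with fewer crossings lies in the span of $B_{X,Y}$'' accommodates this and closes the argument.
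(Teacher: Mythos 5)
Your proof is correct and follows exactly the route the paper intends: the paper states this corollary without a written proof precisely because it is the combination of Corollary~\ref{cor:a2_no_double} and Lemma~\ref{lem:a2_htpy} with an induction on the number of crossings, which is what you carry out. Your remark that the induction must range over all minimal diagrams from $X$ to $Y$ (not just those inducing a fixed pairing) is the right way to handle the error terms, and your base case is also handled correctly via the dot-sliding relations of Lemma~\ref{lem:easy_qbos_rels}.
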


The set $B_{E_\textbf{i},E_\textbf{j}}$ depends on several choices, so we will only use the notation once these choices have been fixed. Depicted below is an example of a choice of $B_{E_iF_i,F_iE_i}$.

\begin{center}
   \begin{tikzpicture}
       \draw[->,thick] (0,0)..controls(0,.5) and (1,.5)..(1,1);
       \draw[->,thick] (0,1)..controls(0,.5) and (1,.5)..(1,0);
       \draw[-,thick] (-.5,0)--(1.5,0);
       \draw[-,thick] (-.5,1)--(1.5,1);
       \draw (0,-.3) node{\tiny$i$};
       \draw (1,-.3) node{\tiny$-i$};
       \draw (0,1.3) node{\tiny$-i$};
       \draw (1,1.3) node{\tiny$i$};
       \draw (.04,.1) node{$\bullet$};
       \draw (.3,.1) node{\tiny$n_1$};
       \draw (.92,.22) node{$\bullet$};
       \draw (1.2,.25) node{\tiny$n_2$};

       \draw (2,.5) node{$,$};

       \draw[->,thick] (3,0)..controls(3,.5) and (4,.5)..(4,0);
       \draw[->,thick] (3,1)..controls(3,.5) and (4,.5)..(4,1);
       \draw[-,thick] (2.5,0)--(4.5,0);
       \draw[-,thick] (2.5,1)--(4.5,1);
       \draw (3,-.3) node{\tiny$i$};
       \draw (4,-.3) node{\tiny$-i$};
       \draw (3,1.3) node{\tiny$-i$};
       \draw (4,1.3) node{\tiny$i$};
       \draw (3.04,.1) node{$\bullet$};
       \draw (3.3,.1) node{\tiny$n_3$};
       \draw (3.06,.78) node{$\bullet$};
       \draw (2.85,.7) node{\tiny$n_4$};
   \end{tikzpicture}
\end{center}

In this example, all $n_1,n_2,n_3,n_4\in \N$ are allowed. 

After fixing an order on the braids in each minimal diagram $D$, we obtain a bijection from $B_{E_\textbf{i},E_\textbf{j}}$ to $p'(\textbf{i},\textbf{j})\times \N^{(|\textbf{ij}|)/2}$. We will use the notation $d(\textbf{i},\textbf{j})$ for the latter set, and refer to its elements as \emph{dotted pairings} between $\textbf{i}$ and $\textbf{j}$.

\subsection{Gradings}\label{subsec:2rep}
We make small adjustments in the case that our category can be graded. 

The previous results all restrict the kinds of morphisms that can appear in $\mathcal{A}_2(Q)$. We can use these to obtain much stronger results in the case that $Q$ comes from a symmetrizable generalized Cartan matrix. For the rest of this article, we assume that a symmetrizable generalized Cartan matrix $C$ is given and that, after some choice of scalars, the matrix $Q$ is determined from $C$ as described in Subsection \ref{subsec:klr}.

Due to this assumption, we see that this category may be enriched in graded $\kf$-vector spaces via $\text{deg}(\text{id}_{E_i})=\text{deg}(\text{id}_{F_i})=0$, $\text{deg}(X_i)=2d_i$, $\text{deg}(T_{ij})=-d_iC_{ij}$, and $\text{deg}(\epsilon_i)=\text{deg}(\eta_i)=0$. For a morphism $f$, we will also use the notation $f$ to refer to any shifts $q^nf$. Note that the degrees of the unit and counit differ from those for the full quantum group in \cite{qha}\cite{kholau3}.
\begin{defn}
    Define $\mathcal{A}_2(C)$ to be the monoidal category whose objects are formal shifts $q^n X$ of those in $\mathcal{A}_2(Q)$ and whose morphisms are defined by $\Hom_{\mathcal{A}_2(C)}(q^n X,q^m Y)=\Hom_{\mathcal{A}_2(Q)}^\bullet(X,Y)_{n-m}$. We require the shift $q$ to commute with the monoidal product. For objects $X$ and $Y$ of $\mathcal{A}_2(C)$, we define a $\mathcal{A}_2$-diagram from $X$ to $Y$ in exactly the same way as before, although we now require that our morphisms have the correct grading. By a small abuse of notation, we define the function $sseq$ on objects of $\mathcal{A}_2(C)$ as before, where we now also require $sseq(qX)=sseq(X)$. We  refer to tensor components of $X$ of the form $E_i$ or $F_j$ as entries of $X$ or of $sseq(X)$.
    
    Note that any spanning set $B_{X,Y}$ from Corollary \ref{cor:a2_dot_htpy} consists of homogeneous elements, and that the degree of a $B_{X,Y}$ element does not depend on the choice of minimal diagram $D$  or on the specific intervals on each braid for the dots. So, this corollary also applies to $\mathcal{A}_2(C)$, where we only consider dotted pairings of the correct grading. We therefore use the notation $B_{X,Y}$ in the same way for objects $X,Y\in \mathcal{A}_2(C)$, keeping only diagrams of the correct grading. We also use the notation $d_{n}(\textbf{i},\textbf{j})$ to denote the subset of $d(\textbf{i},\textbf{j})$ whose associated diagrams from $E_{\textbf{i}}$ to $E_{\textbf{j}}$ have degree $n$. There is an induced bijection of $d_n(\textbf{i},\textbf{j})$ with $B_{q^{m+n}E_{\textbf{i}},q^mE_{\textbf{j}}}$.
\end{defn} 

The symmetry $\phi$ is well-defined on $\mathcal{A}_2(C)$ since it preserves degrees of diagrams. Here, $\phi(q^mE_{\textbf{i}})=q^mE_{-\overline{\textbf{i}}}$.

The 2-representation of Proposition \ref{prop:a2_2rep_exists} is compatible with the grading. We can therefore convert it into a 2-representation of $\mathcal{A}_2(C)$ on $\bigoplus_\alpha H_\alpha(C)-\text{grMod}$, and we do so for the rest of the paper.

We will not consider the ungraded $\mathcal{A}_2(Q)$ in the rest of the paper. 

\begin{lem}\label{lem:a2_lblfd}
For any $X,Y\in \mathcal{A}_2(C)$, the graded Hom space $\Hom_{\mathcal{A}_2(C)}^\bullet(X,Y)$ is left-bounded and locally finite-dimensional.
\end{lem}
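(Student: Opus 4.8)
The plan is to read off everything from the explicit spanning set $B_{X,Y}$ produced in Corollary \ref{cor:a2_dot_htpy} together with the graded refinement noted in the definition of $\mathcal{A}_2(C)$. Since $\Hom_{\mathcal{A}_2(C)}^\bullet(q^mX,q^{m'}Y)$ only differs from $\Hom_{\mathcal{A}_2(C)}^\bullet(X,Y)$ by an overall degree shift, it suffices to treat the case $X=E_{\textbf{i}}$ and $Y=E_{\textbf{j}}$ for $\textbf{i},\textbf{j}\in\text{SSeq}$. The graded version of Corollary \ref{cor:a2_dot_htpy} tells us that $B_{X,Y}=\bigcup_D S_D$ is a spanning set consisting of \emph{homogeneous} elements, where $D$ ranges over a fixed choice of one minimal dotless $\mathcal{A}_2$-diagram for each pairing in $p'(sseq(X),sseq(Y))$, and $S_D$ consists of all diagrams obtained from $D$ by placing arbitrarily many dots on the chosen intervals of its braids. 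The whole lemma then reduces to two elementary finiteness statements about the degrees of these diagrams.

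The first key input is that $p'(sseq(X),sseq(Y))$ is a \emph{finite} set: it is a subset of the set of perfect matchings between the finitely many entries of $\textbf{i}$ and $\textbf{j}$, of which there are only finitely many. Hence there are only finitely many base diagrams $D$, each with a well-defined integer degree $\deg(D)$. The second, and genuinely crucial, input is that adding a dot strictly raises the degree: a dot on a braid labelled $i$ has degree $\deg(X_i)=2d_i$, and each $d_i\geq 1$ is a positive integer. Consequently, if $D$ has braids labelled $i_1,\dots,i_r$ (with $r=(|\textbf{ij}|)/2$ fixed) and the dot numbers on these braids are $m_1,\dots,m_r\in\N$, then the corresponding element of $S_D$ has degree
\[
\deg(D)+\sum_{k=1}^{r} 2 d_{i_k}\, m_k .
\]

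From this formula both claims follow at once. For left-boundedness, every element of $B_{X,Y}$ has degree at least $\min_D \deg(D)$, a minimum taken over the finite set of base diagrams and hence finite; so the spanning set, and therefore the Hom space, vanishes in all sufficiently negative degrees. For local finite-dimensionality, fix $n\in\Z$: for each of the finitely many $D$, the equation $\deg(D)+\sum_k 2 d_{i_k} m_k=n$ has only finitely many solutions $(m_1,\dots,m_r)\in\N^r$ because every coefficient $2 d_{i_k}$ is strictly positive, so only finitely many dotted diagrams in $B_{X,Y}$ have degree $n$; their images span $\Hom_{\mathcal{A}_2(C)}(q^n X,Y)$, which is therefore finite-dimensional. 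I do not expect a serious obstacle here: the only thing one must be careful to invoke is the strict positivity $d_i\geq 1$ (so that dots cannot accumulate in a fixed degree) together with the finiteness of $p'$; all the hard work restricting which diagrams occur has already been carried out in Lemmas \ref{lem:a2_forbidden_pairings}, \ref{lem:a2_self_int}, \ref{lem:a2_no_loop} and in Corollary \ref{cor:a2_dot_htpy}.
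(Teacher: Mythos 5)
Your proof is correct and follows essentially the same route as the paper: both invoke the homogeneous spanning set of Corollary \ref{cor:a2_dot_htpy}, the finiteness of $p'(sseq(X),sseq(Y))$, and the strict positivity of dot degrees ($2d_i>0$) to conclude left-boundedness and local finite-dimensionality. Your write-up merely makes explicit the degree formula $\deg(D)+\sum_k 2d_{i_k}m_k$ that the paper leaves implicit.
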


\begin{proof}
    By Corollary \ref{cor:a2_dot_htpy}, this vector space contains a homogeneous spanning set that has a fixed bijection with $d(sseq(X),sseq(Y))$. We need only note that $p'(sseq(X),sseq(Y))$ is finite and that adding dots to any braid increases the grading.
\end{proof}

We will eventually show that the spanning sets $B_{X,Y}$ described in Corollary \ref{cor:a2_dot_htpy} actually constitute a basis. We  do so by proving that the 2-representation of Proposition \ref{prop:a2_2rep_exists} is faithful.

The action of the induction and restriction functors on $\bigoplus_\alpha H_\alpha(C)-\text{grMod}$ is also studied in \cite{kk}. They prove the following important result which is a categorification of the quantum boson relations.
\begin{prop}[\cite{kk} Corollary 3.4]\label{prop:kk}
    For any $\alpha \in \N[I]$ and $i,j\in I$, there is an isomorphism of graded $(H_{\alpha-\alpha_j+\alpha_i},H_\alpha)$-bimodules

    \[q_i^{-C_{ij}}H_{\alpha-\alpha_j+\alpha_i}1_{*i}\otimes_{\alpha-\alpha_j}1_{*j}H_\alpha \oplus \delta_{ij}H_{\alpha}[x_{|\alpha|+1}]\simeq 1_{*j}H_{\alpha+\alpha_i}1_{*i}\]
    given by
    \[(a\otimes_{\alpha-\alpha_j}b,\delta_{ij}c)\rightarrow a\tau_{-1}b + \delta_{ij}c.\]
    Here, we have used the left embeddings of KLR algebras, i.e. $(1_{\textbf{i}i}\otimes_{\alpha-\alpha_j}1_{\textbf{i}j},1_{\textbf{j}})\rightarrow 1_{\textbf{i}ij}\tau_{-1}1_{\textbf{i}ji}+\delta_{ij}1_{\textbf{j}i}c$. In the 2-representation of $\mathcal{A}_2(C)$, there is an isomorphism of functors $q_i^{-C_{ij}}E_iF_j\oplus \delta_{ij}\bigoplus_{n\in \N} q_i^n \1\simeq F_jE_i$ given by $\sigma_{ij} \oplus \delta_{ij}\bigoplus_{n\in \N} F_iX_i^n\circ\eta_i$. This isomorphism is expressed diagrammatically below.

    \begin{center}
            \begin{tikzpicture}

                \draw[-, thick] (3,0).. controls ++(0,-.5) and ++(0,-.5) .. (4,0);
        
                \draw[->, thick] (4,0) .. controls (4,.5) and (5,1) .. (5,1.5);
                \draw[->, thick] (5,0) .. controls (5,.5) and (4,1) .. (4,1.5);

                \draw[-, thick] (5,1.5).. controls ++(0,.5) and ++(0,.5) .. (6,1.5);

                \draw[->, thick] (3,1.5) to (3,0);
                \draw[->, thick] (6,1.5) to (6,0);

                \draw (2.8,1.3) node{\tiny $j$};
                \draw (6.2,.3) node{\tiny $j$};
                \draw (5.2,.2) node{\tiny $i$};

                \draw (7,.75)
                node{$\oplus$ $ \delta_{ij}$ \Huge $($};

                \draw (7.8, .4) node[above]{\tiny $i$};
                \draw[-, thick]
                (8,1.2) to (8,.75);
                \draw[<-, thick]
                (9,1.2) to (9,.75);
                \draw[-, thick] (8,.75).. controls ++(0,-.75) and ++(0, -.75) .. (9,.75);

                \draw (9.5,.6)
                node{$\oplus$ };

                \draw (9.8, .4) node[above]{\tiny $i$};
                \draw[-, thick] (10,.75).. controls ++(0,-.75) and ++(0, -.75) .. (11,.75);
                 \draw[-, thick]
                (10,1.2) to (10,.75);
                \draw[<-, thick]
                (11,1.2) to (11,.75);
                \draw (11, .8) node{$\bullet$};

                \draw (11.5,.6)
                node{$\oplus$ };

                \draw (11.8, .4) node[above]{\tiny $i$};
                \draw[-, thick] (12,.75).. controls ++(0,-.75) and ++(0, -.75) .. (13,.75);
                 \draw[-, thick]
                (12,1.2) to (12,.75);
                \draw[<-, thick]
                (13,1.2) to (13,.75);
                \draw (13, .85) node{$\bullet$};
                \draw (13, .7) node{$\bullet$};

                \draw (14,.65)
                node{$\oplus\dots$ \Huge $)$};
                
            \end{tikzpicture}
        \end{center}
\end{prop}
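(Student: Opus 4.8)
The plan is to prove the bimodule isomorphism first; the isomorphism of functors then follows immediately by transporting along the $2$-representation of Proposition \ref{prop:a2_2rep_exists}, under which $E_i$ and $F_j$ become the induction and restriction bimodules and the morphism $\sigma_{ij}$ becomes the diagram implementing $\tau_{-1}$, while $F_iX_i^n\circ\eta_i$ becomes the straight-through $i$-strand carrying $n$ dots. The degree bookkeeping is already forced: $\deg(\tau_{-1})=-d_iC_{ij}$ matches the shift $q_i^{-C_{ij}}=q^{-d_iC_{ij}}$ on the through-summand, and the straight strand with $k$ dots has degree $2kd_i$, accounting for the copy of the polynomial ring $H_\alpha[x_{|\alpha|+1}]$ in the $\delta_{ij}$-summand.

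First I would check that the prescribed map is a well-defined homomorphism of graded $(H_{\alpha-\alpha_j+\alpha_i},H_\alpha)$-bimodules. Using the left embeddings that adjoin the extra $i$-strand on the right, the expression $a\tau_{-1}b$ lands in $1_{*j}H_{\alpha+\alpha_i}1_{*i}$, and linearity in the outer $H$-actions is immediate. The only content is that the assignment is balanced over the middle algebra $H_{\alpha-\alpha_j}$, i.e. that $ah\otimes b$ and $a\otimes hb$ have the same image for $h\in H_{\alpha-\alpha_j}$: this is exactly the statement that $h$, acting on the leftmost $|\alpha|-1$ strands, commutes with $\tau_{-1}$, which involves only strands $|\alpha|$ and $|\alpha|+1$; it holds by the locality of the KLR relations. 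The $\delta_{ij}$-summand maps $c\mapsto c$ under the inclusion placing a dotted straight strand at position $|\alpha|+1$, and this is manifestly a bimodule map.

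To prove bijectivity I would compare the standard monomial bases of both sides (\cite{khla},\cite{2km}). A basis of $1_{*j}H_{\alpha+\alpha_i}1_{*i}$ consists of elements $\tau_w x^{\mathbf a}1_{\mathbf i}$ with $\mathbf i\in\mathrm{Seq}_{\alpha+\alpha_i}$ ending in $i$ and $w\cdot\mathbf i$ ending in $j$, for a fixed choice of reduced word for each $w$. I would stratify these by the position $p:=w^{-1}(|\alpha|+1)$ from which the top-right $j$-strand descends. When $p=|\alpha|+1$ the last strand runs straight through; this requires $i=j$ and, together with the dots $x_{|\alpha|+1}^k$ it may carry, produces exactly the image of the polynomial summand $H_\alpha[x_{|\alpha|+1}]$. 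When $p\le|\alpha|$ the last strand must cross into the bulk, and using the KLR relations to reduce the crossing of strands $|\alpha|$ and $|\alpha|+1$ to a single application of $\tau_{-1}$—absorbing the permutation that carries the strand from position $p$ to position $|\alpha|$ into the left factor—each such basis element is rewritten uniquely as $a\tau_{-1}b$ with $a,b$ running over monomial bases of $H_{\alpha-\alpha_j+\alpha_i}1_{*i}$ and $1_{*j}H_\alpha$. Matching these rewrites with the monomial basis of the through-summand gives the isomorphism.

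The main obstacle is the uniqueness and exhaustiveness of this reorganization of bases. One must verify that the factorization $a\tau_{-1}b$ in the $p\le|\alpha|$ stratum is unique and degree-preserving, that the two strata are disjoint and cover everything, and---crucially when $i=j$---that pushing dots and crossings past one another via the KLR relations (5) and (8) does not cause an element of one stratum to leak into the other. Concretely this is the combinatorial heart that is delegated to \cite{kk}: it amounts to controlling the double cosets $S_{|\alpha|}\backslash S_{|\alpha|+1}/S_{|\alpha|}$ recording the endpoints of the distinguished strand, and checking that for $i\ne j$ only the crossing stratum occurs, while for $i=j$ the straight stratum contributes precisely one extra polynomial strand. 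Well-definedness, bimodule-linearity, and the degree shift are, by contrast, routine.
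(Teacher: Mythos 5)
The first thing to note is that the paper does not actually prove Proposition \ref{prop:kk}: the bimodule isomorphism is imported wholesale from \cite{kk} (Corollary 3.4), and the only content the paper adds is the observation --- stated, not proved --- that under the 2-representation this isomorphism is implemented by $\sigma_{ij}\oplus\delta_{ij}\bigoplus_{n}F_iX_i^n\circ\eta_i$, i.e.\ is expressible through the generators $X$, $T$, $\eta$ alone. Your proposal therefore does strictly more than the paper: you check well-definedness and bimodule-linearity, do the degree bookkeeping, and sketch bijectivity by stratifying the monomial basis of $1_{*j}H_{\alpha+\alpha_i}1_{*i}$ according to where the distinguished strand ends, with the straight stratum producing $\delta_{ij}H_\alpha[x_{|\alpha|+1}]$ and the crossing stratum rewritten as $a\tau_{-1}b$. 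This is in substance the strategy of the proof in \cite{kk} itself (a Mackey-type filtration whose sub is the crossing part and whose quotient is the polynomial part, split by the dotted straight strands), so your route agrees with the cited proof; the difference from the paper is only that the paper outsources all of it. Since you explicitly delegate the combinatorial heart --- uniqueness and exhaustiveness of the factorization, i.e.\ linear independence of the elements $a\tau_{-1}b$ together with the dotted straight strands, stable under the KLR relations (5) and (8) --- to \cite{kk}, your attempt has the same logical status as the paper's treatment and is acceptable on those terms; for a self-contained proof that is the one step genuinely requiring an argument (leading-term analysis or the faithful polynomial representation).

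Two cautions. First, the transport to the functor statement is ``immediate'' only after one verifies that the 2-representation of Proposition \ref{prop:a2_2rep_exists} sends $\sigma_{ij}=F_jE_i\epsilon_j\circ F_jT_{ji}F_j\circ\eta_jE_iF_j$ to exactly the bimodule map $a\otimes b\mapsto a\tau_{-1}b$; unwinding the unit and counit here is the one computation the paper flags as new, so it deserves to be written out rather than waved through. Second, your bookkeeping that a straight $i$-strand with $k$ dots has degree $2kd_i$ is correct, and it shows the polynomial summand decomposes as $\bigoplus_{n\in\N}q_i^{2n}\1$ (matching $1/(1-q_i^2)=1+q_i^2+q_i^4+\cdots$ in the introduction); the $q_i^{n}$ appearing in the statement of Proposition \ref{prop:kk} is a slip, and your version is the consistent one.
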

The observation that the formula for the isomorphism can be given in terms of only $X$, $T$, and $\eta$ is, to our knowledge, new. Note that very similar formulas are formally inverted in Rouquier's definitions of 2-Kac-Moody algebras \cite{qha}.

We state a technical result that we will need for the next theorem.
\begin{prop}[\cite{kholau2} Proposition 2.16]
    Fix $\textbf{j}$ and $\textbf{j}' \in $ Seq with $\alpha_{\textbf{j}} \geq \alpha_{\textbf{j}'}$. Then $1_{*\textbf{j}'}H_{\alpha_{\textbf{j}}}$ is free as a left graded $H_{\alpha_{\textbf{j}}-\alpha_{\textbf{j}'}}$-module. A basis can be obtained as follows. For each left coset for $S_{|\textbf{j}'|}$ as a subgroup of $S_{|\textbf{j}|}$, there is a unique minimal representative $\omega$ such that $\omega^{-1}$ does not swap the symbols $1$ through $|\textbf{j}|-|\textbf{j}'|$ amongst themselves. Pick a reduced presentation for each such $\omega$. Denote by $\tau_{\omega}$ the element of $H_{\alpha_{\textbf{j}}}$ associated to this reduced presentation. Then the set $\{ 1_{\textbf{i}\textbf{j}'}p(x_{-|\textbf{j}'|},\dots x_{|\textbf{j}|})\tau_{\omega}\}$ ranging over all $\textbf{i}\in \text{Seq}_{\alpha_{\textbf{j}}-\alpha_{\textbf{j}'}}$, all coset representatives $\omega$, and all monomials $p(x_{-|\textbf{j}'|},\dots x_{|\textbf{j}|})$ in the rightmost $|\textbf{j}'|$ variables constitutes a basis.
\end{prop}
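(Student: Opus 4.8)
The plan is to deduce the statement from the $\kf$-basis theorem for $H_\alpha$ (with $\alpha \coloneqq \alpha_{\textbf{j}}$, $n \coloneqq |\textbf{j}|$) together with the combinatorics of minimal coset representatives for the parabolic subgroup acting on the first $n-m$ strands, where $m \coloneqq |\textbf{j}'|$ and $\beta \coloneqq \alpha_{\textbf{j}'}$. Having fixed a reduced word for each $\sigma \in S_n$, the set $\{x_1^{a_1}\cdots x_n^{a_n}\tau_\sigma 1_{\textbf{i}'} : a\in\N^n,\ \sigma\in S_n,\ \textbf{i}'\in\text{Seq}_\alpha\}$ is a homogeneous $\kf$-basis of $H_\alpha$. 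Left multiplication by $1_{*\textbf{j}'}$ retains exactly the basis vectors whose target $\sigma(\textbf{i}')$ ends in $\textbf{j}'$, so these span $1_{*\textbf{j}'}H_\alpha$ over $\kf$. The subalgebra $H_{\alpha-\beta}$ is embedded by acting on the first $n-m$ strands; one checks that each of its generators $x_k$ ($k\le n-m$), $\tau_k$ ($k\le n-m-1$), and idempotent commutes with $1_{*\textbf{j}'}$ (these touch only the first $n-m$ positions, while $1_{*\textbf{j}'}$ records only the last $m$), so $1_{*\textbf{j}'}H_\alpha$ is a left $H_{\alpha-\beta}$-module, and $\{\tau_\pi x_1^{b_1}\cdots x_{n-m}^{b_{n-m}}1_{\textbf{i}''} : b,\ \pi\in S_{n-m},\ \textbf{i}''\in\text{Seq}_{\alpha-\beta}\}$ is a $\kf$-basis of $H_{\alpha-\beta}$.

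The combinatorial input is the length-additive factorization $S_n = S_{n-m}\cdot D$, where $S_{n-m} = \langle s_1,\dots,s_{n-m-1}\rangle$ and $D$ is the set of minimal-length representatives characterized by the stated condition that $\omega^{-1}$ be order-preserving on $\{1,\dots,n-m\}$; one has $|D| = n!/(n-m)!$. For $\pi\in S_{n-m}$ and $\omega\in D$ we have $\ell(\pi\omega) = \ell(\pi) + \ell(\omega)$, so reduced words may be chosen with $\tau_{\pi\omega} = \tau_\pi\tau_\omega$, and every monomial factors as (a monomial in $x_1,\dots,x_{n-m}$) times a monomial $p$ in the last $m$ variables $x_{n-m+1},\dots,x_n$. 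For spanning I would take each surviving basis vector $x^a\tau_\sigma 1_{\textbf{i}'}$, write $\sigma = \pi\omega$ and $\tau_\sigma = \tau_\pi\tau_\omega$, and use the KLR relations (5), (7), (8) to slide the first-$(n-m)$-strand data (the variables $x_1,\dots,x_{n-m}$ and the factor $\tau_\pi$) to the far left, into the image of $H_{\alpha-\beta}$, while collecting the last-$m$-variable monomial $p$ immediately to the left of $\tau_\omega$. This produces an $H_{\alpha-\beta}$-multiple of $1_{\textbf{i}\textbf{j}'}\,p\,\tau_\omega$ plus correction terms in which $\sigma$ is replaced by strictly shorter permutations; an induction on $\ell(\sigma)$ then shows that $B \coloneqq \{1_{\textbf{i}\textbf{j}'}\,p\,\tau_\omega\}$ spans $1_{*\textbf{j}'}H_\alpha$ as a left $H_{\alpha-\beta}$-module.

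To upgrade spanning to a free basis I would compare graded dimensions. For each fixed pair $(\omega,p)$ the vectors $\{1_{\textbf{i}\textbf{j}'}\,p\,\tau_\omega\}_{\textbf{i}}$ together generate a single free rank-one $H_{\alpha-\beta}$-summand, since $\sum_{\textbf{i}}1_{\textbf{i}} = 1$ reassembles $H_{\alpha-\beta}$ out of its idempotent pieces; thus $B$ is the idempotent-refined presentation of a basis of a free module of rank $|D|\cdot(\#\text{ monomials }p)$. Using the two $\kf$-bases above, the equality of $\text{grdim}(1_{*\textbf{j}'}H_\alpha)$ with the graded dimension of this free module reduces, via the factorization $S_n = S_{n-m}\cdot D$ and the polynomial splitting $\kf[x_1,\dots,x_n] = \kf[x_1,\dots,x_{n-m}]\otimes\kf[x_{n-m+1},\dots,x_n]$, to the length-generating-function identity for minimal coset representatives and to counting $|\text{Seq}_\alpha|$-versus-$|\text{Seq}_{\alpha-\beta}|$ sequence labels. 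Since $1_{*\textbf{j}'}H_\alpha$ is bounded below with finite-dimensional graded components, a homogeneous spanning set of the correct graded dimension must be a basis, which yields freeness.

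The main obstacle is the straightening step: I must verify that moving the first-$(n-m)$-strand polynomials and $\tau_\pi$ to the left past $\tau_\omega$ and absorbing them into $H_{\alpha-\beta}$ introduces only error terms of strictly smaller Bruhat length, so that the induction closes. This is precisely what relations (5) and (8) guarantee, as their error terms are supported on shorter permutations, making the length filtration compatible with the coset factorization. The remaining subtlety is purely bookkeeping: tracking the idempotents $1_{\textbf{i}''}$ so that the basis appears in the stated per-sequence form, with $\textbf{i}$ ranging over $\text{Seq}_{\alpha-\beta}$ to encode the idempotent decomposition of the free module.
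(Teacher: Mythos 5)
The paper does not prove this proposition: it is imported, with attribution, from \cite{kholau2} (Proposition 2.16) and used as a black box for Corollary \ref{cor:kholau2_ef_basis}, so there is no in-paper proof to compare yours to. Judged on its own, your argument is the standard one --- the $\kf$-basis theorem for $H_{\alpha}$, the length-additive factorization $S_n=S_{n-m}\cdot D$ along the parabolic acting on the first $n-m$ strands (writing $n=|\textbf{j}|$, $m=|\textbf{j}'|$), straightening, and a graded-dimension comparison --- and it is essentially correct, including the subtle point that the elements $1_{\textbf{i}\textbf{j}'}p\,\tau_\omega$ for varying $\textbf{i}$ jointly form one free rank-one summand. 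One simplification: since the basis theorem holds for an arbitrary choice of reduced words, you may choose the reduced word of $\sigma=\pi\omega$ to be the concatenation of those of $\pi$ and $\omega$; then $\tau_\sigma=\tau_\pi\tau_\omega$ on the nose, and since monomials in $x_{n-m+1},\dots,x_n$ commute exactly with $\tau_\pi$ (relation (5), the ``$0$ otherwise'' case), each surviving basis vector factors \emph{exactly} as (image of an $H_{\alpha-\beta}$-basis element) times $1_{\textbf{i}\textbf{j}'}p\,\tau_\omega$, with no lower-length error terms. Your induction is needed only because you fixed reduced words in advance, and the final step is then cleaner as a bijection of homogeneous $\kf$-bases than as a Poincar\'e-series identity (the latter needs care anyway, since $\deg(\tau_\sigma 1_{\textbf{i}})$ depends on the labels in $\textbf{i}$, not only on $\ell(\sigma)$).

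The point you should not pass over silently: your index set $D$, of size $n!/(n-m)!$, is \emph{not} the set named in the statement, which speaks of cosets of $S_{|\textbf{j}'|}=S_m$; those number $n!/m!$. The literal statement cannot be correct: your own rank count shows the module has exactly $n!/(n-m)!$ free generators per monomial $p$ (for instance, with $I=\{i\}$, $\textbf{j}=(i,i,i)$, $\textbf{j}'=(i)$, the nilHecke algebra $NH_3$ is free over $NH_2$ on the $3=3!/2!$ generators $x_3^b\tau_\omega$ for each power $b$, not on $3!/1!=6$ of them). Moreover, with $S_m$ read literally, the clause ``$\omega^{-1}$ does not swap the symbols $1$ through $n-m$ amongst themselves'' fails to select one element from each coset (for $m=1$ the cosets are singletons, and only $n$ of the $n!$ of them satisfy the clause). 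That clause is precisely the characterization of minimal-length representatives of the cosets $S_{n-m}\omega$ of the parabolic $S_{|\textbf{j}|-|\textbf{j}'|}$, which is what your proof uses and what makes $\ell(\pi\omega)=\ell(\pi)+\ell(\omega)$ hold for all $\pi\in S_{n-m}$. So you have in effect corrected a misprint in the statement (the subgroup should be $S_{|\textbf{j}|-|\textbf{j}'|}$, matching the $|\textbf{j}'|=1$ case proved in \cite{kholau2}); state this correction explicitly, since a reader comparing your proof with the printed statement will otherwise conclude that the two index sets disagree.
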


Computations similar to the following corollary appear in Section 3 of \cite{kk}.
\begin{cor}\label{cor:kholau2_ef_basis}
    Fix $\textbf{i},\textbf{j}$, and $\textbf{j}'\in$ Seq with $\alpha_\textbf{j} \geq \alpha_{\textbf{j}'}$. Then $H_{\alpha_{\textbf{j}}-\alpha_{\textbf{j}'}+\alpha_{\textbf{i}}}1_{*\textbf{i}}\otimes_{\alpha_{\textbf{j}}-\alpha_{\textbf{j}'}} 1_{*\textbf{j}'}H_{\alpha_j}$ has a $\kf$-basis that can be obtained as follows. For each left coset for $S_{|\textbf{j}'|}$ as a subgroup of $S_{|\textbf{j}|}$, there is a unique minimal representative $\omega$ that does not swap the symbols $1$ through $|\textbf{j}|-|\textbf{j}'|$ amongst themselves. Pick a reduced presentation for each such $\omega$. Denote by $\tau_{\omega}$ the element of $H_{\alpha_{\textbf{j}}}$ associated to this reduced presentation. Similarly, for each $\sigma \in S_{|\textbf{j}|-|\textbf{j}'|+|\textbf{i}|}$, pick a reduced presentation, and denote by $\tau_{\sigma}$ the element of $H_{\alpha_{\textbf{j}}-\alpha_{\textbf{j}'}+\alpha_{\textbf{i}}}$ associated to this presentation. Then the set $\{\tau_{\sigma}r(x_{1},\dots x_{-1})1_{\textbf{ki}}\otimes 1_{\textbf{k}'\textbf{j}'}p(x_{-|\textbf{j}'|},\dots x_{|\textbf{j}|})\tau_{\omega}\}$ ranging over all $\textbf{k},\textbf{k}'\in \text{Seq}_{\alpha_{\textbf{j}}-\alpha_{\textbf{j}'}}$, all coset representatives $\omega$, all $S_{|\textbf{j}|-|\textbf{j}'|+|\textbf{i}|}$ elements $\sigma$, all monomials $r(x_1,\dots x_{-1})$ in the variables of $H_{\alpha_{\textbf{j}}-\alpha_{\textbf{j}'}+\alpha_{\textbf{i}}}$, and all monomials $p(x_{-|\textbf{j}'|},\dots x_{|\textbf{j}|})$ in the rightmost $|\textbf{j}'|$ variables of $H_{\alpha_j}$ constitutes a basis.
\end{cor}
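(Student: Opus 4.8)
The plan is to reduce the statement to the quoted Proposition together with the standard monomial basis theorem for KLR algebras, using the elementary fact that tensoring a right module of known $\kf$-basis against a \emph{free} left module yields a $\kf$-basis consisting of the pairwise tensors. Write $\gamma := \alpha_{\textbf{j}} - \alpha_{\textbf{j}'}$, so that $|\gamma| = |\textbf{j}| - |\textbf{j}'|$, and recall that the tensor product in question is balanced over $H_\gamma$, with $M := H_{\gamma + \alpha_{\textbf{i}}} 1_{*\textbf{i}}$ a right $H_\gamma$-module and $N := 1_{*\textbf{j}'} H_{\alpha_{\textbf{j}}}$ a left $H_\gamma$-module, each via the evident concatenation embeddings $H_\gamma \hookrightarrow H_{\gamma + \alpha_{\textbf{i}}}$ (append $\textbf{i}$) and $H_\gamma \hookrightarrow H_{\alpha_{\textbf{j}}}$ (append $\textbf{j}'$).

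First I would record a $\kf$-basis of the left tensor factor $M$. The KLR basis theorem provides a $\kf$-basis $\{\tau_\sigma \, r \, 1_{\textbf{l}}\}$ of $H_{\gamma+\alpha_{\textbf{i}}}$ indexed by $\sigma \in S_{|\gamma|+|\textbf{i}|}$ (with fixed reduced words), monomials $r$ in the polynomial generators, and $\textbf{l} \in \text{Seq}_{\gamma+\alpha_{\textbf{i}}}$. Right-multiplying by $1_{*\textbf{i}} = \sum_{\textbf{k} \in \text{Seq}_\gamma} 1_{\textbf{ki}}$ annihilates every basis vector whose sequence $\textbf{l}$ does not terminate in $\textbf{i}$ and fixes the rest, so $\{\tau_\sigma \, r \, 1_{\textbf{ki}} : \sigma \in S_{|\textbf{j}|-|\textbf{j}'|+|\textbf{i}|},\, r \text{ a monomial},\, \textbf{k} \in \text{Seq}_\gamma\}$ is a $\kf$-basis of $M$.

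Next, the quoted Proposition gives precisely an $H_\gamma$-module basis $\{1_{\textbf{k}'\textbf{j}'} \, p \, \tau_\omega\}$ exhibiting $N$ as a free left $H_\gamma$-module, indexed by $\textbf{k}' \in \text{Seq}_\gamma$, the minimal coset representatives $\omega$, and monomials $p$ in the rightmost $|\textbf{j}'|$ variables. Writing $N = \bigoplus_s H_\gamma \cdot n_s$ for these basis elements $n_s$, the natural identifications $M \otimes_{H_\gamma} N \cong \bigoplus_s M \otimes_{H_\gamma} (H_\gamma n_s) \cong \bigoplus_s M$ (the last sending $m \otimes h n_s \mapsto mh$) show that $\{m_t \otimes n_s\}$, with $\{m_t\}$ the $\kf$-basis of $M$ just produced, is a $\kf$-basis of $M \otimes_{H_\gamma} N$. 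Unwinding these isomorphisms gives exactly the stated family $\{\tau_\sigma \, r \, 1_{\textbf{ki}} \otimes 1_{\textbf{k}'\textbf{j}'} \, p \, \tau_\omega\}$, with the chosen reduced words for $\sigma$ and $\omega$ being those already fixed in the two input basis results.

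The argument is essentially bookkeeping; the only points requiring care are confirming that the idempotent $1_{*\textbf{i}}$ selects exactly the standard basis elements ending in $\textbf{i}$ (so that the index set for the first factor is $\text{Seq}_\gamma$ rather than all of $\text{Seq}_{\gamma+\alpha_{\textbf{i}}}$), and checking that the two concatenation embeddings of $H_\gamma$ are indeed the ones over which the tensor product is balanced, so that the ``free module against known-basis module'' lemma applies on the correct side. Neither presents a genuine obstacle once the conventions of Subsection \ref{subsec:klr} are in hand, so I expect the hardest part to be purely notational: tracking the variable ranges $x_1,\dots,x_{-1}$ and $x_{-|\textbf{j}'|},\dots,x_{|\textbf{j}|}$ and the symmetric groups $S_{|\textbf{j}|-|\textbf{j}'|+|\textbf{i}|}$ through the identifications.
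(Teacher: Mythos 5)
Your proof is correct and is precisely the intended argument: the paper states this result as a corollary of the quoted Proposition without writing a proof, and your reasoning---the freeness of $1_{*\textbf{j}'}H_{\alpha_{\textbf{j}}}$ as a left $H_{\alpha_{\textbf{j}}-\alpha_{\textbf{j}'}}$-module combined with the standard KLR monomial basis of $H_{\alpha_{\textbf{j}}-\alpha_{\textbf{j}'}+\alpha_{\textbf{i}}}1_{*\textbf{i}}$ (cut down by the idempotent) and the base-change isomorphism for tensoring against a free module---is exactly what the paper leaves implicit. The two points of care you flag (the idempotent selecting standard basis elements ending in $\textbf{i}$, and the balancing of the tensor over the two concatenation embeddings) are handled correctly.
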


\subsection{Faithfulness of the 2-representation}\label{subsec:faithful}
We prove a few lemmas about formal direct sums of $\mathcal{A}_2$ objects. We use these to show that the 2-representation of Proposition \ref{prop:a2_2rep_exists} is faithful. 

Proposition \ref{prop:kk} suggests that the morphism $\sigma_{ij} \oplus \delta_{ij}\bigoplus_{n\in \N} F_iX_i^n\circ \eta_i$ should have interesting properties. We must first formally expand the category $\mathcal{A}_2(C)$ to study this morphism.

\begin{defn}
     Denote by $\text{Pre}_2(C)\coloneqq \text{Func}(\mathcal{A}_2(C)^{op},\kf-Vec)$ the category of presheaves on $\mathcal{A}_2(C)^{op}$ valued in $\kf$-vector spaces.
\end{defn} 

Recall the fully faithful Yoneda embedding $\mathcal{Y}:\mathcal{A}_2(C)\hookrightarrow \text{Pre}_2(C)$ given by $X\rightarrow \Hom_{\mathcal{A}_2(C)}(-, X)$ and obvious effect on morphisms. The category $\text{Pre}_2(C)$ is complete and cocomplete and has a strict $\Z$-grading given by $qF(X)=F(q^{-1}X)$ with appropriate shifts on natural transformations. Note that with this convention, the Yoneda embedding commutes with shifting, i.e. $\mathcal{Y}(qX)=q\mathcal{Y}(X)$.

Certain coproducts are especially common in our studies. We use a few notions from \cite{nava} to describe these coproducts.

\begin{defn}
    We say that a coproduct in $\text{Pre}_2(C)$ is \emph{locally finite} if it has the form
    \[\coprod_{i\in \Z} q^i(F_1^{\oplus k_{1,i}}\oplus F_2^{\oplus k_{2,i}}\dots \oplus F_n^{\oplus k_{n,i}})\]

    for some $F_j\in \text{Pre}_2(C)$ and $k_{j,i}\in \N$. We say also that the coproduct is \emph{left-bounded} if there exists some $m \in \Z$ for which $k_{j,i} = 0$ for all $j$ whenever $i < m$.
\end{defn}

\begin{defn}
    Let $\{X_i\}_{i\in S}$ be a set of objects in a $\kf$-linear category $\mathcal{C}$. Suppose that the coproduct $\coprod_{i\in S} X_i$ and the product $\prod_{i\in S} X_i$ exist in $\mathcal{C}$. There is a canonical morphism $\psi:\coprod_i X_i \rightarrow \prod_i X_i$ corresponding to the element of $\prod_i \prod_j \Hom_\mathcal{C}(X_i,X_j)$ that is the identity in each $\Hom_{\mathcal{C}}(X_i,X_i)$ and zero elsewhere. We say that this coproduct (and also this product) is a \emph{biproduct} if $\psi$ is an isomorphism. We denote biproducts with $\bigoplus_i$ instead of just $\prod_i$ or $\coprod_i$.
\end{defn}

\begin{lem}\label{lem:a2_biprod}
    In $\text{Pre}_2(C)$, all left-bounded and locally finite coproducts of objects in $\mathcal{A}_2(C)$ are biproducts.
\end{lem}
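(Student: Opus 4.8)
The plan is to reduce the biproduct condition to an objectwise finiteness statement and then feed in Lemma \ref{lem:a2_lblfd}. Recall first that in the functor category $\text{Pre}_2(C)$ both products and coproducts exist and are computed objectwise: for a family $\{G_s\}_{s\in S}$ of presheaves and any $X\in\mathcal{A}_2(C)$ one has $(\coprod_s G_s)(X)=\bigoplus_s G_s(X)$ and $(\prod_s G_s)(X)=\prod_s G_s(X)$ in $\kf\text{-Vec}$, and the comparison morphism $\psi\colon\coprod_s G_s\to\prod_s G_s$ has component at $X$ the canonical inclusion $\bigoplus_s G_s(X)\hookrightarrow\prod_s G_s(X)$. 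Since a natural transformation of presheaves is an isomorphism if and only if every component is, and since the inclusion of a coproduct of vector spaces into the product is an isomorphism exactly when all but finitely many summands vanish, it suffices to show that for each fixed $X\in\mathcal{A}_2(C)$ only finitely many of the summands of our coproduct evaluate to a nonzero vector space at $X$.

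So I would write the coproduct, using that $\mathcal{Y}$ commutes with the shift, as $P=\coprod_{i\in\Z}\bigoplus_{j=1}^{n}\mathcal{Y}(q^{i}F_j)^{\oplus k_{j,i}}$ with each $F_j\in\mathcal{A}_2(C)$. Evaluating at $X$ gives $P(X)=\bigoplus_{i\in\Z}\bigoplus_{j=1}^{n}\Hom_{\mathcal{A}_2(C)}(X,q^{i}F_j)^{\oplus k_{j,i}}$, and by the grading convention $\Hom_{\mathcal{A}_2(C)}(q^{n}X,q^{m}Y)=\Hom^\bullet_{\mathcal{A}_2(Q)}(X,Y)_{n-m}$ we have $\Hom_{\mathcal{A}_2(C)}(X,q^{i}F_j)=\Hom^\bullet_{\mathcal{A}_2(C)}(X,F_j)_{-i}$. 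Hence the summand indexed by $(i,j)$ contributes a nonzero space at $X$ only when $k_{j,i}>0$ and $\Hom^\bullet_{\mathcal{A}_2(C)}(X,F_j)_{-i}\neq 0$.

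It then remains to bound the index $i$ to a finite window. The label $j$ ranges over the finite set $\{1,\dots,n\}$. For each $j$, left-boundedness of the coproduct forces $k_{j,i}=0$ whenever $i<m$, giving a lower bound on $i$; and by Lemma \ref{lem:a2_lblfd} the graded Hom space $\Hom^\bullet_{\mathcal{A}_2(C)}(X,F_j)$ is left-bounded, so it vanishes in all sufficiently negative degrees, which in this convention forces $\Hom^\bullet_{\mathcal{A}_2(C)}(X,F_j)_{-i}=0$ once $i$ exceeds some bound $N_j$, giving an upper bound on $i$. Thus nonzero summands occur only for $m\le i\le N:=\max_j N_j$, a finite set of values, and since each multiplicity $k_{j,i}$ is finite there are only finitely many nonzero summands at $X$. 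Consequently $\psi_X$ is an isomorphism, and as $X$ was arbitrary, $\psi$ is an isomorphism, so the coproduct is a biproduct. The only real subtlety, and the point I would be careful about, is the sign of the grading convention: one must check that left-boundedness of the Hom space yields an \emph{upper} bound on $i$, which together with the coproduct's lower bound confines $i$ to a finite interval; there is otherwise no serious obstacle, as the heavy lifting is already done by Lemma \ref{lem:a2_lblfd}.
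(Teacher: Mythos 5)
Your proof is correct and follows essentially the same route as the paper: reduce the biproduct claim to the objectwise statement that the canonical map $\bigoplus \rightarrow \prod$ of vector spaces is an isomorphism, and then use Lemma \ref{lem:a2_lblfd} together with the hypotheses on the coproduct to confine the nonzero summands at each $X$ to finitely many indices. Your explicit bookkeeping of the grading sign and of the two separate sources of finiteness (the coproduct's left-boundedness giving the lower bound on $i$, left-boundedness of $\Hom^\bullet_{\mathcal{A}_2(C)}(X,F_j)$ giving the upper bound) is exactly what the paper's terser proof uses implicitly.
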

\begin{proof}
Denote by $\coprod_{i\in \Z} q^i(F_1^{\oplus k_{1,i}}\oplus F_2^{\oplus k_{2,i}}\dots \oplus F_n^{\oplus k_{n,i}})$ our left-bounded locally finite coproduct of $\mathcal{A}_2(C)$ objects. It is enough to show that the natural transformation
\[\psi:\coprod_{i\in \Z} q^i(F_1^{\oplus k_{1,i}}\oplus F_2^{\oplus k_{2,i}}\dots \oplus F_n^{\oplus k_{n,i}})\rightarrow \prod_{i\in \Z} q^i(F_1^{\oplus k_{1,i}}\oplus F_2^{\oplus k_{2,i}}\dots \oplus F_n^{\oplus k_{n,i}})\]
is object-wise an isomorphism. For any $X\in \mathcal{A}_2(C)$, only finitely many of the $\Hom_{\mathcal{A}_2(C)}(X,q^iF_j^{\oplus k_{j,i}})$ can be nonzero due to Corollary \ref{lem:a2_lblfd}. So, when we evaluate $\psi$ at the object $X$, we see that $\psi$ induces the identity map on the finite biproduct $\bigoplus_{i\in \Z} q^i(\Hom(X,F_1^{\oplus k_{1,i}})\oplus \Hom(X,F_2^{\oplus k_{2,i}})\dots \oplus \Hom(X,F_n^{\oplus k_{n,i}}))$. This gives the claim.

\end{proof}

\begin{defn}
    We say that a left-bounded locally finite biproduct \[\bigoplus_{i\in \Z} q^i(F_1^{\oplus k_{1,i}}\oplus \dots \oplus F_n^{\oplus k_{n,i}})\] is a \emph{\textbf{B}-biproduct} if each series $\sum_{i\in \Z}k_{m,i}q^i$ is contained in the subset of $\N[q,q^{-1}][[q]]$ generated under addition, multiplication, and shifts by the elements $1$ and the $1/(1-q_i^2)\coloneqq 1+q_i^2+q_i^{4}+\dots$ for each $i\in I$. We similarly define $\textbf{B}$-coproducts.
\end{defn}
\begin{defn}
    Denote by ${_\textbf{B}\mathcal{A}_2}(C)$ the full subcategory of $\text{Pre}_2(C)$ containing all $\textbf{B}$-biproducts of $\mathcal{A}_2(C)$ objects. This category is evidently closed under shifts and taking $\textbf{B}$-biproducts. Moreover, it inherits a monoidal product, with the tensor distributing over morphisms between such biproducts in the obvious way. The 2-representation of $\mathcal{A}_2(C)$ on $\bigoplus_\alpha H_\alpha(C)-\text{grMod}$ extends to a 2-representation of ${_\textbf{B}\mathcal{A}_2(C)}$. The symmetry $\phi$ extends to $_\textbf{B} \mathcal{A}_2(C)$ since it preserves shifts. 
\end{defn}

\begin{defn}We say that an object $X$ of $\mathcal{A}_2(C)$ is \emph{reduced} if it has the form $X=q^mE_{\textbf{i}}F_{\textbf{j}}$ for some $m\in \Z$ and $\textbf{i},\textbf{j}\in$ Seq. 
\end{defn}

\begin{defn}
    For $i,j\in I$, denote by $\rho_{ij}$ the ${_\textbf{B}\mathcal{A}_2(C)}$-morphism 
    \[\sigma_{ij} \oplus \delta_{ij}\bigoplus_{n\in \N} F_iX_i^n\circ\eta_i:q_i^{-C_{ij}}E_iF_j\oplus \delta_{ij}\bigoplus_{n\in \N} q_i^n \1\rightarrow F_jE_i. \]
    We denote by $R_{ij}$ the domain object of $\rho_{ij}$.
\end{defn}

\begin{lem}\label{lem:rho_onto}
    For any reduced object $X$ and any object $YF_jE_iZ$ of $\mathcal{A}_2(C)$, any $\mathcal{A}_2(C)$ morphism from $X$ to $YF_jE_iZ$ factors through $Y\rho_{ij}Z$ in ${_\textbf{B}\mathcal{A}_2(C)}$. Moreover, $\Hom_{\mathcal{A}_2(C)}(X,YF_jE_iZ)$ has a spanning set of minimal diagrams that further factor through one of the given summands of $Y(q_i^{-C_{ij}}E_iF_j\oplus \delta_{ij}\bigoplus_{n\in \N}q_i^n \1)Z$ by a minimal diagram.
\end{lem}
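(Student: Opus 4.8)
The plan is to reduce to minimal diagrams and then perform a local analysis of the two strands ending at the distinguished $F_j$ and $E_i$ at the top. By Corollary \ref{cor:a2_dot_htpy} the space $\Hom_{\mathcal{A}_2(C)}(X,YF_jE_iZ)$ is spanned by minimal $\mathcal{A}_2$-diagrams, so it suffices to treat a single such diagram $D$ and to exhibit a factorization of $D$ through one summand of $Y R_{ij}Z$ by a minimal diagram $D'$, possibly up to a linear combination of minimal diagrams with strictly fewer crossings (handled by induction). This simultaneously proves both the first assertion and the ``moreover''. Write $a$ for the distinguished $F_j$ entry and $b$ for the distinguished $E_i$ entry at the top; they are adjacent, with the negative entry $a$ immediately left of the positive entry $b$. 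Let $\beta_a,\beta_b$ be the braids through $a$ and $b$.

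First I would treat the case $\beta_a=\beta_b$. Two adjacent top endpoints lying on the same braid must form a cup $W\eta_kW'$, and since its endpoints are $a=F_j$ on the left and $b=E_i$ on the right, orientation compatibility of $\eta$ forces $k=i=j$. Sliding any dots on this arc to the top by the relations of Lemma \ref{lem:easy_qbos_rels}, the diagram factors as $Y(F_iX_i^n\circ\eta_i)Z\circ D'$ with $D'\colon X\to YZ$ minimal, which is a factorization through the $\delta_{ij}\bigoplus_{n\in\N}q_i^n\1$ summand. When $\beta_a\neq\beta_b$ I would show the two braids cross exactly once and that the crossing is a copy of $\sigma_{ij}$. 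By Lemma \ref{lem:a2_forbidden_pairings}, $\beta_b$ must terminate either at a positive entry of $X$ or, via a cup, at a negative top entry strictly left of $a$; dually $\beta_a$ terminates at a negative entry of $X$ or at a positive top entry strictly right of $b$. In each of the four resulting combinations one checks, via the interleaving criterion for when two chords of a disk cross, that the endpoints of $\beta_a$ and $\beta_b$ interleave. Here the hypothesis that $X$ is \emph{reduced} is exactly what is needed: it places every $E$-endpoint of $X$ to the left of every $F$-endpoint, so that in the through--through combination the two bottom endpoints occur in the opposite cyclic order to $a,b$ at the top. By minimality (Corollary \ref{cor:a2_no_double}) the braids then cross precisely once, and since $\beta_b$ is an upward $E_i$-strand and $\beta_a$ a downward $F_j$-strand, this single crossing is exactly a copy of $\sigma_{ij}$.

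To finish the case $\beta_a\neq\beta_b$ I would isotope this crossing to sit immediately below the adjacent pair $(a,b)$. This is the main obstacle: in general other strands pass through the region between the crossing and the top, so the crossing cannot be moved by a planar isotopy alone. The remedy is Lemma \ref{lem:a2_htpy} together with Proposition \ref{prop:a2_isotopy}: reordering the three-way crossings and sliding dots alters $D$ only up to a linear combination of minimal diagrams with strictly fewer crossings, and after these moves the $\beta_a$--$\beta_b$ crossing becomes an outermost transposition of two adjacent strands just below $(a,b)$. Peeling it off expresses $D$ as $Y\sigma_{ij}Z\circ D'$ (plus fewer-crossing error terms) with $D'\colon X\to YE_iF_jZ$ minimal, a factorization through the $q_i^{-C_{ij}}E_iF_j$ summand, the shift $q_i^{-C_{ij}}$ being the degree of $\sigma_{ij}$. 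The error terms are dispatched by induction on the number of crossings, the base case being diagrams already in one of the outermost configurations.

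Combining the two cases gives the ``moreover'' statement verbatim, each spanning minimal diagram having been rewritten as a minimal diagram into a single summand of $YR_{ij}Z$ post-composed with $Y\rho_{ij}Z$. Since $\rho_{ij}$ is the direct sum of $\sigma_{ij}$ with the cups $F_iX_i^n\circ\eta_i$, factoring through either summand is in particular a factorization through $Y\rho_{ij}Z$ in ${_\textbf{B}\mathcal{A}_2(C)}$, where the infinite biproduct is legitimate by Lemma \ref{lem:a2_biprod}. The degree bookkeeping is routine; the genuinely delicate point is the isotopy-to-the-top step governed by Lemma \ref{lem:a2_htpy} and the accompanying induction on crossings.
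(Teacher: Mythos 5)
Your proof is correct, and it rests on the same two pillars as the paper's: the observation that, because $X$ is reduced, the braids ending at the distinguished top $F_j$ and $E_i$ must either coincide or cross exactly once (your four-case interleaving check via Lemma \ref{lem:a2_forbidden_pairings} is actually more detailed than the paper's one-line justification), and the factorization of the resulting cup or crossing through a summand of $YR_{ij}Z$. Where you diverge is in execution. You take an \emph{arbitrary} minimal diagram and move the cup or crossing to the top by braid moves and dot slides, invoking Proposition \ref{prop:a2_isotopy} and Lemma \ref{lem:a2_htpy}; this costs you error terms with fewer crossings and hence an induction on the number of crossings. The paper short-circuits all of this using the freedom built into Corollary \ref{cor:a2_dot_htpy}: since \emph{any} choice of one minimal dotless diagram per pairing, with any dot placement, yields a spanning set, the paper simply chooses, for each pairing, the representative in which $Y\sigma_{ij}Z$ (resp.\ $Y\eta_iZ$ followed by powers $YF_iX_iZ$) is the last operation applied and all dots lie below it; every element of the resulting spanning set then factors on the nose, with no error terms and no induction. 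Both arguments are sound; the paper's choice trick buys brevity, while your rewriting argument shows the slightly stronger fact that any spanning set of minimal diagrams can be converted to the desired form modulo lower-crossing terms. One small point you should make explicit in your cup case: minimality together with the adjacency of the two distinguished top entries forces the cup braid to cross nothing at all (a strand entering the region it bounds with the top edge would have to cross it twice), which is why that factorization is exact rather than merely valid up to error terms.
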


\begin{proof}
    By linearity, it is enough to prove the claim for any fixed choice of $B_{X,YF_jE_iZ}$. Since $X$ is reduced, in any $\mathcal{A}_2$-diagram from $X$ to $YF_jE_iZ$, the braids connecting to the upper $F_j$ and to the upper $E_i$ either coincide or cross. All other possibilities lead to a local minimum with the wrong orientation, contradicting Lemma \ref{lem:a2_forbidden_pairings}. We now fix a choice of minimal dotless diagrams for each element of $p'(sseq(X),sseq(YF_jE_iZ))$. For those diagrams with this $F_j$ and $E_i$ crossing, we pick a minimal diagram in which this crossing $Y\sigma_{ij}Z$ is the last operation applied in our diagram. If these $F_j$ and $E_i$ entries connect (and so $i=j$), we pick a minimal diagram in which the last operation applied is the cup $Y\eta_iZ$. So, for each element of $d(sseq(X),sseq(YF_jE_iZ))$, we pick the corresponding entry of our spanning set $B_{X,YF_jE_iZ}$ based on this choice of dotless diagrams. For those dotless diagrams ending in $Y\eta_i Z$, we postcompose with the appropriate number of $YF_iX_iZ$. For all other braids, make any choice of dot placement for which the dots are below the given cups and crossings. We depict this choice of $B_{X,YF_jE_iZ}$ elements below.

\begin{center}
    \begin{tikzpicture}
        \draw[-,thick] (-1,0)--(2,0);
        \draw[-,dotted] (-1,1)--(2,1);
        \draw[-,thick] (-1,2)--(2,2);
        \draw[->,thick] (0,1)..controls (0,1.5) and (1,1.5)..(1,2);
        \draw[<-,thick] (1,1)..controls (1,1.5) and (0,1.5)..(0,2);
        \draw (-.5,1.5) node[draw]{$Y$};
        \draw (1.5,1.5) node[draw]{$Z$};
        \draw (.5,.5) node{$\dots$};
        \draw (0,2.2) node{\tiny$-j$};
        \draw (1,2.2) node{\tiny$i$};
        \draw (4,2.2) node{\tiny$-i$};
        \draw (5,2.2) node{\tiny$i$};
        
        \draw (2.5,1) node{$,$};

        \draw[-,thick] (3,0)--(6,0);
        \draw[-,dotted] (3,1)--(6,1);
        \draw[-,thick] (3,2)--(6,2);
        \draw[->,thick] (4,2)..controls (4,.8) and (5,.8)..(5,2);
        \draw (3.5,1.5) node[draw]{$Y$};
        \draw (5.5,1.5) node[draw]{$Z$};
        \draw (4.5,.5) node{$\dots$};
        \draw (4.7,1.5) node{\tiny$m$};
        \draw (4.94,1.5) node{$\bullet$};
    \end{tikzpicture}
\end{center}
    
    By construction, each diagram in $B_{X,YF_jE_iZ}$ factors through $Y\rho_{ij}Z\circ YfZ$, where $f$ is one of the direct summand inclusions for the given summands of $Y(q_i^{-C_{ij}}E_iF_j\oplus \delta_{ij}\bigoplus_{n\in \N}q_i^n \1)Z$.
\end{proof}

\begin{thm}\label{thm:a2_2rep_faithful}
For any objects $X,Y\in \mathcal{A}_2(C)$, the spanning sets $B_{X,Y}$ are bases for $\Hom_{\mathcal{A}_2(C)}(X,Y)$. The 2-representation of Proposition \ref{prop:a2_2rep_exists} is faithful.
\end{thm}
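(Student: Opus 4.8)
The plan is to deduce both assertions at once from the single statement that the 2-representation $\rho$ of Proposition \ref{prop:a2_2rep_exists} carries the spanning set $B_{X,Y}$ to a linearly independent subset of the space of natural transformations $\rho(X)\Rightarrow\rho(Y)$. Indeed, Corollary \ref{cor:a2_dot_htpy} already gives that $B_{X,Y}$ spans $\Hom_{\mathcal{A}_2(C)}(X,Y)$; if its image under $\rho$ is linearly independent, then no nontrivial combination of elements of $B_{X,Y}$ can lie in $\ker\rho$, so $B_{X,Y}$ is itself linearly independent (hence a basis) and $\rho$ is injective on $\Hom_{\mathcal{A}_2(C)}(X,Y)$. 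Carrying this out for all pairs $X,Y$ proves simultaneously that every $B_{X,Y}$ is a basis and that $\rho$ is faithful. So the entire problem becomes the computation of $\rho$ on diagrams together with a linear-independence check downstairs.

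To make that check concrete I would first identify the target of $\rho$. Each object is sent to an iterated induction/restriction functor, i.e.\ tensoring with an explicit graded $(H_{\beta'},H_\beta)$-bimodule $B_X$ built from the pieces $H1_{*i}$ (for each $E_i$-entry) and $1_{*i}H$ (for each $F_i$-entry); by the usual Eilenberg--Watts/Yoneda argument a natural transformation $\rho(X)\Rightarrow\rho(Y)$ is the same as a graded bimodule map $B_X\to B_Y$, so it suffices to show that $\{\rho(d):d\in B_{X,Y}\}$ is an independent family of bimodule homomorphisms. Because $\rho$ realizes the adjunction $E_i\dashv F_i$ as induction $\dashv$ restriction, it commutes with the adjunction isomorphisms $\Hom_{\mathcal{A}_2(C)}(E_iA,B)\cong\Hom_{\mathcal{A}_2(C)}(A,F_iB)$ and $\Hom_{\mathcal{A}_2(C)}(A,BE_i)\cong\Hom_{\mathcal{A}_2(C)}(AF_i,B)$; these strip leading $E$'s and trailing $F$'s and let me move boundary entries between source and target without affecting whether $\rho(B_{X,Y})$ is independent. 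Any internal $F_jE_i$ pattern that remains is then resolved by Lemma \ref{lem:rho_onto} and its $\phi$-symmetric analogue, which factor morphisms through $Y\rho_{ij}Z$ and thereby trade an inversion for the reduced pattern $E_iF_j$ at the cost of the extra $\delta_{ij}\bigoplus_{n\in\N}q_i^n\1$ summands recorded in Proposition \ref{prop:kk}. In this way the verification reduces, on both the diagrammatic and the bimodule side, to the case of reduced objects $X=q^mE_{\textbf{i}}F_{\textbf{j}}$ and $Y=q^{m'}E_{\textbf{i}'}F_{\textbf{j}'}$.

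For reduced objects the bimodule $B_X$ is precisely $H1_{*\textbf{i}}\otimes 1_{*\textbf{j}}H$, whose $\kf$-basis and bimodule structure are given explicitly by Corollary \ref{cor:kholau2_ef_basis} in terms of coset representatives $\tau_\omega$, the permutations $\tau_\sigma$, and polynomials in the $x$-variables. I would then compute $\rho(d)$ for each dotted pairing $d\in B_{X,Y}$ by tracking how the elementary generators act: dots go to the polynomial variables, crossings to the $\tau_\sigma$ (or $\tau_\omega$) factors, and the cups/caps $\eta_i,\epsilon_i$ to the structure maps of the induction/restriction bimodules. The combinatorics of $p'(sseq(X),sseq(Y))$ together with the dot data matches dotted pairings bijectively with the distinct basis elements produced by Corollary \ref{cor:kholau2_ef_basis}, so that distinct elements of $B_{X,Y}$ are sent to linearly independent bimodule maps. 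As a sanity check one compares graded dimensions: $\operatorname{grdim}\Hom_{\mathcal{A}_2(C)}^\bullet(X,Y)$, computed from $d(sseq(X),sseq(Y))$, agrees with the graded rank of the bimodule $\Hom$ read off from Corollary \ref{cor:kholau2_ef_basis}.

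The hard part will be this last explicit matching: one must verify that $\rho$ is not merely injective but sends the chosen diagrammatic spanning set onto a genuinely independent family, which requires care in how the generators $X_i,T_{ij},\eta_i,\epsilon_i$ translate into the $\tau_\omega$, $\tau_\sigma$ and polynomial data of Corollary \ref{cor:kholau2_ef_basis}, and in the bookkeeping of the $\textbf{B}$-biproducts (in particular the factors $1/(1-q_i^2)=1+q_i^2+\cdots$ coming from the $\eta_i$-summands) that appear when internal inversions are resolved. Once the dictionary between dotted pairings and the KLR basis is pinned down, linear independence downstairs---and hence both the basis statement and faithfulness---follows.
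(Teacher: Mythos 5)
Your overall strategy coincides with the paper's: reduce both claims to showing that the 2-representation $\rho$ sends one fixed spanning set $B_{X,Y}$ to a linearly independent family, handle the reduced-to-reduced case by matching dotted pairings with the explicit KLR basis of Corollary \ref{cor:kholau2_ef_basis}, and absorb $F_jE_i$ patterns \emph{in the target} via Lemma \ref{lem:rho_onto} together with Proposition \ref{prop:kk} and an induction. Those pieces are sound. The genuine gap is that you have no tool for $F_jE_i$ patterns \emph{in the source}. The duality $E_i\dashv F_i$ is one-sided, so the isomorphisms you invoke, $\Hom_{\mathcal{A}_2(C)}(E_iA,B)\cong\Hom_{\mathcal{A}_2(C)}(A,F_iB)$ and $\Hom_{\mathcal{A}_2(C)}(AF_i,B)\cong\Hom_{\mathcal{A}_2(C)}(A,BE_i)$, only strip a \emph{leading} $E$ or a \emph{trailing} $F$ from the source; a source that begins with an $F$ and ends with an $E$, such as $X=F_jE_i$, is untouched by these moves. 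Lemma \ref{lem:rho_onto} does not apply to it either, since that lemma assumes the source is reduced and only resolves inversions in the target. Nor does its ``$\phi$-symmetric analogue'' help: $\phi$ is covariant with $\phi(E_{\textbf{i}})=E_{-\overline{\textbf{i}}}$, so it carries source entries to source entries and target entries to target entries; the $\phi$-image of Lemma \ref{lem:rho_onto} is again a statement about reduced sources and non-reduced targets. Concretely, your argument never reaches cases like $\Hom_{\mathcal{A}_2(C)}(F_jE_i,F_jE_i)$, where the spanning set of dotted vertical strands still needs an independence proof, so the claimed reduction ``to the case of reduced objects'' fails.

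The paper closes exactly this hole with a third step that has no counterpart in your proposal. For $X=AF_iE_jB$, Lemma \ref{lem:a2_forbidden_pairings} shows that in any $\mathcal{A}_2$-diagram from $X$ to $Y$ the braids attached to this internal $F_i$ and $E_j$ can neither cross nor coincide; consequently, precomposition with the crossing $A\sigma_{ji}B$ sends a fixed choice of $B_{X,Y}$ to \emph{minimal} diagrams from $q_i^{-C_{ij}}AE_jF_iB$ to $Y$ realizing pairwise distinct dotted pairings. Linear independence of images for the new, less inverted source therefore implies it for $X$, and one inducts on the number of ordered pairs $(a,b)$ in the source with $a$ negative, $b$ positive, and $a$ earlier than $b$. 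You would need to add this precomposition-by-$\sigma_{ji}$ argument (or some substitute, e.g., a direct bimodule computation for sources containing $FE$ patterns) before your proof goes through; the rest of your outline then essentially reproduces the paper's proof.
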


\begin{rem}
    The 2-representation of Proposition \ref{prop:a2_2rep_exists} is not full. Lemma \ref{lem:a2_no_loop} shows that there are no interesting endomorphisms of $\1$, i.e. $\text{grdim(Hom}^\bullet_{\mathcal{A}_2(C)}(\1,\1))$ is either $1$ or $0$. However, the identity functor on $\bigoplus_\alpha H_\alpha(C)-\text{grMod}$ has many nontrivial endomorphisms. For example, the identity functor on $H_\alpha(C)-\text{grMod}$ has a nontrivial endomorphism given by multiplication by any element in the center of $H_\alpha(C)$. This center consists of certain symmetric polynomials, see Theorem 2.9 of \cite{khla}.
\end{rem}
\begin{proof}
     We prove both claims by showing that the image of such a $B_{X,Y}$ under our 2-representation is a set of linearly independent natural transformations. Note that by Lemma \ref{lem:a2_htpy} and Lemma \ref{lem:a2_lblfd}, for any objects $X,Y\in \mathcal{A}_2(C)$, two different choices of $B_{X,Y}$ are related by an application of a finite-dimensional unit upper triangular matrix. So, one is linearly independent iff the other is. It is therefore sufficient to prove linear independence of any fixed choice of $B_{X,Y}$.

    We first prove the theorem under the assumption that $X=q^mE_{\textbf{i}}F_{\textbf{j}}$ and $Y=q^n E_{\textbf{i}'}F_{\textbf{j}'}$ for some $m,n\in \Z$ and $\textbf{i}$, $\textbf{i}'$, $\textbf{j}$, $\textbf{j}'\in $ Seq. By Lemma \ref{lem:a2_forbidden_pairings}, in any $\mathcal{A}_2$-diagram from $X$ to $Y$, the braid from an entry in $E_{\textbf{i}'}$ cannot connect to an entry in $F_{\textbf{j}'}$, or else it creates a local minimum with the wrong orientation. So, each entry in $E_{\textbf{i}'}$ must connect to one in $E_{\textbf{i}}$, and similar for the $F_{\textbf{j'}}$ and $F_{\textbf{j}}$. In particular, $|\textbf{i}'|+|\textbf{j}'|\leq |\textbf{i}|+|\textbf{j}|$. Both $X$ and $Y$ yield functors on $\bigoplus_{\alpha} H_{\alpha}(C)-\text{grMod}$, which we also denote by $X$ and $Y$. We evaluate both functors on the object $H_{\alpha_{\textbf{j}}}\in H_{\alpha_{\textbf{j}}}-\text{grMod}$. Note that as elements of $\Z I$, we have that $\alpha_{\textbf{i}}-\alpha_{\textbf{j}}=\alpha_{\textbf{i}'}-\alpha_{\textbf{j}'}$. We can therefore view both functors $X$ and $Y$ as tensoring by some graded $(H_{\alpha_{\textbf{i}}},H_{\alpha_{\textbf{j}}})$-bimodule, and any element of $\Hom_{\mathcal{A}_2(C)}(X,Y)$ determines a graded bimodule homomorphism. We compute that $X(H_{\alpha_{\textbf{j}}})=q^mH_{\alpha_{\textbf{i}}}1_{\bar{\textbf{i}}}\otimes_{\kf} 1_{\textbf{j}}H_{\alpha_{\textbf{j}}}$ and $Y(H_{\alpha_{\textbf{j}}})=q^n H_{\alpha_{\textbf{i}}}1_{*\bar{\textbf{i}'}}\otimes_{\alpha_{\textbf{j}}-\alpha_{\textbf{j}'}} 1_{*\textbf{j}'}H_{\alpha_{\textbf{j}}}$. We evaluate the bimodule homomorphisms associated to a chosen basis of $\Hom_{\mathcal{A}_2(C)}(X,Y)$ on $1_{\bar{\textbf{i}}}\otimes 1_{\textbf{j}}$ and show that the corresponding elements of $Y(H_{\alpha_{\textbf{j}}})$ are linearly independent over $\kf$.

    We now fix a specific spanning set $B_{X,Y}$ for $\Hom_{\mathcal{A}_2(C)}(X,Y)$. We observe that for each pairing in $p'(sseq(X),sseq(Y))$, and for any $\mathcal{A}_2$-diagram realizing this pairing, each braid either connects an $E_i$ in $X$ to an $E_i$ in $Y$, connects an $F_i$ in $X$ to an $F_i$ in $Y$, or connects an $E_i$ in $X$ to an $F_i$ in $X$. For each element $M$ of $d_{m-n}(sseq(X),sseq(Y))$, we pick the diagram that is constructed as follows.
    \begin{enumerate}
        
        \item Denote by $\mathbf{l}$ the (non-consecutive) subsequence of $\textbf{j}$ consisting of the entries that are paired with an entry of $\textbf{i}$. Our $d_{m-n}(\textbf{i}\textbf{j},\textbf{i}'\textbf{j}')$-element $M$ induces a unique element $M'$ of $p'(\textbf{i},\textbf{i}'\overline{\textbf{l}})$. There is a unique element $\sigma$ of the symmetric group $S_{|\textbf{i}|}$ corresponding to $M'$. Pick any reduced presentation $\sigma=s_{i_k}s_{i_{k-1}}\dots s_{i_1}$ and apply the morphism $T_{\sigma}F_{\textbf{j}}$ associated to this choice.
        
        \item There is a unique element $M''$ of $p'(\sigma(\textbf{i})\textbf{j},\textbf{i}'\textbf{j}')$ that yields $M$ after composition with $M'$ and an appropriate dot placement. Denote by $\textbf{l}'$ the (non-consecutive) subsequence of $\sigma(\textbf{i})$ consisting of the entries that are paired with an entry of $\textbf{j}$ by $M''$. As above, we obtain an element of $p'(\textbf{j},\overline{\textbf{l}}'\textbf{j}')$, and thus, an element $\omega$ of the symmetric group $S_{|\textbf{j}|}$ mapping $\textbf{j}$ to $\bar{\textbf{l}}'\textbf{j}'$. Pick any reduced presentation $\omega=s_{j_l}\dots s_{j_1}$ and apply the morphism $E_{\sigma(\textbf{i})}T_{\omega}$ associated to this choice.

         \item Now, place all dots at the top of each strand. On those braids that connect two entries of $X$, we choose to place the dots on the $E$-side. Denote by $r$ the monomial for the dots placed on each $E$ strand in $X$ and by $p$ the monomial for the dots placed on each $F$ strand in $X$ that is not connecting two entries of $X$.
        
        \item Finally, we need only apply a sequence of $\epsilon_i$ to close off the braids connecting two entries of $X$. Since all crossings have been performed and since each positive entry of $X$ is to the left of each negative entry of $X$, there is a unique order in which to apply the $\epsilon_i$, i.e., the local maxima of these braids are nested.
    \end{enumerate}
    This construction is depicted below.
    
    \begin{center}
        \begin{tikzpicture}
            
            \draw[-,thick] (-.5,7)--(6.5,7);
            \draw[-,thick] (-.5,1)--(6.5,1);
            \draw[-,dotted] (-.5,4)--(6.5,4);
            \draw[-,dotted] (-.5,5)--(6.5,5);
            \draw[-,dotted] (-.5,6)--(6.5,6);
            \draw[-,dotted] (-.5,7)--(6.5,7);

            \draw[->,thick] (0,6)--(0,7);
            \draw[->,thick] (1,6)--(1,7);
            \draw[->,thick] (2,5)--(2,6);
            \draw[->,thick] (3,5)--(3,6);
            \draw[-,thick] (4,5)--(4,6);
            \draw[<-,thick] (5,5)--(5,6);
            \draw[-,thick] (6,6)--(6,7);

            \draw(0,5.5) node{$\bullet$};
            \draw(1,5.5) node{$\bullet$};
            \draw(2,5.5) node{$\bullet$};
            \draw(3,5.5) node{$\bullet$};
            \draw(6,5.5) node{$\bullet$};

            \draw[-] (3,5.5)--(-1,5.5);
            \draw (-1.97,5.5) node [inner sep=2pt, draw] {$r(x_1,\dots x_4)$};
            \draw[-] (6,5.5)--(7,5.5);
            \draw (7.47,5.5) node [inner sep=2pt, draw] {$p(x_7)$};

            \draw[->,thick] (0,1)..controls (0,1.5) and (3,3.5) .. (3,4);
            \draw[->,thick] (1,1)..controls (1,1.5) and (0,2.5)..(0,3)..controls (0,3.5) and (1,3.5)..(1,4);
            \draw[->,thick] (2,1)..controls (2,1.5) and (1,2.25)..(1,3)..controls(1,3.75) and (0,3.5)..(0,4);
            \draw[->,thick]
            (3,1)..controls(3,1.5) and (2,3.25)..(2,4);
            \draw[<-,thick] (4,1)--(4,4);
            \draw[<-,thick] (5,1)--(5,4);
            \draw[<-,thick] (6,1)--(6,4);

            \draw[<-,thick] (4,4)--(4,5);
            \draw[<-,thick] (5,4)..controls (5,4.5) and (6,4.5)..(6,5);
            \draw[<-,thick] (6,4)..controls (6,4.5) and (5,4.5)..(5,5);
            \draw[-,thick] (0,4)--(0,5);
            \draw[-,thick] (1,4)--(1,5);
            \draw[-,thick] (2,4)--(2,5);
            \draw[-,thick] (3,4)--(3,5);

            \draw[-,thick] (0,5)--(0,6);
            \draw[-,thick] (1,5)--(1,6);
            \draw[->,thick] (2,6)..controls (2,6.95) and (5,6.95)..(5,6);
            \draw[->,thick] (3,6)..controls (3,6.5) and (4,6.5)..(4,6);
            \draw[<-,thick] (6,5)--(6,6);

            \draw[decorate,decoration={brace,amplitude=5pt,mirror, raise=-1ex}](0,.65) -- (3, .65);
            \draw (1.5, .25) node{$\textbf{i}$};

            \draw[decorate,decoration={brace,amplitude=5pt,mirror, raise=-1ex}](4,.65) -- (6, .65);
            \draw (5, .25) node{$\textbf{j}$};

            \draw[decorate,decoration={brace,amplitude=5pt, raise=-1ex}](0,7.35) -- (1, 7.35);
            \draw (.5, 7.75) node{$\textbf{i}'$};
            \draw(6,7.5) node{$\textbf{j}'$};

            \draw[decorate,decoration={brace,amplitude=5pt, raise=-1ex,mirror}](8.5,5.1) -- (8.5, 5.9);
            \draw (8.8, 5.5) node{$(3)$};

            \draw[decorate,decoration={brace,amplitude=5pt, raise=-1ex,mirror}](8.5,1.1) -- (8.5, 3.9);
            \draw (8.8, 2.5) node{$(1)$};

            \draw[decorate,decoration={brace,amplitude=5pt, raise=-1ex,mirror}](8.5,4.1) -- (8.5, 4.9);
            \draw (8.8, 4.5) node{$(2)$};

            \draw[decorate,decoration={brace,amplitude=5pt, raise=-1ex,mirror}](8.5,6.1) -- (8.5, 6.9);
            \draw (8.8, 6.5) node{$(4)$};
 
        \end{tikzpicture}
    \end{center}
    
    It is easy to see that this diagram $D$ induces $M$. Moreover, since $\omega$ cannot cross any two braids that were already crossed by $\sigma$, this diagram is also minimal.
    
     In the 2-representation, the diagram $D$ sends $1_{\bar{\textbf{i}}}\otimes 1_{\textbf{j}}$ to \[\tau_{i_1-|\textbf{i}|-1}\dots\tau_{i_k-|\textbf{i}|-1}r(x_{-1},\dots x_{1})1_{\overline{\sigma(\textbf{i})}}\otimes 1_{\omega(\textbf{j})}p(x_{-|\textbf{j}'|},\dots x_{|\textbf{j}|})\tau_{-j_l}\dots\tau_{-j_1} .\] Recall that each $S_{|\textbf{j}'|}$ coset of $S_{|\textbf{j}|}$ has a unique minimal representative such that $\omega^{-1}$ does not swap the symbols $1$ through $|\textbf{j}|-|\textbf{j}'|$ amongst themselves. The various $s_{|\textbf{j}|-j_l}\dots s_{|\textbf{j}|-j_1}$ obtained by the process described above all evidently have this property. Distinct elements of $d_{m-n}(sseq(X),sseq(Y))$ must differ in either $\sigma$, the coset for $\omega$, or the number of dots on some braid. Corollary \ref{cor:kholau2_ef_basis} then gives that the images of these elements are linearly independent. So, this choice of minimal diagrams for each pairing gives a basis of $\Hom_{\mathcal{A}_2(C)}(X,Y)$. 

    We now prove the theorem under the assumption that $X=q^mE_{\textbf{i}}F_{\textbf{j}}$ for some $m\in \Z$ and $\textbf{i},\textbf{j}\in$ Seq and with no restriction on $Y$. Write $Y=q^nE_{\textbf{l}}$ for some $\textbf{l}\in $ SSeq. If $Y$ has the form $q^nE_{\textbf{i}'}F_{\textbf{j}'}$ for some $n\in \Z$, $\textbf{i}',\textbf{j}'\in$ Seq, then the arguments above apply. Otherwise, we may write $Y=AF_jE_iB$ for some $A,B$. Lemma \ref{lem:rho_onto} gives us a choice of $B_{X,Y}$ consisting of minimal diagrams that factor through either $A\sigma_{ij}B$ or $AF_jX_i^nB\circ A\eta_i B$. Moreover, for any two distinct $B_{X,Y}$ elements that factor through $A\sigma_{ij}B$, the induced minimal diagrams to $q_i^{-C_{ij}}AE_iF_jB$ yield distinct dotted $\mathcal{A}_2$-pairings of $X$ and $q_i^{-C_{ij}}AE_iF_jB$. We denote by $S_\sigma $ the subset of $\Hom_{\mathcal{A}_2(C)}(X,q_i^{-C_{ij}}AE_iF_jB)$ obtained in this way. We similarly denote by $S_{\eta,n}$ the subset of $\Hom_{\mathcal{A}_2(C)}(X,q_i^{2n}AB)$ induced in this way, and note that the elements of these sets all correspond to distinct dotted pairings. We have therefore partitioned $B_{X,Y}$ as \[B_{X,Y}=A\sigma_{ij}B\circ S_\tau \sqcup \bigsqcup_n AF_iX_i^nB\circ A\eta_iB\circ S_{\eta,n}.\] By Proposition \ref{prop:kk}, each of the sets in this disjoint union factor through a distinct summand of the functor $AF_jE_iB$ and are therefore linearly independent from each other. So, we need only prove that each of the sets $S_\tau$ and $S_{\eta,n}$ are linearly independent. This follows from proving the theorem for the source $X$ and the targets $q_i^{-C_{ij}}AE_iF_jB$ and all $q_i^{2n}AB$ for $n\in \N$. Both subcases have fewer ordered pairs $(a,b)$ of entries in the target with $a$ negative, $b$ positive, and $a$ an earlier entry than $b$. So, we may argue by induction on the number of such pairs, with the base case being proven in the previous paragraphs. 

    We finally prove the theorem for arbitrary $X$ and $Y$. If $X$ has the form $q^mE_{\textbf{i}}F_{\textbf{j}}$ for some $m\in \Z$ and $\textbf{i},\textbf{j}\in $ SSeq, then we are in the case considered in the previous paragraph. Otherwise, as above, we may assume $X$ has the form $AF_iE_jB$ for some objects $A$ and $B$. By Lemma \ref{lem:a2_forbidden_pairings}, in any $\mathcal{A}_2$-diagram from $X$ to $Y$, the braids attached to these entries $F_i$ and $E_j$ can neither cross nor coincide. So, if we precompose a minimal $\mathcal{A}_2$-diagram from $X$ to $Y$ by the crossing $A\sigma_{ji}B$, then we obtain a minimal diagram from $q_i^{-C_{ij}}AE_jF_iB$ to $Y$. If we fix a choice of $B_{X,Y}$, then the elements of $\{x\circ A\sigma_{ji}B | x\in B_{X,Y}\}$ are minimal diagrams corresponding to different dotted pairings from $q_i^{-C_{ij}}AE_jF_iB$ to $Y$. It is therefore sufficient to prove the theorem for this new source and with target $Y$. As in the previous paragraph, we may argue by induction and reduce to the case that $X$ has the form $q^mE_{\textbf{i}}F_{\textbf{j}}$. This case is proven in the previous paragraph.
\end{proof}

\begin{rem}\label{rem:2embeddings}
    Let $\mathcal{A}_{2E}(C)$, resp $\mathcal{A}_{2F}(C)$ denote the full subcategory of $\mathcal{A}_2(C)$ generated by the $E_i$, resp. the $F_i$. There is a natural monoidal functor $\mathcal{A}_1(C)\xhookrightarrow{\text{inc}} \mathcal{A}_{2E}(C)$ that is the identity on objects and morphisms. Theorem \ref{thm:a2_2rep_faithful} shows that this is a faithful embedding, and Lemma \ref{lem:a2_htpy} shows that this is full. The composition $\phi\circ \text{inc}$ is a fully faithful embedding into $\mathcal{A}_{2F}(C)$, and it is monoidal if we reverse the monoidal product on $\mathcal{A}_{2F}(C)$.
\end{rem}
This theorem gives us an explicit description of every Hom space in $\mathcal{A}_2(C)$. Because of this, we may reduce many complex algebraic questions about $\mathcal{A}_2(C)$ to topological questions about minimal $\mathcal{A}_2$-diagrams. In particular, we need the following result.

\begin{cor}\label{cor:rho_mono}
For any objects $Y,Z\in \mathcal{A}_2(C)$, the morphism $Y\rho_{ij}Z$ is a monomorphism, i.e. $Y\rho_{ij}Z \circ f=0$ implies $f=0$.
\end{cor}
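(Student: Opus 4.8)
The plan is to deduce the monomorphism property from two facts already in hand: that $\rho_{ij}$ becomes an isomorphism under the 2-representation (Proposition \ref{prop:kk}), and that the 2-representation is faithful (Theorem \ref{thm:a2_2rep_faithful}). Write $\Phi$ for the 2-representation functor on $_\textbf{B}\mathcal{A}_2(C)$. Since $Y\rho_{ij}Z$ is a morphism of $_\textbf{B}\mathcal{A}_2(C)\subseteq \text{Pre}_2(C)$ and monomorphisms of $\kf$-valued presheaves are detected objectwise, it suffices to show that for every object $A\in\mathcal{A}_2(C)$ the component
\[(Y\rho_{ij}Z)_A:(YR_{ij}Z)(A)\longrightarrow \Hom_{\mathcal{A}_2(C)}(A,YF_jE_iZ)\]
is injective. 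A monomorphism in $\text{Pre}_2(C)$ immediately yields $Y\rho_{ij}Z\circ f=0\Rightarrow f=0$ for an arbitrary $f$, so this reduction loses nothing.

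First I would unwind the source. Because $R_{ij}=q_i^{-C_{ij}}E_iF_j\oplus\delta_{ij}\bigoplus_{n\in\N}q_i^n\1$ is a $\textbf{B}$-biproduct, Lemma \ref{lem:a2_biprod} gives that evaluation at $A$ is the direct sum
\[(YR_{ij}Z)(A)=\Hom_{\mathcal{A}_2(C)}(A,q_i^{-C_{ij}}YE_iF_jZ)\oplus\delta_{ij}\bigoplus_{n\in\N}\Hom_{\mathcal{A}_2(C)}(A,q_i^nYZ),\]
and on each summand $(Y\rho_{ij}Z)_A$ is post-composition with the corresponding $\mathcal{A}_2(C)$-morphism, namely $Y\sigma_{ij}Z$ or $Y(F_iX_i^n\circ\eta_i)Z$. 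The main obstacle is precisely the infinite biproduct occurring in $R_{ii}$: the faithfulness of Theorem \ref{thm:a2_2rep_faithful} is stated for honest objects of $\mathcal{A}_2(C)$, so it can only be applied summand by summand, and this is legitimate only if, for each fixed $A$, all but finitely many summands vanish. That finiteness is exactly what Lemma \ref{lem:a2_lblfd} provides, since $\Hom^\bullet_{\mathcal{A}_2(C)}(A,YZ)$ is left-bounded and locally finite-dimensional, forcing $\Hom_{\mathcal{A}_2(C)}(A,q_i^nYZ)=0$ for $n\gg 0$.

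The heart of the argument is the commuting square obtained by applying $\Phi$, namely
\[\Phi_{YF_jE_iZ}\circ (Y\rho_{ij}Z)_A=\bigl(\Phi(Y\rho_{ij}Z)\circ -\bigr)\circ \Phi_{YR_{ij}Z},\]
where $\Phi_{(-)}$ sends a morphism with codomain $(-)$ to its image under $\Phi$. The right-hand vertical $\Phi_{YF_jE_iZ}$ is injective by faithfulness, and the left-hand vertical $\Phi_{YR_{ij}Z}$ is injective because it is the (finite) direct sum of faithfulness maps on the individual $\mathcal{A}_2(C)$-summands composed with the canonical injection $\bigoplus_k\Hom_{\text{Fun}}(\Phi A,\Phi X_k)\hookrightarrow\Hom_{\text{Fun}}(\Phi A,\Phi(YR_{ij}Z))$. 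Since $\Phi$ is additive, monoidal, and preserves $\textbf{B}$-biproducts, $\Phi(YR_{ij}Z)$ is the functor of Proposition \ref{prop:kk} and $\Phi(Y\rho_{ij}Z)$ is the natural isomorphism given there, so post-composition with it is a bijection and the bottom map is injective. Hence $\bigl(\Phi(Y\rho_{ij}Z)\circ -\bigr)\circ \Phi_{YR_{ij}Z}$ is injective, so the equal composite $\Phi_{YF_jE_iZ}\circ (Y\rho_{ij}Z)_A$ is injective, and therefore $(Y\rho_{ij}Z)_A$ is injective. As this holds for all $A$, the transformation $Y\rho_{ij}Z$ is a monomorphism, completing the proof.
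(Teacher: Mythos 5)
Your proof is correct, but it follows a genuinely different route from the paper's. The paper argues inside the diagrammatic calculus: it expands $f$ through the biproduct into components $f_\tau$ and $f_{\eta,n}$, writes each in a spanning set $B_{X,-}$, composes with $Y\sigma_{ij}Z$ resp.\ $YF_jX_i^nZ\circ Y\eta_iZ$, and uses Lemma \ref{lem:a2_forbidden_pairings} to see that the resulting diagrams are still minimal and realize pairwise distinct dotted pairings of $(sseq(X),sseq(YF_jE_iZ))$; the linear-independence (basis) half of Theorem \ref{thm:a2_2rep_faithful} then forces all coefficients to vanish. You instead never touch diagrams: you reduce to objectwise injectivity in $\text{Pre}_2(C)$, and transport the problem through the 2-representation $\Phi$, using only the faithfulness half of Theorem \ref{thm:a2_2rep_faithful} together with Proposition \ref{prop:kk}, which makes $\Phi(Y\rho_{ij}Z)$ invertible so that post-composition with it is bijective. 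Both proofs invoke Lemma \ref{lem:a2_lblfd} to control the infinite summand $\bigoplus_n q_i^n\1$ (in your case this is not even strictly necessary, since elements of a coproduct have finite support). The one step you should make explicit is why the canonical map $\bigoplus_k\Hom_{\text{Fun}}(\Phi A,\Phi X_k)\to\Hom_{\text{Fun}}(\Phi A,\Phi(YR_{ij}Z))$ is injective: this follows because the coproduct of functors admits projections $\pi_l$ with $\pi_l\circ\Phi(\iota_k)=\delta_{kl}$ (defined by the universal property of the coproduct in the $\kf$-linear functor category), so a finite sum $\sum_k\Phi(\iota_k)\circ h_k=0$ forces every $h_k=0$; as stated, "canonical injection" is an assertion rather than an argument. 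Also note that your injectivity of the right-hand vertical $\Phi_{YF_jE_iZ}$ is never actually used—injectivity of the composite already forces injectivity of $(Y\rho_{ij}Z)_A$. What your approach buys is robustness and generality: it works verbatim for any morphism of ${_\textbf{B}\mathcal{A}_2(C)}$ that a faithful, biproduct-preserving 2-representation inverts, so it immediately shows every element of $S^{\oplus\circ}$ is monic, streamlining Lemma \ref{lem:rightfrac}. What the paper's approach buys is finer diagrammatic information (which composites with $\rho_{ij}$ stay minimal and independent), which is of a piece with the surrounding arguments such as Lemma \ref{lem:rho_onto} and the proof of Theorem \ref{thm:a2_2rep_faithful} itself.
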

\begin{proof}
    Let $f:X\rightarrow YR_{ij}Z$ be a morphism such that $Y\rho_{ij}Z\circ f=0$. The morphism $f$ determines a morphism $f_\tau:X\rightarrow q_i^{-C_{ij}}YE_iF_jZ$, and when $i=j$, $f_{\eta,n}:X\rightarrow q_i^nYZ$. By the universal property of the biproduct, $f=0$ whenever $f_\tau$ and all $f_{\eta,n}=0$. Fix a choice of each $B_{X,q_i^{-C_{ij}}YE_iF_jZ}$ and $B_{X,q_i^nYZ}$. Since these sets span their corresponding Hom spaces, we may expand the $f_\tau$ and $f_{\eta,n}$ as $f_\tau=\sum_{k=1}^m a_{\tau,k}f_{\tau,k}$ and $f_{\eta,n}=\sum_{k=1}^ma_{\eta,n,k}f_{\eta,n,k}$ where the $a_{\tau,k},a_{\eta,n,k}\in \kf$ and the $f_{\tau,k}$ and $f_{\eta,n,k}$ are elements of our spanning sets. Then we compute \[0=Y\rho_{ij}Z\circ f=\sum_{k=1}^m (a_{\tau,k}Y\sigma_{ij}Z\circ f_{\tau,k}+\sum_{n}a_{\eta,n,k}YF_jX_i^nZ\circ Y\eta_iZ\circ f_{\eta,n,k}).\]
    This sum is finite by Lemma \ref{lem:a2_lblfd}. Moreover, each term in this sum is a multiple of a minimal diagram corresponding to a distinct element of $d(sseq(X),sseq(YF_jE_iZ))$. The fact that the terms $Y\sigma_{ij}Z\circ f_{\tau,k}$ are minimal follows from Lemma \ref{lem:a2_forbidden_pairings}. By Theorem \ref{thm:a2_2rep_faithful}, all coefficients must be zero, and therefore $f=0$.
\end{proof}

Homomorphisms between similar reduced objects are easy to describe.

\begin{lem}\label{lem:a2_reduced_hom}
Fix $\textbf{i},\textbf{i}',\textbf{j},\textbf{j}'\in $ Seq such that $\alpha_{\textbf{i}}=\alpha_{\textbf{i}'}$ and $\alpha_{\textbf{j}}=\alpha_{\textbf{j}'}$. Then there is an isomorphism of graded $\kf$-vector spaces
\[\Hom_{\mathcal{A}_2(C)}^\bullet(E_\textbf{i}F_\textbf{j},E_{\textbf{i}'}F_{\textbf{j}'})\simeq 1_{\bar{\textbf{i}'}}H_{\alpha_\textbf{i}}(Q)1_{\bar{\textbf{i}}}\otimes_{\kf}1_{\textbf{j}'}H_{\alpha_\textbf{j}}(Q)1_{\textbf{j}}.\]
\end{lem}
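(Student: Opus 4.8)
The plan is to realize the claimed isomorphism as horizontal composition and to reduce everything to the basis theorem, Theorem \ref{thm:a2_2rep_faithful}. First I would use the ring isomorphism $H_\alpha(Q)\cong \text{End}_{\mathcal{A}_1(Q)}(\bigoplus_{\textbf{k}} E_{\bar{\textbf{k}}})$ recorded after Definition \ref{2cat}, in its graded refinement, to identify the two tensor factors on the right with Hom spaces of morphisms of $E$'s. Under $1_{\textbf{k}}\leftrightarrow \text{id}_{E_{\bar{\textbf{k}}}}$ one obtains graded isomorphisms $1_{\bar{\textbf{i}'}}H_{\alpha_\textbf{i}}(Q)1_{\bar{\textbf{i}}}\cong \Hom^\bullet_{\mathcal{A}_1(C)}(E_{\textbf{i}},E_{\textbf{i}'})$ and $1_{\textbf{j}'}H_{\alpha_\textbf{j}}(Q)1_{\textbf{j}}\cong \Hom^\bullet_{\mathcal{A}_1(C)}(E_{\bar{\textbf{j}}},E_{\bar{\textbf{j}'}})$, since each is just the relevant block of the endomorphism algebra. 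Applying the fully faithful embeddings of Remark \ref{rem:2embeddings}, namely $\text{inc}$ on the first factor and $\phi\circ\text{inc}$ on the second, and using $\phi(E_{\bar{\textbf{j}}})=F_{\textbf{j}}$, these become $\Hom^\bullet_{\mathcal{A}_2(C)}(E_{\textbf{i}},E_{\textbf{i}'})$ and $\Hom^\bullet_{\mathcal{A}_2(C)}(F_{\textbf{j}},F_{\textbf{j}'})$. All these maps preserve the grading, because $\text{inc}$ is the identity on morphisms and $\phi$ preserves degrees of diagrams. It then suffices to prove that horizontal composition $\Phi\colon (g,h)\mapsto g\otimes h$, from $\Hom^\bullet_{\mathcal{A}_2(C)}(E_{\textbf{i}},E_{\textbf{i}'})\otimes_{\kf}\Hom^\bullet_{\mathcal{A}_2(C)}(F_{\textbf{j}},F_{\textbf{j}'})$ to $\Hom^\bullet_{\mathcal{A}_2(C)}(E_{\textbf{i}}F_{\textbf{j}},E_{\textbf{i}'}F_{\textbf{j}'})$, is an isomorphism; it is automatically graded since $\deg(g\otimes h)=\deg(g)+\deg(h)$.

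The key combinatorial input is that, under the hypotheses $\alpha_{\textbf{i}}=\alpha_{\textbf{i}'}$ and $\alpha_{\textbf{j}}=\alpha_{\textbf{j}'}$, every pairing in $p'(sseq(E_{\textbf{i}}F_{\textbf{j}}),sseq(E_{\textbf{i}'}F_{\textbf{j}'}))$ has no caps and no cups. Indeed, in $E_{\textbf{i}'}F_{\textbf{j}'}$ every positive entry precedes every negative entry, so any cup in the target would be a forbidden ``leftward cup'' of Lemma \ref{lem:a2_forbidden_pairings}; hence each target entry is paired with a source entry of the same type ($E$ with $E$, $F$ with $F$). Since $\alpha_{\textbf{i}}=\alpha_{\textbf{i}'}$ forces $|\textbf{i}|=|\textbf{i}'|$, all $|\textbf{i}|$ positive source entries are consumed by through-strands to the positive target entries, leaving none for caps; the symmetric argument with $\alpha_{\textbf{j}}=\alpha_{\textbf{j}'}$ treats the $F$-strands. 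Consequently each realizable pairing factors as a label-preserving bijection $E_{\textbf{i}}\to E_{\textbf{i}'}$ together with one $F_{\textbf{j}}\to F_{\textbf{j}'}$, and every dotted pairing in $d_n(sseq(E_{\textbf{i}}F_{\textbf{j}}),sseq(E_{\textbf{i}'}F_{\textbf{j}'}))$ factors accordingly, with degrees adding, into an $E$-part and an $F$-part.

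With this in hand I would produce a basis adapted to $\Phi$. By Theorem \ref{thm:a2_2rep_faithful} choose bases $B_E$ and $B_F$ of the two factors consisting of minimal dotted $\mathcal{A}_2$-diagrams. For $g\in B_E$ and $h\in B_F$, the tensor $g\otimes h$ is drawn by placing $g$ in a left slab and $h$ in a right slab, so the $E$-strands and $F$-strands never cross; as $g$ and $h$ are individually minimal, $g\otimes h$ introduces no double crossings and is again minimal. Ranging over $B_E\times B_F$ realizes exactly one minimal dotless diagram per product pairing, decorated by every admissible dot placement (one chosen interval per strand), so $\{g\otimes h\}$ is a legitimate choice of the spanning set $B_{E_{\textbf{i}}F_{\textbf{j}},E_{\textbf{i}'}F_{\textbf{j}'}}$ of Corollary \ref{cor:a2_dot_htpy}, and hence a basis of the target Hom space by Theorem \ref{thm:a2_2rep_faithful}. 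Thus $\Phi$ carries the tensor-product basis $\{g\otimes h\}$ of the source bijectively onto a basis of the target, so $\Phi$ is an isomorphism of graded vector spaces, which completes the proof.

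I expect the only genuine obstacle to be the combinatorial step of the second paragraph: one must argue carefully from Lemma \ref{lem:a2_forbidden_pairings} and the weight constraints that neither caps nor cups can occur, so that pairings, and therefore the basis diagrams, factor as an $E$-part times an $F$-part with no interaction between the two families of strands. Once this is established, the identification of the two factors via Remark \ref{rem:2embeddings} and the reduction to Theorem \ref{thm:a2_2rep_faithful} are formal, and the compatibility with the grading follows from the additivity of degree under the monoidal product.
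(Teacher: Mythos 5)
Your proof is correct and follows essentially the same route as the paper's: use Theorem \ref{thm:a2_2rep_faithful} to work with bases of minimal dotted diagrams, apply Lemma \ref{lem:a2_forbidden_pairings} together with the weight hypotheses to rule out caps and cups so every pairing splits into an $E$-part and an $F$-part, use minimality to see that $E$- and $F$-strands never interact, and identify the two factors with KLR blocks via Remark \ref{rem:2embeddings}. The only difference is cosmetic (you identify the right-hand side first and phrase the bijection as ``tensor products of basis diagrams form a legitimate choice of $B_{X,Y}$,'' whereas the paper phrases it as surjectivity of horizontal composition onto a chosen basis), so no further changes are needed.
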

\begin{proof}
    By Theorem \ref{thm:a2_2rep_faithful}, this graded Hom space has a basis given by certain  minimal $\mathcal{A}_2$ diagrams from $E_\textbf{i}F_\textbf{j}$ to $E_{\textbf{i}'}F_{\textbf{j}'}$. Since the target of these diagrams is reduced, we may apply Lemma \ref{lem:a2_forbidden_pairings} to see that these diagrams cannot match entries of the target amongst themselves. Each entry of the target is therefore matched with an identically labeled entry of the source. Since $\alpha_{\textbf{i}}=\alpha_{\textbf{i}'}$ and $\alpha_{\textbf{j}}=\alpha_{\textbf{j}'}$, every entry of the source must be matched to an entry of the target. Since these diagrams are minimal, none of the entries of $E_{\textbf{i}}$ can cross entries of $F_{\textbf{j}}$, or else there must be a double crossing. Therefore, after an appropriate choice of bases, the injective horizontal composition map $\sqcup_{m\in \Z}( B_{q^mE_\textbf{i},E_{\textbf{i}'}}\times B_{q^{n-m}F_\textbf{j},q^nF_{\textbf{j}'}})\rightarrow B_{q^nE_{\textbf{i}}F_\textbf{j},E_{\textbf{i}'}F_{\textbf{j}'}}$ is surjective for any $n\in \Z$. By Theorem \ref{thm:a2_2rep_faithful}, this induces $\kf$-vector space isomorphisms \[\bigoplus_{m\in \Z} (\Hom_{\mathcal{A}_2(C)}(q^mE_\textbf{i},E_{\textbf{i}'})\otimes_\kf \Hom_{\mathcal{A}_2(C)}(q^{n-m}F_\textbf{j},F_{\textbf{j}'}))\rightarrow \Hom_{\mathcal{A}_2(C)}(q^nE_\textbf{i}F_\textbf{j},E_{\textbf{i}'}F_{\textbf{j}'}).\]
    The source of this isomorphism is the degree $n$ component of the graded vector space \[\text{Hom}^\bullet_{\mathcal{A}_2(C)}( E_\textbf{i},E_{\textbf{i}'})\otimes_{\kf} \text{Hom}^\bullet_{\mathcal{A}_2(C)}( F_\textbf{j},F_{\textbf{j}'}).\] Remark \ref{rem:2embeddings} shows that \[ \text{Hom}^\bullet_{\mathcal{A}_2(C)}( E_\textbf{i},E_{\textbf{i}'})\simeq \text{Hom}^\bullet_{\mathcal{A}_1(C)}( E_\textbf{i},E_{\textbf{i}'})\simeq 1_{\bar{\textbf{i}'}}H_{\alpha_\textbf{i}}(Q)1_{\bar{\textbf{i}}},\] and \[ \text{Hom}^\bullet_{\mathcal{A}_2(C)}( F_\textbf{j},F_{\textbf{j}'})\simeq  \text{Hom}^\bullet_{\mathcal{A}_1(C)}( E_{\overline{\textbf{j}}},E_{\overline{\textbf{j}'}})\simeq 1_{\textbf{j}'}H_{\alpha_\textbf{j}}(Q)1_{\textbf{j}}.\]
\end{proof}

\begin{rem}
    A similar claim holds for ``coreduced" objects of the form $q^mF_\textbf{i}E_\textbf{j}$, although we will not need this.
\end{rem}
\subsection{Localizing to obtain the quantum boson category}\label{subsec:localize}

We show how to obtain our desired quantum boson category via a localization at a certain set of morphisms containing the $\rho_{ij}$. We show that this is equivalent to a coreflective localization at a slightly larger set of morphisms.

\begin{defn}
    We say that an object $X$ of ${_\textbf{B}\mathcal{A}_2(C)}$ is a \emph{reduced series} if it is a \textbf{B}-biproduct of reduced objects, i.e. if it has the form of a $\textbf{B}$-biproduct $X=\bigoplus_{i\in \Z}q^i(Y_1^{\oplus k_{1,i}}\oplus \dots \oplus Y_n^{\oplus k_{n,i}})$ with each $Y_j$ reduced.
\end{defn}

\begin{defn}
    For any object $X\in {_\textbf{B}\mathcal{A}_2(C)}$, we define an object $R(X)\in {_\textbf{B}\mathcal{A}_2(C)}$ as follows. We define the map to be linear over left-bounded locally finite biproducts, so it is enough to specify it on $\mathcal{A}_2(C)$.  If $X$ is already reduced, then we say $R(X)\coloneqq X$. Otherwise, we can uniquely up to shifts write $X=YF_jE_iZ$ with $Y$ reduced. Then we say \[R(X)\coloneqq R(YR_{ij}Z).\] The terms in the biproduct expansion of $R(X)$ have fewer ordered pairs of entries $(a,b)$ with $a$ negative, $b$ positive, and $a$ earlier than $b$. So, we can inductively define $R$ in this way. This is a well-defined map of  ${_\textbf{B}\mathcal{A}_2(C)}$ objects since the maximum number of such pairs over terms in any left-bounded locally finite biproduct of $\mathcal{A}_2(C)$ objects is finite.
    
    There is a canonical morphism $\rho_X$ from $R(X)$ to $X$ obtained by appropriately composing and taking biproducts of the maps $Y\rho_{ij}Z$, where we just take $\rho_X=\text{id}_X$ if $X$ is reduced.

    In this notation, $\rho_{ij}=\rho_{F_jE_i}$ and $R_{ij}=R(F_jE_i)$.
\end{defn}
The following is immediate from the definitions.
\begin{prop}\label{prop:M_kinda_monoidal}
    For any objects $X$ and $Y$, we have $R(XY)=R(R(X)Y)$ and $\rho_{XY}=\rho_XY\circ \rho_{R(X)Y}$.
\end{prop}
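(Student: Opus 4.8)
The plan is to prove both identities simultaneously by induction on the number $N(X)$ of ``bad pairs'' in $X$, i.e.\ the number of ordered pairs of entries $(a,b)$ of $X$ with $a$ negative, $b$ positive, and $a$ earlier than $b$ --- the very quantity used in the definition of $R$ to guarantee termination. First I would reduce to the case where $X$ and $Y$ are objects of $\mathcal{A}_2(C)$ rather than general $\textbf{B}$-biproducts. Since $R$ and $\rho$ are defined to distribute over left-bounded locally finite biproducts and $\otimes$ distributes over such biproducts, writing $X=\bigoplus_\lambda X_\lambda$ and $Y=\bigoplus_\mu Y_\mu$ splits each identity into the single-object identities $R(X_\lambda Y_\mu)=R(R(X_\lambda)Y_\mu)$ and $\rho_{X_\lambda Y_\mu}=\rho_{X_\lambda}Y_\mu\circ\rho_{R(X_\lambda)Y_\mu}$. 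Note that $N(X)=0$ precisely when $X$ is reduced.

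For the base case $N(X)=0$ the object $X$ is reduced, so $R(X)=X$ and $\rho_X=\text{id}_X$; then $R(R(X)Y)=R(XY)$ and $\rho_X Y\circ\rho_{R(X)Y}=\text{id}_{XY}\circ\rho_{XY}=\rho_{XY}$, using $\rho_X Y=\text{id}_X Y=\text{id}_{XY}$ and $\rho_{R(X)Y}=\rho_{XY}$. For the inductive step with $N(X)>0$, I would take the canonical decomposition $X=Y'F_jE_iZ'$ with $Y'$ reduced, so that by definition $R(X)=R(W)$ and $\rho_X=Y'\rho_{ij}Z'\circ\rho_W$, where $W\coloneqq Y'R_{ij}Z'$. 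The crucial combinatorial observation is that this decomposition is stable under right tensoring: because $Y'$ is reduced, the leftmost adjacent $F_jE_i$ of $XY$ already sits in the $X$-factor, so $XY=Y'F_jE_i(Z'Y)$ is again the canonical decomposition, giving $R(XY)=R(WY)$ and $\rho_{XY}=Y'\rho_{ij}Z'Y\circ\rho_{WY}$.

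Each term $W_k$ of the biproduct $W$ satisfies $N(W_k)<N(X)$: swapping the adjacent $F_jE_i$ to $E_iF_j$ destroys exactly the one bad pair $(F_j,E_i)$ and leaves every other pair's relative order unchanged, while the summand $Y'Z'$ (present when $i=j$) removes those two entries outright, so in both cases $N$ strictly drops. Hence the inductive hypothesis applies termwise to $W$ with right factor $Y$, yielding $R(WY)=R(R(W)Y)$ and $\rho_{WY}=\rho_W Y\circ\rho_{R(W)Y}$. Since $R(W)=R(X)$, combining these gives $R(XY)=R(WY)=R(R(W)Y)=R(R(X)Y)$, and for the morphism identity I would compute
\[
\rho_{XY}=Y'\rho_{ij}Z'Y\circ\rho_{WY}=Y'\rho_{ij}Z'Y\circ\rho_W Y\circ\rho_{R(X)Y}=\big(Y'\rho_{ij}Z'\circ\rho_W\big)Y\circ\rho_{R(X)Y}=\rho_X Y\circ\rho_{R(X)Y},
\]
where the third equality uses functoriality of $-\otimes Y$.

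The only genuine subtlety, and the step I would treat most carefully, is the stability of the canonical decomposition under right multiplication, together with the bookkeeping that lets me invoke the single-object inductive hypothesis termwise across the biproduct $W$ (and the accompanying verification that $N$ strictly decreases on each term). Everything else is formal manipulation of composites in a strict monoidal category and unwinding the recursive definitions of $R$ and $\rho$.
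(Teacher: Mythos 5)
Your proof is correct, and it is essentially the paper's argument: the paper states this proposition as ``immediate from the definitions,'' and your induction on the number of bad pairs, with the key observation that the canonical decomposition $X=Y'F_jE_iZ'$ (leftmost adjacent $F_jE_i$ with reduced prefix) remains the canonical decomposition of $XY$, is exactly the formal unwinding of the recursive definitions of $R$ and $\rho_X$ that the paper has in mind. No gaps: the reduction to single objects, the strict drop of $N$ on each summand of $Y'R_{ij}Z'$, and the termwise use of the inductive hypothesis across the biproduct are all valid.
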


For concreteness, we made a choice in the definition of $\rho_X$ where we begin to take $\rho$'s from the left. The next lemma shows that any order of $\rho$'s gives the same morphism.
\begin{lem}\label{lem:rho_order_doesnt_matter}
    Suppose we have ${_\textbf{B}\mathcal{A}_2(C)}$ objects $Z=AF_jE_iB$ and $Y=AR_{ij}B$ with $A,B\in {_\textbf{B}\mathcal{A}_2(C)}$. Then $A\rho_{ij}B\circ \rho_Y=\rho_Z$.
\end{lem}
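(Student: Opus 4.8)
The plan is to prove the identity by induction on the number of ``inversions'' of $A$, i.e. the number of ordered pairs $(a,b)$ of entries of $A$ with $a$ negative, $b$ positive, and $a$ earlier than $b$ — the same quantity that makes $R$ well-defined. Since the tensor distributes over $\textbf{B}$-biproducts and $R,\rho_X$ are defined linearly over them, it suffices to treat the case where $A$ is a single $\mathcal{A}_2(C)$-object, the biproduct summands being handled componentwise. Simultaneously with the morphism identity I would record the object-level identity $R(Z)=R(Y)$, which is exactly what makes $\rho_Z$ and $A\rho_{ij}B\circ\rho_Y$ into parallel morphisms $R(Z)=R(Y)\to Z$.

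For the base case, suppose $A$ is reduced. Then the canonical first-$FE$-crossing decomposition of $Z=AF_jE_iB$ is literally $A\cdot F_jE_i\cdot B$: since $A=q^mE_{\textbf i}F_{\textbf j}$ has all its $E$'s before its $F$'s, the first $F$ immediately followed by an $E$ in $AF_jE_iB$ is precisely this marked $F_j$ followed by $E_i$. Hence the definition of $R$ and $\rho_X$ gives $R(Z)=R(AR_{ij}B)=R(Y)$ and $\rho_Z=(A\rho_{ij}B)\circ\rho_{AR_{ij}B}=(A\rho_{ij}B)\circ\rho_Y$ directly, which is the claim.

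For the inductive step, suppose $A$ is not reduced and write its first $FE$-crossing as $A=A'F_lE_kA''$ with $A'$ reduced; set $\widetilde A\coloneqq A'R_{kl}A''$, a $\textbf{B}$-biproduct each of whose summands has strictly fewer inversions than $A$. Because $A'$ is reduced, $F_lE_k$ is also the first $FE$-crossing of $Z=A'F_lE_kA''F_jE_iB$ and of $Y=A'F_lE_kA''R_{ij}B$, so the definition unfolds one step to $\rho_Z=(A'\rho_{kl}A''F_jE_iB)\circ\rho_{\widetilde A F_jE_iB}$ and $\rho_Y=(A'\rho_{kl}A''R_{ij}B)\circ\rho_{\widetilde A R_{ij}B}$. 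Applying the inductive hypothesis to $\widetilde A$ (in each summand) rewrites $\rho_{\widetilde A F_jE_iB}$ as $(\widetilde A\rho_{ij}B)\circ\rho_{\widetilde A R_{ij}B}$. The crux is then a single application of the interchange law of the monoidal category: the resolutions $A'\rho_{kl}A''(-)$ and $(-)\rho_{ij}B$ act on disjoint tensor slots, so
\[(A'\rho_{kl}A''F_jE_iB)\circ(\widetilde A\rho_{ij}B)=(A\rho_{ij}B)\circ(A'\rho_{kl}A''R_{ij}B),\]
using $A'F_lE_kA''=A$. Substituting and recognizing $(A'\rho_{kl}A''R_{ij}B)\circ\rho_{\widetilde A R_{ij}B}=\rho_Y$ from the unfolding of $\rho_Y$ above yields $\rho_Z=(A\rho_{ij}B)\circ\rho_Y$, as desired.

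The step I expect to be the main obstacle is bookkeeping rather than any deep computation: in general the first $FE$-crossing of $Z$ lies \emph{inside} $A$, so the result is not readable directly from the definition, and one genuinely needs the induction together with the interchange law to commute this inner crossing past the marked $F_jE_i$. Some care is also required to carry out the interchange step across $\textbf{B}$-biproducts (so that $\rho_{kl}$ applied inside $\widetilde A$ behaves componentwise and the two morphisms commute on the nose); this is routine given that $\otimes$ distributes over biproducts in ${_\textbf{B}\mathcal{A}_2(C)}$, but it should be spelled out.
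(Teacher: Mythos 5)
Your proof is correct, and it reaches the result by a somewhat different route than the paper. The paper's proof is a three-line computation: it first invokes Proposition \ref{prop:M_kinda_monoidal} with $X=A$ to replace $A$ by the reduced series $R(A)$ all at once, writing $\rho_Z=\rho_A F_jE_iB\circ R(A)\rho_{ij}B\circ \rho_{R(A)R_{ij}B}$ and $\rho_Y=\rho_A R_{ij}B\circ \rho_{R(A)R_{ij}B}$ (using that $R(A)F_j$ is a reduced series, so the recursive definition of $\rho$ unfolds exactly one step), and then concludes with a single application of the interchange law, $\rho_AF_jE_iB\circ R(A)\rho_{ij}B=A\rho_{ij}B\circ \rho_AR_{ij}B=\rho_A\rho_{ij}B$. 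You instead peel off the crossings of $A$ one at a time, inducting on the inversion count that makes $R$ well-defined, and apply the interchange law once per step. In effect you inline the content of Proposition \ref{prop:M_kinda_monoidal} (which the paper declares immediate from the definitions) rather than citing it; the two arguments rest on exactly the same two ingredients, namely the one-step unfolding of the recursive definition of $\rho$ past a reduced prefix and the monoidal interchange law. What the paper's version buys is brevity, since the induction is absorbed into the earlier proposition; what yours buys is self-containedness, an explicit well-founded measure justifying the recursion, and an explicit record of the object-level identity $R(Z)=R(Y)$ needed for the two morphisms to be parallel, which the paper leaves implicit. Your bookkeeping at the delicate points is also sound: the identification of the first $FE$-crossing of $Z$ and of each biproduct summand of $Y$ as the crossing $F_lE_k$ inside $A$ (valid because $A'$ is reduced), the componentwise handling of the biproduct $\widetilde{A}=A'R_{kl}A''$, and the observation that each summand of $\widetilde{A}$ has strictly fewer inversions than $A$.
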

The difference here is that $A$ is not assumed to be a reduced series.
\begin{proof}
    By Proposition \ref{prop:M_kinda_monoidal}, we compute 
    \[\rho_Z=\rho_AF_jE_iB\circ R(A)\rho_{ij}B\circ \rho_{R(A)R_{ij}B},\]
     where we have used that $R(A)F_j$ is a reduced series. We also compute
    \[\rho_Y=\rho_A R_{ij}B\circ \rho_{R(A)R_{ij}B}.\]
    The claim follows from noting that 
    \[\rho_AF_jE_iB\circ R(A)\rho_{ij}B=A\rho_{ij}B\circ \rho_AR_{ij}B=\rho_A\rho_{ij}B.\]
\end{proof}
\begin{lem}\label{lem:M_functorial}
    For any morphism $f:X\rightarrow Y$ in ${_\textbf{B}\mathcal{A}_2(C)}$, there exists a unique morphism $R(f):R(X)\rightarrow R(Y)$ such that $f\circ \rho_X=\rho_Y\circ R(f)$. Moreover, $R$ defines an endofunctor of ${_\textbf{B}\mathcal{A}_2(C)}$.
\end{lem}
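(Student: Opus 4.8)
The plan is to deduce everything from two facts: that each canonical morphism $\rho_Y$ is a monomorphism, and that every morphism out of a reduced series factors through the appropriate $\rho_Y$. Uniqueness of $R(f)$ is then automatic, and functoriality follows formally.

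First I would record that $\rho_Y$ is a monomorphism for every object $Y$. By construction $\rho_Y$ is a composite of morphisms of the form $A\rho_{ij}B$ with $A$ a reduced series and $B$ arbitrary, together with the identity on reduced objects. Each factor $A\rho_{ij}B$ is a $\textbf{B}$-biproduct of morphisms $Y'\rho_{ij}Z$ with $Y',Z\in\mathcal{A}_2(C)$, and each of these is a monomorphism by Corollary \ref{cor:rho_mono}. Since monomorphisms in the presheaf category $\text{Pre}_2(C)$ are detected objectwise, and left-bounded locally finite biproducts are computed objectwise and are objectwise finite by Lemma \ref{lem:a2_biprod}, a biproduct of monomorphisms is again a monomorphism; composites of monomorphisms are monomorphisms, so $\rho_Y$ is one.

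The central step is the factoring claim: for every reduced series $W$ and every object or $\textbf{B}$-biproduct $Y$, any morphism $g\colon W\to Y$ factors as $g=\rho_Y\circ\tilde g$ for some $\tilde g\colon W\to R(Y)$. I would prove this by induction on the maximal number of ordered pairs of entries $(a,b)$ with $a$ negative, $b$ positive, and $a$ earlier than $b$, appearing among the terms of $Y$. When this count is zero, $Y$ is a reduced series, $\rho_Y=\text{id}$, and there is nothing to prove. For a biproduct target one reduces to its summands using the universal property of the product, since $\rho_Y$ and $R(Y)$ are defined biproduct-wise. For a single nonreduced object, write $Y=Y'F_jE_iZ$ with $Y'$ reduced; decomposing $W$ into its reduced summands and invoking Lemma \ref{lem:rho_onto} on each, $g$ factors through $Y'\rho_{ij}Z$, say $g=(Y'\rho_{ij}Z)\circ g'$ with $g'\colon W\to Y'R_{ij}Z$. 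The target $Y'R_{ij}Z$ is a $\textbf{B}$-biproduct each of whose summands has strictly fewer such bad pairs, so the inductive hypothesis factors $g'=\rho_{Y'R_{ij}Z}\circ\tilde g$; combining and using the definition of $\rho_Y$ (equivalently Lemma \ref{lem:rho_order_doesnt_matter}) gives $g=\rho_Y\circ\tilde g$.

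With these in hand the lemma is immediate. Given $f\colon X\to Y$, the composite $f\circ\rho_X$ is a morphism from the reduced series $R(X)$ to $Y$, so the factoring claim produces $R(f)\colon R(X)\to R(Y)$ with $f\circ\rho_X=\rho_Y\circ R(f)$; this $R(f)$ is unique because $\rho_Y$ is a monomorphism. Functoriality follows formally: $\text{id}_X\circ\rho_X=\rho_X\circ\text{id}_{R(X)}$ forces $R(\text{id}_X)=\text{id}_{R(X)}$, and for $X\xrightarrow{f}Y\xrightarrow{g}Z$ one computes $\rho_Z\circ\bigl(R(g)\circ R(f)\bigr)=g\circ\rho_Y\circ R(f)=g\circ f\circ\rho_X$, whence $R(g\circ f)=R(g)\circ R(f)$ again by the monomorphism property of $\rho_Z$. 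The main obstacle is the factoring claim: Lemma \ref{lem:rho_onto} supplies only a single factorization through one $\rho_{ij}$, so the work is in setting up the induction on bad pairs correctly, handling biproduct sources and targets via universal properties, and checking that the iterated factorization assembles into a factorization through $\rho_Y$ itself.
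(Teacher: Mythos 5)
Your proof is correct and takes essentially the same route as the paper: existence via Lemma~\ref{lem:rho_onto} iterated according to the recursive definition of $\rho_Y$, uniqueness from Corollary~\ref{cor:rho_mono} together with closure of monomorphisms under composition and biproducts, and functoriality deduced formally from uniqueness. The only difference is that you make explicit, as an induction on the number of bad pairs, the iteration that the paper compresses into ``the claim follows from Lemma~\ref{lem:rho_onto} and the definition of $\rho_Y$.''
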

\begin{proof}
    For existence, we define such a morphism on each summand of $R(X)$. By linearity over biproducts of $R$ and $\rho_Y$, we may assume that $Y\in \mathcal{A}_2(C)$. Then the claim follows from Lemma \ref{lem:rho_onto} and the definition of $\rho_Y$. Uniqueness follows from Corollary \ref{cor:rho_mono} and by noting that the set of monomorphisms is closed under composition and taking biproducts. 

    For the second claim, it is clear that $R(\text{id}_X)=\text{id}_{R(X)}$ for any $X$. The uniqueness of $R(f\circ g)$ shows that $R(f\circ g)=R(f)\circ R(g)$. 
\end{proof}

We can therefore extend Proposition \ref{prop:M_kinda_monoidal} to morphisms. 

\begin{prop}\label{prop:M_sorta_monoidal_functor}
    For any morphism $f:X\rightarrow Y$ and for any object $Z$ in ${_\textbf{B}\mathcal{A}_2(C)}$, we have that $R(fZ)=R(R(f)Z)$.
\end{prop}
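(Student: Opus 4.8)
The plan is to exploit the uniqueness clause of Lemma \ref{lem:M_functorial}: both $R(fZ)$ and $R(R(f)Z)$ are morphisms with the same source and target, and each is pinned down by an intertwining relation with the canonical maps $\rho$. First I would check that the source and target genuinely agree. By Proposition \ref{prop:M_kinda_monoidal} we have $R(XZ)=R(R(X)Z)$ and $R(YZ)=R(R(Y)Z)$, so both $R(fZ)$ and $R(R(f)Z)$ are morphisms $R(XZ)=R(R(X)Z)\to R(YZ)=R(R(Y)Z)$; this makes the asserted equality meaningful in the first place.

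Next, I would record the two defining intertwining relations. Applying Lemma \ref{lem:M_functorial} to $fZ\colon XZ\to YZ$, the morphism $R(fZ)$ is the \emph{unique} morphism $g$ satisfying $fZ\circ \rho_{XZ}=\rho_{YZ}\circ g$. Applying the same lemma to $R(f)Z\colon R(X)Z\to R(Y)Z$, the morphism $R(R(f)Z)$ is characterized by $R(f)Z\circ \rho_{R(X)Z}=\rho_{R(Y)Z}\circ R(R(f)Z)$. My goal is then to verify that $R(R(f)Z)$ also satisfies the defining relation of $R(fZ)$; uniqueness—which rests on Corollary \ref{cor:rho_mono} forcing the relevant $\rho$'s to be monomorphisms—then yields the equality.

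The core computation is a short diagram chase. Starting from $\rho_{YZ}\circ R(R(f)Z)$, I would substitute $\rho_{YZ}=\rho_Y Z\circ \rho_{R(Y)Z}$ from Proposition \ref{prop:M_kinda_monoidal}, then push $\rho_{R(Y)Z}$ through $R(R(f)Z)$ using its defining relation to obtain $\rho_Y Z\circ R(f)Z\circ \rho_{R(X)Z}$. The interchange law for the monoidal product lets me rewrite $\rho_Y Z\circ R(f)Z=(\rho_Y\circ R(f))Z$, and the defining relation $\rho_Y\circ R(f)=f\circ\rho_X$ for $R(f)$ converts this into $(f\circ \rho_X)Z=fZ\circ \rho_X Z$. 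Reassembling with $\rho_{XZ}=\rho_X Z\circ \rho_{R(X)Z}$ gives exactly $\rho_{YZ}\circ R(R(f)Z)=fZ\circ \rho_{XZ}$, which is precisely the relation characterizing $R(fZ)$.

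I expect the only subtlety—rather than a genuine obstacle—to be the careful application of the interchange law and the bookkeeping of which factor each $\rho$ is tensored against, together with confirming that all the morphisms $R(f)Z$, $\rho_Y Z$, and so on are legitimate ${_\textbf{B}\mathcal{A}_2(C)}$-morphisms (the monoidal product distributes over morphisms between $\textbf{B}$-biproducts). Once the intertwining identity is verified, the conclusion $R(fZ)=R(R(f)Z)$ is immediate from the uniqueness asserted in Lemma \ref{lem:M_functorial}.
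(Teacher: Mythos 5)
Your proof is correct and is exactly the argument the paper intends: the paper states this proposition without proof, remarking only that Lemma \ref{lem:M_functorial} lets one ``extend Proposition \ref{prop:M_kinda_monoidal} to morphisms,'' which is precisely your strategy of showing $R(R(f)Z)$ satisfies the characterizing intertwining relation of $R(fZ)$ and invoking uniqueness (via the monomorphism property from Corollary \ref{cor:rho_mono}). The diagram chase using $\rho_{XZ}=\rho_X Z\circ\rho_{R(X)Z}$, $\rho_{YZ}=\rho_Y Z\circ\rho_{R(Y)Z}$, and the interchange law is exactly what is needed.
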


Denote by $S$ the set of all identity morphisms in $\mathcal{A}_2(C)$ and all morphisms of the form $Y\rho_{ij}Z$ for some $Y,Z\in \mathcal{A}_2(C)$ and $i,j\in I$. Denote by $S^{\oplus}$ the closure of $S$ under taking left-bounded locally finite biproducts. Let $S^{\oplus \circ}$ be the closure of $S^\oplus$ under composition. It is easy to see that $S^{\oplus \circ}$ is closed under composition, biproducts, shifts and the monoidal product. 

\begin{lem}\label{lem:rightfrac}
    The set $S^{\oplus \circ}$ admits a calculus of right fractions.
\end{lem}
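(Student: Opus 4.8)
The plan is to verify the four conditions of Definition \ref{defn:calc_of_frac} directly, using the functor $R$ and the natural transformation $\rho$ as the organizing tool; concretely, I want to exhibit the reduced series as a coreflective subcategory of ${_\textbf{B}\mathcal{A}_2(C)}$ with coreflector $R$ and counit $\rho$, and then read off the Ore-type condition from this. Conditions (1) and (2) are exactly the closure properties of $S^{\oplus\circ}$ already recorded (it contains every identity, since an identity of a biproduct object is a biproduct of generating identities, and it is closed under composition by construction), so the content lies in conditions (3) and (4).

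For condition (4) the key point is that every element of $S^{\oplus\circ}$ is a monomorphism, so the cancellation property holds trivially with $t=\mathrm{id}$. The generators $Y\rho_{ij}Z$ are monomorphisms by Corollary \ref{cor:rho_mono}. Since monomorphisms in the presheaf category $\text{Pre}_2(C)$ are exactly the objectwise-injective natural transformations, and objectwise injectivity is preserved both by biproducts (a direct sum of injective $\kf$-linear maps is injective) and by composition, every morphism in $S^{\oplus\circ}$ is objectwise injective, hence a monomorphism, and $s\circ f=s\circ g$ forces $f=g$.

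The heart of the argument is the factorization lemma: for any reduced series $A$ and any object $Y$, postcomposition with $\rho_Y$ is a bijection $\Hom(A,R(Y))\xrightarrow{\sim}\Hom(A,Y)$, i.e. every $h\colon A\to Y$ factors uniquely through $\rho_Y$. Uniqueness is immediate from $\rho_Y$ being a monomorphism. For existence I first reduce to $A$ a single reduced object, using $\Hom(\bigoplus_a A_a,-)=\prod_a\Hom(A_a,-)$, and then to $Y$ a single $\mathcal{A}_2(C)$-word, using that a reduced object is representable so that (by Yoneda and the fact that the relevant biproducts are computed objectwise) $\Hom(A,\bigoplus_b Y_b)=\bigoplus_b\Hom(A,Y_b)$. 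I then induct on the number of inversions of $Y$: the base case $Y$ reduced has $\rho_Y=\mathrm{id}$, and in the inductive step I write $Y=Y'F_jE_iZ'$ with $Y'$ reduced, so that $\rho_Y=Y'\rho_{ij}Z'\circ\rho_{Y'R_{ij}Z'}$, apply Lemma \ref{lem:rho_onto} to factor $h$ through $Y'\rho_{ij}Z'$, and observe that the remaining factor targets $Y'R_{ij}Z'$, whose components have strictly fewer inversions, so the inductive hypothesis closes the argument. This step is the main obstacle, since it is precisely where the surjectivity of the $\rho_{ij}$ on Hom spaces and the recursive definition of $R$ must be combined.

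Finally, for condition (3) I first record that $R(s)$ is an isomorphism for every $s\in S^{\oplus\circ}$: for a generator $s=Y\rho_{ij}Z$, Lemma \ref{lem:rho_order_doesnt_matter} gives $Y\rho_{ij}Z\circ\rho_{YR_{ij}Z}=\rho_{YF_jE_iZ}$, whose two sides have equal domains, forcing $R(YR_{ij}Z)=R(YF_jE_iZ)$, and then comparison with the defining relation of $R(s)$ from Lemma \ref{lem:M_functorial} together with $\rho$ being a monomorphism yields $R(s)=\mathrm{id}$; this extends to all of $S^{\oplus\circ}$ since $R$ is a functor that is linear over biproducts. Now, given a cospan $X\xrightarrow{f}Y\xleftarrow{s}Z$ with $s\in S^{\oplus\circ}$, I set $W=R(X)$ and $t=\rho_X\in S^{\oplus\circ}$. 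Since $R(X)$ is a reduced series, the factorization lemma writes $f\circ\rho_X=\rho_Y\circ\tilde h$ for a unique $\tilde h\colon R(X)\to R(Y)$, and I define $g=\rho_Z\circ R(s)^{-1}\circ\tilde h\colon R(X)\to Z$. Using the naturality relation $s\circ\rho_Z=\rho_Y\circ R(s)$ from Lemma \ref{lem:M_functorial}, I compute $s\circ g=\rho_Y\circ R(s)\circ R(s)^{-1}\circ\tilde h=\rho_Y\circ\tilde h=f\circ\rho_X=f\circ t$, which completes the square with $t\in S^{\oplus\circ}$, as required.
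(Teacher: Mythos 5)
Your proof is correct and follows essentially the same route as the paper: conditions (1)--(2) hold by construction, condition (4) is obtained by showing every morphism in $S^{\oplus\circ}$ is monic via Corollary \ref{cor:rho_mono} and closure of monomorphisms under composition and biproducts, and condition (3) is settled by completing the cospan with the span $X\xleftarrow{\rho_X}R(X)\rightarrow Z$ built from the $R$/$\rho$ machinery of Lemmas \ref{lem:rho_onto}, \ref{lem:rho_order_doesnt_matter}, and \ref{lem:M_functorial}. The only cosmetic difference is that the paper reduces condition (3) to $s\in S$ by iteration and linearity and then takes the leg $\rho_Z\circ R(f)$ directly, whereas you treat all of $S^{\oplus\circ}$ uniformly by first proving $R(s)$ is an isomorphism (indeed the identity on generators) and re-deriving the factorization statement through $\rho_Y$ -- in effect inlining the coreflection argument the paper records just afterwards in Corollary \ref{cor:a2_coreflect}.
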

\begin{proof}
    $S^{\oplus \circ}$ contains all identity morphisms in $_{\textbf{B}}\mathcal{A}_2(C)$ since it has all $\mathcal{A}_2(C)$ identities and is closed under biproducts. It is defined to be closed under composition. To prove condition 4 of Definition \ref{defn:calc_of_frac}, we show that each $S^{\oplus \circ}$ element is in fact monic. The class of monic morphisms is closed under composition and biproducts, so it is enough to show that the $S$ elements are monic. This is Corollary \ref{cor:rho_mono}. To show condition 3, suppose we are given a cospan $X\xrightarrow{f} Y \xleftarrow{s} Z$ with $s\in S^{\oplus \circ}$. We wish to complete this to a commutative square with the side opposite $s$ given by an $S^{\oplus \circ}$ morphism. By iterating our argument, it is enough to assume that $s\in S^\oplus$, and by linearity, it is enough to assume $s\in S$. Then we complete our square with the span $X\xleftarrow{\rho_X} R(X) \xrightarrow{\rho_Z\circ R(f)} Z$. We indeed have $\rho_X\in S^{\oplus \circ}$. For commutativity, we need to show that $s\circ \rho_Z\circ R(f)=f\circ \rho_X$. This follows from Lemmas \ref{lem:rho_order_doesnt_matter} and Lemma \ref{lem:M_functorial}.
\end{proof}
\begin{defn}
    We define the quantum boson category $\mathcal{B}(C)$ to be the localization ${_\mathbf{B}\mathcal{A}_2(C)}[S^{\oplus \circ -1}]$
\end{defn}
\begin{lem}
    The 2-representation of $\mathcal{A}_2(C)$ on $\bigoplus_{\alpha} H_{\alpha}(C)-\text{grMod}$ descends to a 2-representation of $\mathcal{B}(C)$.
\end{lem}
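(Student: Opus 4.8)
The plan is to descend the functor through the localization by appealing directly to its universal property. Write $\tilde\rho\colon {_\textbf{B}\mathcal{A}_2(C)}\to End_{\oplus,\Z}(\bigoplus_\alpha H_\alpha(C)-\text{grMod})$ for the 2-representation of ${_\textbf{B}\mathcal{A}_2(C)}$ that extends the one of Proposition \ref{prop:a2_2rep_exists}. Since $\mathcal{B}(C)={_\textbf{B}\mathcal{A}_2(C)}[S^{\oplus\circ-1}]$, the defining universal property guarantees that $\tilde\rho$ factors (uniquely) through $\mathcal{B}(C)$ as soon as $\tilde\rho$ sends every morphism in $S^{\oplus\circ}$ to an isomorphism of functors. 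I would invoke the $\kf$-linear, monoidal, and graded refinements of localization (Propositions \ref{prop:kf_enrich} and \ref{prop:monoidal_localiz}) to conclude that the factored functor $\overline{\tilde\rho}\colon\mathcal{B}(C)\to End_{\oplus,\Z}(\bigoplus_\alpha H_\alpha(C)-\text{grMod})$ is automatically strict monoidal, additive, graded, and $\kf$-linear, hence is a genuine 2-representation. So the entire statement reduces to the invertibility condition on $S^{\oplus\circ}$.

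Next I would reduce that condition to the generators of $S$. Because $\tilde\rho$ is an additive monoidal functor it preserves composition, the monoidal product, and (using Lemma \ref{lem:a2_biprod}, which identifies the relevant coproducts as biproducts) left-bounded locally finite biproducts; as $S^{\oplus\circ}$ is built from $S$ precisely under composition and such biproducts, and as a composite or biproduct of isomorphisms is an isomorphism, it is enough to check that $\tilde\rho(s)$ is an isomorphism for each $s\in S$. The identity morphisms in $S$ are sent to identities, so the only case left is $s=Y\rho_{ij}Z$ for objects $Y,Z$ and $i,j\in I$.

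The key input here is Proposition \ref{prop:kk}: it provides exactly the natural isomorphism $\tilde\rho(\rho_{ij})\colon q_i^{-C_{ij}}E_iF_j\oplus\delta_{ij}\bigoplus_{n\in\N}q_i^n\1\xrightarrow{\sim}F_jE_i$ of endofunctors. Since $Y\rho_{ij}Z=\text{id}_Y\otimes\rho_{ij}\otimes\text{id}_Z$ and $\tilde\rho$ is strictly monoidal, $\tilde\rho(Y\rho_{ij}Z)$ is the whiskering of $\tilde\rho(\rho_{ij})$ by the functors $\tilde\rho(Y)$ and $\tilde\rho(Z)$, and whiskering a natural isomorphism by functors on either side again yields a natural isomorphism. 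This verifies the hypothesis of the universal property and produces the 2-representation of $\mathcal{B}(C)$.

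I do not anticipate a serious obstacle: the substantive work is already contained in Proposition \ref{prop:kk} (Kang--Kashiwara), which supplies the one nontrivial invertibility. The points requiring care are bookkeeping rather than content — namely confirming that $\tilde\rho$ is strictly monoidal so that $Y\rho_{ij}Z$ is genuinely sent to a whiskering of $\tilde\rho(\rho_{ij})$, that the biproducts appearing in $S^{\oplus}$ are honest biproducts preserved by the additive functor, and that the monoidal, graded, and $\kf$-linear structures all pass through the localization via the cited universal properties so that the descended functor is again a 2-representation and not merely a bare functor.
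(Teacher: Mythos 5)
Your proposal is correct and follows essentially the same route as the paper: extend the 2-representation to ${_\textbf{B}\mathcal{A}_2(C)}$, use Proposition \ref{prop:kk} plus monoidality to invert every element of $S$, note that compositions and biproducts of isomorphisms are isomorphisms to handle all of $S^{\oplus\circ}$, and invoke the universal property of the localization. Your extra care about the enriched, graded, and monoidal refinements of the universal property and the whiskering argument is sound bookkeeping that the paper leaves implicit.
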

\begin{proof}
    The 2-representation extends to ${_\textbf{B}\mathcal{A}_2(C)}$ since $\bigoplus_\alpha H_\alpha(C)-\text{grMod}$ contains all left-bounded locally finite direct sums.  

    Now, recall that Proposition \ref{prop:kk} shows that each $\rho_{ij}$ is inverted in our 2-representation, and since the 2-representation is monoidal, the same is true for each element of $S$. Biproducts and compositions of isomorphisms are isomorphisms, so this 2-representation inverts all of $S^{\oplus \circ}$. By the universal property of localization, this 2-representation descends to one of $\mathcal{B}(C)$.
\end{proof}

Note that $\phi(\rho_{ij})=\rho_{ij}$ and therefore that the symmetry $\phi$ descends to a monoidal equivalence $\mathcal{B}(C)\rightarrow \mathcal{B}(C)^{co}$.

In the rest of this subsection, we show that $\mathcal{B}(C)$ is equivalently a coreflective localization of ${_\textbf{B}\mathcal{A}_2(C)}$ by $R$. Both descriptions have their uses. The localization at $S^{\oplus \circ}$ is useful due to how explicit this set is, and therefore how easy it is to verify the universal property. The coreflective localization is useful due to a somewhat less cumbersome description of the morphisms.

Denote by $\mathcal{B}'(C)$ the image of $R$, i.e., the full subcategory consisting of the reduced series. 
\begin{cor}\label{cor:a2_coreflect}
    The fully faithful inclusion $L:\mathcal{B}'(C)\rightarrow {_\textbf{B}\mathcal{A}_2(C)}$ is left adjoint to $R$.
\end{cor}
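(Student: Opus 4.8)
The plan is to verify the adjunction through its counit rather than by manipulating hom-sets directly: I would show that for every object $X$ of $_\textbf{B}\mathcal{A}_2(C)$ the canonical morphism $\rho_X:R(X)\to X$ is a couniversal arrow from $L$ to $X$. Concretely, since $\mathcal{B}'(C)$ is by definition a full subcategory of $_\textbf{B}\mathcal{A}_2(C)$, the inclusion $L$ is automatically fully faithful, and for any reduced series $A$ we have $\Hom_{\mathcal{B}'(C)}(A,R(X))=\Hom_{_\textbf{B}\mathcal{A}_2(C)}(A,R(X))$; so the adjunction $L\dashv R$ (with $R$ corestricted to its image $\mathcal{B}'(C)$) amounts to the statement that for each morphism $f:A\to X$ with $A$ a reduced series there is a \emph{unique} $g:A\to R(X)$ with $\rho_X\circ g=f$. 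Establishing this universal property gives the adjunction, with $\rho$ as counit.

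For existence I would feed $f$ into Lemma \ref{lem:M_functorial}, which produces $R(f):R(A)\to R(X)$ satisfying $f\circ\rho_A=\rho_X\circ R(f)$. Because $A$ is a reduced series, the definition of $R$ forces $R(A)=A$ and $\rho_A=\text{id}_A$, so this identity collapses to $f=\rho_X\circ R(f)$; thus $g:=R(f)$ is the desired factorization.

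For uniqueness I would argue that $\rho_X$ is a monomorphism, whence $\rho_X\circ g=\rho_X\circ g'$ implies $g=g'$. By construction $\rho_X$ is assembled from the elementary morphisms $Y\rho_{ij}Z$ by composition and by passage to left-bounded locally finite biproducts. Each $Y\rho_{ij}Z$ is monic by Corollary \ref{cor:rho_mono}, and the class of monomorphisms in a $\kf$-linear category is closed under composition and under biproducts, exactly as was already invoked in the proof of Lemma \ref{lem:rightfrac}; hence $\rho_X$ is monic.

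Combining existence and uniqueness shows $\rho_X$ is couniversal, so $L\dashv R$ with counit $\rho$ and unit the identity (the latter because $RL=\text{id}_{\mathcal{B}'(C)}$, confirming the coreflective picture). I do not expect a serious obstacle, since all the genuine content has already been isolated earlier: the monicity of the $\rho_{ij}$ in Corollary \ref{cor:rho_mono} and the compatibility $f\circ\rho_X=\rho_Y\circ R(f)$ in Lemma \ref{lem:M_functorial}. The only point meriting a line of care is that the right adjoint supplied by the universal property agrees on morphisms with the functor $R$ of Lemma \ref{lem:M_functorial}; but this is immediate, as the mate of $f$ under the adjunction is precisely the unique arrow factoring $f$ through $\rho_X$, which by the existence argument is $R(f)$. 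Thus this corollary merely repackages the preceding results as the asserted adjunction.
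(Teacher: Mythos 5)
Your proposal is correct and follows essentially the same route as the paper: the paper also takes the unit to be the identity, the counit to be $\rho$, and deduces the hom-set bijection $\Hom(A,R(X))\xrightarrow{\rho_X\circ}\Hom(A,X)$ for reduced series $A$ from Lemma \ref{lem:M_functorial}. Your separate uniqueness argument via monicity of $\rho_X$ (Corollary \ref{cor:rho_mono} plus closure of monomorphisms under composition and biproducts) is exactly the content of the uniqueness clause already proved inside Lemma \ref{lem:M_functorial}, so you have merely unpacked the same proof.
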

\begin{proof}
    The unit of adjunction is the identity transformation. The counit $LR\rightarrow \text{Id}$ is defined objectwise by $R(X)\xrightarrow{\rho_X}X$. Naturality follows from Lemma \ref{lem:M_functorial}. Moreover, Lemma \ref{lem:M_functorial} shows that for any $X\in \mathcal{B}'(C)$ and any $Y\in {_\textbf{B}\mathcal{A}_2(C)}$, we have that the map \[\Hom_{\mathcal{A}_2(C)}(X,R(Y))\xrightarrow{\rho_Y\circ}\Hom_{\mathcal{A}_2(C)}(X,Y)\] is an isomorphism. 
\end{proof}
\begin{cor}
    Let $W$ be the set of all morphisms in ${_\textbf{B}\mathcal{A}_2(C)}$ that are sent to isomorphisms by $R$. Then $W$ admits a calculus of right fractions and $\mathcal{B}'(C)$ is equivalent to the localization $ {_\textbf{B}\mathcal{A}_2(C)}[W^{-1}]$.
\end{cor}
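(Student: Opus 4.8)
The plan is to deduce this corollary immediately from Proposition \ref{prop:coreflect_to_localiz}, whose hypotheses are exactly the coreflective situation packaged in Corollary \ref{cor:a2_coreflect}. The first step is to regard the endofunctor $R$ of ${_\textbf{B}\mathcal{A}_2(C)}$ (produced in Lemma \ref{lem:M_functorial}) as a functor $R:{_\textbf{B}\mathcal{A}_2(C)}\rightarrow \mathcal{B}'(C)$ by corestricting to its image; this is legitimate since by construction $R(X)$ is always a reduced series, hence an object of $\mathcal{B}'(C)$. Next I would observe that the inclusion $L:\mathcal{B}'(C)\rightarrow {_\textbf{B}\mathcal{A}_2(C)}$ is fully faithful because $\mathcal{B}'(C)$ is by definition a full subcategory, and that $L$ is left adjoint to $R$ by Corollary \ref{cor:a2_coreflect}. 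Thus the pair $(L,R)$ satisfies the hypotheses of Proposition \ref{prop:coreflect_to_localiz} with $\mathcal{C}={_\textbf{B}\mathcal{A}_2(C)}$ and $\mathcal{D}=\mathcal{B}'(C)$.

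Before invoking the proposition I would check that the set it produces agrees with the set $W$ in the statement. Proposition \ref{prop:coreflect_to_localiz} localizes at the morphisms $f$ for which $R(f)$ is an isomorphism in $\mathcal{B}'(C)$, whereas $W$ asks for $R(f)$ to be an isomorphism in ${_\textbf{B}\mathcal{A}_2(C)}$; these coincide since $R(f)$ lands in the full subcategory $\mathcal{B}'(C)$, where isomorphisms are detected by the inclusion. Applying the proposition then yields both conclusions at once: $W$ admits a calculus of right fractions, and the canonical comparison functor ${_\textbf{B}\mathcal{A}_2(C)}[W^{-1}]\rightarrow \mathcal{B}'(C)$ induced by $R$ is an equivalence, which is precisely the assertion.

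Because all of the real work has already been done, namely functoriality of $R$ in Lemma \ref{lem:M_functorial}, monicity of the $\rho$-morphisms in Corollary \ref{cor:rho_mono}, and the adjunction in Corollary \ref{cor:a2_coreflect}, I do not expect any genuine obstacle here; the only thing to be careful about is the bookkeeping identifying $W$ with the set appearing in Proposition \ref{prop:coreflect_to_localiz}. If a self-contained argument were preferred instead of citing that proposition, I would verify the four conditions of Definition \ref{defn:calc_of_frac} directly, using that the counit components $\rho_X:R(X)\rightarrow X$ are natural (Lemma \ref{lem:M_functorial}), that $R(\rho_X)$ is invertible so that each $\rho_X\in W$, and that $W$ is closed under composition and biproducts; the cospan-completion condition would be handled exactly as in Lemma \ref{lem:rightfrac}, replacing the specific morphism there by the counit of the present coreflection. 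Routing through Proposition \ref{prop:coreflect_to_localiz} is cleaner, so that is the route I would take.
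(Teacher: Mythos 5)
Your proposal is correct and is exactly the paper's argument: the paper proves this corollary by simply citing Proposition \ref{prop:coreflect_to_localiz}, with the adjunction supplied by Corollary \ref{cor:a2_coreflect}. Your additional bookkeeping (corestricting $R$ to $\mathcal{B}'(C)$ and noting that isomorphisms in the full subcategory coincide with those detected in ${_\textbf{B}\mathcal{A}_2(C)}$) is sound and just makes explicit what the paper leaves implicit.
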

\begin{proof}
    Apply Proposition \ref{prop:coreflect_to_localiz}. 
\end{proof}

\begin{lem}
    $W$ is monoidally closed. The monoidal structure on ${_\textbf{B}\mathcal{A}_2(C)}$ descends to one on ${_\textbf{B}\mathcal{A}_2(C)}[W^{-1}]$.
\end{lem}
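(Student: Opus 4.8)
The plan is to derive both assertions from Proposition \ref{prop:monoidal_localiz}. The preceding corollary already establishes that $W$ admits a calculus of right fractions, and $W$ is closed under shifts (since $R$ commutes with shifts, $R(q^nf)=q^nR(f)$ is invertible iff $R(f)$ is). Thus the statement that the monoidal structure descends will follow at once from Proposition \ref{prop:monoidal_localiz} as soon as we know $W$ is closed under the monoidal product, which is exactly what ``$W$ is monoidally closed'' means. So the whole content is the closure: if $f,g\in W$ then $f\otimes g\in W$. Because $W$ admits a calculus of right fractions it contains every identity and is closed under composition, and any $f\otimes g\colon XX'\to YY'$ factors by the interchange law as $f\otimes g=(Yg)\circ(fX')$. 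Hence it suffices to prove the two one-sided statements: for $f\in W$ and any object $Z$, both $fZ\in W$ and $Zf\in W$.

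The right-hand statement is immediate. By Proposition \ref{prop:M_sorta_monoidal_functor} we have $R(fZ)=R(R(f)Z)$; since $f\in W$ the morphism $R(f)$ is an isomorphism, so $R(f)Z=R(f)\otimes\mathrm{id}_Z$ is an isomorphism, and the functor $R$ carries it to the isomorphism $R(R(f)Z)=R(fZ)$. Therefore $fZ\in W$.

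The left-hand statement $Zf\in W$ is the main obstacle, and the reason is structural: $R$ reduces diagrams \emph{from the left}, so there is no strict identity $R(Zf)=R(ZR(f))$. Prepending a negative entry $F_i$ to the source of $f$ creates a new leftmost $F_jE_i$-pattern that interacts with the interior of $f$, so left tensoring does not commute with $R$ on the nose (unlike right tensoring, handled by Proposition \ref{prop:M_sorta_monoidal_functor}). I would resolve this using the left--right symmetry $\phi$. Recall that $\phi$ is a monoidal isomorphism of ${_\textbf{B}\mathcal{A}_2(C)}$ onto its reverse ${_\textbf{B}\mathcal{A}_2(C)}^{co}$, that it preserves shifts and biproducts, and that it sends each reduced object $q^mE_{\textbf{i}}F_{\textbf{j}}$ to the reduced object $q^mE_{\bar{\textbf{j}}}F_{\bar{\textbf{i}}}$; hence $\phi$ carries the full subcategory $\mathcal{B}'(C)$ of reduced series onto itself. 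Transporting the coreflector of Corollary \ref{cor:a2_coreflect} through $\phi$ then produces a second right adjoint $R'$ to the inclusion $\mathcal{B}'(C)\hookrightarrow {_\textbf{B}\mathcal{A}_2(C)}$, which is concretely the ``reduce from the right'' functor. The mirror images of Lemma \ref{lem:rho_order_doesnt_matter}, Lemma \ref{lem:M_functorial}, and Proposition \ref{prop:M_sorta_monoidal_functor}, proved by the same arguments with orientations reversed (all available because the forbidden-pairing lemmas are left--right symmetric), give the mirrored identity $R'(Zf)=R'(ZR'(f))$. Since right adjoints to a fixed functor are unique up to natural isomorphism, $R'\cong R$, so $R'$ inverts exactly the morphisms in $W$. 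Rerunning the argument of the second paragraph with $R'$ in place of $R$ yields $Zf\in W$ whenever $f\in W$.

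With $W$ closed under the monoidal product in hand, the second sentence of the lemma follows directly from Proposition \ref{prop:monoidal_localiz}: the monoidal structure on ${_\textbf{B}\mathcal{A}_2(C)}$ descends to ${_\textbf{B}\mathcal{A}_2(C)}[W^{-1}]$ with the localization functor monoidal, compatibly with the $\kf$-enrichment of Proposition \ref{prop:kf_enrich} and, because $W$ is closed under shifts, with the strict grading. The only genuinely new input is the left-tensoring closure; everything else is bookkeeping resting on the coreflection $(L,R)$ and the symmetry $\phi$, and I expect establishing the mirror-image properties of the right-reduction functor to be the step requiring the most care.
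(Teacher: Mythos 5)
Your proposal is correct, and its skeleton (reduce to the two one-sided closures via the interchange law, handle right tensoring with Proposition \ref{prop:M_sorta_monoidal_functor}, then invoke Proposition \ref{prop:monoidal_localiz}) coincides with the paper's proof. Where you genuinely diverge is the left-tensoring step $Zf\in W$. The paper disposes of it in one line: Lemma \ref{lem:rho_order_doesnt_matter} was proved precisely so that ``any order of $\rho$'s gives the same morphism,'' and iterating it yields the left-sided analogues $\rho_{ZX}=Z\rho_X\circ\rho_{ZR(X)}$, hence $R(ZX)=R(ZR(X))$ and then $R(Zf)=R(ZR(f))$ by the uniqueness clause of Lemma \ref{lem:M_functorial}; one then argues exactly as on the right. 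So your structural claim that ``there is no strict identity $R(Zf)=R(ZR(f))$'' is not accurate --- the identity does hold on the nose; it is merely not definitional, and Lemma \ref{lem:rho_order_doesnt_matter} is exactly the tool the paper supplies to prove it. Your workaround is nevertheless valid: transporting the coreflection through $\phi$ gives a second right adjoint $R'=\phi R\phi$ to $L$ (using $\phi^2=\mathrm{id}$ and the fact, which you correctly verify, that $\phi$ carries reduced objects to reduced objects, hence preserves $\mathcal{B}'(C)$), uniqueness of right adjoints gives $R'\cong R$ and so $R'$ inverts exactly $W$, and the mirrored identity $R'(Zf)=R'(ZR'(f))$ finishes the argument. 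What your route buys is that, once $\phi$ is in hand, the mirror lemmas need not be re-proved diagrammatically at all --- they follow formally by conjugation, since $\phi$ preserves $S$, biproducts, and shifts; what the paper's route buys is brevity, since the order-independence lemma is already proved and no adjoint-uniqueness detour is needed.

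Two small cautions. First, your parenthetical that the mirror lemmas are available ``because the forbidden-pairing lemmas are left--right symmetric'' is imprecise: Lemma \ref{lem:a2_forbidden_pairings} is \emph{not} invariant under a naive left--right flip (the duality $E_i\dashv F_i$ is one-sided), but only under the flip combined with orientation reversal, which is exactly $\phi$; phrase the symmetry that way, or simply conjugate by $\phi$ instead of ``repeating the arguments with orientations reversed.'' Second, when transporting the adjunction you should note explicitly that $\phi L\phi=L$ as functors on $\mathcal{B}'(C)$, which is what makes $R'$ a right adjoint to the \emph{same} inclusion $L$ and so licenses the appeal to uniqueness.
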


\begin{proof}
For the first claim, note that for any morphism $f$ and any object $X\in {_\textbf{B}\mathcal{A}_2(C)}$, Proposition \ref{prop:M_sorta_monoidal_functor} shows that $R(f X)=R(R(f)X)$. If $R(f)$ is an isomorphism with inverse $g$, then for any object $X$,  $R(f)X$ has inverse $gX$. Any functor sends isomorphisms to isomorphisms, so $R(R(f)X)=R(fX)$ is an isomorphism. We use Lemma \ref{lem:rho_order_doesnt_matter} to argue similarly for the reverse multiplication.

Now, for $f:A\rightarrow B$ and $f':X\rightarrow Y\in S$, we compute \[R(ff')=R(fY\circ A f')=R(fY)\circ R(Af').\]
The composition of isomorphisms is an isomorphism, and so the first claim holds. 
The second claim then follows from Proposition \ref{prop:monoidal_localiz}.
\end{proof}
The corresponding monoidal structure on $\mathcal{B}'(C)$ is given by $X\otimes Y\coloneqq R(XY)$, $f\otimes X \coloneqq R(fX)= R(R(f)R(X))$, and likewise for the reverse product.
\begin{lem}
Denote by $\mathcal{L}$ the localization functor ${_\textbf{B}\mathcal{A}_2(C)}\rightarrow \mathcal{B}(C)$. Then the composite $\mathcal{L}\circ L$ is a lax monoidal equivalence of categories $\mathcal{B}'(C)\rightarrow \mathcal{B}(C)$.
\end{lem}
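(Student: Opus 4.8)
The plan is to produce an explicit quasi-inverse to $\mathcal{L}\circ L$ and then transport the monoidal structure of $\mathcal{L}$ along the reduction maps. The observation that makes everything run is that each counit $\rho_X\colon R(X)\to X$ of the adjunction of Corollary \ref{cor:a2_coreflect} lies in $S^{\oplus\circ}$: by construction it is a composite of biproducts of morphisms $Y\rho_{ij}Z$, so $\mathcal{L}(\rho_X)$ is invertible in $\mathcal{B}(C)$. I would first record that $S^{\oplus\circ}\subseteq W$. Indeed, applying Lemma \ref{lem:M_functorial} to $f=\rho_X$ and using that $R(X)$ is reduced gives $\rho_X=\rho_X\circ R(\rho_X)$, and since $\rho_X$ is monic (Corollary \ref{cor:rho_mono} and the closure of monics under composition and biproduct noted in Lemma \ref{lem:rightfrac}) this forces $R(\rho_X)=\mathrm{id}$; thus every $\rho_{ij}\in W$. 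As $W$ is closed under the monoidal product (the preceding lemma), under biproducts, and under composition (because $R$ is a functor preserving biproducts), we conclude $S\subseteq W$ and hence $S^{\oplus\circ}\subseteq W$.

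For the equivalence of underlying categories I would regard $R$ as a functor $R'\colon {_\textbf{B}\mathcal{A}_2(C)}\to\mathcal{B}'(C)$ onto its image. Since $R'$ inverts every morphism of $S^{\oplus\circ}$, the universal property of the localization $\mathcal{L}$ supplies a unique functor $\bar R\colon\mathcal{B}(C)\to\mathcal{B}'(C)$ with $\bar R\circ\mathcal{L}=R'$. Because $R$ fixes reduced series on objects and (by Lemma \ref{lem:M_functorial}, since $\rho=\mathrm{id}$ there) on morphisms, we get $\bar R\circ(\mathcal{L}\circ L)=R'\circ L=\mathrm{id}_{\mathcal{B}'(C)}$ strictly. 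Conversely the isomorphisms $\mathcal{L}(\rho_X)\colon\mathcal{L}(R(X))\to\mathcal{L}(X)$ assemble into a natural isomorphism $(\mathcal{L}\circ L)\circ\bar R\Rightarrow\mathrm{id}_{\mathcal{B}(C)}$: for a morphism $\mathcal{L}(h)$ the naturality square is exactly the identity $h\circ\rho_X=\rho_Y\circ R(h)$ of Lemma \ref{lem:M_functorial}, and for the formally inverted morphisms naturality follows automatically. Hence $\mathcal{L}\circ L$ is an equivalence with quasi-inverse $\bar R$.

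It remains to upgrade this to a monoidal equivalence. Recall that $\mathcal{L}$ is strict monoidal (Proposition \ref{prop:monoidal_localiz}) and that the product on $\mathcal{B}'(C)$ is $X\otimes Y=R(XY)$. On objects $(\mathcal{L}\circ L)(X)\otimes(\mathcal{L}\circ L)(Y)=\mathcal{L}(XY)$ whereas $(\mathcal{L}\circ L)(X\otimes Y)=\mathcal{L}(R(XY))$, so I would define the structure morphism $\mu_{X,Y}\coloneqq\mathcal{L}(\rho_{XY})^{-1}\colon\mathcal{L}(XY)\to\mathcal{L}(R(XY))$ and the unit morphism $\mathcal{L}(\rho_{\1})^{-1}=\mathrm{id}$. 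Each $\mu_{X,Y}$ is an isomorphism, so the structure is in fact strong (in particular lax); naturality of $\mu$ is once more the relation of Lemma \ref{lem:M_functorial}, now applied to $f\otimes g$.

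The step I expect to require the most care, and the genuine obstacle, is the associativity coherence axiom for $\mu$, because $\mathcal{B}'(C)$ is not strict: its associator identifies $R(R(XY)Z)$ with $R(XR(YZ))$ through reduction maps. I would reduce the coherence diagram to an equality of $\rho$-morphisms in ${_\textbf{B}\mathcal{A}_2(C)}$, which may be verified \emph{before} localizing since all $\rho$-morphisms are monic (Corollary \ref{cor:rho_mono}). Concretely, both composites express the total reduction morphism $R(XYZ)\to XYZ$, and the fact that it is independent of the order in which the elementary reductions $Y\rho_{ij}Z$ are performed is exactly Proposition \ref{prop:M_kinda_monoidal} together with Lemma \ref{lem:rho_order_doesnt_matter} (with Proposition \ref{prop:M_sorta_monoidal_functor} handling the interaction with morphisms); the unit coherence is immediate from $\rho_{\1}=\mathrm{id}$. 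Assembling the equivalence with these isomorphism structure maps shows that $\mathcal{L}\circ L$ is a (strong, hence lax) monoidal equivalence, as claimed.
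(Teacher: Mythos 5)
Your proof is correct, but it reaches the equivalence by a genuinely different route than the paper. The paper works directly with the calculus of right fractions: essential surjectivity comes from $X\simeq R(X)$ in $\mathcal{B}(C)$, and fully-faithfulness comes from the observation that the only $S^{\oplus\circ}$-morphism whose target is a reduced series is the identity, so every morphism $\mathcal{L}L(X)\to\mathcal{L}L(Y)$ is represented by a unique span $X\xleftarrow{=}X\xrightarrow{f}Y$, giving $\Hom_{\mathcal{B}(C)}(\mathcal{L}L(X),\mathcal{L}L(Y))\simeq\Hom_{{_\textbf{B}\mathcal{A}_2(C)}}(X,Y)$. You instead descend $R$ through the universal property of the localization to get an explicit quasi-inverse $\bar R$, after checking $S^{\oplus\circ}\subseteq W$; this is precisely the observation the paper makes only \emph{after} the lemma, where the same inverse equivalence is sketched as a complement, so in effect you have promoted that sketch to the main proof (your monicity argument via Lemma \ref{lem:M_functorial} and Corollary \ref{cor:rho_mono} that $R(\rho_X)=\mathrm{id}$ is a clean way to do it). Each approach has its advantages: the paper's span analysis yields the concrete description of Hom spaces between reduced series in $\mathcal{B}(C)$ that is recorded in the summary subsection and reused later, and it never needs naturality checks on formally inverted morphisms; your route avoids the fraction calculus entirely, and your coherence verification --- reducing associativity of $\mu$ to Proposition \ref{prop:M_kinda_monoidal} and Lemma \ref{lem:rho_order_doesnt_matter} in ${_\textbf{B}\mathcal{A}_2(C)}$ before localizing, where the $\rho$'s are monic --- fills in a step the paper dismisses with ``clearly lax monoidal.'' One small simplification available to you: since $R(R(XY)Z)=R(XYZ)=R(XR(YZ))$ as objects (by the two results you cite), the associator of $\mathcal{B}'(C)$ is actually the identity, so the caution about a nontrivial associator is unnecessary, and the coherence axiom collapses to the equality $\rho_{XY}Z\circ\rho_{R(XY)Z}=X\rho_{YZ}\circ\rho_{XR(YZ)}=\rho_{XYZ}$ that you establish.
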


\begin{proof}
    This functor is clearly lax monoidal with the defining natural transformations $\mathcal{L}\circ L(X)\otimes \mathcal{L}\circ L(Y)\xrightarrow{\rho_{XY}^{-1}}\mathcal{L}\circ L (R(XY))$ and identity natural transformation of units. Essential surjectivity follows from the isomorphism $X\simeq R(X)$ in $\mathcal{B}(C)$. For fully-faithfulness, observe that for any reduced series $X\in \mathcal{B}'(C)$, the only $S^{\oplus\circ}$-morphism whose target is $X$ is the identity on $X$. One can then quickly verify that the equivalence class of the span $X\xleftarrow{=}X\xrightarrow{f} Y$ contains only this span. So, \[\Hom_{\mathcal{B}(C)}(\mathcal{L}\circ L (X),\mathcal{L}\circ L(Y))\simeq\Hom_{{_\textbf{B}\mathcal{A}_2(C)}}(L (X), L(Y))\simeq\Hom_{\mathcal{B}'(C)}(X,Y).\]
\end{proof}

Note that this result also shows that $\mathcal{L}$ preserves all biproducts of ${_\textbf{B}\mathcal{A}_2(C)}$. Note that $\mathcal{B}'(C)\simeq \mathcal{B}(C)$ is strictly graded since $_\textbf{B}\mathcal{A}_2(C)$ is.

We obtain an inverse equivalence as follows. We observe that $S^{\oplus\circ}\subset W$, and so by the universal property of $\mathcal{B}(C)$, the functor $R$ on ${_\textbf{B}\mathcal{A}_2(C)}$ descends to a functor on $\mathcal{B}(C)$. It is easy to check that this is indeed an inverse equivalence. 

\subsection{Summary of \texorpdfstring{$\mathcal{B}(C)$}{B(C)}}
We review the structure of $\mathcal{B}(C)$ and state precisely its universal property.

From the definition, the category $\mathcal{B}(C)$ is a monoidal category with the following structure.
\begin{itemize}
    \item Objects are \textbf{B}-biproducts in finite sequences in the letters $E_i$ and $F_j$ for $i,j\in I$.
    \item Morphisms from $X$ to $Y$ are given by equivalence classes of spans in ${_\textbf{B}\mathcal{A}_2(C)}$ of the form $X\xleftarrow{s} Y \xrightarrow{f} Z$ with $s\in S^{\oplus\circ}$. In particular, for any object $X$, we have an isomorphism $X\simeq R(X)$, where $R(X)$ is a certain $\textbf{B}$-biproduct of reduced objects. 
    
    \item For any $\textbf{i},\textbf{j}\in $SSeq and any $m,n\in \Z$, we have that $\Hom_{\mathcal{B}(C)}(q^mE_\textbf{i},q^nE_{\textbf{j}})$ contains all $\mathcal{A}_2$-diagrams of degree $m-n$ from $E_\textbf{i}$ to $E_\textbf{j}$. If we pick a minimal diagram associated to each element of $d_{m-n}(\textbf{i},\textbf{j})$, then the corresponding elements of this Hom space are linearly independent. Moreover, if $E_\textbf{i}$ and $E_{\textbf{j}}$ are both reduced, then this set forms a basis. Morphisms between other objects are determined by the biproduct rules.
    
    \item There is a monoidal structure given as follows. For $m,n\in \Z$ and $\textbf{i},\textbf{j}\in $ SSeq, we have that $q^mE_\textbf{i}\otimes q^nE_{\textbf{j}}=q^{m+n}E_{\textbf{ij}}$, and the monoidal product of two $\mathcal{A}_2$-diagrams is given by horizontal composition. This product distributes over biproducts in the natural way.
\end{itemize}

An equivalent description is given as follows. We use again the notation $\mathcal{B}'(C)$ to point out that the set of objects is different.
\begin{itemize}
    \item There are reduced objects of the form $q^mE_{\textbf{i}}F_{\textbf{j}}$ for each $m\in \Z$ and $\textbf{i},\textbf{j}\in $ Seq. The objects of $\mathcal{B}'(C)$ are all \textbf{B}-biproducts in these reduced objects.
    \item The morphisms between two reduced objects are the linear combinations of the $\mathcal{A}_2$-diagrams between the corresponding signed sequences. We obtain a basis for each such Hom space by picking a minimal diagram associated to each appropriately graded dotted pairing of the source and target. Morphisms between other objects are obtained by the biproduct rules.
    \item The monoidal product is $X\otimes Y\coloneqq R(XY)$, with the product of morphisms given by $f\otimes g=R(fg)$. The concatenation refers to the monoidal product in ${_\textbf{B}\mathcal{A}_2(C)}$.

\end{itemize}

The universal property of $\mathcal{B}(C)$ is as follows. An additive graded monoidal functor $P:\mathcal{B}(C)\rightarrow \mathcal{D}$ is an additive graded monoidal functor $P:\mathcal{A}_1(C)\rightarrow \mathcal{D}$ such that
\begin{itemize}
    \item Each object $P(E_i)$ has a right dual $P(F_i)$. We therefore can extend uniquely up to isomorphism the functor $P$ to $\mathcal{A}_2(C)$
    \item The target $\mathcal{D}$ contains all $\textbf{B}$-biproducts of the image objects of $\mathcal{A}_2(C)$. We therefore can extend uniquely up to isomorphism the functor $P$ to ${_\textbf{B}\mathcal{A}_2(C)}$.
    \item Each of the morphisms $P(\rho_{ij})$ is an isomorphism in $D$. The functor $P$ therefore descends uniquely to $\mathcal{B}(C)$.
\end{itemize}

\subsection{Identification of \texorpdfstring{$K_0$}{K0}}\label{subsec:K0}
We introduce the appropriate Grothendieck ring for decategorification and show that it is isomorphic to $_\mathbf{B}B(C)$. The proofs in this subsection are based on those in \cite{kholau3} for the categorification of the idempotented quantum group.

The category $\mathcal{B}(C)$ admits certain infinite direct sums which are not captured by the usual Grothendieck group. The correct notion of Grothendieck group to use here is a kind of \emph{topological split Grothendieck group}, which we also denote as $K_0$.  Naisse and Vaz developed the theory of these groups in \cite{nava}. Their notion of topological split Grothendieck groups carries the structure of a topological $\Z[[q]][q^{-1}]$-module, which essentially says that sums like $(1+q+q^2+q^3+\dots )[M]=[N]$ make sense. We have only allowed \textbf{B}-biproducts in $\mathcal{B}(C)$ so that our ring of coefficients is a subring of $\Q(q)$. So, we only expect a \textbf{B} action on our Grothendieck group. The hypotheses of the results of \cite{nava} are easily modified to allow for a smaller ring of coefficients.

Since our category $\mathcal{B}(C)$ is abstractly defined, we need to verify that the topological split Grothendieck group of its idempotent completion is well-behaved. In what follows, we introduce the relevant properties from \cite{nava}.

\begin{defn}
    Denote by $\mathcal{B}(C)^{i}$ the idempotent completion of $\mathcal{B}(C)$. The category $\mathcal{B}(C)^{i}$ inherits a $\Z$-grading and monoidal product from $\mathcal{B}(C)$.
\end{defn}
\begin{defn}
    An additive, strictly $\Z$-graded category $\mathcal{C}$ is called \emph{\textbf{B}-additive} if every \textbf{B}-coproduct in $\mathcal{C}$ is a biproduct. We say that a \textbf{B}-additive category is \emph{\textbf{B}-complete} if it admits all \textbf{B}-biproducts.
\end{defn}
Lemma \ref{lem:a2_biprod} shows that $\mathcal{B}(C)^{i}$ is \textbf{B}-complete.
\begin{defn}
    Let $\mathcal{C}$ be a $\textbf{B}$-additive category. We say that the object $X$ of $\mathcal{C}$ is \emph{\textbf{B}-small} if every map of the form $f:X\rightarrow \bigoplus_{i\subset I} Y_i$ with the target a $\textbf{B}$-biproduct factors through some $\bigoplus_{j\in J} Y_j$ for $J$ a finite subset of $I$.
\end{defn}
\begin{prop}\label{prop:a2_smallobj}
    In $\mathcal{B}(C)^{i}$, any direct summand of a reduced object is \textbf{B}-small.
\end{prop}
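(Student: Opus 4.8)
The plan is to deduce the statement from the left-boundedness of graded Hom spaces in Lemma \ref{lem:a2_lblfd}, together with the fact that $\mathcal{B}(C)^i$ is idempotent-complete and, by Lemma \ref{lem:a2_biprod}, \textbf{B}-complete. Fix a summand $X=(P,e)$ of a reduced object $P$ and a morphism $f\colon X\to\bigoplus_{i\in I}Y_i$ into a \textbf{B}-biproduct. Writing the biproduct as a left-bounded, locally finite family of shifted copies $Y_i=q^{d_i}Z_{m_i}$ of finitely many basic objects $Z_1,\dots,Z_n$ of $\mathcal{B}(C)^i$, I would first reduce to a purely component-wise statement. Since a \textbf{B}-biproduct is in particular a product, $f$ is determined by its components $\pi_i f\in\Hom_{\mathcal{B}(C)^i}(X,Y_i)$; if only finitely many of these are nonzero, say for $i$ in the support $J$, then the finite idempotent $e_J=\sum_{i\in J}\iota_i\pi_i$ satisfies $e_Jf=f$ (both sides have the same components, so they agree by the universal property of the product) and factors as $\iota_J\circ r_J$ through the finite sub-biproduct $\bigoplus_{i\in J}Y_i$, giving the desired factorization $f=\iota_J\circ(r_J f)$. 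So it suffices to show that $\pi_i f=0$ for all but finitely many $i$.

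Next I would carry out the degree bookkeeping. Using the convention $\Hom^\bullet(A,B)_k=\Hom(q^kA,B)$, the $i$-th component lives in $\Hom_{\mathcal{B}(C)^i}(X,q^{d_i}Z_{m_i})=\Hom^\bullet_{\mathcal{B}(C)^i}(X,Z_{m_i})_{-d_i}$. Left-boundedness of the \textbf{B}-biproduct bounds the shifts from below, $d_i\ge M$, while local finiteness guarantees that for each fixed integer only finitely many $i$ carry that value of $d_i$. Thus it is enough to bound $d_i$ from above on the support of $f$, and for this I claim each graded space $\Hom^\bullet_{\mathcal{B}(C)^i}(X,Z_m)$ is left-bounded, vanishing below some degree $\ell_m$. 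Granting this, $\pi_i f\ne0$ forces $-d_i\ge\ell_{m_i}$, i.e. $d_i\le-\ell_{m_i}\le\max_m(-\ell_m)$, so the support of $f$ lies in a bounded range of shifts and is therefore finite.

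The technical heart of the argument is this left-boundedness claim, which I expect to be the main obstacle: the targets $Z_m$ are summands of reduced \emph{series}, i.e. of infinite \textbf{B}-biproducts of reduced objects, and a product of left-bounded graded spaces need not be left-bounded. I would argue as follows. Writing $X=(P,e)$ and $Z_m=(R_m,e_m)$ with $P$ a reduced object and $R_m$ a reduced series, the space $\Hom^\bullet_{\mathcal{B}(C)^i}(X,Z_m)$ is the image of the idempotent $g\mapsto e_m\circ g\circ e$ on $\Hom^\bullet_{\mathcal{B}(C)}(P,R_m)$, hence a graded direct summand; so it suffices to bound $\Hom^\bullet_{\mathcal{B}(C)}(P,R_m)$ below. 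Since the only morphism in $S^{\oplus\circ}$ with reduced target $P$ is the identity, $\Hom$ out of $P$ in $\mathcal{B}(C)$ agrees with $\Hom$ in ${_\textbf{B}\mathcal{A}_2(C)}$, and because $R_m$ is a biproduct its graded Hom from $P$ is the product over the summands $W_b=q^{e_b}V_b$ of the spaces $\Hom^\bullet_{\mathcal{A}_2(C)}(P,V_b)_{k-e_b}$, where the $V_b$ are unshifted reduced objects of finitely many types and the shifts satisfy $e_b\ge M'$ by left-boundedness of the series.

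Finally I would extract the uniform bound. By Lemma \ref{lem:a2_lblfd} each $\Hom^\bullet_{\mathcal{A}_2(C)}(P,V_b)$ vanishes below some degree, and since the $V_b$ range over finitely many reduced objects there is a common lower bound $\ell_0$. Hence $\Hom^\bullet_{\mathcal{A}_2(C)}(P,V_b)_{k-e_b}=0$ whenever $k-e_b<\ell_0$, and using $e_b\ge M'$ this forces every factor to vanish once $k<M'+\ell_0$. Therefore $\Hom^\bullet_{\mathcal{B}(C)}(P,R_m)_k=0$ for $k<M'+\ell_0$, establishing the claim and, through the reductions above, the \textbf{B}-smallness of $X$.
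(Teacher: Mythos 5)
Your proof is correct and takes essentially the same route as the paper's: the paper reduces to reduced objects (direct summands of \textbf{B}-small objects are \textbf{B}-small) and then cites the proof of Lemma \ref{lem:a2_biprod}, which is exactly the left-boundedness and local-finiteness bookkeeping with Lemma \ref{lem:a2_lblfd} that you carry out. The only difference is one of detail: you make explicit the left-boundedness of $\Hom^\bullet$ from a reduced object into an arbitrary object of $\mathcal{B}(C)^i$ (a summand of a reduced series), using the identification of Hom spaces out of reduced objects with those in ${_\textbf{B}\mathcal{A}_2(C)}$, a point the paper compresses into its citation.
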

\begin{proof}
    Direct summands of \textbf{B}-small objects are \textbf{B}-small, so it is sufficient to prove the claim for the reduced objects. This follows from the proof of Lemma \ref{lem:a2_biprod}.
\end{proof}
\begin{defn}
    A \textbf{B}-additive category $\mathcal{C}$ is called \emph{\textbf{B}-Krull-Schmidt} if every object of $\mathcal{C}$ is isomorphic to a \textbf{B}-biproduct of \textbf{B}-small objects that have local endomorphism rings.
\end{defn}
\begin{lem}\label{lem:a2_locally_KS}
    The category $\mathcal{B}(C)^{i}$ is \textbf{B}-Krull-Schmidt.
\end{lem}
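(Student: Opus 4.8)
The plan is to verify that $\mathcal{B}(C)^i$ satisfies the hypotheses of the (suitably adapted) Krull--Schmidt theorem of Naisse--Vaz \cite{nava} and then invoke it. The three ingredients to check are that $\mathcal{B}(C)^i$ is $\textbf{B}$-complete and $\textbf{B}$-additive, that it is generated under $\textbf{B}$-biproducts by a set of $\textbf{B}$-small objects, and that each of these generating objects has a local endomorphism ring. First I would record completeness and additivity: by Lemma \ref{lem:a2_biprod} every left-bounded locally finite coproduct of $\mathcal{A}_2(C)$ objects is a biproduct, so $\mathcal{B}(C)^i$ is $\textbf{B}$-additive, and it is $\textbf{B}$-complete as already observed. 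By the summary description of $\mathcal{B}(C)\simeq \mathcal{B}'(C)$, every object of $\mathcal{B}(C)$ is a $\textbf{B}$-biproduct of shifts of reduced objects $q^mE_{\textbf{i}}F_{\textbf{j}}$, and hence every object of $\mathcal{B}(C)^i$ is a summand of such a biproduct.

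Next I would produce the generating objects. For a reduced object $X=q^mE_{\textbf{i}}F_{\textbf{j}}$, Lemma \ref{lem:a2_reduced_hom} identifies $\Hom^\bullet_{\mathcal{A}_2(C)}(X,X)$ with $1_{\bar{\textbf{i}}}H_{\alpha_{\textbf{i}}}(Q)1_{\bar{\textbf{i}}}\otimes_{\kf} 1_{\textbf{j}}H_{\alpha_{\textbf{j}}}(Q)1_{\textbf{j}}$, which by Lemma \ref{lem:a2_lblfd} is bounded below and locally finite-dimensional. In particular the degree-zero endomorphism ring $\text{End}_{\mathcal{B}(C)}(X)$ is a finite-dimensional $\kf$-algebra, hence semiperfect; writing its identity as a sum of primitive orthogonal idempotents and splitting them in $\mathcal{B}(C)^i$ decomposes $X$ into finitely many indecomposables. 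Each such summand $(X,e)$ is $\textbf{B}$-small by Proposition \ref{prop:a2_smallobj}, and its endomorphism ring $e\,\text{End}_{\mathcal{B}(C)}(X)\,e$ is local because $e$ is a primitive idempotent in a finite-dimensional algebra. Decomposing every reduced object in this way and collecting the summands with their $\textbf{B}$-multiplicities exhibits every object of $\mathcal{B}(C)$, and hence every object of $\mathcal{B}(C)^i$, as a summand of a $\textbf{B}$-biproduct of $\textbf{B}$-small objects with local endomorphism rings.

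Finally I would invoke the Naisse--Vaz machinery to upgrade ``summand of a $\textbf{B}$-biproduct'' to ``is itself a $\textbf{B}$-biproduct.'' The main obstacle is exactly this step: one must decompose an idempotent on a possibly infinite $\textbf{B}$-biproduct into a convergent orthogonal family of primitive idempotents and check that the resulting partial biproduct reconstitutes the object. This is where $\textbf{B}$-completeness, $\textbf{B}$-smallness, and the locality of the endomorphism rings are used together, and where the bounded-below, locally finite-dimensional structure of Lemma \ref{lem:a2_lblfd} is essential: it allows the decomposition to be carried out degree by degree, lifting and refining idempotents from each finite truncation so that the process converges in the topological sense of \cite{nava}. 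Granting the adaptation of \cite{nava} to the coefficient ring $\textbf{B}$ promised above, this yields that every object of $\mathcal{B}(C)^i$ is a $\textbf{B}$-biproduct of $\textbf{B}$-small objects with local endomorphism rings, that is, that $\mathcal{B}(C)^i$ is $\textbf{B}$-Krull--Schmidt.
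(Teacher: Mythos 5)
Your proof is correct and follows essentially the same route as the paper's: reduce to reduced objects, use the coreflection (Corollary \ref{cor:a2_coreflect}, here via Lemma \ref{lem:a2_reduced_hom}) together with Lemma \ref{lem:a2_lblfd} to see that their endomorphism rings are finite-dimensional, split primitive idempotents in the idempotent completion to obtain finitely many indecomposables with local endomorphism rings, and invoke Proposition \ref{prop:a2_smallobj} for $\textbf{B}$-smallness. The final step you defer to the machinery of \cite{nava} (upgrading ``summand of a $\textbf{B}$-biproduct of such objects'' to ``is itself such a $\textbf{B}$-biproduct'') is exactly what the paper's terse reduction ``it is sufficient to show the claim for a fixed reduced object'' leaves implicit, so your proposal supplies nothing less than, and if anything is more explicit than, the paper's own argument.
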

\begin{proof}
    Every object of $\mathcal{B}(C)$ decomposes into a \textbf{B}-biproduct of reduced objects. So, it is sufficient to show the claim for a fixed reduced object $X$. By Corollary \ref{cor:a2_coreflect}, $\Hom_{\mathcal{B}(C)^{i}}(X,X)=\Hom_{\mathcal{A}_2(C)}(X,X)$, and by Lemma $\ref{lem:a2_lblfd}$, this Hom space is finite-dimensional. A reduced object $X$ therefore decomposes in $\mathcal{B}(C)^{i}$ as a finite direct sum of indecomposable objects, and moreover, every indecomposable arises this way. Since $\mathcal{B}(C)^{i}$ is idempotent complete and since endomorphism rings of indecomposables in $\mathcal{B}(C)^i$ are finite-dimensional, the indecomposables have local endomorphism rings. The indecomposables are \textbf{B}-small due to Proposition \ref{prop:a2_smallobj}.
\end{proof}
\begin{prop}[\cite{nava} Theorem 5.13]\label{thm:nv}
The topological split Grothendieck group of a \textbf{B}-complete \textbf{B}-Krull-Schmidt category equipped with the $(q)$-adic topology is a free $\mathbf{B}$-module generated by the classes of indecomposables (up to shifts).
\end{prop}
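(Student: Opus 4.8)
The plan is to follow the strategy of Naisse and Vaz \cite{nava}, verifying that their argument for $\Z[[q]][q^{-1}]$-coefficients survives intact once the coefficient ring is cut down to $\mathbf{B}$. There are two things to establish: that the classes of indecomposables topologically generate the group, and that they are $\mathbf{B}$-linearly independent. I would first fix a set of representatives $\{X_\lambda\}$ of the isomorphism classes of indecomposable objects up to shift, noting that by the $\mathbf{B}$-Krull--Schmidt hypothesis these exist and have local endomorphism rings.

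For generation, the $\mathbf{B}$-Krull--Schmidt property gives for every object $Y$ an isomorphism with a $\mathbf{B}$-biproduct $\bigoplus_\lambda \bigoplus_{i\in\Z} q^i X_\lambda^{\oplus k_{\lambda,i}}$, where by the definition of a $\mathbf{B}$-biproduct each multiplicity series $f_\lambda(q):=\sum_{i} k_{\lambda,i}q^i$ lies in the $\N$-span generated under addition, multiplication, and shifts by $1$ and the $1/(1-q_i^2)$, hence in $\mathbf{B}$. In the $(q)$-adic topology these series converge, so that $[Y]=\sum_\lambda f_\lambda(q)\,[X_\lambda]$, and therefore the $[X_\lambda]$ generate $K_0(\mathcal{C})$ topologically over $\mathbf{B}$.

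For independence, I would show that each multiplicity series $f_\lambda$ is an invariant of $Y$, which is exactly uniqueness of $\mathbf{B}$-biproduct decomposition. Here $\mathbf{B}$-smallness (Proposition \ref{prop:a2_smallobj}) is the crucial lever: any map out of an indecomposable $X_\lambda$ into a $\mathbf{B}$-biproduct factors through a finite sub-biproduct, so comparing two decompositions of $Y$ can be carried out shift-degree by shift-degree and reduced, in each fixed degree, to the classical finite Krull--Schmidt (Azumaya) uniqueness theorem, which applies precisely because the endomorphism rings of the $X_\lambda$ are local. This produces well-defined continuous multiplicity functionals $[Y]\mapsto f_\lambda$ that invert the generating map and exhibit $K_0(\mathcal{C})$ as the free $\mathbf{B}$-module on the $[X_\lambda]$.

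The main obstacle is the interaction between the infinite, but $\mathbf{B}$-controlled, biproducts and the $(q)$-adic completion in the independence step. One must check that the reduction to finite decompositions afforded by $\mathbf{B}$-smallness is genuinely compatible with the topology, so that the multiplicity functionals are continuous and no convergent $\mathbf{B}$-combination of distinct indecomposable classes can be forced to vanish by the biproduct relations defining the topological split Grothendieck group. Making this reduction precise — in particular confirming that the restriction from $\Z[[q]][q^{-1}]$ to the subring $\mathbf{B}$ preserves both completeness of the relevant series and the degreewise finiteness used in the uniqueness argument — is the technical heart of adapting \cite{nava} to our setting.
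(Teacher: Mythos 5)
The paper contains no proof of this proposition at all --- it is imported directly from Naisse--Vaz (\cite{nava}, Theorem 5.13), with only the earlier remark that ``the hypotheses of the results of \cite{nava} are easily modified to allow for a smaller ring of coefficients.'' Your proposal is correct in outline and takes essentially that same route: generation from the $\mathbf{B}$-Krull--Schmidt decomposition (whose multiplicity series lie in $\mathbf{B}$ by the definition of a $\mathbf{B}$-biproduct) and independence via Azumaya-type uniqueness using $\mathbf{B}$-smallness and local endomorphism rings, with the topological compatibility you flag as the ``technical heart'' being precisely the adaptation the paper dismisses as an easy modification of \cite{nava}.
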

Lemma \ref{lem:a2_locally_KS} shows that this proposition applies to $\mathcal{B}(C)^{i}$.

The free $\mathbf{B}$-module $K_0(\mathcal{B}(C)^{i})$ inherits the structure of a $\mathbf{B}$-algebra due to the monoidal structure on $\mathcal{B}(C)^{i}$ that distributes over $\textbf{B}$-biproducts. We now begin to show that this Grothendieck group is isomorphic to the quantum boson algebra after extending scalars.

There is a $\Q$-linear involution $\bar{*}$ on $\Q(q)$ defined by $\bar{q}\rightarrow q^{-1}$. The involution $\bar{*}$ is also defined on $\mathbf{B}$ since $\overline{1/(1-q_i^2)}=-q_i^2/(1-q_i^2)$.

\begin{defn}
    There is a $\Z[q,q^{-1}][[q]]$-valued $q$-semilinear form $(*,*)_{K_0}$ defined on $K_0(\mathcal{B}(C)^i)$ as follows. Fix a \textbf{B}-basis of $K_0(\mathcal{B}(C)^i)$ of indecomposables $[M_i]$. Then define \[([M_i],[M_j])_{K_0}\coloneqq \text{grdim(Hom}_{\mathcal{B}(C)^i}^\bullet(M_i,M_j)).\]
     Lemma \ref{lem:a2_lblfd} shows that this is a well-defined element of $\Z[q,q^{-1}][[q]]$. Then for $a\in \mathbf{B}$, we define $(v,aw)_{K_0}=(\bar{a}v,w)_{K_0}\coloneqq a(v,w)_{K_0}$.
     
     Since $\text{Hom}_{\mathcal{B}(C)^i}(*,q*)=\text{Hom}_{\mathcal{B}(C)^i}(q^{-1}*,*)$, we have that for \emph{any} indecomposable objects $M_i$ and $M_j$ that $([M_i],[M_j])_{K_0}$ may be computed as a graded dimension of Hom spaces. It is also clear that $(*,*)_{K_0}$ is linear over finite direct sums in the first argument and $\mathbf{B}$-biproducts in the second argument, i.e., if $X$ is a finite direct sum of indecomposables and $Y$ is any object of $\mathcal{B}(C)^i$, then we have that \[([X],[Y])_{K_0}= \text{grdim(Hom}_{\mathcal{B}(C)^i}^\bullet(X,Y)).\] This holds in particular if $X$ is reduced. 
\end{defn}

Let $\mathcal{A}_1(C)^i$ denote the idempotent completion of $\mathcal{A}_1(C)$. The embedding $\text{inc}:\mathcal{A}_1(C)\xhookrightarrow{\text{inc}} \mathcal{A}_{2E}(C)$ of Remark \ref{rem:2embeddings} yields an embedding $\text{inc}:\mathcal{A}_1(C)^i\xhookrightarrow{\text{inc}} \mathcal{B}(C)^{i}$. In particular, for each $i\in I$ and $a\in \N$, there are divided power objects $E_i^{(a)}\in\mathcal{B}(C)^{i}$ associated to certain idempotents and shifts in $\mathcal{A}_1(C)$. These objects satisfy $[a]_{i}!*E_i^{(a)}\simeq E_i^a$. Again following Remark \ref{rem:2embeddings}, there are similar divided power objects $F_i^{(b)}\in \mathcal{B}(C)^{i}$. For $\textbf{i}\in$ SSeq, $\textbf{j},\textbf{k}\in $ Seq, define $E_{\textbf{i}},E_{\textbf{j}}F_{\textbf{k}}\in B(C)$ in the natural way. We say that these $q^mE_\textbf{j}F_{\textbf{k}}$ are \emph{reduced} elements of $B(C)$. 

\begin{lem}
    There is a $\mathbf{B}$-algebra homomorphism $\gamma: {_\mathbf{B}B(C)}\rightarrow K_0(\mathcal{B}(C)^{i})$ given by $\gamma(E_i^{(a)})=[E_i^{(a)}]$ and $\gamma(F_i^{(b)})=[F_i^{(b)}]$.
\end{lem}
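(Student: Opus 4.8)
The plan is to obtain $\gamma$ as the restriction of a $\Q(q)$-algebra homomorphism defined on all of $B(C)$, using the presentation of $B(C)$ as a quotient of the free $\Q(q)$-algebra on the symbols $\{E_i,F_i\}_{i\in I}$ by the two families of quantum Serre relations and the quantum boson relations. Since $K_0(\mathcal{B}(C)^{i})$ is only a $\mathbf{B}$-algebra and $\mathbf{B}\subset\Q(q)$, I would first pass to the $\Q(q)$-algebra $\Q(q)\otimes_{\mathbf{B}}K_0(\mathcal{B}(C)^{i})$ and define a map $\tilde{\gamma}$ there by $E_i\mapsto[E_i]$ and $F_i\mapsto[F_i]$. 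To see that $\tilde{\gamma}$ is well defined it suffices to verify that these classes satisfy the three defining relation families of $B(C)$.

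For the quantum Serre relations among the $E_i$, I would use the embedding $\mathrm{inc}\colon\mathcal{A}_1(C)^i\hookrightarrow\mathcal{B}(C)^{i}$ of Remark \ref{rem:2embeddings}, which induces a ring homomorphism $K_0(\mathcal{A}_1(C)^i)\to K_0(\mathcal{B}(C)^{i})$ sending $[E_i]\mapsto[E_i]$. By Theorem \ref{decat} we have $K_0(\mathcal{A}_1(C)^i)\simeq {_\mathbf{A}U_q^+(\mathfrak{g}(C))}$, where the quantum Serre relations hold, and applying a ring homomorphism transports them to the classes $[E_i]$ in $K_0(\mathcal{B}(C)^{i})$. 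For the $F_i$, I would invoke the monoidal equivalence $\phi\colon\mathcal{B}(C)\to\mathcal{B}(C)^{co}$ with $\phi(E_i)=F_i$. Reversing the monoidal product sends the Serre element $\sum_k(-1)^k\binom{1-C_{ij}}{k}_iE_i^kE_jE_i^{1-C_{ij}-k}$ to the same expression read in the opposite order, which recovers the original after reindexing $k\mapsto 1-C_{ij}-k$ since $\binom{1-C_{ij}}{k}_i=\binom{1-C_{ij}}{1-C_{ij}-k}_i$; hence the $[F_i]$ satisfy the quantum Serre relations as well.

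The quantum boson relations are the crux, and they are handed to us by Proposition \ref{prop:kk}. Passing to classes in the topological split Grothendieck group, the functor isomorphism $q_i^{-C_{ij}}E_iF_j\oplus\delta_{ij}\bigoplus_{n\in\N}q_i^n\1\simeq F_jE_i$ gives $[F_j][E_i]=q_i^{-C_{ij}}[E_i][F_j]+\delta_{ij}/(1-q_i^2)$ in $K_0(\mathcal{B}(C)^{i})$, the $\delta$-term being the image of $\bigoplus_n q_i^n\1$ under $\sum_n q_i^n=1/(1-q_i^2)\in\mathbf{B}$. This is exactly the quantum boson relation of $B(C)$: the scalars agree because symmetrizability gives $D_iC_{ij}=D_jC_{ji}$, so $q_i^{-C_{ij}}=q_j^{-C_{ji}}$, while the $\delta$-term contributes only when $i=j$, where $q_i=q_j$. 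With all three relation families checked, the universal property of the free-algebra quotient produces the $\Q(q)$-algebra homomorphism $\tilde{\gamma}$.

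Finally, I would descend to the integral form. Since the divided power objects satisfy $[a]_i!\,[E_i^{(a)}]=[E_i]^a$, we get $\tilde{\gamma}(E_i^{(a)})=[E_i]^a/[a]_i!=[E_i^{(a)}]\in K_0(\mathcal{B}(C)^{i})$, and similarly $\tilde{\gamma}(F_i^{(b)})=[F_i^{(b)}]$; as these generate $_\mathbf{B}B(C)$ over $\mathbf{B}$ and $K_0(\mathcal{B}(C)^{i})$ is a $\mathbf{B}$-subalgebra of $\Q(q)\otimes_{\mathbf{B}}K_0(\mathcal{B}(C)^{i})$, the entire image of $_\mathbf{B}B(C)$ lands in $K_0(\mathcal{B}(C)^{i})$. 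Because $K_0(\mathcal{B}(C)^{i})$ is a free $\mathbf{B}$-module by Proposition \ref{thm:nv} (applicable via Lemma \ref{lem:a2_locally_KS}), the natural map $K_0(\mathcal{B}(C)^{i})\to\Q(q)\otimes_{\mathbf{B}}K_0(\mathcal{B}(C)^{i})$ is injective, so the restriction of $\tilde{\gamma}$ yields a well-defined $\mathbf{B}$-algebra homomorphism $\gamma\colon {_\mathbf{B}B(C)}\to K_0(\mathcal{B}(C)^{i})$ with the stated values. The main obstacle I anticipate is not any single step but the careful bookkeeping required to align the categorical boson relation of Proposition \ref{prop:kk} with the algebraic one precisely, namely tracking the grading shifts $q_i^n$, the index conventions $C_{ij}$ versus $C_{ji}$, and confirming that the summand $\bigoplus_{n\in\N}q_i^n\1$ genuinely realizes the scalar $1/(1-q_i^2)$.
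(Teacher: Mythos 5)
Your proof is correct and takes essentially the same route as the paper: verify the three relation families after extending scalars to $\Q(q)$ (Serre relations for the $[E_i]$ via the KLR categorification, quantum boson relations from the invertibility of the $\rho_{ij}$ in $\mathcal{B}(C)^i$), then restrict to ${_\mathbf{B}B(C)}$ using that $K_0(\mathcal{B}(C)^{i})$ is a free $\mathbf{B}$-module so the divided power classes land integrally. The only cosmetic difference is that the paper deduces the Serre relations for the $[F_i]$ by ``taking adjoints'' where you use the symmetry $\phi$; both work, though note your reindexing $k\mapsto 1-C_{ij}-k$ recovers the Serre element only up to the global sign $(-1)^{1-C_{ij}}$, which is harmless since the element vanishes.
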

\begin{proof}
   We first show that this homomorphism exists after tensoring both sides with $\Q(q)$, in which case the domain is $B(C)$. The KLR relations on the generating morphisms ensure that the $[E_i]$ satisfy the quantum Serre relations, see \cite{qha} or \cite{khla}. Taking adjoints, we see that the $[F_i]$ do as well. The fact that each $\rho_{ij}$ is an isomorphism in $\mathcal{B}(C)^{i}$ gives the quantum boson relations in $K_0(\mathcal{B}(C)^{i})$.

    Now, we need only note that $K_0(\mathcal{B}(C)^{i})$ is a free $\mathbf{B}$-module and that each $\gamma(E_i^{(a)})$ and $\gamma(F_i^{(b)})\in K_0(\mathcal{B}(C)^{i})$. So, the restriction of $\gamma$ to $_\mathbf{B}B(C)$ has image contained in $K_0(\mathcal{B}(C)^{i})$.
\end{proof}
For any $\textbf{i}\in $ SSeq, we see that $\gamma(E_\textbf{i})=[E_\textbf{i}]$.

\begin{thm}\label{thm:gammaiso}
    The homomorphism $\gamma$ is an isomorphism of $\textbf{B}$-algebras.
\end{thm}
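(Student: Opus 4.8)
The plan is to show that $\gamma$ is both injective and surjective, using different machinery for each direction. Injectivity will be extracted from the nondegeneracy of the form $(*,*)_2$ on $B(C)$ together with a comparison of $(*,*)_2$ against the Hom-pairing $(*,*)_{K_0}$; this comparison also yields the identification of bilinear forms advertised in the introduction. Surjectivity will follow from the fact that $K_0(\mathcal{B}(C)^i)$ is $\mathbf{B}$-spanned by the classes of indecomposable summands of reduced objects, each of which factors as a product of an $E$-side and an $F$-side class that is already visibly in the image of $\gamma$.

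For injectivity, I would first reduce to $\Q(q)$-coefficients: since $K_0(\mathcal{B}(C)^i)$ is a free $\mathbf{B}$-module by Proposition \ref{thm:nv} and ${_\mathbf{B}B(C)}$ is torsion-free as a subalgebra of $B(C)$, it is enough to prove that $\gamma\otimes_{\mathbf{B}}\Q(q)$ is injective. The central computation is on reduced monomials. For $\textbf{i},\textbf{i}',\textbf{j},\textbf{j}'\in\text{Seq}$, Corollary \ref{cor:a2_coreflect} identifies the $\mathcal{B}(C)^i$-Hom space between the reduced objects $E_\textbf{i}F_\textbf{j}$ and $E_{\textbf{i}'}F_{\textbf{j}'}$ with the corresponding $\mathcal{A}_2(C)$-Hom space, and Lemma \ref{lem:a2_reduced_hom} factors that space as a tensor product of two KLR Hom spaces. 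Taking graded dimensions and invoking the identification in Definition \ref{form} of the KLR pairing with $(\overline{*},*)_L$, I obtain that $(\gamma(E_\textbf{i}F_\textbf{j}),\gamma(E_{\textbf{i}'}F_{\textbf{j}'}))_{K_0}$ is a product of two values of the form on $U_q^+(\mathfrak{g}(C))$. On the algebra side, transporting the factorization property Proposition \ref{prop:qthomform}(7) along $B(C)\hookrightarrow B_\Z(C)$ expresses $(E_\textbf{i}F_\textbf{j},E_{\textbf{i}'}F_{\textbf{j}'})_2$ as the same product of $U_q^+$-values, up to the explicit power of $q$ coming from the twist $q^{-(\alpha_X,\alpha_Z)}$. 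Matching the two expressions shows that $\gamma$ is an isometry for these forms up to the bar involution and this fixed power of $q$; since $(*,*)_2$ is nondegenerate, $\gamma\otimes\Q(q)$ is injective, and hence so is $\gamma$.

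For surjectivity, I would use that $\mathcal{B}(C)^i$ is $\mathbf{B}$-Krull--Schmidt by Lemma \ref{lem:a2_locally_KS}, so that every indecomposable is a summand of some reduced object $E_\textbf{j}F_\textbf{k}$ and its class lies in the weight-$(\alpha_\textbf{j},\alpha_\textbf{k})$ block of $K_0$. By Lemma \ref{lem:a2_reduced_hom} the full subcategory of reduced objects of a fixed weight $(\alpha,\beta)$ is, after idempotent completion, governed by $H_\alpha\otimes_{\kf}H_\beta$, so Proposition \ref{prop:klr_tensor_K0} shows its Grothendieck group is the $\mathbf{A}$-span of the products $[\tilde P][\tilde Q]$, where $[\tilde P]$ ranges over indecomposable $E$-side classes and $[\tilde Q]$ over indecomposable $F$-side classes. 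Under the embeddings of Remark \ref{rem:2embeddings} combined with Theorem \ref{decat}, each $[\tilde P]$ equals $\gamma(p)$ and each $[\tilde Q]$ equals $\gamma(\tilde q)$ for elements $p,\tilde q$ of ${_\mathbf{A}U_q^+(\mathfrak{g}(C))}$, realized inside ${_\mathbf{B}B(C)}$ via $E_i\mapsto E_i$ and $E_i\mapsto F_i$ respectively. Since $\gamma$ is an algebra homomorphism, $[\tilde P][\tilde Q]=\gamma(p\,\tilde q)\in\text{Im}(\gamma)$, and as these classes $\mathbf{B}$-span $K_0(\mathcal{B}(C)^i)$, the map $\gamma$ is surjective.

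The main obstacle I anticipate is the normalization bookkeeping: the categorical pairing factors as an honest untwisted tensor product via Lemma \ref{lem:a2_reduced_hom}, whereas the algebraic pairing carries the $q^{-(\alpha_X,\alpha_Z)}$ twist of Proposition \ref{prop:qthomform}(7), so care is needed to pin down exactly which twist of $(*,*)_{K_0}$ is identified with $(*,*)_2$ and to verify this twist is consistent across all weight blocks. A secondary point is the integral refinement in the surjectivity step, namely that the indecomposable KLR projectives correspond under Theorem \ref{decat} to elements already lying in the divided-power lattice ${_\mathbf{B}B(C)}$ rather than merely in $B(C)$; this is precisely what upgrades the $\Q(q)$-level statement to an isomorphism of $\mathbf{B}$-algebras.
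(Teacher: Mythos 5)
Your surjectivity argument is essentially the paper's own proof: every indecomposable is a summand of a reduced object, the endomorphism algebra of the sum of all reduced objects of fixed bi-weight $(\alpha_{\textbf{i}},\alpha_{\textbf{j}})$ is identified with $H_{\alpha_{\textbf{i}}}\otimes_{\kf}H_{\alpha_{\textbf{j}}}$ via Corollary \ref{cor:a2_coreflect} and Lemma \ref{lem:a2_reduced_hom}, and Proposition \ref{prop:klr_tensor_K0} together with Theorem \ref{decat} puts the resulting classes in the image of $\gamma$. That half is correct.

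The injectivity half has a genuine gap. Your comparison of $(*,*)_{K_0}$ with $(*,*)_2$ is carried out only for pairs of reduced monomials $E_{\textbf{i}}F_{\textbf{j}}$, $E_{\textbf{i}'}F_{\textbf{j}'}$ lying in the \emph{same} bi-weight block: Lemma \ref{lem:a2_reduced_hom} is stated, and is only true, under the hypotheses $\alpha_{\textbf{i}}=\alpha_{\textbf{i}'}$ and $\alpha_{\textbf{j}}=\alpha_{\textbf{j}'}$. But neither form is block-diagonal in the bi-weight: for example $(E_iF_i,1)_2=1/(1-q_i^2)$, and correspondingly $([E_iF_i],[\1])_{K_0}=\text{grdim}(\Hom^\bullet_{\mathcal{A}_2(C)}(E_iF_i,\1))=1/(1-q_i^2)$, coming from dotted cap diagrams. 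These cross-block entries are computed by neither of your cited tools, so "isometry up to twist" is not established on a spanning set, and nondegeneracy of $(*,*)_2$ cannot be transported: from $\gamma(v)=0$ you get $(\gamma(v),\gamma(u))_{K_0}=0$ for all $u$, but you cannot convert this into $(v,u)_2=0$ without the comparison for \emph{all} pairs (a nondegenerate form can restrict degenerately to a block; moreover $v$ need not lie in a single block, and splitting $\ker\gamma$ along blocks would require knowing each indecomposable has a well-defined bi-weight, which you do not address). A second, related error: Proposition \ref{prop:qthomform}(7) does not apply to $(E_{\textbf{i}}F_{\textbf{j}},E_{\textbf{i}'}F_{\textbf{j}'})_2$ as written, because it requires the \emph{first} argument in the order (level $>m$)(level $\leq m$), i.e. the $F$'s to the left of the $E$'s, whereas reduced monomials have the opposite order; reordering with the quantum boson relations produces exactly the cross-block correction terms above, so even the block-diagonal identity is not a direct citation. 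The paper closes all of this by a different mechanism that your proposal lacks: it first proves the $K_0$-level adjunctions $([E_iX],[Y])_{K_0}=([X],[F_iY])_{K_0}$ and $([XF_i],[Y])_{K_0}=([X],[YE_i])_{K_0}$ (mirroring Proposition \ref{prop:qthomform}(6)), uses these together with the categorified boson relations to reduce every pairing to one of the form $([\1],[E_{\textbf{i}}F_{\textbf{j}}])_{K_0}$, and computes that directly; this handles all pairs at once, including the cross-block ones. Some such duality/reduction step is what your argument is missing.
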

\begin{proof}
    We first prove that $\gamma$ is surjective, i.e., that $K_0(\mathcal{B}(C)^{i})$ is generated as an algebra by the classes $[E_i^{(a)}]$ and $[F_i^{(b)}]$. Recall that every indecomposable object is a direct summand of a reduced object. For a fixed reduced object $E_{\textbf{i}}F_{\textbf{j}}$, denote by \[X_{\alpha_{\textbf{i}},\alpha_{\textbf{j}}}\coloneqq \bigoplus_{\alpha_{\textbf{i}'}=\alpha_{\textbf{i}},\alpha_{\textbf{j}'}=\alpha_{\textbf{j}}}E_{\textbf{i}'}F_{\textbf{j}'},\] which is a finite direct sum. Every indecomposable of $K_0(\mathcal{B}(C)^{i})$ is therefore a direct summand of some $X_{\alpha_{\textbf{i}},\alpha_{\textbf{j}}}$ up to shifts. By Corollary \ref{cor:a2_coreflect} and Lemma \ref{lem:a2_reduced_hom},
    \begin{align*}
    \text{Hom}_{\mathcal{B}(C)^{i}}^\bullet(X_{\alpha_{\textbf{i}},\alpha_{\textbf{j}}},X_{\alpha_{\textbf{i}},\alpha_{\textbf{j}}})&=\bigoplus_{n\in \Z}\bigoplus_{}\Hom_{\mathcal{B}(C)^{i}}(q^nE_{\textbf{i}'}F_{\textbf{j}'},E_{\textbf{i}''}F_{\textbf{j}''})\\
    &= \bigoplus_{n\in \Z}\bigoplus_{}\Hom_{\mathcal{A}_2(C)}(q^nE_{\textbf{i}'}F_{\textbf{j}'},E_{\textbf{i}''}F_{\textbf{j}''})\\
    &\simeq H_{\alpha_\textbf{i}}(Q)\otimes_{\kf} H_{\alpha_{\textbf{j}}}(Q),  
    \end{align*}
    where the interior sum is over all $\textbf{i}',\textbf{j}',\textbf{i}'',\textbf{j}''$ such that $\alpha_{\textbf{i}}=\alpha_{\textbf{i}'}=\alpha_{\textbf{i}''}$ and $\alpha_{\textbf{j}}=\alpha_{\textbf{j}'}=\alpha_{\textbf{j}''}$. By Proposition \ref{prop:klr_tensor_K0}, we have $K_0(H_{\alpha_\textbf{i}}(Q)\otimes_{\kf}H_{\alpha_{\textbf{j}}}(Q))\simeq {_\mathbf{A}U_q^+(\mathfrak{g})}_{\alpha_\textbf{i}}\otimes_{\mathbf{A}} {_\mathbf{A} U_q^+(\mathfrak{g})}_{\alpha_\textbf{j}}$. The algebra ${_\mathbf{A}U_q^+(\mathfrak{g})}\otimes_{\mathbf{A}} {_\mathbf{A} U_q^+(\mathfrak{g})}$ is spanned by tensors of divided powers $E_i^{(a)}\otimes 1$ and $1\otimes E_i^{(b)}$. Unwinding the isomorphisms, this says that for any direct summand $M$ of $X_{\alpha_\textbf{i},\alpha_\textbf{j}}$ that $[M]$ is in the $\mathbf{A}$-subalgebra generated by the $[E_i^{(a)}]$ and $[F_i^{(b)}]$.

    For injectivity, we first show that $\gamma$ intertwines the forms $(*,*)_2$ and $(*,*)_{K_0}$, i.e. that $(v,w)_2=(\gamma(v),\gamma(w))_{K_0}$. The images of the divided powers generate both $B(C)$ and $K_0(\mathcal{B}(C)^i)$ as algebras, so since both forms are $q$-semilinear, is is enough to verify equality on products of divided powers. It is therefore also enough to verify equality on products of the $E_i$ and $F_i$. These products are spanned by the reduced products, so it is enough to assume that the first argument is reduced. So, we must compute the $([E_\textbf{i}F_\textbf{j}],[E_\textbf{k}])_{K_0}$ for $\textbf{i},\textbf{j}\in $ Seq and $\textbf{k}\in$ SSeq. We can simplify further after first showing that $[E_i]$ is left dual to $[F_i]$ for $(*,*)_{K_0}$. We know $F_i$ is right dual to $E_i$ for $(*,*)_2$ by Proposition \ref{prop:qthomform}. 
    
    For $X$ reduced, $Y$ any object of $\mathcal{B}(C)^i$, and any $i\in I$, we have that $E_iX$ and $XF_i$ are also reduced and therefore $([E_iX],[Y])_{K_0}=([X],[F_iY])_{K_0}$ and $([XF_i],[Y])_{K_0}=([X],[YE_i])_{K_0}$. Every indecomposable is a direct summand of a reduced object, so we may therefore deduce the same for $X$ indecomposable. So, $[F_i]$ is right dual to $[E_i]$ for $(*,*)_{K_0}$. So, to check that $\gamma$ intertwines the two forms, we may assume the first argument is $[\1]$. Applying quantum boson relations again, it is sufficient to compute the $([\1],[E_\textbf{i}F_\textbf{j}])_{K_0}$ for $\textbf{i},\textbf{j}\in $ Seq. Both $\1$ and $E_\textbf{i}F_\textbf{j}$ are finite direct sums of indecomposables, so we compute
    \[([\1],[E_\textbf{i}F_\textbf{j}])_{K_0}=\text{grdim(Hom}_{\mathcal{B}(C)^i}^\bullet(\1,E_\textbf{i}F_\textbf{j})).\]
    By Corollary \ref{cor:a2_coreflect}, these Hom spaces in $\mathcal{B}(C)^i$ equal the corresponding Hom spaces in $\mathcal{A}_2(C)$, i.e., are computed via minimal $\mathcal{A}_2$-diagrams from $\1$ to $E_\textbf{i}F_\textbf{j}$. We see that there are no minimal $\mathcal{A}_2$-diagrams from $\1$ to $E_\textbf{i}F_\textbf{j}$ of any degree if $E_\textbf{i}F_\textbf{j}\neq \1$, so we compute that $([\1],[E_\textbf{i}F_\textbf{j}])_{K_0}=0$ if $E_{\textbf{i}}F_\textbf{j}\neq \1$ and $([\1],[\1])_{K_0}=1$. We also compute from the definition that $(1,E_\textbf{i}F_\textbf{j})_2=0$ if $E_\textbf{i}F_\textbf{j}\neq 1$ and $(1,1)_2=1$. So, $\gamma$ intertwines $(*,*)_2$ and $(*,*)_{K_0}$.

    It is enough to prove injectivity of $\gamma$ after tensoring both sides with $\Q(q)$, in which case the domain is $B(C)$. The bilinear form $(*,*)_{K_0}$ extends to $\Q(q)\otimes_{\textbf{B}}K_0(\mathcal{B}(C)^i)$ by $q$-semilinearity since the duality $\bar{*}$ is also defined on $\Q(q)$. Suppose that there exists $v\in B(C)$ for which $\gamma(v)=0$. Then for all $w\in \Q(q)\otimes_{\textbf{B}}K_0(\mathcal{B}(C)^i)$, we have that $(\gamma(v),w)_{K_0}=0$. We have shown that $\gamma$ is surjective, so for all $u\in {B(C})$, we have $(\gamma(v),\gamma(u))_{K_0}=0$. The homomorphism $\gamma$ intertwines $(*,*)_{K_0}$ and $(*,*)_{2}$, so $v$ is in the kernel of $(*,*)_2$. But $(*,*)_2$ is nondegenerate on $B(C)$, so $v=0$. So, $\gamma$ is an isomorphism.
\end{proof}

\begin{rem}\label{rem:missingdiag}
    For $X$ and $Y$ reduced elements of $B(C)$, Theorem \ref{thm:gammaiso} shows that $(X,Y)_2$ is computed as a graded sum over $\mathcal{A}_2$-diagrams between the corresponding objects of $\mathcal{A}_2(C)$. This is not true if $X$ and $Y$ are not reduced. For example, there are no $\mathcal{A}_2$-diagrams from $F_iE_i$ to $E_iF_i$, but $(F_iE_i,E_iF_i)_2=q_i^2/(1-q_i)^2$. We show how to remedy this in Section \ref{sec:graphical} by introducing a larger class of diagrams between elements of $B_\Z(C)$.
\end{rem}

\subsection{Applications to bases}\label{subsec:bases}
We use Theorem \ref{thm:gammaiso} to produce well-behaved bases of $B(C)$.

For each symmetrizable generalized  Cartan matrix $C$, choice of parameter matrix $Q$, choice of base field $\kf$, and a choice of shift for each set of indecomposables $\{q^nX\}_{n\in \Z}$, Theorem \ref{thm:gammaiso} produces a basis of $B(C)$. The basis depends on all of these choices. We denote by $S_2$ the basis of classes of indecomposables of $B(C)$ induced by a fixed choice of $Q$, $\kf$, and shifts, and by $S$ the basis of $U_q^+(C)$ given by classes of indecomposable projective modules over KLR algebras induced by the same choices as in \cite{qha} or \cite{kholau2}. It is known that for $C$ symmetric, $\kf$ algebraically closed and characteristic zero,  a geometric choice of $Q$, and  the self-dual choice of shifts that $S$ agrees with Lusztig's canonical basis \cite{vv}\cite{qha}.

The basis $S$ is interesting because it has positive structure constants for multiplication. Theorem \ref{thm:gammaiso} allows us to strengthen this result. We have an action of $\mathcal{B}(C)^i$ on the appropriate $\textbf{B}$-extension of $\mathcal{H}^{fg}(C)$. By decategorifying, we obtain the following.

\begin{cor}
    The $\N[q,q^{-1},(1/(1-q_i^2))_{i\in I}]$-span of $S_2$ acts on the $\N[q,q^{-1},(1/(1-q_i^2))_{i\in I}]$-span of $S$.
\end{cor}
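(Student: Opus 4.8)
The plan is to deduce this positivity statement directly from the categorical action, using the principle that classes of genuine objects can only combine into nonnegative-coefficient combinations of indecomposables. By $\textbf{B}$-bilinearity of the module structure it suffices to treat a single basis element $[M] \in S_2$ acting on a single $[N] \in S$, where $M$ is an indecomposable object of $\mathcal{B}(C)^i$ representing its class in $S_2$ and $N$ is an indecomposable projective graded module over a KLR algebra representing its class in $S$.

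First I would recall that under the isomorphism $\gamma$ of Theorem \ref{thm:gammaiso}, $S_2$ is exactly the set of classes of indecomposables of $\mathcal{B}(C)^i$ (up to shift), while under Theorem \ref{decat}, extended by base change to the $\textbf{B}$-completion, $S$ is the set of classes of indecomposable projectives. The 2-representation of $\mathcal{B}(C)^i$ on the $\textbf{B}$-extension of $\mathcal{H}^{fg}(C)$ induces on Grothendieck groups (as in Definition \ref{2rep}) the action of $_\textbf{B}B(C)$ on the $\textbf{B}$-extension of $_\textbf{A}U_q^+(\mathfrak{g}(C))$. I would then check that this is the algebraic module structure: $E_i$ acts by induction, decategorifying to right multiplication as in Theorem \ref{decat}, and $F_i$ acts by restriction, which is right adjoint to induction and hence decategorifies to the operator adjoint to right multiplication under the form of Definition \ref{form}; this is Kashiwara's action. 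Any discrepancy in normalization between the categorical $F_i$ and Kashiwara's $F_i$ is by a scalar lying in the positive cone, and so is harmless for the conclusion.

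The heart of the argument is that $M \cdot N$ is a \emph{genuine} object of the $\textbf{B}$-extension of $\mathcal{H}^{fg}(C)$, not a formal difference. Induction preserves finitely generated projectives, while the restriction of a projective decomposes as a $\textbf{B}$-biproduct of projectives by Proposition \ref{prop:kk}; hence the action does indeed land in the $\textbf{B}$-extension of $\mathcal{H}^{fg}(C)$. That category is $\textbf{B}$-Krull-Schmidt by the same reasoning as Lemma \ref{lem:a2_locally_KS}, since KLR algebras have finite-dimensional graded pieces bounded below and so endomorphism rings of indecomposables are local. Therefore $M \cdot N$ decomposes uniquely as a $\textbf{B}$-biproduct $\bigoplus_k N_k^{\oplus c_k}$ of indecomposable projectives $N_k$, and by the very definition of a $\textbf{B}$-biproduct each multiplicity series $c_k$ lies in $\N[q,q^{-1},(1/(1-q_i^2))_{i\in I}]$. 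Decategorifying gives $[M]\cdot[N] = \sum_k c_k [N_k]$ with every $[N_k] \in S$ and every $c_k$ in the positive cone, which is the claim.

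The main obstacle I expect is bookkeeping rather than genuine difficulty: one must verify that the $\textbf{B}$-extension of $\mathcal{H}^{fg}(C)$ is $\textbf{B}$-complete and $\textbf{B}$-Krull-Schmidt, so that unique $\textbf{B}$-biproduct decompositions with positive-cone multiplicities exist via the analogue of Proposition \ref{thm:nv}, and that the induced action on $K_0$ agrees with the algebraic action of $_\textbf{B}B(C)$ rather than merely with some action. Once these identifications are in place the positivity is automatic.
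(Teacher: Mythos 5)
Your proposal is correct and takes essentially the same route as the paper: the paper's entire proof is the observation that the 2-representation of $\mathcal{B}(C)^i$ on the $\textbf{B}$-extension of $\mathcal{H}^{fg}(C)$ sends objects to objects, so decategorifying forces the structure constants into the positive cone. Your write-up merely makes explicit the bookkeeping the paper leaves implicit, namely that this $\textbf{B}$-extension is $\textbf{B}$-complete and $\textbf{B}$-Krull-Schmidt so that genuine objects decompose with multiplicities in $\N[q,q^{-1},(1/(1-q_i^2))_{i\in I}]$, and that the induced action on $K_0$ agrees (up to a positive-cone normalization) with Kashiwara's action of $B(C)$ on $U_q^+(\mathfrak{g}(C))$.
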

Note that each element of $\N[q,q^{-1},(1/(1-q_i^2))_{i\in I}]$ can be viewed as a Laurent series in $q$ with coefficients in $\N$.

Proposition \ref{prop:klr_tensor_K0} and the proof of Theorem \ref{thm:gammaiso} give an explicit description of $S_2$ in terms of $S_1$.
\begin{cor}
    Let $G_1:U_q^+(C)\hookrightarrow B(C)$ be the embedding of $\Q(q)$-algebras with $E_i\rightarrow E_i$, and let $G_2:U_q^+(C)\hookrightarrow B(C)$ be the embedding of $\Q(q)$-algebras with $E_i\rightarrow F_i$. Then up to shifts, \[S_2=\{G_1(X)G_2(Y)|X,Y\in S\}.\]
\end{cor}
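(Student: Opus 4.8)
The plan is to trace the indecomposable objects of $\mathcal{B}(C)^i$ through the identifications already in place and read off their classes under $\gamma^{-1}$. Recall from the proof of Lemma \ref{lem:a2_locally_KS} that every indecomposable of $\mathcal{B}(C)^i$ is, up to shift, a direct summand of some reduced object $E_\textbf{i}F_\textbf{j}$, and from the surjectivity argument in the proof of Theorem \ref{thm:gammaiso} that, via Corollary \ref{cor:a2_coreflect} and Lemma \ref{lem:a2_reduced_hom}, horizontal composition induces an isomorphism of graded algebras
\[
\mathrm{End}^\bullet_{\mathcal{B}(C)^i}(E_\textbf{i}F_\textbf{j}) \simeq \mathrm{End}^\bullet_{\mathcal{A}_2(C)}(E_\textbf{i}) \otimes_\kf \mathrm{End}^\bullet_{\mathcal{A}_2(C)}(F_\textbf{j}) \simeq 1_{\bar\textbf{i}}H_{\alpha_\textbf{i}}(Q)1_{\bar\textbf{i}} \otimes_\kf 1_\textbf{j}H_{\alpha_\textbf{j}}(Q)1_\textbf{j}.
\]
First I would upgrade this endomorphism statement to the level of objects: since the isomorphism is realized by the monoidal product and the $E$- and $F$-strands never interact in a reduced diagram (Lemma \ref{lem:a2_reduced_hom}), the functor $(A,B)\mapsto A\otimes B$ from $\mathcal{A}_{2E}(C)^i \times \mathcal{A}_{2F}(C)^i$ into $\mathcal{B}(C)^i$ is fully faithful on summands of reduced objects and identifies the full subcategory of summands of $E_\textbf{i}F_\textbf{j}$ with the graded projective modules over $H_{\alpha_\textbf{i}}(Q)\otimes_\kf H_{\alpha_\textbf{j}}(Q)$.

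Next I would invoke Proposition \ref{prop:klr_tensor_K0} (Section 3.8.3 of \cite{kholau3}), which asserts precisely that the indecomposable projectives of $H_{\alpha_\textbf{i}}(Q)\otimes_\kf H_{\alpha_\textbf{j}}(Q)$ are the external products of indecomposable projectives of the two factors. Translating back, every indecomposable of $\mathcal{B}(C)^i$ is, up to shift, of the form $P_E \otimes P_F$ with $P_E$ an indecomposable summand of some $E_\textbf{i}$ and $P_F$ an indecomposable summand of some $F_\textbf{j}$, and decategorifying gives $[P_E\otimes P_F] = [P_E]\cdot[P_F]$ by the monoidal structure on $K_0$.

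It then remains to identify the two factors. By Remark \ref{rem:2embeddings}, the inclusion $\mathrm{inc}$ is an equivalence $\mathcal{A}_1(C)^i \simeq \mathcal{A}_{2E}(C)^i$, so via $\mathcal{U}_q^+(C)\simeq \mathcal{H}^{fg}(C)$ and Theorem \ref{decat} its indecomposables decategorify exactly to $S$; checking on divided powers that the induced map on $K_0$ is $\gamma\circ G_1$ (both are $\mathbf{B}$-algebra maps sending $E_i^{(a)}$ to $[E_i^{(a)}]$) yields $\{[P_E]\}=\gamma(G_1(S))$. Likewise the twisted inclusion $\phi\circ\mathrm{inc}$ is an equivalence $\mathcal{A}_1(C)^i\simeq \mathcal{A}_{2F}(C)^i$, so the $F$-indecomposables also biject with $S$ and their classes lie in $\gamma(G_2(U_q^+(C)))$; because $\phi$ reverses the monoidal product, this image is $\gamma(G_2(\sigma(S)))$, where $\sigma$ is the anti-automorphism of $U_q^+(C)$ fixing the generators. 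Here I would use that $S$ is stable under $\sigma$ — this holds because $\sigma$ is categorified by the generator-fixing anti-automorphism of the KLR algebras, which permutes indecomposable projectives — so that $\gamma(G_2(\sigma(S)))=\gamma(G_2(S))$. Combining with the fact that $\gamma$ is an algebra isomorphism (Theorem \ref{thm:gammaiso}), the set of indecomposable classes of $\mathcal{B}(C)^i$ is $\{\gamma(G_1(X))\gamma(G_2(Y))\}=\{\gamma(G_1(X)G_2(Y)):X,Y\in S\}$, and applying $\gamma^{-1}$ gives $S_2=\{G_1(X)G_2(Y):X,Y\in S\}$ up to shift.

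The main obstacle I anticipate is the bookkeeping in the final paragraph: confirming that the order-reversing symmetry $\phi$ does not change which basis elements occur, i.e. that $S$ is invariant under the generator-fixing anti-automorphism $\sigma$. Everything else is a routine transport of structure through the equivalences of Corollary \ref{cor:a2_coreflect} and Remark \ref{rem:2embeddings}, together with the $K_0$-computation of Proposition \ref{prop:klr_tensor_K0}.
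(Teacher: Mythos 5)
Your proposal follows the same skeleton as the paper's (one-sentence) argument: every indecomposable of $\mathcal{B}(C)^i$ is a summand of a reduced object, Corollary \ref{cor:a2_coreflect} and Lemma \ref{lem:a2_reduced_hom} identify $\mathrm{End}^\bullet_{\mathcal{B}(C)^i}(X_{\alpha_\textbf{i},\alpha_\textbf{j}})$ with $H_{\alpha_\textbf{i}}(Q)\otimes_\kf H_{\alpha_\textbf{j}}(Q)$, and Proposition \ref{prop:klr_tensor_K0} factors the indecomposable projectives as external products. You are in fact more careful than the paper on one point: transporting the $F$-side back to $U_q^+(C)$ through $\phi\circ\mathrm{inc}$ reverses the monoidal product, so a priori one only obtains $S_2=\{G_1(X)G_2(\sigma(Y)) : X,Y\in S\}$ up to shifts, where $\sigma=\mathrm{rev}$ is the generator-fixing $q$-linear anti-automorphism (with the paper's conventions in Lemma \ref{lem:a2_reduced_hom}, the same twist appears, just on the $E$-side, via the sequence reversal $1_{\bar{\textbf{i}}}$). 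So the statement does reduce to $\sigma(S)=S$ up to shifts, a point the paper's proof suppresses entirely.

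However, your justification of $\sigma(S)=S$ invokes the wrong categorical symmetry, and this is a genuine gap. The generator-fixing anti-automorphism $\psi$ of $H_\alpha$ does permute indecomposable projectives, via the contravariant functor $P\mapsto \psi^*\,\mathrm{HOM}_{H_\alpha}(P,H_\alpha)$, but this functor fixes every class $[H_\alpha 1_\textbf{i}]$ (because $\psi(1_\textbf{i})=1_\textbf{i}$, so $\psi^*\,\mathrm{HOM}_{H_\alpha}(H_\alpha 1_\textbf{i},H_\alpha)\simeq H_\alpha 1_\textbf{i}$), inverts grading shifts, and preserves the order of the induction product on idempotent-generated projectives. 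Hence on $K_0$ it induces the $q$-antilinear bar involution, not $\sigma$: concretely, in type $A_2$ at weight $\alpha_1+\alpha_2$ it fixes $[H_\alpha 1_{(1,2)}]=E_1E_2$, whereas $\sigma(E_1E_2)=E_2E_1$. The functor that does categorify $\sigma$ is pullback along the left--right mirror automorphism of $H_\alpha$ given by $1_\textbf{i}\mapsto 1_{\bar{\textbf{i}}}$, $x_k\mapsto x_{|\alpha|+1-k}$, $\tau_k\mapsto -\tau_{|\alpha|-k}$, which is a well-defined algebra automorphism precisely because $Q_{ij}(u,v)=Q_{ji}(v,u)$; this is a covariant, shift-preserving self-equivalence of the category of graded projectives that reverses the induction product, permutes indecomposable projectives, and sends $[H_\alpha 1_\textbf{i}]$ to $[H_\alpha 1_{\bar{\textbf{i}}}]$, hence induces $\sigma$ on $K_0$ and yields $\sigma(S)=S$ up to shifts. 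Replacing your appeal to $\psi$ by this mirror symmetry closes the gap; alternatively, one can absorb the mirror automorphism into the choice of isomorphism $\mathrm{End}^\bullet(X_{\alpha_\textbf{i},\alpha_\textbf{j}})\simeq H_{\alpha_\textbf{i}}\otimes_\kf H_{\alpha_\textbf{j}}$ on one tensor factor, after which no twist appears and no invariance statement about $S$ is needed. With that repair the rest of your argument is sound.
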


Theorem \ref{thm:gammaiso} shows that $(*,*)_2$ can be computed from graded dimensions of Hom spaces in $\mathcal{B}_2(C)^i$. The values of $(*,*)_2$ on $S_2$ elements are easily computed. This result also follows from Theorem \ref{thm:qthom_diagram}.
\begin{cor}
    Fix $X,Y,X',Y'\in S$ for which $\text{gr}(X)\leq\text{gr}(X')$ and $\text{gr}(Y)\leq\text{gr}(Y')$. Then \[(G_1(X)G_2(Y),G_1(X')G_2(Y'))_2=(G_1(X),G_1(X'))_2*(G_2(Y),G_2(Y'))_2=(\bar{X},X')_L(\bar{Y},Y')_L.\]
\end{cor}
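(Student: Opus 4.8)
The plan is to prove the two asserted equalities separately, obtaining the second from the block-comparison proposition of Subsection~\ref{subsec:qas} and reserving the categorical argument for the first.

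For the second equality, recall the proposition asserting $(\bar A,B)_L=(G_n(A),G_n(B))_\Z$ for the block isomorphisms $G_n\colon U_q^+(\mathfrak{g}(C))\xrightarrow{\sim}B_{\Z,n}(C)$. Under the embedding $B(C)\hookrightarrow B_\Z(C)$ defining $(*,*)_2$, the embedding $G_1$ of the corollary (sending $E_i\mapsto E_{i,0}$) composes to the block isomorphism onto $B_{\Z,0}(C)$, while $G_2$ (sending $E_i\mapsto E_{i,1}$) composes to the block isomorphism onto $B_{\Z,1}(C)$. Applying the proposition with $n=0$ and $n=1$ then gives $(G_1(X),G_1(X'))_2=(\bar X,X')_L$ and $(G_2(Y),G_2(Y'))_2=(\bar Y,Y')_L$; this step needs no grading hypothesis.

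For the first equality I would pass to the categorification. Since $\gamma$ intertwines $(*,*)_2$ with $(*,*)_{K_0}$ by Theorem~\ref{thm:gammaiso}, and since $G_1(X)$ lands in $\mathcal{A}_{2E}(C)$ and $G_2(Y)$ in $\mathcal{A}_{2F}(C)$ by Remark~\ref{rem:2embeddings}, the class $\gamma(G_1(X)G_2(Y))$ is that of a reduced object, a summand of $E_\textbf{i}F_\textbf{j}$ with $\alpha_\textbf{i}=\text{gr}(X)$ and $\alpha_\textbf{j}=\text{gr}(Y)$. Thus the left-hand side is $\text{grdim}\,\Hom^\bullet_{\mathcal{B}(C)^i}$ of the two reduced objects, which by Corollary~\ref{cor:a2_coreflect} may be computed in $\mathcal{A}_2(C)$. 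The grading hypotheses enter here: by Lemma~\ref{lem:a2_forbidden_pairings} each entry of a reduced target pairs with an identically oriented source entry, so a nonzero Hom space forces $\alpha_{\textbf{i}'}\le\alpha_\textbf{i}$ and $\alpha_{\textbf{j}'}\le\alpha_\textbf{j}$; combined with $\text{gr}(X)\le\text{gr}(X')$ and $\text{gr}(Y)\le\text{gr}(Y')$ this yields $\alpha_\textbf{i}=\alpha_{\textbf{i}'}$ and $\alpha_\textbf{j}=\alpha_{\textbf{j}'}$. In that case Lemma~\ref{lem:a2_reduced_hom} supplies a degree-preserving isomorphism $\Hom^\bullet(E_\textbf{i}F_\textbf{j},E_{\textbf{i}'}F_{\textbf{j}'})\simeq\Hom^\bullet(E_\textbf{i},E_{\textbf{i}'})\otimes_\kf\Hom^\bullet(F_\textbf{j},F_{\textbf{j}'})$, so graded dimension factors as a product with no shift; translating back through $\gamma$ gives $(G_1(X)G_2(Y),G_1(X')G_2(Y'))_2=(G_1(X),G_1(X'))_2\,(G_2(Y),G_2(Y'))_2$. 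If instead the gradings are strictly unequal, both sides vanish, since the Lusztig forms are $\N[I]$-homogeneous, so the identity holds in every case.

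The main obstacle I anticipate is the grading bookkeeping: checking that the hypotheses $\text{gr}(X)\le\text{gr}(X')$, $\text{gr}(Y)\le\text{gr}(Y')$ are exactly what upgrades the inequality forced by Lemma~\ref{lem:a2_forbidden_pairings} to the equality required by Lemma~\ref{lem:a2_reduced_hom}, and that the horizontal-composition isomorphism is degree-preserving so that no stray power of $q$ appears. It is worth noting that one cannot simply invoke the factorization property~(7) of Proposition~\ref{prop:qthomform}: that identity pairs a product in ``high--low'' block order against one in ``low--high'' order and carries a twist $q^{-(\alpha_X,\alpha_Z)}$, whereas here both arguments occur in the same block order, which is precisely why the clean, untwisted factorization is visible on the categorical side rather than the algebraic one.
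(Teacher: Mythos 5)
Your proof is correct and takes essentially the approach the paper intends: the paper supplies no written argument beyond pointing to Theorem~\ref{thm:gammaiso} (computing $(*,*)_2$ on $S_2$ classes as graded dimensions of Hom spaces in $\mathcal{B}(C)^i$) and, alternatively, Theorem~\ref{thm:qthom_diagram}, and your write-up fleshes out the first route, using Corollary~\ref{cor:a2_coreflect}, Lemma~\ref{lem:a2_forbidden_pairings}, and Lemma~\ref{lem:a2_reduced_hom} exactly where the paper's structure theory makes them available, with the weight hypotheses upgrading the forbidden-pairing inequalities to the equalities needed for the reduced-Hom factorization. Your handling of the second equality via the block-comparison proposition (with the $n=0,1$ index shift between the corollary's $G_1,G_2$ and the block maps $G_n$ treated correctly), and your closing remark on why Proposition~\ref{prop:qthomform}(7) cannot be invoked directly, are both sound and consistent with the paper's framework.
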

We can prove a similarly nice property for the symmetric form $(\bar{*},*)_2$, but we defer this to Theorem \ref{thm:bznicebasis}. Note that if $\text{gr}(X)<\text{gr}(X')$ or $\text{gr}(Y)<\text{gr}(Y')$ that this value is zero. Computing values of the bilinear form is more complicated when the inequality is reversed since there are additional cap morphisms to consider. For example, $(E_iF_i,1)_2=1/(1-q_i^2)$.

\section{Graphical interpretation of the bilinear form on \texorpdfstring{$B_\Z(C)$}{BZ(C)}}\label{sec:graphical}
We provide a new description of the bilinear form $(*,*)_\Z$ and use this interpretation to describe an interesting basis of $B_\Z(C)$.

In \cite{lusbook}, Lusztig gives a different construction of $U_q^+(\mathfrak{g})$. The bilinear form $(*,*)_L$ is also defined on the free algebra in the same generators, and $U_q^+(\mathfrak{g})$ is the quotient of the free algebra by the kernel of $(*,*)_L$. Similar results for the idempotented quantum group were established in \cite{kholau3}. We prove a similar result for the bosonic extension $B_\Z(C)$. The arguments in this section are based on those in Section 2 of \cite{kholau3}. In this section, fix a symmetrizable generalized Cartan matrix $(C_{ij})_{i,j\in I}$.

\begin{defn}
    Denote by $\Z I$ the set of all elements in $I$ with formal integers attached. Denote by $\Z$Seq the set of \emph{integer sequences} of $I$, i.e. finite sequences in $\Z I$. We allow the empty sequence.
\end{defn}
\begin{defn}
    For $(C_{ij})_{i,j\in I}$ a symmetrizable generalized Cartan matrix, denote by $F_\Z(C)$ the free $\Q(q)$-algebra on generators $E_{i,n}$ for all $i\in I,n\in \Z$. The algebra $F_\Z(C)$ has a basis of monomials indexed by $\Z$Seq. For $\textbf{i}=(n_1i_1,\dots n_ki_k)\in \Z$Seq, we denote by $E_\textbf{i}$ the corresponding monomial $E_{i_1,n_1}\dots E_{i_k,n_k}$. In particular, denote $E_\emptyset=1$. Denote by $zseq$ the reverse map from a monomial to its induced element of $\Z$Seq. We extend the notation of Definition \ref{defn:seq} to this context.
\end{defn}
We now construct the appropriate generalization of $\mathcal{A}_2$-diagrams.
\begin{defn}
    For $\textbf{i},\textbf{j}\in \Z$Seq, we define \emph{minimal $\mathcal{B}_\Z$-diagrams} from $\textbf{i}$ to $\textbf{j}$ as follows. Let $\R\times [0,1]$ be the rectangular strip with $\R\times \{0\}$ on the bottom and $\R \times \{1\}$ on the top. Pick $|\textbf{i}|$ distinct points on $\R\times \{0\}$ and label them in order from left to right by the entries of $\textbf{i}$. Do the same for $\textbf{j}$ on $\R\times \{1\}$. If $|\textbf{i}|+|\textbf{j}|$ is odd, then we say there are no minimal $\mathcal{B}_\Z$-diagrams from $\textbf{i}$ to $\textbf{j}$. Otherwise, we consider immersions of $(|\textbf{i}|+|\textbf{j}|)/2$ braids $[0,1]$ into $\R\times [0,1]$ so that the endpoints of these braids are all distinct and among the marked points on the boundaries. These immersions determine a matching between the entries of $\textbf{i}$ and $\textbf{j}$ by matching the endpoints of each braid. We require that these immersions satisfy the following properties.
    \begin{enumerate}
        \item The two endpoints of any braid must be labelled by the same element of $I$.
        \item If an entry of $\textbf{i}$ is matched with an entry of $\textbf{j}$, then these entries have the same $\Z$-label. If two entries $mi$ and $ni$ of $\textbf{i}$ are matched and if $ni$ is further right than $mi$, then $n=m+1.$ If two entries $mi$ and $ni$ of $\textbf{j}$ are matched and if $ni$ is further left than $mi$, then $n=m+1.$
        \item No braid crosses itself.
        \item No pair of braids crosses more than once.
        \item There are no triple-points, i.e. the immersion is generic.
    \end{enumerate}

    Below is such a minimal $\mathcal{B}_\Z$-diagram.
    \begin{center}
    \begin{tikzpicture}
        \draw[-,thick] (-.5,0)--(2.5,0);
        \draw[-,thick] (-.5,3)--(2.5,3);
        \draw[-,thick] (0,0) .. controls (0,1.25) and (2,1.25) .. (2,0);
        \draw[-,thick] (2,3) .. controls (2,1.75) and (0,1.75) .. (0,3);
        \draw[-, thick] (1,0) .. controls (1,.5) and (.3,1) .. (.3,1.5) .. controls (.3,2) and (1,2.5)..(1,3);


        \draw[decorate,decoration={brace,amplitude=5pt,raise=-1ex}](0,3.35) -- (2, 3.35);
        \draw (1, 3.75) node{$\textbf{j}=(6k, -2j, 5k)$};

        \draw[decorate,decoration={brace,amplitude=5pt,mirror, raise=-1ex}](0,-.35) -- (2, -.35);
        \draw (1, -.75) node{$\textbf{i}=(3i,-2j, 4i)$};
    \end{tikzpicture}
\end{center}
    The second and third conditions show that each braid is ``$\Z$-oriented" in the sense that the difference in $\Z$ labels between the endpoints of any braid may be computed as a $\pi$-clockwise turning number as the braid is traversed from one endpoint to the other. We therefore label vertical pieces of any braid by the induced $\Z$-label. We consider the boundary-preserving isotopy classes of these diagrams. We say that $\textbf{i}$ is the \emph{source} of the diagram and that $\textbf{j}$ is the \emph{target}. We allow the empty diagram from $\emptyset$ to $\emptyset$. 
\end{defn}
\begin{defn}
    Fix $\textbf{i},\textbf{j}\in \Z$Seq and a minimal $\mathcal{B}_\Z$-diagram $D$ from $\textbf{i}$ to $\textbf{j}$. We define the \emph{degree} of $D$, denoted $\text{deg}(D)$, as follows. Perform an isotopy on $D$ so that each of its crossings locally looks like the diagram below. \begin{center}
        \begin{tikzpicture}
            \draw[-,thick](0,0)..controls (0,.5) and (1,.5) .. (1,1);
            \draw[-,thick](1,0)..controls (1,.5) and (0,.5) .. (0,1);
            \draw[-,dotted] (-1,0) -- (2,0);
            \draw[-,dotted] (-1,1) -- (2,1);
            \draw[-,thick] (-1,1.7) --(2,1.7);
            \draw[-,thick] (-1,-.7)--(2,-.7);

  
        \end{tikzpicture}
    \end{center} 
    
    In particular, the lower 2 endpoints of the crossing may be labeled by the induced $\Z$-orientation from the endpoints of the corresponding braid. Suppose that the bottom left endpoint is labeled $mi$ and the bottom right endpoint is labeled $nj$. If $n-m\geq 1$, we say that the degree of this crossing is $(-1)^{n-m}d_iC_{ij}$. If $n-m < 1$, we say that the degree of this crossing is $(-1)^{1+n-m}d_iC_{ij}$. It is easy to see that the degree of the crossing is preserved by rotation and well-defined on the isotopy class of the diagram. See for example the picture below. 
    
    \begin{center}
        \begin{tikzpicture}
            \draw (-1.05,.5) node{deg};
            \draw (-.65,.5) node{\Huge $($};
            \draw[-,thick] (0,0) ..controls (0,.5) and (1,.5)..(1,1);
            \draw[-,thick] (1,0)..controls (1,.5) and (0,.5)..(0,1);
             \draw (-.3,.15) node{$1i$};
            \draw (1.3,.15) node{$3j$};
            \draw (-.3, .85) node{$3j$};
            \draw (1.3, .85) node{$1i$};
            \draw (1.75,.5) node{\Huge$)$};
            \draw (2.7,.5) node{$=$ deg};

            \draw (3.35,.5) node{\Huge $($};
            \draw[-,thick] (4,0) ..controls (4,.5) and (5,.5)..(5,1);
            \draw[-,thick] (5,0)..controls (5,.5) and (4,.5)..(4,1);
             \draw (3.7,.15) node{$3j$};
            \draw (5.3,.15) node{$2i$};
            \draw (3.7, .85) node{$2i$};
            \draw (5.3, .85) node{$3j$};
            \draw (5.75,.5) node{\Huge$)$};

            \draw (6.8,.5) node{$=d_iC_{ij}$};
        \end{tikzpicture}
    \end{center}
    Then we define  $\text{deg}(D)$ to be the sum over the degrees of each crossing. In particular, the degree of a crossingless diagram is $0$.
\end{defn}
\begin{defn}
    Every minimal $\mathcal{B}_\Z$-diagram from $\textbf{i}$ to $\textbf{j}$ induces a perfect matching of the entries of $\textbf{i}$ and $\textbf{j}$. This matching is compatible with the vertex labels and $\Z$-orientation. Let $p'(\textbf{i},\textbf{j})$ denote the set of perfect matchings with compatible vertex and $\Z$-labels. It is easy to see that each element of $p'(\textbf{i},\textbf{j})$ is realized by some minimal diagram. We refer to elements of $p'(\textbf{i},\textbf{j})$ as $(\textbf{i},\textbf{j})$-\emph{pairings}. Note that the degrees of two minimal diagrams inducing the same matching are equal since they are related by the usual triple-crossing relations, and therefore we may also define degrees of $(\textbf{i},\textbf{j})-$pairings. Note also that the information of which braids cross in a minimal diagram is determined by the induced matching in $p'(\textbf{i},\textbf{j})$.
\end{defn}

\begin{defn}
    There is a $\Q$-bilinear form $(*,*)_G$ defined on $F_\Z(C)$ as follows. For two elements $\textbf{i},\textbf{j}\in \Z$Seq and $a,b\in \Q(q)$, we define
    \[(aE_\textbf{i},bE_\textbf{j})_G\coloneqq \bar{a}b(\prod_{i\in I}\frac{1}{(1-q_i^2)^{\kappa_i}})\sum_{D\in p'(\textbf{i},\textbf{j})}q^{\text{deg}(D)},\]
    where $\kappa_i$ denotes the number of braids with $I$-label $i$ in any minimal diagram realizing $D$.
\end{defn}
\begin{ex}
As an example, take $\textbf{i}=(0i, 8j, 1i, 2i)$ and $\textbf{j}=(2i, 1i, 0i, 8j)$. There are three elements of $p'(\textbf{i},\textbf{j})$ with representative minimal diagrams as follows.
\begin{center}
    \begin{tikzpicture}
        \draw[-,thick] (-.5,0) -- (3.5,0);
        \draw [-,thick] (-.5,2)--(3.5,2);

        \draw[-,thick] (0,0)..controls(0,.5) and (2,.5)..(2,0);
        \draw[-,thick] (1,0)..controls (1,.5) and (3,1.5)..(3,2);
        \draw[-,thick] (3,0)..controls (3,.5) and (0,1.5)..(0,2);
        \draw[-,thick] (1,2)..controls (1,1.5) and (2,1.5)..(2,2);

        \draw (0,-.2) node{\small$0i$};
        \draw (1,-.2) node{\small$8j$};
        \draw (2,-.2) node{\small$1i$};
        \draw (3,-.2) node{\small$2i$};

        \draw (0,2.2) node{\small$2i$};
        \draw (1,2.2) node{\small$1i$};
        \draw (2,2.2) node{\small$0i$};
        \draw (3,2.2) node{\small$8j$};

        \draw (4,1) node{$,$};

        \draw[-,thick] (4.5,0) -- (8.5,0);
        \draw [-,thick] (4.5,2)--(8.5,2);

        \draw[-,thick] (5,0)..controls (5,.5) and (7,1.5)..(7,2);
        \draw[-,thick] (6,0)..controls (6,.5) and (8,1.5)..(8,2);
        \draw[-,thick] (7,0)..controls (7,.5) and (8,.5)..(8,0);
        \draw[-,thick] (5,2)..controls(5,1.5) and (6,1.5)..(6,2);

        \draw (5,-.2) node{\small$0i$};
        \draw (6,-.2) node{\small$8j$};
        \draw (7,-.2) node{\small$1i$};
        \draw (8,-.2) node{\small$2i$};

        \draw (5,2.2) node{\small$2i$};
        \draw (6,2.2) node{\small$1i$};
        \draw (7,2.2) node{\small$0i$};
        \draw (8,2.2) node{\small$8j$};

        \draw (9,1) node{$,$};

        \draw[-,thick] (9.5,0) -- (13.5,0);
        \draw [-,thick] (9.5,2)--(13.5,2);

        \draw[-,thick] (10,0)..controls (10,.5) and (12,1.5)..(12,2);
        \draw[-,thick] (11,0)..controls (11,.5) and (13,1.5)..(13,2);
        \draw[-,thick] (12,0)..controls (12,.5) and (11,1.5)..(11,2);
        \draw[-,thick] (13,0)..controls(13,1.2) and (10,.8)..(10,2);

        \draw (10,-.2) node{\small$0i$};
        \draw (11,-.2) node{\small$8j$};
        \draw (12,-.2) node{\small$1i$};
        \draw (13,-.2) node{\small$2i$};

        \draw (10,2.2) node{\small$2i$};
        \draw (11,2.2) node{\small$1i$};
        \draw (12,2.2) node{\small$0i$};
        \draw (13,2.2) node{\small$8j$};
    \end{tikzpicture}
\end{center}

The degree of the first diagram is $d_jC_{ji}-d_jC_{ji}=0$, and the degree of the second diagram is also $0$ since it has no crossings. The degree of the third diagram is $d_jC_{ij}-d_jC_{ij}-d_iC_{ii}+d_iC_{ii}-d_iC_{ii}=-d_iC_{ii}$. Therefore, we compute \[(E_{\textbf{i}},E_{\textbf{j}})_G=\frac{1}{(1-q_i^2)^3}\cdot\frac{1}{1-q_j^2}\cdot(q^0+q^0+q_i^{-2})=\frac{2+q_i^{-2}}{(1-q_i^2)^3(1-q_j^2)}.\]
\end{ex}
The coefficient $\prod_{i\in I} 1/(1-q_i^2)^{\kappa_i}$ may be understood as coming from the possible placements of dots along a minimal diagram realizing $D$, where a dot on a braid with vertex label $i$ has degree $2d_i$, and diagrams with the same number of dots per braid are considered equal. This should be compared to the $\mathcal{A}_2$-diagrams of the previous section.

For monomials $X$ and $Y$, it is clear from the definition that $(X,Y)_G=0$ if $X\in B_\Z(C)_\alpha$ and $Y\in B_\Z(C)_\beta$ with $\alpha\neq \beta$. The symmetries $\bar{*}$ and $\mathcal{D}$ are also defined on $F_\Z(C)$. It is clear that for any $X,Y\in F_\Z(C)$ that $(X,Y)_G=(\bar{\mathcal{D}}(X),\bar{\mathcal{D}}(Y))_G$. By rotating diagrams by 180 degrees, we have $(X,Y)_G=(\bar{Y},\bar{X})_G$. Composing these, we have $(X,Y)_G=(\mathcal{D}(Y),\mathcal{D}(X))_G$. We will need the following refinement of this result.

\begin{lem}\label{lem:graphic_form_duality}
For any $X,Y\in F_\Z(C)$ and $i\in I$, $n\in \Z$, we have that $(E_{i,n}X,Y)_G=(X,E_{i,n+1}Y)_G$ and $(XE_{i,n},Y)_G=(X,YE_{i,n-1})_G$.
\end{lem}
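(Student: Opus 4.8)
The plan is to establish the first identity $(E_{i,n}X,Y)_G=(X,E_{i,n+1}Y)_G$ directly and diagrammatically, and then to deduce the second identity formally from it. Since $(*,*)_G$ is additive and $q$-semilinear in each argument, it suffices to treat monomials $X=E_{\textbf{i}_X}$ and $Y=E_{\textbf{i}_Y}$. Then $E_{i,n}X=E_{(ni)\cdot\textbf{i}_X}$ and $E_{i,n+1}Y=E_{(n+1,i)\cdot\textbf{i}_Y}$, all scalar prefactors are trivial, and the claim reduces to the equality of two sums over pairings, namely over $p'((ni)\cdot\textbf{i}_X,\textbf{i}_Y)$ and over $p'(\textbf{i}_X,(n+1,i)\cdot\textbf{i}_Y)$. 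I would prove this equality by exhibiting a bijection $\Phi$ between these two sets of pairings that preserves both the braid counts $\kappa_i$ (and hence the coefficient $\prod_i 1/(1-q_i^2)^{\kappa_i}$) and the degree.

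The bijection $\Phi$ re-routes the leftmost strand around the left edge of the strip. Given a minimal diagram realizing a pairing in $p'((ni)\cdot\textbf{i}_X,\textbf{i}_Y)$, let $\beta$ be the braid attached to the leftmost bottom endpoint $p_0$, which carries the label $ni$; $\Phi$ drags the endpoint $p_0$ up and around the left boundary so that $\beta$ instead emanates from a new leftmost top endpoint $t_0$, to which I assign the label $(n+1)i$. To see this is well defined I would check the two possible cases for the other end of $\beta$ against the second condition in the definition of minimal $\mathcal{B}_\Z$-diagrams. If $\beta$ meets a top entry $t_c$ of $\textbf{i}_Y$, then the source--target clause forces $t_c$ to carry the label $ni$; after the move $\beta$ joins the two top entries $t_0=(n+1)i$ (leftmost) and $t_c=ni$, which is compatible since $(n+1)=n+1$. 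If instead $\beta$ meets a bottom entry $p_d$ to the right of $p_0$, the second clause forces $p_d=(n+1)i$, and after the move $\beta$ joins $t_0=(n+1)i$ to $p_d=(n+1)i$, a valid source--target pair. The inverse of $\Phi$ is the same move in reverse (dragging $t_0$ back to $p_0$ with the label shifted from $(n+1)i$ to $ni$), so $\Phi$ is a bijection. Since $\Phi$ neither creates nor destroys braids and does not change any $I$-label, the counts $\kappa_i$ are preserved.

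The crux, and the step I expect to be the main obstacle, is degree preservation, because $\text{deg}$ is computed from the \emph{local} $\Z$-labels of the two strands at each crossing, not merely from which pairs of strands cross. The idea is that $\Phi$ is realized by an ambient isotopy supported away from the left boundary together with the reinterpretation of the single endpoint $p_0$ as $t_0$. Concretely, I would draw $\beta$ in the source diagram as a segment hugging the left edge from $p_0$ to a point $P$ near the top-left corner, followed by an arc $\pi_0$ from $P$ to the other end of $\beta$; in the target diagram $\beta$ consists of a short segment from $t_0$ to $P$ followed by the \emph{same} arc $\pi_0$. A Jordan-arc analysis of the two complementary regions cut out by $\beta$ shows that, in both diagrams, a strand $\gamma$ crosses $\beta$ precisely when it separates the same sets of boundary endpoints; hence the boundary-hugging segment crosses nothing (there are no endpoints to its left), and every crossing of $\beta$ lies on the common arc $\pi_0$. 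Along $\pi_0$ the induced $\Z$-labels are determined by the (unchanged) label of the far endpoint of $\beta$ and the turning of $\pi_0$, so the partner strand and both local labels at each crossing are identical in the source and target diagrams, and each crossing contributes the same summand to the degree. Crossings not involving $\beta$ are untouched. Therefore $\text{deg}$ is preserved by $\Phi$, completing the proof of the first identity.

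Finally I would deduce the second identity from the first using the symmetry $(A,B)_G=(\mathcal{D}(B),\mathcal{D}(A))_G$ recorded just before the lemma, together with the fact that $\mathcal{D}$ is an antiautomorphism of $F_\Z(C)$ with $\mathcal{D}(E_{i,n})=E_{i,n+1}$. Indeed, applying this symmetry gives $(XE_{i,n},Y)_G=(\mathcal{D}(Y),E_{i,n+1}\mathcal{D}(X))_G$ and $(X,YE_{i,n-1})_G=(E_{i,n}\mathcal{D}(Y),\mathcal{D}(X))_G$, and these two right-hand sides coincide by the first identity applied to the elements $\mathcal{D}(Y)$ and $\mathcal{D}(X)$. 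This reduces the second adjunction relation to the first with no further diagrammatic work.
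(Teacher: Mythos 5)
Your proposal is correct and takes essentially the same route as the paper: after reducing to monomials by linearity, the paper constructs exactly your bijection $p'(ni\cdot\textbf{i},\textbf{j})\rightarrow p'(\textbf{i},(n+1)i\cdot\textbf{j})$ by attaching a cup at the left edge that re-routes the $ni$ endpoint to a new top entry $(n+1)i$, observing as you do that this preserves degrees and the braid counts $\kappa_i$. Your deduction of the second identity via the antiautomorphism $\mathcal{D}$ and the symmetry $(A,B)_G=(\mathcal{D}(B),\mathcal{D}(A))_G$ is just a more explicit rendering of the paper's remark that ``the second equality is dual.''
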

\begin{proof}
    We prove the first equality since the second equality is dual. By linearity, it is enough to prove the claim when $X=E_\textbf{i}$ and $Y=E_\textbf{j}$ are monomials. We construct a degree-preserving bijection $p'(ni\cdot \textbf{i},\textbf{j})\rightarrow p'(\textbf{i},(n+1)i\cdot \textbf{j})$. For each representative minimal $\mathcal{B}_\Z$-diagram of $p'(ni\cdot \textbf{i},\textbf{j})$, attach a cup to the bottom of the diagram connecting  this $ni$ to the new $(n+1)i$ on the top. This bijection is depicted in the following picture.
    
    \begin{center}
    \begin{tikzpicture}
        \draw[-,thick] (-.5,0)--(2.5,0);
        \draw[-,thick] (-.5,3)--(2.5,3);
        \draw[-,thick] (0,0) .. controls (0,1.25) and (2,1.25) .. (2,0);
        \draw[-,thick] (2,3) .. controls (2,1.75) and (0,1.75) .. (0,3);
        \draw[-, thick] (1,0) .. controls (1,.5) and (.3,1) .. (.3,1.5) .. controls (.3,2) and (1,2.5)..(1,3);

        \draw[decorate,decoration={brace,amplitude=5pt,raise=-1ex}](0,3.35) -- (2, 3.35);
        \draw (1, 3.75) node{$\textbf{j}$};

        \draw[decorate,decoration={brace,amplitude=5pt,mirror, raise=-1ex}](1,-.35) -- (2, -.35);
        \draw (1.5, -.75) node{$\textbf{i}$};

        \draw (0,-.2) node{$ni$};

        \draw(3,1.5) node{$\implies$};

        \draw[-,thick] (3.5,0)--(7.5,0);
        \draw[-,thick] (3.5,3)--(7.5,3);
        \draw[-,thick] (4,3)..controls (4,1) and (4,1)..(4,.5)..controls (4,0) and(5,0)..(5,.5)..controls (5,1.4) and (7,1.5) .. (7,0);
        \draw[-,thick] (7,3) .. controls (7,1.75) and (5,1.75) .. (5,3);
        \draw[-, thick] (6,0) .. controls (6,.5) and (5.3,1) .. (5.3,1.5) .. controls (5.3,2) and (6,2.5)..(6,3);

        \draw[decorate,decoration={brace,amplitude=5pt,raise=-1ex}](5,3.35) -- (7, 3.35);
        \draw (6, 3.75) node{$\textbf{j}$};

        \draw[decorate,decoration={brace,amplitude=5pt,mirror, raise=-1ex}](6,-.35) -- (7, -.35);
        \draw (6.5, -.75) node{$\textbf{i}$};

        \draw (4,3.2) node{$(n+1)i$};

    \end{tikzpicture}
\end{center}
The reverse bijection is attaching the appropriate cap to $(n+1)i$. This bijection also does not change the number of braids with a fixed $I$-label, so the lemma follows.
\end{proof}
Denote by $\text{ker}(*,*)_G$ the subspace of $F_\Z(C)$ consisting of elements $X$ satisfying $(Y,X)_G=0$ for any $Y$. This is equivalent to requiring that all $(\bar{X},\bar{Y})_G=0$. We say that a monomial $E_{i_1,n_1}\dots E_{i_k,n_k}$ is \emph{reduced} if $n_j\leq n_l$ when $j\leq l$. We say that the monomial is \emph{coreduced} if $n_j\geq n_l$ when $j\leq l$.

\begin{thm}\label{thm:qthom_diagram}
    The following are true.
    \begin{enumerate}
        \item $\text{ker}(*,*)_G$ is a two-sided ideal of $F_\Z(C)$.
        \item The bilinear form $(*,*)_G$ is invariant under the quantum boson and quantum Serre relations, i.e. it descends to $B_\Z(C)$ under the homomorphism $E_{i,n}\rightarrow E_{i,n}$.
        \item The bilinear form $(*,*)_G$ on $B_\Z(C)$ is identical to $(*,*)_\Z$.
        \item There is an isomorphism of algebras $B_\Z(C)\simeq F_\Z(C)/\text{ker}(*,*)_G$
    \end{enumerate}
\end{thm}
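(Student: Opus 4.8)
The plan is to work entirely on the free algebra $F_\Z(C)$, to introduce the pullback form $(X,Y)_\Z^F\coloneqq(\pi(X),\pi(Y))_\Z$ along the quotient $\pi\colon F_\Z(C)\twoheadrightarrow B_\Z(C)$, and to prove (2) and (3) simultaneously by showing $(*,*)_G=(*,*)_\Z^F$ as forms on $F_\Z(C)$; then (1) is a formal warm-up and (4) follows from nondegeneracy. For (1), fix $X\in\ker(*,*)_G$. By the rotation identity $(A,B)_G=(\bar B,\bar A)_G$, the condition $(Y,X)_G=0$ for all $Y$ is equivalent to $(\bar X,Z)_G=0$ for all $Z$. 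Using that $\bar *$ is an antiautomorphism fixing generators together with Lemma \ref{lem:graphic_form_duality}, I compute for any generator $E_{i,n}$ that $(Y,XE_{i,n})_G=(\overline{XE_{i,n}},\bar Y)_G=(E_{i,n}\bar X,\bar Y)_G=(\bar X,E_{i,n+1}\bar Y)_G=0$, and the mirror computation with the second adjunction identity gives $(Y,E_{i,n}X)_G=(\bar X E_{i,n},\bar Y)_G=(\bar X,\bar Y E_{i,n-1})_G=0$. Since the $E_{i,n}$ generate, $\ker(*,*)_G$ is a two-sided ideal.

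For (2) and (3) I would verify that $(*,*)_G$ obeys the structural properties that pin down $(*,*)_\Z$. First the single-slice case: if all entries of $\mathbf i,\mathbf j$ carry the same integer label $n$, then condition (2) of the definition of minimal $\mathcal B_\Z$-diagrams forbids all cups and caps (the forced index differences cannot equal $1$), so every minimal $\mathcal B_\Z$-diagram is an $\mathcal A_1$-diagram, its degree coincides with the KLR degree, and the coefficient $\prod_i(1-q_i^2)^{-\kappa_i}$ exactly records free dots on each strand. Hence $(G_n(A),G_n(B))_G$ is the graded dimension computing the semilinear Lusztig form, as in Remark \ref{rem:missingdiag}, so by the identity $(\bar A,B)_L=(G_n(A),G_n(B))_\Z$ established earlier we get $(G_n(A),G_n(B))_G=(\bar A,B)_L=(G_n(A),G_n(B))_\Z$; in particular the quantum Serre element, lying in the radical of $(*,*)_L$, is annihilated inside a single slice. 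Next I would establish the factorization of Proposition \ref{prop:qthomform}(7) for $(*,*)_G$: for monomials $X,W$ with all indices $>\mu$ and $Y,Z$ with all indices $\le\mu$, condition (2) forces every strand of a diagram $XY\to ZW$ to stay within one of the two index worlds (indices $>\mu$, connecting $X$ to $W$, or indices $\le\mu$, connecting $Y$ to $Z$), and a strand of the first world crosses one of the second world exactly once. Summing the crossing degrees and using $(-1)^{b-a}=(-1)^{a+b}$ to absorb the weight signs $(-1)^{\mathrm{index}}$ yields total degree $-(\alpha_X,\alpha_Z)$, giving $(XY,ZW)_G=q^{-(\alpha_X,\alpha_Z)}(X,W)_G(Y,Z)_G$.

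To finish (2) and (3) I would prove the boson-relation invariance directly and graphically. Comparing diagrams into the two adjacent target entries $(n+k)i,\,nj$, swapping them introduces a single crossing whose degree, by the sign convention in the degree definition, equals $(-1)^kd_iC_{ij}$, contributing the factor $q_i^{(-1)^kC_{ij}}$; the only extra diagrams are those where these two top points cap off, which by condition (2) is possible exactly when $i=j$ and $k=1$ and contributes $\tfrac{1}{1-q_i^2}\,(Y,1)_G$. This gives $(Y,E_{i,n+k}E_{j,n})_G=q_i^{(-1)^kC_{ij}}(Y,E_{j,n}E_{i,n+k})_G+\delta_{(i,j),(k,1)}\tfrac{1}{1-q_i^2}(Y,1)_G$, so the boson relation element lies in $\ker(*,*)_G$. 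Both $(*,*)_G$ and $(*,*)_\Z^F$ now satisfy $q$-semilinearity, weight-grading, boson invariance, single-slice agreement with $(\bar *,*)_L$, and the factorization of Proposition \ref{prop:qthomform}(7) (the last three holding for $(*,*)_\Z^F$ via $\pi$ and the cited results). Using boson invariance to sort a monomial into index-ordered form modulo lower terms and then peeling off the top slice by factorization, an induction reduces any pairing of either form to the same product of single-slice Lusztig forms; hence $(*,*)_G=(*,*)_\Z^F$, which is (3), and simultaneously the Serre and boson relation elements lie in $\ker(*,*)_G$, which is (2).

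For (4), part (2) gives $J\coloneqq\ker\pi\subseteq\ker(*,*)_G$, and conversely if $X\in\ker(*,*)_G$ then by (3) $(\pi(Y),\pi(X))_\Z=(Y,X)_G=0$ for all $Y$; surjectivity of $\pi$ and nondegeneracy of $(*,*)_\Z$ (Proposition \ref{prop:qthomform}(4)) force $\pi(X)=0$, i.e. $X\in J$. Thus $\ker(*,*)_G=J$ and $F_\Z(C)/\ker(*,*)_G\cong B_\Z(C)$ as algebras. I expect the factorization property to be the main obstacle: the delicate points are verifying that no stray connections between the two index worlds are possible and carrying out the crossing-degree bookkeeping so that the accumulated degree is exactly $-(\alpha_X,\alpha_Z)$, with the analogous sign verification in the boson step being the other place where the precise definition of crossing degree must be used.
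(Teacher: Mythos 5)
Your proposal is correct, but it organizes the heart of the argument — parts (2) and (3) — along a genuinely different route than the paper. The shared ingredients are part (1) via Lemma \ref{lem:graphic_form_duality} (your detour through the rotation identity $(A,B)_G=(\bar B,\bar A)_G$ is harmless but unnecessary), the crossing/cap bijection proving invariance under the quantum boson relations, the single-slice identification with Lusztig's form, and part (4) via surjectivity plus nondegeneracy of $(*,*)_\Z$. Where you diverge: the paper proves Serre invariance \emph{directly} (boson-straightening the first argument to a coreduced monomial, observing that the $\Z$-orientation forces it into a single slice $F_\Z(C)_n$, then citing Serre invariance of $(\bar*,*)_L$), and then proves (3) cheaply by using the adjunction identities (Lemma \ref{lem:graphic_form_duality} on the graphical side, Proposition \ref{prop:qthomform}(6) on the algebraic side) to strip every generator off the first argument, reducing everything to $(1,\cdot)_G=(1,\cdot)_\Z$ on reduced monomials. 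You instead prove a graphical analogue of Proposition \ref{prop:qthomform}(7) for $(*,*)_G$ and run a slice-peeling induction to get equality with the pullback form on all of $F_\Z(C)$, after which Serre invariance is a corollary rather than a separate verification. Your factorization step does hold: the $\Z$-label conditions rule out all cross-world caps, cups, and through-strands; entries absorbed by internal caps or cups contribute zero weight, so only through-strand pairs cross (exactly once each, by minimality and a parity argument), and the sign identity $(-1)^{1+b-a}=(-1)^{1+a+b}$ gives total crossing degree $-(\alpha_X,\alpha_Z)$. What each approach buys: the paper's reduction to $(1,\cdot)$ is shorter and sidesteps the factorization bookkeeping entirely; your route costs that bookkeeping but produces the factorization property as a reusable byproduct — essentially the same diagrammatic splitting the paper proves separately later, inside the proof of Theorem \ref{thm:bznicebasis}(2) — and makes part (4) self-contained instead of appealing back to the arguments of Theorem \ref{thm:gammaiso}. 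Two details you should make explicit when writing this up: sorting the \emph{first} argument modulo the boson ideal needs first-argument invariance, which follows from second-argument invariance via the rotation identity and bar-stability of the boson relations; and the induction needs cross-slice orthogonality (e.g.\ $(G_a(A),G_b(B))=0$ for $a\neq b$), which for $(*,*)_G$ is immediate from the $\Z$-label condition but for $(*,*)_\Z$ requires a short argument from the decomposition in Equation \ref{eqn:bzdecomp}, since slices of the same parity have equal $\Z[I]$-weights and property (3) of Proposition \ref{prop:qthomform} alone does not suffice.
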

\begin{proof}

    To prove (1), suppose that there is $X\in F_\Z(C)$ satisfying $(Y,X)_G=0$ for all $Y\in F_\Z(C)$. To prove that $XF_\Z(C)\subset \text{ker}(*,*)_G$, it is sufficient to show that for any $i\in I$, $n\in \Z$, and $Z\in F_\Z(C)$ that $(Z,XE_{i,n})_G=0$. By Lemma \ref{lem:graphic_form_duality}, this is equivalent to proving that all $(ZE_{i,n+1},X)_G=0$, which holds by assumption on $X$. Showing closure under left multiplication is dual.

    For (2), we first prove the quantum boson relations. For any monomial $E_\textbf{i}\in F_\Z(C)$, any $i,j\in I$, any $n\in \Z$ and $k\in \Z_{\geq 2}$, we produce bijections $p'(\textbf{i},(n+k)i\cdot nj)\rightarrow p'(\textbf{i},nj\cdot (n+k)i)$. In any minimal $\mathcal{B}_\Z$-diagram with target $(n+k)i\cdot nj$, the braids connecting to the top $(n+k)i$ and $nj$ cannot coincide due to the $\Z$ orientation. For those elements  $v\in p'(\textbf{i},(n+k)i\cdot nj)$ where these two braids cross in any representative diagram, we pick a representative $D$ where this crossing is above all other crossings. Let $D'$ be the diagram from $X$ to $nj\cdot (n+k)i$ obtained by removing this crossing. Note that $D'$ is still minimal and that $\text{deg}(D')=\text{deg}(D)-(-1)^{k}d_iC_{ij}$. We then map $v$ to the element of $p'(\textbf{i},nj\cdot (n+k)i)$ represented by $D'$. Likewise, for those $v\in p'(\textbf{i},(n+k)i\cdot nj)$ where these two braids do not cross in any representative diagram, pick any representative $D$. Let $D'$ be the diagram obtained by putting a crossing of these $nj$ and $(n+k)i$ on top of $D$, and note that this is still minimal. We again compute that $\text{deg}(D')=\text{deg}(D)-(-1)^kd_iC_{ij}$. We then map $v$ to the element of $p'(\textbf{i},nj\cdot (n+k)i)$ represented by $D'$.
    
    \begin{center}
        \begin{tikzpicture}
            \draw[-,thick] (0,0) -- (0,1.5);
            \draw[-,thick] (1,0)..controls (1,.75) and (3,.75)..(3,0);
            \draw[-,thick] (2,0)--(2,1.5);

            \draw[-,thick] (-.5,1.5)--(3.5,1.5);
            \draw[-,thick] (-.5,0)--(3.5,0);

            \draw (0,-.2) node{\small$(n+k)i$};
            \draw (1,-.2) node{\small$ml$};
            \draw (2,-.2) node{\small$nj$};
            \draw (3,-.2) node{\small$(m+1)l$};
            \draw (0,1.7) node{\small$(n+k)i$};
            \draw (2,1.7) node{\small$nj$};

            \draw (4,.75) node{$\implies$};

            \draw[-,thick] (5,0)..controls (5,.5) and (7,1).. (7,1.5);
            \draw[-,thick] (6,0)..controls (6,.75) and (8,.75)..(8,0);
            \draw[-,thick] (7,0)..controls (7,.5) and (5,1)..(5,1.5);

            \draw[-,thick] (4.5,1.5)--(8.5,1.5);
            \draw[-,thick] (4.5,0)--(8.5,0);

            \draw (5,-.2) node{\small$(n+k)i$};
            \draw (6,-.2) node{\small$ml$};
            \draw (7,-.2) node{\small$nj$};
            \draw (8,-.2) node{\small$(m+1)l$};
            \draw (5,1.7) node{\small$nj$};
            \draw (7,1.7) node{\small$(n+k)i$};
        \end{tikzpicture}
    \end{center}

    \begin{center}
        \begin{tikzpicture}
            \draw[-,thick] (0,0)..controls (0,.5) and (2,1)..(2,1.5);
            \draw[-,thick] (1,0)..controls (1,.75) and (3,.75)..(3,0);
            \draw[-,thick] (2,0)..controls (2,.5) and (0,1)..(0,1.5);

            \draw[-,thick] (-.5,1.5)--(3.5,1.5);
            \draw[-,thick] (-.5,0)--(3.5,0);

            \draw (0,-.2) node{\small$nj$};
            \draw (1,-.2) node{\small$ml$};
            \draw (1.9,-.2) node{\tiny$(n+k)i$};
            \draw (3.1,-.2) node{\tiny$(m+1)l$};
            \draw (0,1.7) node{\small$(n+k)i$};
            \draw (2,1.7) node{\small$nj$};

            \draw (4,.75) node{$\implies$};

            \draw[-,thick] (5,0)--(5,1.5);
            \draw[-,thick] (6,0)..controls (6,.75) and (8,.75)..(8,0);
            \draw[-,thick] (7,0)--(7,1.5);

            \draw[-,thick] (4.5,1.5)--(8.5,1.5);
            \draw[-,thick] (4.5,0)--(8.5,0);

            \draw (5,-.2) node{\small$nj$};
            \draw (6,-.2) node{\small$ml$};
            \draw (6.9,-.2) node{\tiny$(n+k)i$};
            \draw (8.1,-.2) node{\tiny$(m+1)l$};
            \draw (5,1.7) node{\small$nj$};
            \draw (7,1.7) node{\small$(n+k)i$};
        \end{tikzpicture}
    \end{center}
    
    The map $p'(\textbf{i},(n+k)i\cdot nj)\rightarrow p'(\textbf{i},nj\circ (n+k)i)$ is evidently a bijection that lowers degree by $(-1)^kd_iC_{ij}$. It also does not change the number of braids with a fixed $I$-label. So, we have that $(X,E_{i,n+k}E_{j,n})_G=q_i^{(-1)^kC_{ij}}(X,E_{j,n}E_{i,n+k})_G=(X,q_i^{(-1)^kC_{ij}}E_{j,n}E_{i,n+k})$. We argue similarly for the quantum boson relations $E_{i,n+1}E_{j,n}-q_i^{-C_{ij}}E_{j,n}E_{i,n+1}=\delta_{ij}/(1-q_i^2)$. If $i\neq j$ then the proof is exactly the same. In the case $i=j$, we instead construct a bijection $p'(\textbf{i},(n+1)i\cdot ni)\rightarrow p'(\textbf{i},ni\cdot (n+1)i)\sqcup p'(\textbf{i},\emptyset)$. We have now a third class of elements of $p'(\textbf{i},(n+1)i\cdot ni)$ represented by minimal diagrams in which the braids connecting to the top $(n+1)i$ and $ni$ coincide. In these diagrams, the braid connecting these two cannot cross any other braids, as otherwise the diagram is not minimal. So, if we remove the connecting cup from the diagram, we obtain an element of $p'(\textbf{i},\emptyset)$.

    \begin{center}
        \begin{tikzpicture}
            \draw[-,thick](0,0)..controls (0,.5) and (1,.5)..(1,0);
            \draw[-,thick] (0,1.5)..controls (0,1) and (1,1)..(1,1.5);
            \draw[-,thick] (-.75,0)--(1.75,0);
            \draw[-,thick] (-.75,1.5)--(1.75,1.5);

            \draw(0,-.2) node{\small$ml$};
            \draw(1.1,-.2) node{\small$(m+1)l$};
            \draw(-.1,1.7) node{\small$(n+1)i$};
            \draw(1,1.7) node{\small$ni$};

            \draw(2,.75) node{$\implies$};

            \draw[-,thick](3,0)..controls (3,.5) and (4,.5)..(4,0);
            \draw[-,thick] (2.25,0)--(4.75,0);
            \draw[-,thick] (2.25,1.5)--(4.75,1.5);

            \draw(3,-.2) node{\small$ml$};
            \draw(4.1,-.2) node{\small$(m+1)l$};
        \end{tikzpicture}
    \end{center}
    
    This is clearly a bijection between $p'(\textbf{i},\emptyset)$ and the subset of $p'(\textbf{i},(n+1)i\cdot ni)$ where the upper entries are paired. The degree of a diagram does not change when we remove an uncrossed cup, but since we remove a braid with vertex label $i$ in applying this bijection, we need to include an extra factor of $1/(1-q_i^2)$ when computing $(X,E_{i,n+1}E_{i,n})_{G}$. So, the bijection $p'(\textbf{i},(n+1)i\cdot ni)\rightarrow p'(\textbf{i},ni\cdot (n+1)i)\sqcup p'(\textbf{i},\emptyset)$ gives us that $(X,E_{i,n+1}E_{i,n})_G=(X,q_i^{-2}E_{i,n}E_{i,n+1})_G+(X,1/(1-q_i^2))_G$, as desired.

    We now prove that $(*,*)_G$ is invariant under the quantum Serre relations. Fix $i\neq j\in I$, $n\in \Z$, and the element
     \[Y\coloneqq \sum_{k=0}^{1-C_{ij}}\binom{1-C_{ij}}{k}_iE_{i,n}E_{j,n}E_{i,n}^{1-C_{ij}-k}.\] We show that $Y\in \text{ker}(*,*)_G$. Since $(*,*)_G$ is invariant under quantum boson relations and by (1), it is enough to prove that all $(X,Y)_G=0$ for $X$ a coreduced monomial. Denote by $F_{\Z}(C)_n$ the subalgebra generated by the $\{E_{l,n}\}_{l\in I}$. It follows from the $\Z$-orientation of our diagrams that if $X\notin F_{\Z}(C)_n$ that $(X,Y)_G=0$. So, we may assume that $X\in F_{\Z}(C)_n$. 
     In this case, it is known that $(*,*)_G$ agrees with Lusztig's form $(\bar{*},*)_L$ on the free algebra in the $E_{l,n}$, see \cite{kholau3} Lemma 2.8 and references therein. It is known that $(\bar{*},*)_L$ is invariant under the quantum Serre relations, i.e. that $(X,Y)_G=0$.

    We have shown that $(*,*)_G$ is invariant under the quantum boson and quantum Serre relations. Let $\mathcal{I}$ denote the two sided ideal of $F_\Z(C)$ generated by these relations so that $F_\Z(C)/\mathcal{I}=B_\Z(C)$. By (1), we have that $\mathcal{I}\subset \text{ker}(*,*)_G$, so $(*,*)_G$ descends to $B_\Z(C)$.
    
    To prove (3), we may use Proposition \ref{prop:qthomform}, Lemma \ref{lem:graphic_form_duality}, and the quantum boson relations to reduce to showing that $(1,*)_G=(1,*)_\Z$. By applying quantum boson relations on the target, it is enough to show that $(1,E_\textbf{i})_G=(1,E_\textbf{i})_\Z$ for $E_\textbf{i}$ a reduced monomial. We see that if $E_\textbf{i}\neq 1$ then there are no minimal $\mathcal{B}_\Z$-diagrams from $\emptyset$ to $E_\textbf{i}$ due to the $\Z$-orientation, and otherwise $(1,1)_G=1$. We compute the same for $(1,E_\textbf{i})_\Z$ directly from the definition. So, $(*,*)_\Z=(*,*)_G$ on $B_\Z(C)$.
    
    To prove (4), note that (1), (2), and (3) give us a surjective homomorphism of algebras $B_\Z(C)\rightarrow F_\Z(C)/\text{ker}(*,*)_G$ intertwining $(*,*)_\Z$ and $(*,*)_G$. Due to Proposition \ref{prop:qthomform}, the form $(*,*)_\Z$ is nondegenerate on $B_\Z(C)$. Therefore, this surjective map is also injective due to the arguments of Theorem \ref{thm:gammaiso}.
\end{proof}
\begin{rem}
     Remark \ref{rem:missingdiag} observes that $(F_iE_i,E_iF_i)_2\neq 0$ despite there being no $\mathcal{A}_2$-diagrams from $F_iE_i$ to $E_iF_i$. Theorem \ref{thm:qthom_diagram} explains this discrepancy. The crossing $F_iE_i\rightarrow E_iF_i$ is allowed in an $\mathcal{B}_\Z$-diagram but not in an $\mathcal{A}_2$-diagram.
\end{rem}

We can mimic the results of Subsection \ref{subsec:bases} to produce an interesting basis of $B_\Z(C)$. Let $S_\Z$ be the set of $B_\Z(C)$ elements of the form $\prod_{n\in \Z} G_n(X_n)$ with each $X_n\in S$ and all but finitely many equal to $1$. It follows from the results of \cite{newkashiboson} that $S_\Z$ is a basis for $B_\Z(C)$.

\begin{thm}\label{thm:bznicebasis}
The following are true of the basis $S_\Z$. Fix elements of $S_\Z$ with factorizations $X=\prod_{n\in \Z} G_n(X_n)$ and $Y=\prod_{n\in \Z}G_n(Y_n)$. Let $(*,*)$ be the symmetric bilinear form on $\Z[I]$ determined by $(\alpha_i,\alpha_j)=d_iC_{ij}$.
\begin{enumerate}
    \item  $XY$ is contained in the $\N[q,q^{-1},(1/(1-q_i^2))_{i\in I}]$-span of $S_\Z$.
    \item  If each $\text{gr}(X_n)\leq\text{gr}(Y_n)$, then $(X,Y)_\Z=\prod_{n\in \Z} (\bar{X}_n,Y_n)_L$.
    \item $(\bar{X},Y)_\Z=q^{c}\prod_{n\in \Z}(\text{rev}(X_n),Y_n)_L$, where $c=\sum_{m< n}(\text{gr}(X_m),\text{gr}(X_n))$ and $\text{rev}$ is the $q$-linear antiinvolution of $U_q^+(C)$ that fixes each $E_i$. 
\end{enumerate}
   
\end{thm}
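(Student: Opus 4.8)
The plan is to handle part (1) purely algebraically from the quantum boson relations, and to read parts (2) and (3) off the graphical form of Theorem~\ref{thm:qthom_diagram}. For (1) I would bubble-sort the concatenated block sequence of $XY=\prod_nG_n(X_n)\cdot\prod_nG_n(Y_n)$ into $\Z$-nondecreasing order by adjacent transpositions, tracking positivity. There are three kinds of swaps. If two word-adjacent blocks $G_n(A),G_m(B)$ have $n>m+1$, the boson relations with $k=n-m\geq2$ carry no $\delta$-correction, so for homogeneous $A,B$ one gets the clean scalar commutation $G_n(A)G_m(B)=q^{(-1)^{n-m}(\mathrm{gr}(A),\mathrm{gr}(B))}G_m(B)G_n(A)$, a single monomial in the semiring $\N[q,q^{-1},(1/(1-q_i^2))_{i\in I}]$. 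If $n=m+1$, the swap is governed by the $B(C)$-boson relation sitting inside $B_\Z(C)$ via $E_j\mapsto E_{j,m}$, $F_i\mapsto E_{i,m+1}$ (after $\bar{\mathcal D}^{-m}$), and positivity of the expansion, including the $1/(1-q_i^2)$-corrections that merge two blocks, is exactly the positivity of structure constants of the indecomposable basis $S_2$ from Theorem~\ref{thm:gammaiso}. When two word-adjacent blocks share an index $n$, I merge them using that $G_n$ is an algebra homomorphism and expand the merged factor in the basis $S$ of $U_q^+(C)$, whose structure constants lie in $\N[q,q^{-1}]$. Each correction strictly lowers the number of generators, so an induction on (number of generators, number of out-of-order adjacent pairs) terminates with an $\N[q,q^{-1},(1/(1-q_i^2))_{i\in I}]$-combination of fully sorted products, i.e. of $S_\Z$-elements.

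For (2) and (3) the tools are the identity $(*,*)_\Z=(*,*)_G$ of Theorem~\ref{thm:qthom_diagram}(3), which computes the form as a signed-degree count of minimal $\mathcal B_\Z$-diagrams, and the single-strand identity $(G_n(A),G_n(B))_\Z=(\bar A,B)_L$. The decisive observation is an asymmetry forced by block order: in $X=\prod_nG_n(X_n)$ the $\Z$-labels increase left to right, so condition (2) in the definition of minimal $\mathcal B_\Z$-diagrams makes a cap joining two entries of $X$ impossible (a cap needs the further-left label to exceed the further-right by one), while a cup joining two consecutive blocks is allowed; for $\bar X$, whose blocks are ordered by decreasing index, cups are impossible instead. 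I would use this to restrict which braids can occur.

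In (2) the target $Y$ has increasing labels, so no cap on $Y$ exists; hence every entry of $Y$ joins an entry of $X$ of equal $\Z$-label, forcing the label-$i$ multiplicities to satisfy $b_{n,i}\leq a_{n,i}$. Combined with the hypothesis $\mathrm{gr}(X_n)\leq\mathrm{gr}(Y_n)$ (i.e. $a_{n,i}\leq b_{n,i}$) this gives equality for all $n,i$, leaves no free entries of $X$ for cups, and makes every diagram block-diagonal: source block $n$ meets only target block $n$, and these occupy the same horizontal region, so there are no crossings between distinct blocks. Degree and the normalisation factor $\prod_i(1-q_i^2)^{-\kappa_i}$ are then additive/multiplicative over blocks, and the diagram sum factorises as $\prod_n(G_n(X_n),G_n(Y_n))_G=\prod_n(\bar X_n,Y_n)_L$; if some inequality is strict, both sides vanish consistently. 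In (3) the source is $\bar X=\prod_n^{\mathrm{rev}}G_n(b(X_n))$ with $b$ the $q$-antilinear antiautomorphism of $U_q^+(C)$ fixing the $E_i$; now neither cups (on the decreasing $\bar X$) nor caps (on the increasing $Y$) occur, so every braid is source-to-target of equal label, again forcing $\mathrm{gr}(X_n)=\mathrm{gr}(Y_n)$. But source block $n$ now sits opposite target block $n$, so the diagram is a global reversal braid — each entry of block $m$ crossing each entry of block $n$ once for every $m<n$ — composed with per-block subdiagrams. The per-block factor is $(G_n(b(X_n)),G_n(Y_n))_G=(\overline{b(X_n)},Y_n)_L=(\mathrm{rev}(X_n),Y_n)_L$, using that the $U_q^+(C)$-bar $\bar*$ satisfies $\overline{b(\cdot)}=\mathrm{rev}(\cdot)$, while the reversal crossings contribute the global scalar $q^{c}$.

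The main obstacle is the exact degree bookkeeping for $c$ in part (3): I must sum the crossing-degree rule over all pairs of entries in distinct blocks and match the total to $\sum_{m<n}(\mathrm{gr}(X_m),\mathrm{gr}(X_n))$. This means reading off, for a reversal crossing of a label-$n$ strand (bottom-left) with a label-$m$ strand (bottom-right, $m<n$), the correct branch of the degree rule and verifying that the signs assemble into the stated constant. The safest certification is an independent computation by iterating Proposition~\ref{prop:qthomform}(7): peeling the lowest block produces the twist $q^{-(\alpha_X,\alpha_Z)}$ with the \emph{signed} grading $\mathrm{gr}(E_{i,n})=(-1)^n\alpha_i$ entering $\alpha_X,\alpha_Z$, and recursing yields both the per-block factors $(\mathrm{rev}(X_n),Y_n)_L$ and the same exponent, giving an algebraic cross-check of the graphical count. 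The remaining points — additivity of degree across the reversal band and the block columns in a minimal diagram, and the identity $\overline{b(X_n)}=\mathrm{rev}(X_n)$ — are routine.
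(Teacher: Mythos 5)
Your proposal is correct and, at its core, uses the same ingredients as the paper's proof: part (1) rests on positivity of the structure constants of $S_2$ supplied by Theorem~\ref{thm:gammaiso} together with the clean $q$-power commutation of blocks whose indices differ by at least two; part (2) is the same reducedness argument (no pairings internal to the reduced target, the counting hypothesis then forces a block-to-block perfect matching, and minimality forbids inter-block crossings, so the diagram count factorizes); part (3) comes down to iterating Proposition~\ref{prop:qthomform}(7). The organizational differences are worth noting. For (1), the paper avoids your three-case bubble sort: it reduces immediately to $Y=G_m(Z)$ a single block, commutes it past all $G_n(X_n)$ with $n>m+1$ at the cost of a power of $q$, and then treats the entire interaction $G_m(X_m)G_{m+1}(X_{m+1})\cdot G_m(Z)=T_m\big(G_1(X_m)G_2(X_{m+1})G_1(Z)\big)$ by a single application of positivity inside $T_m(B(C))$. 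This sidesteps your equal-index merging case (so no separate appeal to positivity of the structure constants of $S$ is needed) and the lexicographic termination bookkeeping, though your sort does terminate for the reasons you give. For (3), the paper runs only the algebraic route you describe as a cross-check and leaves the graphical route as an unproved remark; your proposal leads with the graphical route.

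On the point you yourself identify as the main obstacle: the sign bookkeeping in (3) is genuinely delicate, and you should carry it out rather than assume it assembles into the displayed constant. In the reversal band, a crossing of a block-$n$ strand (vertex $i$) with a block-$m$ strand (vertex $j$), $m<n$, has the block-$n$ strand at bottom left (since $\bar{X}$ lists blocks in decreasing order), so the second branch of the degree rule applies and the pair contributes $(-1)^{n-m+1}d_iC_{ij}$; summing gives $\sum_{m<n}(-1)^{n-m+1}(\text{gr}(X_m),\text{gr}(X_n))$. The same parity-dependent sign emerges from the recursion via property (7), because the $\Z[I]$-gradings enter with signs $(-1)^n$. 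This agrees with the stated $c$ exactly when interacting blocks sit at odd index-distance (e.g. for elements of $T_m(S_2)$, where only consecutive indices occur), but the sign flips for blocks at even index-distance. So your instinct to certify the graphical count against the (7)-recursion is the right one; that verification is where the real content of (3) lies, and the paper's one-line proof does not display this bookkeeping either.
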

\begin{proof}
    It is sufficient to prove (1) under the assumption that $Y=G_m(Z)$ for some $Z\in S$ and $m\in \Z$. By the defining relations of $B_\Z(C)$, we have that $(\prod_{n>m+1}G_n(X_n))Y=q^{(\alpha_Y,\alpha)}Y\prod_{n>m+1}G_n(X_n)$, where $\text{gr}(Y)=\alpha_Y$, and $\text{gr}(\prod_{n>m+1} G_n(X_n))=\alpha$.  For the $\Q(q)$-algebra embedding $T_m:B(C)\hookrightarrow B_\Z(C)$ given by $E_i\rightarrow E_{i,m}$ and $F_i\rightarrow E_{i,m+1}$, we have that $T_m(S_2)\subset S_\Z$ after perhaps adjusting shifts. Then Theorem \ref{thm:gammaiso} shows that $G_m(X_m)G_{m+1}(X_{m+1})Y$ is contained in the $\N[q,q^{-1},(1/(1-q_i^2))_{i\in I}]$-span of $S_\Z$. The claim follows.

    To prove (2), write each $G_n(X_n)=\sum_{i\leq k_n}a_iG_n(X_{n,i})$, where the $a_i\in \Q(q)$ and $G_n(X_{n,i})$ are monomials in $B_\Z(C)_n$ of the same $\Z[I]$-degree as $G_n(X_n)$. We similarly write $G_n(Y_n)=\sum_{j\leq l_n}b_iG_n(Y_{n,j})$. Then we compute \[(X,Y)_\Z=\sum (\prod_n a_{i_n}G_n(X_{n,i_n}),\prod_n b_{j_n}G_n(Y_{n,j_n})),\]
    where the sum is over all choices of $i_n\leq k_n$ and $j_n\leq l_n$. Note that both $\prod_n G_n(X_{n,i_n})$ and $\prod_n G_n(Y_{n,j_n})$ are reduced. In a minimal $\mathcal{B}_\Z$-diagram from $zseq(\prod_n G_n(X_{n,i_n}))$ to $zseq(\prod_n G_n(Y_{n,j_n}))$, no two entries of $zseq(\prod_n G_n(Y_{n,j_n}))$ can be paired together due to the reducedness, and so each entry of $zseq(G_n(Y_{n,j_n}))$ must be paired with an appropriately labeled entry of $zseq(G_n(X_{n,i_n}))$. Since $\text{gr}(X_{n,i_n})\leq\text{gr}(Y_{n,j_n})$, every entry of $zseq(G_n(X_{n,i_n}))$ is paired in this way. For $a$ an entry of $zseq(G_n(X_{n,i_n}))$ and $b$ an entry of $zseq(G_m(X_{m,i_m}))$ with $n\neq m$, the braids connecting to $a$ and $b$ cannot cross due to minimality. Therefore, we see that \[(\prod_n a_{i_n}G_n(X_{n,i_n}),\prod_n b_{j_n}G_n(Y_{n,j_n}))_\Z=\prod_n(a_{i_n}G_n(X_{n,i_n}),b_{j_n}G_n(Y_{n,j_n}))_\Z=\prod_n(\overline{a_{i_n}X_{n,i_n}},b_{j_n}Y_{n,j_n})_L.\] The claim follows.

    (3) can be proven with a similar method, but it is simpler to use property (7) of Proposition \ref{prop:qthomform} to deduce that $(\bar{X},Y)_\Z=q^{c}\prod_{n\in \Z}(\overline{G_n(X_n)},Y_n)_\Z=q^{c}\prod_{n\in \Z} (\text{rev}(X_n),Y_n)_L$.
    
\end{proof}
Theorem \ref{thm:bznicebasis} suggests that the basis $S_\Z$ is a basis of indecomposable objects in an appropriate categorification of $B_\Z(C)$. Producing such a categorification is work in progress.  Although we have technically produced several different bases of $B_\Z(C)$ depending on $Q$, $\kf$, and shifts, comparing them reduces to the same comparison for $S\subset U_q^+(C)$. Understanding the relation between these bases would also allow one to compare to the bases produced in \cite{newkashiboson}.

\bibliographystyle{amsalpha}
    \bibliography{ref.bib}
\end{document}